\title[Gluing asymptotically cylindrical Calabi-Yau threefolds]{Gluing and deformation of asymptotically cylindrical Calabi-Yau manifolds in complex dimension three}
\author{Tim Talbot}
\address{DPMMS\\Centre for Mathematical Sciences\\Wilberforce Road\\Cambridge\\CB3 0WB}
\email{T.J.Talbot@maths.cam.ac.uk}
\thanks{This work was supported by the UK Engineering and Physical Sciences Research Council (EPSRC) grant EP/H023348/1 for the University of Cambridge Centre for Doctoral Training, the Cambridge Centre for Analysis.}
\date{}
\theoremstyle{definition}
\newtheorem{defin}{Definition}[section]
\theoremstyle{plain}
\newtheorem{lem}[defin]{Lemma}
\newtheorem{prop}[defin]{Proposition}
\newtheorem{thm}[defin]{Theorem}
\newtheorem{cor}[defin]{Corollary}
\newtheorem*{thma}{Theorem A}
\newtheorem*{thmb}{Theorem B}
\newtheorem*{thmc}{Theorem C}
\newtheorem{claim}[defin]{Claim}
\theoremstyle{remark}
\newtheorem*{rmk}{Remark}
\newtheorem{numrmk}[defin]{Remark}
\numberwithin{equation}{section}
\newcommand{\R}{{\mathbb{R}}}
\renewcommand{\C}{{\mathbb{C}}}
\newcommand{\Z}{{\mathbb{Z}}}
\newcommand{\ddth}{\frac{\partial}{\partial \theta}}
\newcommand{\ddt}{\frac{\partial}{\partial t}}
\newcommand{\into}{{\hookrightarrow}}
\newcommand{\Kahler}{K\"ahler}
\newcommand{\ie}{i.e.\ }
\newcommand{\eg}{e.g.\ }
\newcommand{\Diff}{{\mathrm{Diff}}}
\newcommand{\Aut}{{\mathrm{Aut}}}
\newcommand{\abs}{\mathrm{abs}}
\newcommand{\bdd}{{\mathrm{bd}}}
\newcommand{\cpt}{{\mathrm{cpt}}} 
\newcommand{\id}{{\mathrm{id}}} 
\newcommand{\la}{\langle}
\newcommand{\ra}{\rangle}
\renewcommand{\G}{{\mathcal{G}}} 
\newcommand{\B}{{\mathcal{B}}} 
\newcommand{\M}{{\mathcal{M}}} 
\newcommand{\D}{{\Diff}} 
\renewcommand{\H}{\mathcal{H}}
\renewcommand{\L}{\mathcal{L}}
\DeclareMathOperator{\tr}{tr}
\DeclareMathOperator{\Vol}{Vol}
\DeclareMathOperator{\spn}{span}
\renewcommand{\Re}{\operatorname{Re}}
\renewcommand{\Im}{\operatorname{Im}}
\begin{document}
\begin{abstract}
We develop some consequences of the connection between Calabi-Yau structures and torsion-free $G_2$ structures on compact and asymptotically cylindrical six- and seven-dimensional manifolds. Firstly, we improve the known proof that matching asymptotically cylindrical Calabi-Yau threefolds can be glued. Secondly, we give an alternative proof that the moduli space of Calabi-Yau structures on a six-dimensional real manifold is smooth, and extend it to the asymptotically cylindrical case. Finally, we prove that the gluing map of Calabi-Yau threefolds, extended between these moduli spaces, is a local diffeomorphism: that is, that every deformation of a glued Calabi-Yau threefold arises from an essentially unique deformation of the asymptotically cylindrical pieces. 
\end{abstract}
\maketitle 
\section{Introduction}
This paper is about Calabi-Yau threefolds, which we define as Riemannian manifolds with holonomy contained in $SU(3)$. These have been extensively studied: this condition immediately implies that the manifold is K\"ahler and Ricci-flat. Conversely, if we were interested in Ricci-flat K\"ahler manifolds, the Calabi conjecture, proved by Yau \cite{yaucalabiconjecture}, states that in the compact case it is essentially sufficient to consider compact K\"ahlerian manifolds whose canonical bundle has torsion first Chern class; it states that each such manifold has a unique Ricci-flat metric in each K\"ahler class. If, more strongly, we assume the canonical bundle is holomorphically trivial, so that there is a holomorphic volume form, then we have a parallel K\"ahler form and a parallel holomorphic volume form, so that the holonomy is contained in $SU(3)$. Such manifolds are interesting because of their restricted holonomy, because manifolds with restricted curvature are interesting in general, and because they are conjectured to be useful in physics: in certain forms of supersymmetric string theory, spacetime is conjectured to take the form $M \times K$ where $K$ is compact, Ricci-flat and K\"ahler. See \cite{strings}. 

As an auxiliary tool, we will use $G_2$ manifolds, that is those seven-dimensional manifolds with holonomy contained in the exceptional Lie group $G_2$. Since the subgroup of $G_2$ fixing a nonzero vector is isomorphic to $SU(3)$, the Riemannian product of a Calabi-Yau threefold and a general one-manifold is a $G_2$ manifold. The purpose of this paper is to use this correspondence to obtain results for Calabi-Yau threefolds from corresponding results for $G_2$ manifolds: because a small perturbation of a $G_2$ structures as a $3$-form is again a $G_2$ structure, the $G_2$ analysis is often easier than trying to work with Calabi-Yau structures directly.

Specifically, we shall study a gluing construction of compact Calabi-Yau threefolds given by taking asymptotically cylindrical Calabi-Yau threefolds and joining them to form a manifold with a neck. Combining geometric objects via manifolds with necks can also be done if the original objects were not asymptotically cylindrical. An important early example was the construction of self-dual conformal structures on four-manifolds, initiated by Floer\cite{floerprehistory}. In turn, such combinations lead to questions about which deformations of the glued structure arise from compatible deformations of the structures being glued and which, if any, arise from choices in the gluing. 

For instance, after showing that anti-self-dual connections on compact four-manifolds could be glued on a suitable connected sum, Donaldson and Kronheimer showed \cite[Theorem 7.2.63]{donaldsonkronheimer} that every deformation of an anti-self-dual connection on a four-manifold obtained by gluing can be obtained by either deformation of the connections being glued, variation of the length of the neck in gluing, or a change in the identification of the pieces, in an essentially unique way.

Later, Kovalev \cite{kovalevtwistedconnected} gave a gluing construction of $G_2$ manifolds from asymptotically cylindrical $G_2$ manifolds, in such a way that the holonomy was exactly $G_2$. The idea is that we can easily construct a $G_2$ structure with torsion, and that this torsion is small and can be removed by a perturbation argument. Removing small torsion was previously studied by Joyce \cite{joycebook} with a detailed result designed for use desingularising conical singularities. 

In the case of gluing asymptotically cylindrical $G_2$ structures, Nordstr\"om \cite{nordstromgluing} proved the analogous result on deformations: that every deformation of a $G_2$ structure obtained by gluing two matching asymptotically cylindrical $G_2$ structures is obtained, uniquely, by one of perturbing the glued structures, perturbing the identification of the two manifolds, or perturbing the length of the neck. 

We therefore seek a similar result in the Calabi-Yau case. We note that by work beginning with Tian--Yau and continued by Kovalev and Haskins--Hein--Nordstr\"om \cite{tianyauI, kovalevtwistedconnected, haskinsheinnordstrom}, asymptotically cylindrical Calabi-Yau manifolds do indeed exist. It is known that they can be glued by deformation in the sense of complex algebraic geometry. Using the smoothing results of Friedman \cite{friedman} and Kawamata--Namikawa \cite{kawanami}, Lee \cite{lee} showed that we can deform the singular space given by identifying the manifolds ``at infinity" to give a smooth manifold. This work leaves open the question of what the topology of the deformed space is and so whether it can be regarded as a gluing given by cutting off the structures to their asymptotically cylindrical limits, joining them to form a manifold with a long neck, and perturbing to maintain a Calabi-Yau structure. In a similar vein, it remains open whether all the different possible ways of deforming the singular space yield structures equivalent up to diffeomorphism, and so it's not clear that using these deformations gives a well-defined map on moduli spaces. We therefore will not consider this approach. 

We concentrate on the case $n=3$ because then we only have to introduce one extra dimension to reach seven dimensional $G_2$ manifolds. Similar arguments should work for $n<3$. If $n=2$, the obvious thing to do is to consider the Riemannian and holomorphic product of a Calabi-Yau twofold with a complex torus to obtain $SU(2)$ results from the $SU(3)$ results we prove in this paper. Hence, it would probably be necessary to consider the moduli space of Calabi-Yau structures on such a torus. In the same way, it should be possible given a ``Calabi-Yau curve" to construct a twofold by taking a product with a complex torus; however, as we need the moduli space of Calabi-Yau structures on a complex torus to apply this reduction, care would be required to avoid circular reasoning. 

By a Calabi-Yau structure, we mean a torsion-free $SU(n)$ structure. We regard both a Calabi-Yau and a torsion-free $G_2$ structure on a manifold as a set of differential forms satisfying appropriate algebraic and differential conditions; an asymptotically cylindrical such structure satisfies additional asymptotic conditions. We may then define moduli spaces $\M_{SU(3)}$ and $\M_{G_2}$ to be the quotient of structures on a given compact or asymptotically cylindrical manifold by the natural pullback action by (asymptotically cylindrical) diffeomorphisms. Similarly, by making everything invariant by a circle action, we may define a moduli space $\M_{G_2}^{S^1}$ of $S^1$-invariant torsion-free $G_2$ structures on a product $M \times S^1$. When $n=3$, we obtain
\begin{thma}
The Calabi-Yau moduli space $\M_{SU(3)}$ on the compact or asymptotically cylindrical manifold $M$ is a smooth manifold, and we have a diffeomorphism
\begin{equation}
\M_{SU(3)} \times \R_{>0} \times H^1(M, \R) \cong \M_{G_2}^{S^1}
\end{equation}
Also, $\M_{G_2}^{S^1}$ is locally diffeomorphic to open subsets of the $G_2$ moduli space (again, compact or asymptotically cylindrical) $\M_{G_2}$ on $M \times S^1$. 
\end{thma}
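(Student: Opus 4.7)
The plan is to work through the three assertions by exploiting the explicit correspondence between torsion-free $SU(3)$ structures on $M$ and $S^1$-invariant torsion-free $G_2$ structures on $M \times S^1$. Given an $SU(3)$ structure with K\"ahler form $\omega$ and holomorphic volume form $\Omega = \Omega_R + i \Omega_I$, a length $L > 0$, and a closed 1-form $\alpha$ on $M$, the 3-form
\[
\varphi = \omega \wedge (L\, d\theta + \alpha) + \Omega_R
\]
defines an $S^1$-invariant torsion-free $G_2$ structure on $M \times S^1$: both $\varphi$ and its dual $\tfrac{1}{2}\omega^2 + \Omega_I \wedge (L\, d\theta + \alpha)$ are closed because $\alpha$ and the Calabi-Yau forms are. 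A horizontal/vertical split with respect to the $S^1$-action shows every such structure arises in this way, while the equivariant diffeomorphism $(x,\theta) \mapsto (x,\theta + f(x)/L)$ changes $\alpha$ by $df$; so only $[\alpha] \in H^1(M,\R)$ is well-defined, establishing set-theoretically the bijection in the second assertion.

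I would then tackle the smoothness statements in reverse order, beginning with the local diffeomorphism $\M_{G_2}^{S^1} \to \M_{G_2}$ on $M \times S^1$. Since the compact (Joyce) and ACyl (Nordstr\"om) $G_2$ moduli spaces are already known to be smooth manifolds, if this map is a local diffeomorphism then smoothness of $\M_{G_2}^{S^1}$ follows immediately. The argument is a slice/averaging one: the $S^1$-action preserves a base $S^1$-invariant structure $\varphi_0$ and its metric $g_0$, so for any nearby torsion-free $G_2$ structure $\varphi$ the same action is a near-isometry of $g_\varphi$; the implicit function theorem should then produce a small diffeomorphism $F$ of $M \times S^1$ conjugating the action into one that is isometric for $g_\varphi$, so that $F^*\varphi$ is $S^1$-invariant. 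Uniqueness of $F$ modulo equivariant diffeomorphisms gives local injectivity, and existence of such $F$ for every nearby $\varphi$ gives local surjectivity onto an open neighbourhood.

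Granted this, the final step is to transfer smoothness through the bijection of the first paragraph: the factor $\R_{>0} \times H^1(M,\R)$ is a manifest smooth Euclidean factor, and dividing it out endows $\M_{SU(3)}$ with a smooth structure, which one can then cross-check against a natural intrinsic description via Hodge-theoretic slices of $SU(3)$-structure forms modulo diffeomorphisms on $M$. The main obstacle I anticipate is the averaging step in the second paragraph: constructing the correct slice transverse to both ordinary and equivariant diffeomorphism orbits of $\varphi_0$, carrying out the implicit function theorem in a suitable Banach space of forms and diffeomorphisms, and in the ACyl case maintaining appropriate asymptotic decay both for $F$ and for the representative of $[\alpha]$, which must be controlled in the correct cohomology of decaying forms on the cylindrical end.
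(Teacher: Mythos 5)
Your first paragraph (the structure-level correspondence and the quotient of the closed $1$-form by $df$ via the twist diffeomorphism) is sound and matches the paper's Theorem \ref{theoremc} and Lemmas \ref{easyextensionlemma}--\ref{s1invdiffstr}. The genuine gap is in your second paragraph, which is exactly the ``more direct'' route the paper discusses and rejects in the remark following Theorem \ref{mg2s1mfd}. Local surjectivity of $\M_{G_2}^{S^1} \to \M_{G_2}$ is indeed soft: by Theorem \ref{g2slicetheorem}(c) every nearby torsion-free structure is $\Diff_0$-equivalent to one in the slice at $\phi_0$, and all slice structures share the automorphism group of $\phi_0$, hence are $S^1$-invariant. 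But your key step --- that because the circle action is a near-isometry of $g_\varphi$, the implicit function theorem produces a small $F$ conjugating it to an isometric action of $g_\varphi$ --- is unjustified and, as stated for metrics, false: a metric arbitrarily close to an $S^1$-invariant one can have trivial isometry group, so there is no a priori nearby isometric circle action and the linearised conjugation problem has no reason to be solvable. What rescues the statement is torsion-freeness, and exploiting it is not a formality: it is precisely the Hodge-theoretic slice analysis (Killing fields of Ricci-flat metrics are parallel and hence $S^1$-invariant, harmonic forms are $S^1$-invariant, the nonlinear maps cutting out the premoduli space are $S^1$-equivariant, and their restricted derivatives retain the same kernel and surjectivity) that constitutes the paper's proof of Theorem \ref{mg2s1mfd}. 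Likewise your local injectivity claim (``uniqueness of $F$ modulo equivariant diffeomorphisms'') is the other hard point: the paper shows this route reduces to Claim \ref{starclaim}, local path-connectedness of the $S^1$-invariant torsion-free structures, and the natural averaging argument fails because the torsion-free equation is nonlinear, so one cannot average a path of torsion-free structures and stay torsion-free without further analysis.

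A secondary understatement: the final transfer of smoothness is more than ``dividing out a Euclidean factor''. One must identify the twisting classes with $\R_{>0}\times H^1(M)$ (in the asymptotically cylindrical case this needs the Hodge-theoretic Lemma \ref{ccfcohom}, including the boundary condition $\tilde z(\ddt)=0$ and connectedness of the cross-section), show the projection $\pi_Z\colon \M_{G_2}^{S^1}\to Z$ is a smooth submersion, and then check that the smooth structures induced on the different fibres $\M_{SU(3)}$ agree and that the resulting product is smooth (Propositions \ref{surjsubm}--\ref{pizsmooth} and Theorem \ref{maintheorema}). These are routine compared with the main gap above, but they are not automatic.
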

Theorem A is proved as Theorem \ref{mg2s1mfd} and Theorem \ref{maintheorema}. The $\R_{>0}$ and $H^1(M, \R)$ components give the length of the $S^1$ factor and in some sense its angle to the $M$ factor respectively. Note that it follows that $\M_{SU(3)}$ is finite-dimensional. Its dimension is easy to compute from the corresponding $G_2$ results: for instance, in the compact case, we get
\begin{equation}
\dim \M_{SU(3)}(M) = b^3(M \times S^1) - 1 - b^1(M) = b^3(M) + b^2(M) - b^1(M) - 1
\end{equation}

Given that we can glue Calabi-Yau structures (which we prove as Theorem \ref{su3structuregluing}), there is evidently a map from pairs of Calabi-Yau structures on a pair of manifolds to the glued structures. This map need not immediately induce a well-defined map of moduli spaces. However, once Theorem A is established, we can show
\begin{thmb}
The gluing map from the ``moduli space of asymptotically cylindrical $SU(3)$ gluing data" to the moduli space of $SU(3)$ structures on the glued manifold is well-defined, and is a local diffeomorphism.
\end{thmb}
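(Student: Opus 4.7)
The strategy is to reduce Theorem B to Nordström's local diffeomorphism theorem for asymptotically cylindrical $G_2$ gluing, using Theorem A as the bridge between $SU(3)$ and $S^1$-invariant $G_2$ data. The central observation is that the product-with-$S^1$ construction is compatible, up to diffeomorphism, with the gluing procedure on both sides.

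The first step is to establish this compatibility. Given matching asymptotically cylindrical Calabi-Yau threefolds $M_1, M_2$ and $SU(3)$ gluing parameters (neck length, matching diffeomorphism of cross-sections), the associated $S^1$-invariant $G_2$ structures on $M_i \times S^1$ form $G_2$ gluing data in a natural way, and the Kovalev--Nordström $G_2$ gluing applied to them produces, up to diffeomorphism, the $S^1$-invariant $G_2$ structure on $N \times S^1$, where $N$ denotes the $SU(3)$-glued manifold. This amounts to checking that cutoffs, interpolations, and the torsion-killing perturbation can all be chosen $S^1$-equivariantly, so the gluing commutes with taking products with $S^1$.

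Second, I would apply Nordström's theorem: the $G_2$ gluing map $\Phi$ is a local diffeomorphism onto $\M_{G_2}(N \times S^1)$. By step one, the restriction of $\Phi$ to $S^1$-invariant gluing data lands in $\M_{G_2}^{S^1}(N \times S^1)$, which by the second half of Theorem A maps into $\M_{G_2}(N \times S^1)$ as a local diffeomorphism (the forgetful map). Since $\Phi$ is a local diffeomorphism and the target inclusion is locally a diffeomorphism of equidimensional manifolds, the restricted map from the $S^1$-invariant gluing data moduli to $\M_{G_2}^{S^1}(N \times S^1)$ is itself a local diffeomorphism.

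The third step applies the first half of Theorem A on both sides to convert the problem to $SU(3)$. On the target, $\M_{G_2}^{S^1}(N \times S^1) \cong \M_{SU(3)}(N) \times \R_{>0} \times H^1(N, \R)$. On the gluing data side, each piece contributes $\M_{SU(3)}(M_i) \times \R_{>0} \times H^1(M_i, \R)$; the $S^1$-equivariant matching condition on the cross-sections $X \times S^1$ forces the two $\R_{>0}$ factors to coincide and constrains the $H^1$ factors compatibly with the matching of the CY2 cross-sections. So the $SU(3)$ gluing data moduli and the $S^1$-invariant $G_2$ gluing data moduli differ by exactly the same $\R_{>0} \times H^1$ factor that distinguishes the targets, and the $SU(3)$ gluing map is obtained by a symmetric quotient on both sides, preserving the local diffeomorphism property. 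Well-definedness on $\M_{SU(3)}$ is inherited from that of the $G_2$ map. The main obstacle I expect is this bookkeeping: verifying that the $\R_{>0} \times H^1$ cancellations on the gluing data side match those on the target, and that the various local diffeomorphisms supplied by Theorem A and by Nordström's theorem can be arranged on compatible neighbourhoods of a chosen base point --- essentially a question of careful gauge fixing.
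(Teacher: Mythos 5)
Your overall route is the same as the paper's (reduce everything to Nordstr\"om's $G_2$ theorem via $S^1$-invariance and Theorem A), but your third step contains a genuine gap: you treat the gluing map as if it were block-diagonal with respect to the product decompositions $\tilde\G_{G_2}^{S^1} \cong \tilde\G_{SU(3)} \times \B_Z$ (Theorem \ref{gluingdataspacesproduct}) and $\M_{G_2}^{S^1} \cong \M_{SU(3)} \times Z$, so that the ``$\R_{>0}\times H^1$ factors cancel symmetrically'' and the local diffeomorphism property descends formally. That is exactly what is not known, and it is where the real work lies. Two things can a priori go wrong: (i) the $\M_{SU(3)}$-component of the glued structure could depend on which twisting pair $(z_1,z_2)$ you used to pass to $G_2$ data, so well-definedness of the induced $SU(3)$ map is \emph{not} ``inherited'' from the $G_2$ statement --- it is proved in Propositions \ref{weakwelldef} and \ref{prop:strongwelldef}, via a cohomological argument that itself has a subtlety: the glued K\"ahler class is only $\frac1c[\gamma_T(\omega_1,\omega_2)]$ for an a priori unknown constant $c>0$ (Proposition \ref{gluingcohombehaves}), and one needs the local rigidity statement Lemma \ref{ccontrol} and the neck-stretching Lemma \ref{reducingslemma} to control it; and (ii) varying the Calabi-Yau data, the neck length, or the relative twist of the two pieces can move the $Z$-component of the \emph{target} (again through $c$ and through $\gamma_T(z_1,z_2)$), so the derivative is only shown to be ``triangular up to the line $\R[z]$'' (Propositions \ref{gluingztoz}, \ref{gluingsu3tosu3}, \ref{gluingautostosu3}, \ref{lengthstructurebehaves}), and one still needs the linear-algebra argument in the proof of Theorem \ref{localdiffeoofmodspace} to extract injectivity and surjectivity of the composed derivative. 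A purely formal ``quotient on both sides'' would only work if the map respected the splittings exactly, which is not established and is not how the structures behave.

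A secondary omission: your description of the $SU(3)$ gluing data (structures, neck length, identification of cross-sections) misses the component $A(\Omega_1,\omega_1,\Omega_2,\omega_2)$ of Definition \ref{defin:spacea}, the classes of relative twists of the two pieces by paths to isometries of the cross-section. Since $b^1$ of the glued manifold need not vanish and the cross-section has isometries, the moduli space of gluing data $\hat\G$ is a nontrivial bundle over $\B$ with this extra factor (Propositions \ref{g2ghatcharts} and \ref{su3ghatcharts}), and these directions must be accounted for in the derivative computation (Proposition \ref{gluingautostosu3}); your dimension bookkeeping in step three silently drops them. Your first two steps (the $S^1$-equivariance of the cutoff-and-perturb construction, and the passage from $\M_{G_2}$ to $\M_{G_2}^{S^1}$) do match the paper's Theorem \ref{su3structuregluing} and Theorem \ref{g2localdiffeoofmodspace}.
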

The technical statement of this theorem is Theorem \ref{maintheoremb}, which follows a considerable amount of necessary preliminary work. The theorem says roughly that supposing a Calabi-Yau structure is given by gluing, any sufficiently close deformation of it is given by a gluing of small deformations of the original asymptotically cylindrical Calabi-Yau structures and gluing parameters, and moreover these structures and parameters are unique for any given deformation. The proof is fairly straightforward given Theorem A: using the moduli spaces from Theorem A, we define a moduli space of gluing data, by which we mean structures and necessary parameters, and an analysis of the structure of this moduli space and the gluing map shows that Theorem B follows from the corresponding result for $G_2$. 

Our approach to both of these thus rests on the result analogous to Theorem A at the level of structures.
\begin{thmc}
Suppose $M$ is six-dimensional and is compact or has an end. There is a homeomorphism between (asymptotically cylindrical) $G_2$ structures on $M \times S^1$ which are $S^1$-invariant and triples consisting of an $SU(3)$ structure on $M$, a positive number, and an (asymptotically translation invariant with appropriate limit) one-form on $M$, with respect to the smooth topologies on the spaces of differential forms and hence structures. 
\end{thmc}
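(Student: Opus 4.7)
The plan is to construct the claimed homeomorphism explicitly in both directions and then verify its algebraic, differential, and analytic properties separately. The key geometric input is the classical fact that the stabiliser of a non-zero vector inside $G_2 \subset GL(7,\R)$ is $SU(3) \subset GL(6,\R)$, so that on a $G_2$-manifold a choice of non-vanishing vector field $v$ determines an $SU(3)$-structure on the sub-bundle $v^\perp$.

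Starting from an $S^1$-invariant torsion-free $G_2$-structure $\varphi$ on $M \times S^1$, the Killing vector $v = \partial_\theta$ has length $\ell = |v|_{g_\varphi}$, a priori an $S^1$-invariant function on $M$. Let $\xi$ denote the unit dual one-form to $v/\ell$. Orthogonal decomposition gives $\varphi = \xi \wedge \omega + \Re\Omega$, where $\omega$ and $\Re\Omega$ are forms on $v^\perp$; the isomorphism $\pi_*\colon v^\perp \to TM$, which is valid since $v^\perp$ is complementary to the vertical distribution, transports $(\omega,\Omega)$ to an $SU(3)$-structure $(\omega_M, \Omega_M)$ on $M$. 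The remaining piece $\xi - \ell\,d\theta$ is $S^1$-invariant and annihilates $v$, so it descends to a one-form $\alpha$ on $M$, providing the forward map $\varphi \mapsto (\omega_M, \Omega_M, \ell, \alpha)$.

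For the inverse, the recipe is the explicit formula
$$\varphi = \pi^*\Re\Omega_M + (\ell\,d\theta + \pi^*\alpha) \wedge \pi^*\omega_M.$$
Working in an oriented orthonormal coframe $(e^1,\dots,e^6)$ of $T_xM$ in which $(\omega_M, \Omega_M)$ takes the standard $SU(3)$ shape, and introducing $e^7 = \ell\,d\theta + \pi^*\alpha$, a short computation shows that $\varphi$ takes the standard $G_2$ normal form relative to $(e^1,\dots,e^7)$; this pins down the associated metric as $\pi^*g_M + (\ell\,d\theta + \pi^*\alpha)^2$. Computing $d\varphi$ and $d(*\varphi)$ and separating by the coefficient of $d\theta$ then reduces the torsion-free condition to $d\omega_M = d\Omega_M = 0$, $d\alpha = 0$, and $\ell$ constant.

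The two constructions are mutual inverses by direct inspection, and both are continuous in the smooth Fr\'echet topology on spaces of forms, since they are given by pointwise polynomial expressions in the defining forms together with the nowhere-zero quantity $\ell$. In the asymptotically cylindrical setting the same pointwise formulas apply on the ends, so translation-invariant limits transport correctly, and the asymptotic decay rates are preserved because no derivatives appear in the recipes. The step requiring the most care is the forward decoupling: $d\varphi = 0$ and $d(*\varphi) = 0$ produce four coupled differential identities relating $\ell$, $\alpha$, $d\omega_M$, $d\Re\Omega_M$, and $d\Im\Omega_M$ (for instance Cartan's identity gives $d(\iota_v\varphi) = 0$, which becomes $d(\ell\,\pi^*\omega_M) = 0$), and one must untangle them, using Lefschetz injectivity of $\wedge\omega_M$ on $\Omega^1(M)$ together with the $SU(3)$-type decomposition of $\Lambda^2 T^*M$, to separately recover $d\omega_M = d\Omega_M = 0$, $d\ell = 0$ and $d\alpha = 0$.
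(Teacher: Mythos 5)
Your construction of the correspondence itself coincides with the paper's (decompose $\varphi=\Re\Omega+z\wedge\omega$ with $z=(\partial/\partial\theta)^\flat/|\partial/\partial\theta|=\ell\,d\theta+\alpha$, and invert by the explicit coframe formula), but the decisive step is where the proposal breaks down: the claim that $d\varphi=0$ and $d{*}\varphi=0$ can be ``untangled'' pointwise, via Lefschetz injectivity and the $SU(3)$ type decomposition, into $d\omega_M=d\Omega_M=0$, $d\ell=0$, $d\alpha=0$. Separating by $d\theta$-degree, torsion-freeness is exactly the coupled system
\begin{equation*}
d(\ell\,\omega_M)=0,\qquad d(\ell\,\Im\Omega_M)=0,\qquad d\bigl(\Re\Omega_M+\alpha\wedge\omega_M\bigr)=0,\qquad d\bigl(\tfrac12\,\omega_M\wedge\omega_M+\alpha\wedge\Im\Omega_M\bigr)=0,
\end{equation*}
and this system has local solutions with $d\ell\neq0$: on any metric with full holonomy $G_2$ carrying a nontrivial Killing field $X$ (e.g.\ the Bryant--Salamon examples), $X$ automatically preserves $\varphi$, so a flow box around a point where $X\neq0$ reproduces exactly your invariant set-up; yet $X$ cannot be parallel (full holonomy excludes parallel vector fields), and a non-parallel Killing field on a Ricci-flat manifold cannot have constant length, so $\ell=|X|$ is genuinely non-constant there. (This is precisely the local situation studied by Apostolov and Salamon.) Hence no pointwise linear-algebraic manipulation can yield $d\ell=0$; the implication is global, not local, which is why the paper restricts to $M$ compact or asymptotically cylindrical before discussing torsion.

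The missing ingredient, and the heart of the paper's argument at this point, is a Bochner argument: since $\varphi$ is $S^1$-invariant and torsion-free, $\partial/\partial\theta$ is a Killing field of a Ricci-flat metric, and on the compact manifold $M\times S^1$ Killing fields are parallel; then $z$ is parallel, so $\ell$ is constant and $\alpha$ closed, and only afterwards does separation by $d\theta$-degree give $d\omega_M=d\Omega_M=0$. In the asymptotically cylindrical case the integration by parts itself needs justification: the paper first shows the translation-invariant limit of $\partial/\partial\theta$ is parallel (viewing the limit on $N\times S^1\times S^1$), obtains decay of $\nabla\,\partial/\partial\theta$, and then integrates by parts. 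Your proposal invokes compactness or asymptotic cylindricality nowhere in this step, so it cannot be repaired without adding this global input. Two smaller points: Fr\'echet continuity of $\varphi\mapsto(\Omega_M,\omega_M,\ell,\alpha)$ is not merely ``pointwise polynomial'' --- Hitchin's formula $g_\varphi=(\det K_\varphi)^{-1/9}B_\varphi$ requires $\det K_\varphi$ bounded away from zero, which again uses compactness or asymptotic cylindricality; and in the cylindrical setting one must verify the boundary condition that the limit of $z$ annihilates $\partial/\partial t$, which is exactly what makes the limit data an honest cylindrical $SU(3)$ structure on the cross-section.
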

The precise statement of this theorem is Theorem \ref{theoremc} below. We should say immediately that the compact case is essentially already known, following from work of Chan \cite[proof of Theorem 3.10]{chan} by allowing slightly more variation. The asymptotically cylindrical case contains some limited originality, but is not much harder. We mention this theorem at this point, despite the fact that it is less significant than Theorems A and B, as it illuminates the fundamental idea: that if we want to glue Calabi-Yau structures on threefolds, we can convert into $G_2$ structures and then convert back again. 

A slightly more subtle version of Theorem C would also show that given an $SU(3)$ structure with small torsion, then we can construct a $G_2$ structure with small torsion, perturb to remove torsion there, and then return to a Calabi-Yau structure. This idea was used by Chan\cite{chan} in the context of desingularising conical Calabi-Yau orbifolds. He showed that a ``nearly Calabi-Yau structure" can be deformed to a Calabi-Yau structure by passing through a $G_2$ structure on $M \times S^1$. We do not use Chan's concept of a nearly Calabi-Yau structure, but a similar analysis would apply to a nearly Calabi-Yau structure obtained by cutting off and gluing the $SU(3)$ structures $(\Omega_1, \omega_1)$, $(\Omega_2, \omega_2)$ directly, proving our Theorem \ref{su3structuregluing}.

The idea was also used by Doi and Yotsutani\cite{doiyotsutani} for gluing asymptotically cylindrical $SU(3)$ structures, in the simply connected case. In distinction to Chan's approach of gluing $SU(3)$ structures approximately first and then passing to $G_2$, they crossed with $S^1$ first and then glued the resulting $G_2$ structures using Kovalev's result, with no guarantee that the answer is well-adapted to this product. Then, using simple connectedness and the Cheeger-Gromoll splitting theorem, as well as classification theory for six-manifolds, they argued that the (simply connected) glued six-manifold is diffeomorphic to a six-dimensional factor of the universal cover of the glued seven-manifold; since the universal cover is a Riemannian product and also admits a $G_2$ structure, this factor must admit an $SU(3)$ structure. Our analysis simply removes the need for the classification theory and splitting (and consequently the necessary assumption of simple connectedness): in the simply connected case we can identify the universal cover concretely. 

The relation between $SU(3)$ and $G_2$ structures is also used in physics, for instance, by de la Ossa, Larfors, and Svanes \cite{delaossalarforssvanes}. By putting a $G_2$ structure on the product of their $SU(3)$ manifold and a half-line, they constrain the evolution of an $SU(3)$ structure, with torsion, (according to various physical hypotheses), and so study paths in the ``moduli space" of $SU(3)$ structures with torsion. They showed for instance that if the $G_2$ structure does not satisfy specific torsion conditions, then various components of the torsion of $SU(3)$ structures may become nonzero along these paths even if they were not nonzero to start with. 

None of these authors consider, however, whether similar arguments and Nordstr\"om’s theorem in the $G_2$ case \cite{nordstromgluing} enable us to say anything about the action of this gluing on the moduli space. The first step towards such a result is to set up the Calabi-Yau moduli space, and establish Theorem A. 

In the compact case, a moduli space of Calabi-Yau structures can be constructed using Hitchin's work\cite{hitchinthreeforms} on three-forms on six-dimensional compact manifolds. He showed that if $\Omega$ was a holomorphic volume form, then a small perturbation of $\Re \Omega$ was also the real part of a holomorphic volume form, and showed that locally each cohomology class of three-forms only contained one such real part. To get the full moduli space result, this has to be combined with some work on deformations of the K\"ahler form: this was carried out by Nordstr\"om in \cite{nordstromacyldeformations} to provide boundary values for deformations of asymptotically cylindrical $G_2$ manifolds. We do not use this approach to the Calabi-Yau moduli space directly, except when considering how this deformation theory of $G_2$ manifolds passes to the $S^1$-invariant setting. If we set up the Calabi-Yau moduli space in this way, it is relatively straightforward to show the remaining part of Theorem A. 

On the other hand, the asymptotically cylindrical Calabi-Yau moduli space has not been studied in the sense we will use. Various similar objects have been studied. 

Most of these studies have used the notion of a logarithmic deformation of a complex structure. By a result of Haskins--Hein--Nordstr\"om\cite[Theorem C]{haskinsheinnordstrom}, any asymptotically cylindrical Calabi-Yau manifold of dimension greater than 2 is given by removing a divisor from a suitable orbifold. Kawamata \cite{kawamatadeformations} studied deformations of complex manifolds compactifiable in a similar sense; he called those corresponding to a fixed compactification logarithmic deformations. 

Kovalev\cite{kovalevdeformations} studied deformations of Ricci-flat K\"ahler metrics. Given an asymptotically cylindrical Calabi-Yau manifold, he showed that we can define an orbifold of Ricci-flat metrics around its metric, and that locally all such metrics with the same limit are K\"ahler for some logarithmic deformation of the complex structure. However, \cite{kovalevdeformations} did not consider how the complex structure varied in general: in particular, it did not consider variations of complex structure not leading to a change in the metric.

More recently, Conlon, Mazzeo and Rochon\cite{cmraccy} combined \cite[Theorem C]{haskinsheinnordstrom} with \cite{kawamatadeformations} to show (their Theorem C) that the complex deformation theory of asymptotically cylindrical Calabi-Yau manifolds is unobstructed. Conversely to Kovalev, they did not explicitly consider deformations of the K\"ahler form, but observed (Lemma 9.1) that if the first Betti number of the compactifying orbifold is zero, then K\"ahler classes remain K\"ahler under such deformations of complex structure. Combining this with \cite[Theorem D]{haskinsheinnordstrom}, which says in particular that for any K\"ahler class we can find an asymptotically cylindrical Calabi-Yau metric, we essentially expect that every K\"ahler class on the orbifold gives a deformation of the metric corresponding to this complex deformation. 

To get a result comparable to the one we use, we would also have to combine this with \cite{kovalevdeformations} so that we can vary the K\"ahler class without the complex structure and both the K\"ahler class and the complex structure simultaneously. We would also need to extend \cite[Lemma 9.1]{cmraccy} to the case where the first Betti number of the compactifying orbifold is nonzero. 

Nevertheless, the $G_2$ moduli space has already been studied in the asymptotically cylindrical case. The result we use is Theorem \ref{g2slicetheorem}: in the compact case it is due to Bryant and Harvey, but the first published proof was provided by Joyce; in adapting it for the results we need, we follow the simplified proof of Hitchin \cite{hitchinthreeforms} and its elaboration for the asymptotically cylindrical case by Nordstr\"om \cite{nordstromacyldeformations}. Ebin \cite{ebin} also constructed a moduli space of general metrics, and some ideas from his paper have become standard. 

This paper is organised as follows.

In section \ref{sec:preliminaries}, we recall various preliminary definitions. In subsection \ref{ssec:acyl}, we define what it means for a manifold to be asymptotically cylindrical and make various suitably adapted definitions (of diffeomorphisms, isotopy, and so forth). This material is all review from various sources ranging from Lockhart--McOwen \cite{lockhartmcowen} and to some extent Maz$'$ja--Plamenevski\u\i \cite{mazyaplamenevskij} to  Nordstr\"om\cite{nordstromacyldeformations}, except the discussion of asymptotically cylindrical isotopy, which is perhaps implied in some of these sources. Subsection \ref{ssec:sunsetup} gives a definition of the notion of $SU(n)$ (for general $n$) structure, mostly following Hitchin \cite{hitchinthreeforms}. Finally in subsection \ref{ssec:g2setup}, we introduce the notion of a $G_2$ structure following Joyce \cite{joycebook}. 

In section \ref{sec:g2su3}, we then explain Chan's work\cite{chan} on the connection between $S^1$-invariant $G_2$ and $SU(3)$ structures, and how to extend this work to the asymptotically cylindrical case. This proves Theorem C. 

In section \ref{sec:modulispace}, we begin to consider moduli spaces. We define a moduli space of $S^1$-invariant $G_2$ structures in such a way that Chan's arguments extend to give a bijection between it and the product required by Theorem A. By analysing the arguments of Hitchin \cite{hitchinthreeforms} and Nordstr\"om \cite{nordstromacyldeformations} showing that the $G_2$ moduli space is smooth, we then show that this moduli space is smooth; in fact, it is locally diffeomorphic to an open subset of the full $G_2$ moduli space. Using these together, we then argue that the $SU(3)$ moduli space can be assumed to be smooth with a suitably natural smooth structure. 

Finally, section \ref{sec:gluing} falls into five subsections. Firstly, in subsection \ref{ssec:structuregluing}, we show that there is (indeed, are potentially multiple) sensible gluing maps defined on matching $SU(3)$ structures (essentially using the $G_2$ gluing map of \cite{kovalevtwistedconnected}). In subsection \ref{ssec:gluingtomod} we consider how they differ, or not, as maps into moduli space. In subsection \ref{ssec:gluingmodspacesetup} we define moduli spaces of gluing data, following \cite{nordstromgluing}, and show the analogous result to  Theorem A on the relation between these in the $G_2$ and Calabi-Yau setting; in subsection \ref{ssec:removingtildes} we define the gluing map and check it is well-defined. Finally in subsection \ref{ssec:localdiffeoofmodspace}, we combine improved versions of the statements from the first subsection on how our family of gluing maps behaves with the local diffeomorphism of $G_2$ moduli spaces proved in \cite{nordstromgluing} to prove that each gluing map defines a local diffeomorphism of $SU(3)$ moduli spaces also. 

\textbf{Notation.} All cohomology groups are to be understood with real coefficients as de Rham cohomology groups. 

\textbf{Acknowledgement.} I am indebted to Alexei Kovalev for much helpful advice and guidance. 
\section{Preliminaries} 
\label{sec:preliminaries}
In this section, we set up preliminary definitions. In subsection \ref{ssec:acyl}, we define various necessary asymptotically cylindrical objects, which will be used throughout. We then turn to $SU(n)$ and $G_2$ structures. We will follow the definitions and in large part the approach of Hitchin \cite[section 2]{hitchinmodulispace} when dealing with $SU(n)$ in subsection \ref{ssec:sunsetup}, and \cite[section 10.1]{joycebook} when dealing with $G_2$ in subsection \ref{ssec:g2setup}. 
\subsection{Asymptotically cylindrical manifolds}
\label{ssec:acyl}
We first define asymptotically cylindrical structures of various kinds. 
\begin{defin}[cf. \cite{lockhartmcowen, mazyaplamenevskij, giraffe}]
\label{basicacyldefs}
An oriented Riemannian manifold $(M, g)$ is said to have an end if it can be decomposed as a smooth manifold into a compact manifold $M^\cpt$, with compact (oriented) boundary $N$, and the product manifold $N \times [0, \infty)$, with the obvious identification of $N = \partial M^\cpt$ with $N \times\{0\}$. Given such a manifold we can find a global function $t$ that is the coordinate on $[0, \infty)$ on $(1, \infty)$ and zero on $M^\cpt$; throughout, we shall use $t$ for this function. 

$M$ is then said to be asymptotically cylindrical with rate $\delta>0$ if there is some metric $g_N$ on $N$ and constants $C_r$, such that 
\begin{equation}
\label{eq:expdecayestimate}
|D^r (g|_{N \times [0, \infty)} - g_N -dt^2)| < C_re^{-\delta t}
\end{equation}
for all $r=0,1,\ldots$,  $D$ is the Levi-Civita connection induced by $g$, and $|\cdot|$ is the metric induced by $g$ on the appropriate space of tensors. $(M, g)$ is said to be asymptotically cylindrical if there is some $\delta>0$ such that it is asymptotically cylindrical with rate $\delta$. 

Given a bundle $E$ associated to the tangent bundle over $M$, a section $\tilde \alpha$ of $E|_N$ extends to a section of $E|_{N \times [0, \infty)}$ by extending parallel in $t$. A section $\alpha$ of $E$ is said to be asymptotically translation-invariant with rate $0<\delta'<\delta$ if there is a section $\tilde \alpha$ of $E|_N$ and \eqref{eq:expdecayestimate} holds (with $\delta'$) for $|D^r (\alpha -\tilde\alpha)|$ for $t>T$. Throughout, given a section $\alpha$ of such a bundle, $\tilde \alpha$ will be its limit. Note that $dt^2 + g|_N$ is not asymptotically translation invariant for any rate greater than $\delta$, which is why we restrict to $\delta'<\delta$. 
\end{defin}
\begin{rmk}
If $(M, g_1)$ is asymptotically cylindrical with rate $\delta$ and $\alpha$ is asymptotically translation invariant with rate $\delta'<\delta$ and $g_2$ is another asymptotically cylindrical Riemannian metric on $M$ with rate $\epsilon>\delta'$, then $\alpha$ is also asymptotically translation invariant on $(M, g_2)$ with rate at least $\delta'$. Consequently, we may refer to ``asymptotically translation-invariant" tensors without specifying an asymptotically cylindrical metric. 
\end{rmk}
To simplify statements, all fields on asymptotically cylindrical manifolds will be taken to be asymptotically translation invariant. 

Given our smooth function $t$ on a manifold $M$ with an end, we can define cutoff functions. Let 
$\psi_T: \R \to \R$ be a smooth function with
\begin{equation}\label{eq:cutoff}
\psi_T(t) = \begin{cases} 1 &\text{ for } t \geq T-1 \\ 0 &\text{ for } t \leq T-2 \end{cases}
\end{equation}
Then $\psi_T$ evidently extends to $M$. We let $\psi = \psi_2$. 

We have a $C^k$ topology on asymptotically translation invariant forms with a given rate $\delta'$. This is called the extended weighted topology in \cite[section 3]{kovalevdeformations}.  See also the extended $L^2$ spaces of \cite[p.58ff.]{aps1}, although both of these work with extensions of Sobolev spaces whereas we work with H\"older spaces. Specifically, we define a $C^k_{\delta}$-norm on a subset of asymptotically translation invariant fields $\alpha$ by
\begin{defin}
\label{defcktopology}
\begin{equation}
\label{eq:ckdnorm}
\|\alpha\|_{C^k_\delta} = \|(1-\psi)\alpha + \psi e^{\delta t}(\alpha -\tilde \alpha)\|_{C^k(E, g)} + \|\tilde \alpha\|_{C^k(E|_N, \tilde g)}
\end{equation}
\end{defin}
The topology induced by \eqref{eq:ckdnorm} is called the topology with weight $\delta$. Note that if, for some given $\alpha$, there is some fixed $\delta$ so that these norms are finite for all $k=0, 1, \ldots$, $\alpha$ must be asymptotically translation invariant with any rate greater than $\delta$. On the other hand, if $\alpha$ is asymptotically translation invariant, with rate greater than $\delta$, then all these norms are finite. In practice, the major results are proved for each individual $\delta$, and if necessary we then combine the different $\delta$s. 

In the same way we have a H\"older $C^{k, \alpha}_\delta$ topology, and by taking the inverse limit we also have a $C^\infty_\delta$ topology. These topologies (as opposed to the norms) depend only on the decay rate of the metric, since by compactness of $N$ and $M^\cpt$ all metrics with the same decay rate are Lipschitz equivalent. 

In section \ref{sec:modulispace} (e.g. Definition \ref{defin:mg2s1}), we will also need the notion of an asymptotically cylindrical diffeomorphism: general diffeomorphisms obviously do not have to preserve the cylindrical asymptotic, and so do not act by pullback on asymptotically cylindrical metrics. Following Nordstr\"om \cite{nordstromacyldeformations}, we make
\begin{defin}[{\hspace{1sp}\cite[Definition 2.19]{nordstromacyldeformations}}]
\label{acyldiffeo}
A diffeomorphism $\Phi$ of the asymptotically cylindrical manifold $(M, g)$ is asymptotically cylindrical if there is a diffeomorphism $\tilde \Phi$ of $N$ and a parameter $L \in \R$ such that
\begin{equation}
\Phi(n, t) \to (\tilde \Phi(n), t+L)
\end{equation}
exponentially, meaning that on restriction to $N \times (T, \infty)$ for some large $T$, we have $\Phi = \exp V \circ (\tilde \Phi(n), t+L)$ for some vector field $V$ on $M$ decaying exponentially with all derivatives.
\end{defin}
The pullback by an asymptotically cylindrical diffeomorphism of an asymptotically cylindrical metric is asymptotically cylindrical.  Also, whether a diffeomorphism $\Phi$ of $M$ is asymptotically cylindrical does not depend essentially on the asymptotically cylindrical metric $g$ (compare Proposition 6.22 of \cite{nordstromacyldeformations}). Finally we note that the asymptotically cylindrical diffeomorphisms form a group. 

For section \ref{sec:modulispace}, we would like to restrict to the identity component of asymptotically cylindrical diffeomorphisms. We thus need to define a topology. The topology we shall define is essentially used, although not in so many words, by Kovalev \cite[p.148]{kovalevdeformations} in his choice of norm on the generating vector fields, and Nordstr\"om implicitly (some such assumption is required to get the bottom of \cite[p.336]{nordstromacyldeformations}); we have stated it explicitly to avoid confusion about what ``isotopic to the identity" means. 

\begin{defin}
\label{defin:Ddeltatopology}
Let $(M, g)$ be an asymptotically cylindrical manifold. Fix $\delta > 0$, smaller than the decay rate of $g$, and consider the subset of asymptotically cylindrical diffeomorphisms that decay at rate at least $\delta$ with respect to this metric, $\D^g_\delta$. We define a topology on $\D^g_\delta$ by giving neighbourhoods of the identity.  

Let $\Phi$ be an asymptotically cylindrical diffeomorphism. By definition, $\Phi = \exp V \circ (\tilde \Phi(n), t+L)$ far enough along the end. If $(\tilde \Phi, L)$ is not close to the identity, then we do not include $\Phi$ in the neighbourhood. Consequently, we may suppose that $\tilde \Phi = \exp W$, where this exponential map is taken with respect to $\tilde g$, so that $\Phi = \exp_g V \circ \exp_{\tilde g} (W + L\ddt)$. Then $V+W+L\ddt$ is an asymptotically translation invariant vector field. That is, we have identified a subset of $\D^g_\delta$ containing the identity all of whose elements define an asymptotically translation invariant vector field, and where the identity defines the zero vector field. 

To define our neighbourhoods of the identity, we then take the neighbourhoods of zero with respect to the extended weighted topologies described in Definition \ref{defcktopology} on the corresponding asymptotically translation invariant vector fields.
\end{defin}

We may now define a topology on the set $\D$ of all asymptotically cylindrical deformations.
\begin{defin}
\label{defin:ascyldifftopol}
$U \subset \D$ is open if and only if $U \cap \D^g_\delta$ is open in the topology of Definition \ref{defin:Ddeltatopology} for every $\delta>0$. 
\end{defin}

This topology is also independent of our choice of metric $g$. As the map $\Phi \mapsto \tilde\Phi$ is continuous, this definition also automatically gives us a well-defined map from a quotient by $\Diff_0$ on $M$ to a quotient by $\Diff_0$ on $N$. 

We now make
\begin{defin}
\label{defin:acylisotopy}
The asymptotically cylindrical diffeomorphism $\Phi$ is asymptotically cylindrically isotopic to the identity if it lies in the identity component $\Diff_0$ of $\mathcal D$. For simplicity, if $M$ is an asymptotically cylindrical manifold, then if we say ``$\Phi$ is a diffeomorphism of $M$ isotopic to the identity", we shall mean that $\Phi$ is an asymptotically cylindrical diffeomorphism asymptotically cylindrically isotopic to the identity. 

Furthermore, each potential limit $(\tilde \Phi, L)$ of an asymptotically cylindrical diffeomorphism defines a closed subspace of $\D$, as the map to the limit is continuous; we shall say that diffeomorphisms are isotopic with fixed limit if they can be joined by a continuous path in such a subspace (in particular, of course, this implies that they have the same limits). 
\end{defin}
We use isotopy with fixed limit in Definition \ref{defin:spacea}. 

\subsection{\texorpdfstring{$SU(n)$}{SU(n)} structures}
\label{ssec:sunsetup}
We now define $SU(n)$ structures, following Hitchin \cite{hitchinmodulispace}. 
\begin{defin}
\label{defin:sunstructure}
Let $M$ be a $2n$-dimensional manifold. An $SU(n)$ structure on $M$ is induced by a pair $(\Omega, \omega)$ where $\Omega$ is a smooth complex $n$-form on $M$ and $\omega$ is a smooth real $2$-form on $M$ such that at every point $p$ of $M$:
\begin{enumerate}[i)]
\item$\Omega_p = \beta_1 \wedge \cdots \wedge\beta_n$ for some $\beta_i \in T^*_p M \otimes \C$
\item$\Omega_p \wedge \bar \Omega_p \neq 0$
\item$\Omega\wedge\bar\Omega = \frac{(-2)^ni^{n^2}}{n!} \omega^n$
\item$\omega \wedge \Omega = 0$
\item$\omega_p(v, Iv) > 0$ for every $v \in T_pM$ where $I$ is as in the following proposition. 
\end{enumerate}
\end{defin}
The standard definition is actually that an $SU(n)$ structure is a principal $SU(n)$-subbundle of the frame bundle of $M$ (for instance, see \cite[section 2.6]{joycebook}). A pair of forms as in Definition \ref{defin:sunstructure} determines such a subbundle by taking the stabiliser at each point, but evidently the same subbundle could be obtained from different pairs of forms. Nevertheless, we shall abuse notation and say that $(\Omega, \omega)$ is an $SU(n)$ structure. If the two forms have stabiliser $SU(n)$, they determine a complex structure and a hermitian metric. 
\begin{prop}[{\hspace{1sp}\cite[section 2]{hitchinmodulispace}}]
\label{suninduced}
Suppose that $M$ is a $2n$-dimensional manifold and $(\Omega, \omega)$ is an $SU(n)$ structure on it. Then there is a unique almost complex structure $I$ on $M$ with respect to which $\Omega$ is an $(n, 0)$-form and, with respect to $I$, $\omega$ is the fundamental form of an hermitian metric $g$.
\end{prop}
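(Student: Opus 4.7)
The plan is to proceed pointwise: the almost complex structure $I$ and the metric $g$ will be constructed by an algebraic recipe on each $T_pM$, and since this recipe depends smoothly on $(\Omega_p, \omega_p)$, smoothness of $I$ and $g$ follows from that of $\Omega$ and $\omega$. First I would recover the candidate space of $(1,0)$-forms from $\Omega$ alone: set $W_p := \ker\bigl(\alpha \mapsto \alpha \wedge \Omega_p\bigr)$ inside $T^*_pM \otimes \C$. Writing $\Omega_p = \beta_1 \wedge \cdots \wedge \beta_n$ as in condition (i), each $\beta_i$ lies in $W_p$. Condition (ii) forces $\beta_1, \ldots, \beta_n, \bar\beta_1, \ldots, \bar\beta_n$ to be linearly independent over $\C$; expanding an element of $W_p$ in this basis and wedging with $\bar\beta_1 \wedge \cdots \wedge \widehat{\bar\beta_k} \wedge \cdots \wedge \bar\beta_n$ to isolate coefficients gives the reverse inclusion. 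Hence $W_p = \spn_\C\{\beta_1, \ldots, \beta_n\}$, which depends only on $\Omega_p$, and moreover $W_p \cap \bar W_p = 0$, so that $T^*_pM \otimes \C = W_p \oplus \bar W_p$.

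I would then define $I^*$ to act as multiplication by $i$ on $W_p$ and by $-i$ on $\bar W_p$. Since these subspaces are complex conjugates, $I^*$ commutes with complex conjugation, descends to a real endomorphism of $T^*_pM$ with $(I^*)^2 = -\id$, and dualises to the desired $I$. By construction $\Omega$ is of type $(n,0)$ with respect to $I$, and uniqueness is immediate because any almost complex structure making $\Omega$ an $(n,0)$-form must have $W_p$ contained in, and hence equal to, its $(1,0)$-space.

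To produce $g$, I would show that $\omega$ is of type $(1,1)$ with respect to $I$. Decomposing $\omega = \omega^{2,0} + \omega^{1,1} + \omega^{0,2}$, one has $\omega \wedge \Omega = \omega^{0,2} \wedge \Omega$ for bidegree reasons, and since wedging with the nonzero decomposable $(n,0)$-form $\Omega$ is injective on $(0,2)$-forms, condition (iv) gives $\omega^{0,2} = 0$; reality of $\omega$ then yields $\omega^{2,0} = \overline{\omega^{0,2}} = 0$. The $(1,1)$-property is equivalent to $\omega(Iv, Iw) = \omega(v,w)$, which makes $g(v,w) := \omega(v, Iw)$ symmetric, and condition (v) makes it positive definite and hence Hermitian with fundamental form $\omega$. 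The main obstacle is the linear-algebraic step identifying $W_p$ as a well-defined $n$-dimensional subspace transverse to its conjugate; both parts rest crucially on the non-degeneracy condition $\Omega_p \wedge \bar\Omega_p \neq 0$, without which $I$ would fail to be almost complex. Condition (iii) plays no role in producing $(I,g)$ and functions purely as a normalisation tying together the two natural volume forms.
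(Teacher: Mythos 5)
Your proposal is correct, and it is essentially the standard argument underlying the result: the paper itself gives no proof (it simply cites Hitchin's section 2), and your recipe — recovering the $(1,0)$-forms as the wedge-annihilator of $\Omega$, using $\Omega\wedge\bar\Omega\neq 0$ for transversality to the conjugate, then showing $\omega$ is of type $(1,1)$ via condition iv) and positive via condition v) — is exactly that argument, and it is consistent with the paper's remark that condition iii) is not used here. One tiny slip: in the uniqueness step the containment obtained directly is $\Lambda^{1,0}_J \subseteq W_p$ (a $(1,0)$-form wedged with an $(n,0)$-form vanishes), not the direction you state, but since $\dim W_p = n$ either containment gives equality, so nothing is lost.
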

\begin{rmk}
The scaling condition iii) is not used in the proof of this result: it is used for torsion considerations below. In fact, in \cite{joycecones1, joycecones2, joycecones3, joycecones4, joycecones5},  Joyce calls a structure in which iii) may fail an almost Calabi-Yau structure. Our analysis of the relation between $SU(3)$ structures and $G_2$ structures extends to a relation between almost Calabi-Yau structures and $G_2$ structures without much additional work -- we give more details in Remarks \ref{rmk:almostcypointwise} and \ref{rmk:almostcytorsion}.
\end{rmk}
However, we would like $I$ to be a complex structure and $\omega$ a \Kahler\ form. To achieve this, we add further conditions, making
\begin{defin}
\label{defin:suntorsionfreeness}
Suppose that $(\Omega, \omega)$ is an $SU(n)$ structure. It is said to be torsion-free, or a Calabi-Yau structure, if $d\Omega$ and $d\omega$ are both zero.
\end{defin}
Note that Definition \ref{defin:suntorsionfreeness} is equivalent to the vanishing of the intrinsic torsion of the principal $SU(n)$-subbundle.

We indeed have
\begin{prop}
\label{prop:suntorsionfreeness}
Suppose that $(\Omega, \omega)$ is a torsion-free $SU(n)$ structure. The induced almost complex structure $I$ is a complex structure, and the hermitian metric induced from $\omega$ is K\"ahler and Ricci-flat.
\end{prop}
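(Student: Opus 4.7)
The plan is to prove the three conclusions in sequence: integrability of $I$, the K\"ahler property for $g$, and finally Ricci-flatness. I expect only the first step to involve nontrivial linear algebra; the remaining two are quick consequences of standard K\"ahler geometry and the normalisation (iii).

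For integrability I would verify the Newlander--Nirenberg condition locally. Choose a frame $\theta_1, \ldots, \theta_n$ of $(1,0)$-forms with respect to $I$ and split $d\theta_i = \alpha_i + \beta_i$ with $\alpha_i$ of type $(2,0)+(1,1)$ and $\beta_i$ of type $(0,2)$; integrability is equivalent to each $\beta_i$ vanishing. Conditions (i)--(ii) of Definition \ref{defin:sunstructure} let me write $\Omega = f\,\theta_1 \wedge \cdots \wedge \theta_n$ locally for some nowhere-zero smooth $f$. Expanding $d\Omega$ and extracting the $(n-1,2)$-component, the $df$ and $\alpha_i$ contributions drop out for type reasons (they can only contribute to the $(n,1)$-part, and $(n+1,0)$-terms vanish since the holomorphic degree is bounded by $n$), leaving
\begin{equation*}
(d\Omega)^{n-1,2} \;=\; f\sum_{i=1}^n (-1)^{i-1}\, \theta_1 \wedge \cdots \wedge \beta_i \wedge \cdots \wedge \theta_n.
\end{equation*}
Since the wedges $\theta_1 \wedge \cdots \widehat{\theta_i} \cdots \wedge \theta_n$ form a basis of $\Lambda^{n-1,0}$, the hypothesis $d\Omega = 0$ forces each $\beta_i = 0$, and Newlander--Nirenberg gives that $I$ is a complex structure.

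Given integrability, Proposition \ref{suninduced} already supplies a compatible hermitian metric $g$ with fundamental form $\omega$, and the hypothesis $d\omega = 0$ is precisely the K\"ahler condition. For Ricci-flatness I would use condition (iii): since $\omega^n/n!$ is the Riemannian volume form of $g$, (iii) says exactly that $|\Omega|_g^2$ equals a universal nonzero constant pointwise. The standard K\"ahler identity $\rho = -i\partial\bar\partial \log |\Omega|_g^2$ for the Ricci form in terms of a local holomorphic trivialisation of $K_M$ then gives $\rho \equiv 0$. Equivalently, a holomorphic section of $K_M$ of constant pointwise norm is parallel for the Chern connection, which on a K\"ahler manifold coincides with the restriction of the Levi-Civita connection, so the holonomy reduces from $U(n)$ to $SU(n)$ and Ricci-flatness follows immediately.

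The main obstacle is the bookkeeping in the integrability step: verifying the displayed formula for $(d\Omega)^{n-1,2}$ with correct signs, and deducing $\beta_i = 0$ via linear independence of the $(n-1,0)$-factors $\theta_1 \wedge \cdots \widehat{\theta_i} \cdots \wedge \theta_n$. Once that is in place, the K\"ahler and Ricci-flat conclusions are immediate, and the role of the normalisation (iii) is exactly to pin down the constancy of $|\Omega|_g^2$ that these conclusions need.
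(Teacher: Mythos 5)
Your proof is correct, and since the paper states this proposition without proof (treating it as a standard fact within Hitchin's framework), your three steps -- killing the Nijenhuis tensor by extracting the $(n-1,2)$-component of $d\Omega = d(f\,\theta_1\wedge\cdots\wedge\theta_n)$ and invoking Newlander--Nirenberg, reading off the K\"ahler condition from $d\omega = 0$, and deducing Ricci-flatness from the constancy of $|\Omega|_g^2$ forced by the normalisation (iii) -- are exactly the standard route one would supply. The only quibble is the sign in your Ricci-form identity (with the usual convention $\rho = -i\partial\bar\partial\log\det(g_{j\bar k})$ one gets $\rho = +i\partial\bar\partial\log|\Omega|_g^2$ for a holomorphic trivialisation of $K_M$), but this is immaterial since $|\Omega|_g^2$ is constant, so $\rho = 0$ either way.
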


We now have to combine Definition \ref{defin:sunstructure} with Definition \ref{basicacyldefs} to define an asymptotically cylindrical $SU(n)$ structures.
\begin{defin}
\label{defin:acylsunstructure}
An $SU(n)$ structure on a manifold $M$ with an end is said to be asymptotically cylindrical if the induced metric $g$ is asymptotically cylindrical and, with respect to $g$, $\Omega$ and $\omega$ are asymptotically translation invariant.
\end{defin}
Note that if $(\Omega, \omega)$ are asymptotically translation invariant forms for some cylindrical metric, it need not be the case that $\frac\partial{\partial t}$ is orthogonal to $N$. Conversely, any almost complex structure admitting a non vanishing $(n, 0)$ form and such that a cylindrical metric is hermitian can be combined with that cylindrical metric to yield an $SU(n)$ structure inducing a cylindrical metric. This almost complex structure need not be asymptotically translation invariant, so that the fact that the induced metric $g$ is asymptotically cylindrical does not imply that $\Omega$ and $\omega$ are asymptotically translation invariant. That is, the two conditions of Definition \ref{defin:acylsunstructure} are independent of each other. 
\begin{numrmk}
\label{sunacylconndxs}
If we have a torsion-free asymptotically cylindrical $SU(n)$ structure on $M$, then it induces a Ricci-flat metric. If $M$ has disconnected cross-section, it admits a line between different components of the cross-section; by the Cheeger--Gromoll splitting theorem \cite{cheegergromoll1} it is then a product cylinder $N \times \R$, and so not especially interesting. We may thus assume $N$ is connected wherever required. It is only explicitly required in Lemma \ref{ccfcohom}, but that lemma is often used after its proof. 
\end{numrmk}

We will also find it useful for Proposition \ref{spaceanice} to know that diffeomorphisms of a Calabi-Yau manifold isotopic to the identity are isometries if and only if they are automorphisms of the underlying Calabi-Yau structures. The infinitesimal version of this (that Killing fields are holomorphic vector fields and vice versa) is a special case of \cite[Theorem III.5.2]{kobayashitransformations}. 
\begin{lem}
\label{sunauto}
Suppose $M$ is compact or has an end, that $(\Omega, \omega)$ is an (asymptotically cylindrical) torsion-free $SU(n)$ structure, and that $\Phi \in \Diff_0(M)$. Then $\Phi^* g_{(\Omega, \omega)} = g_{\Omega, \omega}$, \ie $\Phi$ is an isometry, if and only if $\Phi^* \Omega = \Omega$ and $\Phi^* \omega = \omega$, \ie $\Phi$ is an automorphism of the $SU(n)$ structure.
\end{lem}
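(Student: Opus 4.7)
The forward direction is immediate from Proposition \ref{suninduced}: the metric $g$ is determined algebraically by the pair $(\Omega,\omega)$, so any automorphism of the pair is an isometry.

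For the converse, the key point is that torsion-freeness (Proposition \ref{prop:suntorsionfreeness}) forces both $\omega$ and $\Omega$ to be parallel with respect to the Levi-Civita connection of $g$. Any isometry preserves this connection, so $\Phi^*\omega$ and $\Phi^*\Omega$ are also parallel; in particular, the differences $\omega-\Phi^*\omega$ and $\Omega-\Phi^*\Omega$ are parallel, and hence have constant pointwise norm. In the compact case, a parallel form is harmonic, and since $\Phi\in\Diff_0(M)$ acts trivially on de Rham cohomology, $\omega$ and $\Phi^*\omega$ are two harmonic representatives of a common class in $H^2(M,\R)$; Hodge uniqueness forces $\omega=\Phi^*\omega$, and the same argument over $\C$ handles $\Omega$.

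In the asymptotically cylindrical case, rather than invoke weighted Hodge theory directly, I would use constancy of the pointwise norm of a parallel form: it suffices to show that $\omega-\Phi^*\omega$ decays to zero at infinity. Let $(\tilde\Phi,L)$ be the limit of $\Phi$ (Definition \ref{acyldiffeo}). Since $\Phi$ is acyl isotopic to the identity and the limit map is continuous (as in the discussion following Definition \ref{defin:ascyldifftopol}), $\tilde\Phi\in\Diff_0(N)$; and taking the limit of $\Phi^*g=g$ shows $\tilde\Phi$ is an isometry of $(N,\tilde g)$. Writing $\tilde\omega=\alpha+dt\wedge\beta$ with $\alpha,\beta$ parallel on the compact (and connected, by Remark \ref{sunacylconndxs}) cross-section, and applying the compact case on $N$, one obtains $\tilde\Phi^*\alpha=\alpha$ and $\tilde\Phi^*\beta=\beta$, hence $\tilde\Phi^*\tilde\omega=\tilde\omega$. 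Consequently $\omega-\Phi^*\omega$ decays to zero at infinity, and, being parallel with constant pointwise norm, vanishes identically. The argument for $\Omega$ is identical.

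The main obstacle I anticipate is bookkeeping rather than analytical: the careful verification that $\tilde\Phi$ genuinely lies in $\Diff_0(N)$ and genuinely preserves the cross-sectional Riemannian structure, which is what allows the compact case to be bootstrapped onto the cross-section. Both follow from the setup of subsection \ref{ssec:acyl}, but must be invoked cleanly.
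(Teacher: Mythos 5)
Your proposal is correct and follows essentially the same route as the paper: the forward direction from naturality of the metric, and the converse from the fact that an isometry in $\Diff_0$ pulls $\Omega$ and $\omega$ back to parallel forms in the same (limit) cohomology classes, which must therefore agree. Your cross-section argument in the asymptotically cylindrical case is just a spelled-out version of the paper's terse remark that ``the limit cohomology classes are also preserved'', after which the exponentially decaying parallel differences vanish exactly as you say.
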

\begin{proof}
If $\Phi$ is an automorphism, it is clearly an isometry, because the metric is obtained from $(\Omega, \omega)$ in a natural fashion, so commuting with the pullback. Conversely, if $\Phi$ is an isometry, it pulls back $\Omega$ and $\omega$ to parallel forms with respect to the induced metric. If $M$ is compact, it also preserves the cohomology classes $[\Omega]$ and $[\omega]$; since an exact parallel form is zero, it preserves $\Omega$ and $\omega$ and so is an automorphism. If $M$ has an end, then the limit cohomology classes are also preserved, and the differences $\Phi^* \Omega - \Omega$ and $\Phi^* \omega -\omega$ are exponentially decaying parallel forms trivial in cohomology, and hence zero. 
\end{proof}

\subsection{\texorpdfstring{$G_2$}{G₂} structures}
\label{ssec:g2setup}
As a technical tool, we will also use $G_2$ structures on $7$-dimensional manifolds. We define
\begin{defin}
\label{defin:g2str}
A $G_2$ structure on a seven-dimensional manifold $M$ is a smooth three-form such that at every point $p$  there is a basis $e_1, \ldots, e_7$ of $T_p^*M$ such that
\begin{equation}
\label{eq:g2structure}
\begin{split}
\phi_p =& e_1 \wedge e_2 \wedge e_3 + e_1 \wedge e_4 \wedge e_5 + e_1 \wedge e_6 \wedge e_7 + e_2 \wedge e_4 \wedge e_6 \\&- e_2 \wedge e_5 \wedge e_7 - e_3 \wedge e_4 \wedge e_7 - e_3 \wedge e_5 \wedge e_6
\end{split}
\end{equation}
\end{defin}
Every $G_2$ structure induces a metric, by taking the corresponding basis to be orthonormal. Thus, $\phi$ induces a $4$-form $*_\phi \phi$. We use this $4$-form to define torsion-freeness of a $G_2$ structure. 
\begin{defin}
\label{defin:g2tfree}
A $G_2$ structure $\phi$ on $M$ is torsion-free if the forms $\phi$ and $*_\phi\phi$ are both closed. 
\end{defin}
As in the $SU(n)$ case, this is equivalent to torsion-freeness of the principal $G_2$-subbundle given by the stabiliser. Torsion-freeness would more naturally be described by $\phi$ being parallel with respect to the metric it induces; that Definition \ref{defin:g2tfree} implies that is a result of Fern\'andez and Gray\cite{fernandezgray}. Further as in the $SU(n)$ case, Definition \ref{defin:g2tfree} implies that the induced metric is Ricci-flat.

Also as in the $SU(n)$ case, we require a notion of asymptotically cylindrical $G_2$ structure. As in Definition \ref{defin:acylsunstructure}, we make
\begin{defin}
\label{defin:precedesg2auto}
\label{acylg2structure}
A $G_2$ structure on a manifold $M$ with an end  is said to be asymptotically cylindrical if the induced metric is asymptotically cylindrical and, with respect to this metric, $\phi$ is asymptotically translation invariant.
\end{defin}
The fact that torsion-free $G_2$ structures induce Ricci-flat metrics gives the analogue of Remark \ref{sunacylconndxs} in the asymptotically cylindrical $G_2$ setting; the fact they are parallel yields the analogue of Lemma \ref{sunauto} in the compact and asymptotically cylindrical $G_2$ settings, with the same proof. 
\section{\texorpdfstring{$SU(3)$}{SU(3)} structures as \texorpdfstring{$S^1$}{S¹}-invariant \texorpdfstring{$G_2$}{G₂} structures}
\label{sec:g2su3}
We now proceed to the relationship between $SU(3)$ and $G_2$ structures induced by the inclusion $SU(3) \subset G_2$, and prove Theorem C. We closely follow ideas in Chan's analysis of the three-dimensional conical gluing problem, \cite{chan}. Only the details of the asymptotically cylindrical case are original. The beginnings of these ideas can be found in \cite[Proposition 11.1.2]{joycebook}, which corresponds to the easier directions of Propositions \ref{g2su3prop1} and \ref{g2su3prop2}: that if we have a Calabi-Yau structure $(\Omega, \omega)$ on a six-manifold $M$ (without any global conditions), we can induce a torsion-free $G_2$ structure on $M \times \R$ by $\phi = \Re \Omega + d\theta \wedge \omega$, with corresponding metric $g_M + d\theta^2$. Our setup, following Chan, is more general. Because in gluing we introduce a perturbation to the $G_2$ structure which is uncontrolled except for being small, it is not at all clear that the $G_2$ structure will remain in the proper subspace $\{\Re \Omega + d\theta \wedge \omega: (\Omega, \omega) \text{ an $SU(3)$ structure}\}$. For instance, and more geometrically, it is not clear that the perturbed $G_2$ structure will either have $\ddth$ orthogonal to $M$ or have $\ddth$ of length one. We introduce $z$, therefore, as a generalisation of $d\theta$: a $1$-form with nonzero coefficient of $d\theta$. In practice, we shall soon assume it has positive coefficient of $d\theta$, and by the end of this section we will assume that $z = Ld\theta + v$ for a constant $L$ and a closed $1$-form $v$ on $M$. In particular, if $b^1(M) = 0$, we could reduce further when discussing moduli spaces, and we may as well assume in section \ref{sec:modulispace} and the relevant parts of section \ref{sec:gluing} that $z = Ld\theta$.

However, we work in full generality to start with. We begin with the vector space case, where $z$ is just a covector.
\begin{prop}
\label{g2su3ptwise}
Suppose given a six-dimensional vector space $V$, and suppose that $z \in (V \oplus \R)^* \cong V^* \oplus \R$ is complementary to $V^*$. Suppose further that $(\Omega, \omega)$ is an $SU(3)$ structure on $V$ (that is, $(\Omega, \omega) \in \bigwedge^3 V^* \otimes \C \oplus \bigwedge^2 V^*$ satisfying the conditions of Definition \ref{defin:sunstructure}) with associated metric $g_{\Omega, \omega}$. Then the three-form
\begin{equation}
\label{eq:g2fromsu3}
\phi = \Re \Omega + z \wedge \omega
\end{equation}
is a $G_2$ structure on $V \oplus \R$ (that is, $\phi \in \bigwedge^3 (V \oplus \R)^*$ satisfying the condition in Definition \ref{defin:g2str}) with associated metric $z\otimes z + g_{\Omega, \omega}$. Moreover, given a $G_2$ structure $\phi$ on $V \oplus \R$, there exist exactly two possible triples $(z, \Omega, \omega)$ and $(z', \Omega', \omega')$, with $(\Omega, \omega)$ and $(\Omega', \omega')$ $SU(3)$ structures on $V$, such that $\phi$ was obtained from this triple as in \eqref{eq:g2fromsu3}. They satisfy $z' = -z$, $\Omega' = \bar \Omega$, and $\omega' = -\omega$. 
\end{prop}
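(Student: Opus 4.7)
The plan is to reduce both directions to computations in an adapted basis.

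\emph{Forward direction.} Pick a $g_{\Omega, \omega}$-orthonormal basis $e^1, \ldots, e^6$ of $V^*$ in which the $SU(3)$ structure takes the standard form $\omega = e^{12} + e^{34} + e^{56}$ and $\Omega = (e^1 + ie^2) \wedge (e^3 + ie^4) \wedge (e^5 + ie^6)$. Since $z$ is complementary to $V^*$ (equivalently, $z(\ddth) \neq 0$), the collection $e^1, \ldots, e^6, z$ is a basis of $(V \oplus \R)^*$. Extend $g_{\Omega, \omega}$ to $V \oplus \R$ as a degenerate symmetric bilinear form by placing $\ddth$ in its kernel; then a Gram-matrix computation shows that $z \otimes z + g_{\Omega, \omega}$ is a positive-definite metric on $V \oplus \R$ for which the basis $e^1, \ldots, e^6, z$ is orthonormal. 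Writing $e^7 := z$ and expanding $\Re \Omega + z \wedge \omega$ in this basis yields a seven-term 3-form which, after a cyclic relabelling of the indices, matches term by term the standard $G_2$ form of Definition \ref{defin:g2str}. Hence $\phi$ is a $G_2$ structure, and since both $z \otimes z + g_{\Omega, \omega}$ and the $G_2$-induced metric declare the same basis orthonormal, they coincide.

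\emph{Converse.} Given $\phi$ on $V \oplus \R$ arising from some triple $(z, \Omega, \omega)$ via \eqref{eq:g2fromsu3}, the metric identity $g_\phi = z \otimes z + g_{\Omega, \omega}$, combined with $g_{\Omega, \omega}(\ddth, \cdot) = 0$, forces $g_\phi(\ddth, \cdot) = z(\ddth) \, z$ and hence $z(\ddth)^2 = g_\phi(\ddth, \ddth) > 0$. This determines $z$ up to a single sign using only $\phi$ and the given splitting. Once $z$ is fixed, uniquely decompose $\phi = \alpha + d\theta \wedge \beta$ with $\alpha \in \bigwedge^3 V^*$ and $\beta \in \bigwedge^2 V^*$; writing $z = L \, d\theta + v$ with $L = z(\ddth)$ and $v \in V^*$, equation \eqref{eq:g2fromsu3} gives $\beta = L \omega$ and $\alpha = \Re \Omega + v \wedge \omega$, so $\omega = \beta / L$ and $\Re \Omega = \alpha - v \wedge \omega$. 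The form $\Omega$ is then determined by $\Re \Omega$ via the induced almost complex structure of Proposition \ref{suninduced}.

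Finally, the sign ambiguity: under $z \mapsto -z$, both $L$ and $v$ change sign, so $\omega \mapsto -\omega$, while $v \wedge \omega$ and therefore $\Re \Omega$ are unchanged; the induced complex structure flips, giving $\Omega \mapsto \bar \Omega$. A direct substitution using $\Re \bar \Omega = \Re \Omega$ verifies that $(-z, \bar \Omega, -\omega)$ reproduces the same $\phi$. The main obstacle is the forward-direction bookkeeping: the explicit expansion of $\Re \Omega + z \wedge \omega$ has to be checked against the seven-term formula of Definition \ref{defin:g2str}, and the permutation of basis labels realising the identification has to be exhibited. Beyond that the converse is essentially linear algebra, with the validity of the reconstructed $(\Omega, \omega)$ as an $SU(3)$ structure on $V$ following because the forward direction realises the inclusion $SU(3) \subset G_2$ at the level of the decomposition, and uniqueness up to sign forces any valid triple to be one produced in this way.
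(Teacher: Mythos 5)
Your forward direction is essentially the paper's own argument (choose a basis of $V^*$ in which $(\Omega,\omega)$ is standard, adjoin $e_7=z$, and match against \eqref{eq:g2structure}), and your description of the sign flip $(z,\Omega,\omega)\mapsto(-z,\bar\Omega,-\omega)$ agrees with the paper. The gap is in the converse: you begin ``Given $\phi$ on $V\oplus\R$ \emph{arising from some triple}'', so everything that follows only shows that a $\phi$ already known to be in the image of \eqref{eq:g2fromsu3} comes from at most two triples. The proposition asserts that an \emph{arbitrary} $G_2$ structure $\phi$ on $V\oplus\R$ arises from exactly two triples, and the existence half is the substantive content --- it is precisely what is needed later, when a perturbed glued $G_2$ structure is not known in advance to lie in the subspace $\{\Re\Omega + z\wedge\omega\}$. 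Your closing sentence (``the forward direction realises the inclusion $SU(3)\subset G_2$ \dots\ and uniqueness up to sign forces any valid triple to be one produced in this way'') does not supply this: uniqueness among triples says nothing about whether any triple exists for a given $\phi$. The paper's missing ingredient is the transitivity of $G_2$ on $S^6$: pick any basis in which $\phi$ takes the form \eqref{eq:g2structure}, then act by an element of the stabiliser of $\phi$ carrying $e_1$ to the unit normal of $V^*$ (possible exactly because $G_2$ acts transitively on the unit sphere), so that $e_2,\dots,e_7$ span $V^*$ and the triple can be read off. Some argument of this kind (equivalently, that the $G_2$-stabiliser of a unit covector is $SU(3)$) has to appear; without it the ``exactly two'' claim is not proved.

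A smaller imprecision: in the uniqueness step you recover $\Omega$ from $\Re\Omega$ ``via the induced almost complex structure of Proposition \ref{suninduced}''. That proposition produces $I$ from the full pair $(\Omega,\omega)$; the real part alone determines the almost complex structure only up to sign, and indeed $\Omega$ and $\bar\Omega$ have equal real parts but opposite imaginary parts, which is exactly the ambiguity at stake. To close this you should either invoke the compatibility condition $\omega(v,Iv)>0$ with the already-determined $\omega$ to fix the sign of $I$ (after which the $(3,0)$-form with prescribed real part is unique), or argue as the paper does: $*\phi = \tfrac12\,\omega\wedge\omega + z\wedge\Im\Omega$, so once $z$, $\omega$ and $\Re\Omega$ are fixed, $\Im\Omega$ is read off directly from $*\phi$, which is determined by $\phi$.
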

\begin{proof}
First choose a basis $e_2, e_3, \ldots, e_7$ of $V^*$ so that $(\Omega, \omega)$ is the standard $SU(3)$ structure (this can be done similarly to the proof of Proposition \ref{suninduced}):
\begin{equation}
\Omega = (e_2 + i e_3) \wedge (e_4 + i e_5) \wedge (e_6 + i e_7) \qquad \omega = e_2 \wedge e_3 + e_4 \wedge e_5 + e_6 \wedge e_7
\end{equation}
Then $e_1=z, e_2, \ldots, e_7$ is a basis of $(V\oplus\R)^*$ and $\Re \Omega + z\wedge \omega$ is the $G_2$ structure given by \eqref{eq:g2structure} with respect to this basis; hence, this construction always yields a $G_2$ structure. The metric follows by this construction: the dual basis vector to $z$ is orthogonal to $V$ and length $1$, and $V$ has its original metric.

Conversely, suppose given a $G_2$ structure $\phi$ on $V \oplus \R$. We want to choose a basis so that $\phi$ is the $G_2$ structure given by \eqref{eq:g2structure} and $V^*$ is spanned by $e_2, \ldots, e_7$. First choose any basis such that $\phi$ is given by \eqref{eq:g2structure}. We need an element of $G_2$ mapping $\spn \{e_2, \ldots, e_7\}$ to $V^*$. This is equivalent to taking $e_1$ to $V^*$'s unit normal, and this is possible because $G_2$ is transitive on $S^6$. In this basis, we then have the structure of the previous paragraph.

To show that any $\phi$ is given precisely by these two triples, we begin by noting that if $\Re\Omega + z\wedge\omega$ is a $G_2$ structure then $z$ and $V^*$ are orthogonal with respect to the induced metric and $z$ has length $1$. Therefore, $z$ is a unit normal vector to $V^*$ with respect to the metric of $\phi$ and is determined up to sign. Fix a possible $z$, and then consider $SU(3)$ structures $(\Omega, \omega)$ on $V$ such that $\phi = z \wedge \omega + \Re \Omega$; the fact that $z$ is complementary means $\phi$ and $z$ uniquely determine $\omega$ and so $\Re \Omega$. It is easy to check that given any such $z$,$\omega$ and $\Omega$, 
\begin{equation}
*\phi = \frac12 \omega \wedge \omega + z\wedge\Im \Omega
\end{equation}
and so as $\phi$ uniquely determines $*\phi$ it also uniquely determines $\Im \Omega$.

If we reverse the sign of $z$, this merely reverses the signs of $\Im \Omega$ and $\omega$, and so gives the second triple.
\end{proof}
At this point, we have two options. We can either hold on to this 2:1 correspondence throughout, or we can make a uniform choice. We will do the latter, partly for notational simplicity and partly to guarantee that the set of $z$'s is connected (a similar result will be technically useful later.) Since $(\Omega, \omega) \leftrightarrow (\bar\Omega, -\omega)$ is an isomorphism of $SU(3)$ structures, it has no serious effect on the results.

We shall express this choice as an orientation on $\R$ (and later the corresponding manifold $S^1$); this fixes the sign of $z$ by demanding that its relevant component should be positive with respect to a standard form $d\theta$ on $\R$; equivalently, this is defining which orientation $V$ has as a subspace of $V \oplus \R$.
\begin{numrmk}
\label{rmk:almostcypointwise}
If $(\Omega, \omega)$ is only the restriction to a point of an almost Calabi-Yau structure as in Joyce\cite{joycecones1, joycecones2, joycecones3, joycecones4, joycecones5} (that is we drop the normalisation condition on the relative sizes of $\Omega$ and $\omega$), the forward direction clearly gives a $G_2$ structure, as we may imagine rescaling $z$. However, for this reason there are many choices for the backward direction: we may freely scale $\omega$ and $z$. Thus in this case we may assume that the $d\theta$ component of $z$ has coefficient $\pm 1$, to retain our two options.
\end{numrmk}

It is clear that the correspondence of Proposition \ref{g2su3ptwise} extends to global structures.  We require the notion of a structure being $S^1$-invariant to ensure that each $G_2$ structure arises from a single $SU(3)$ structure.  
\begin{defin}
\label{defin:s1inv}
Let $M$ be a six-dimensional manifold. Consider the product $M \times S^1$, and let $\ddth$ be the vector field corresponding to a global function $\theta$ giving a coordinate on the circle. The diffeomorphism $\Theta$ is given by the flow of $\ddth$ for some time. A differential form $\alpha$ on $M \times S^1$ is said to be $S^1$-invariant if its Lie derivative in the $\ddth$ direction is zero, or equivalently it is preserved by pullback by $\Theta$. Any other tensor is said to be $S^1$-invariant if the same conditions hold (since $\Theta$ is a diffeomorphism we can consider pushforward by its inverse). A map of tensors is $S^1$-equivariant if it commutes with the appropriate pullback and pushforward maps induced by $\Theta$. 
\end{defin}
We shall use $\ddth$, $\Theta$, and the notion of $S^1$-invariance throughout. $S^1$-equivariance is primarily used in subsection \ref{ssec:s1invg2modspacesmooth}. 

We may now state
\begin{prop}
\label{g2su3prop1}
Let $M$ be a six-dimensional manifold admitting an $SU(3)$ structure. Let $z$ be an $S^1$-invariant covector field on $M \times S^1$ that is always complementary to the subbundle $T^*M$ and has positive orientation with respect to the circle, \ie $\int_{\{p\} \times S^1} z > 0$ everywhere. Then the construction of Proposition \ref{g2su3ptwise} yields a $G_2$ structure on $M \times S^1$. 

Conversely, if $M \times S^1$ admits a $G_2$ structure, the structure is constructed as in Proposition \ref{g2su3ptwise} from some unique section of the bundle of $SU(3)$ structures on $TM$ over $M \times S^1$ (that is, a structure on $T_pM$ at each point $(p, \theta)$, but potentially varying with $\theta$) and some unique complementary covector field on $M\times S^1$ with suitable orientation at each point. In particular, if the $G_2$ structure is $S^1$-invariant, then both of these sections are $S^1$-invariant, so reduce to an $SU(3)$ structure on $M$ and an $S^1$-invariant complementary covector field with suitable orientation as in the previous paragraph.

Moreover, the maps $(z, \Omega, \omega) \mapsto \phi$ and $\phi \mapsto (z, \Omega, \omega)$ are smooth maps of Fr\'echet spaces.
\end{prop}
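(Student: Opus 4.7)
The plan is to apply Proposition \ref{g2su3ptwise} pointwise and upgrade the pointwise correspondence to a smooth global one; the positive-orientation condition and $S^1$-invariance then select a single branch of the $2:1$ correspondence and allow us to descend to structures on $M$.

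\textbf{Forward direction.} Given $(\Omega,\omega)$ and $z$ as in the statement, define $\phi := \Re\Omega + z\wedge\omega$ on $M\times S^1$, where I silently pull $\Omega$ and $\omega$ back along the projection $M\times S^1\to M$. At each point $(p,\theta)$ the triple $(z_{(p,\theta)},\Omega_p,\omega_p)$ satisfies the hypotheses of Proposition \ref{g2su3ptwise}: $\Omega_p,\omega_p$ form an $SU(3)$ structure on $T_pM$ by hypothesis, and $z_{(p,\theta)}\in T^*_{(p,\theta)}(M\times S^1)$ is complementary to $T^*_pM$ by the hypothesis on $z$. Hence $\phi_{(p,\theta)}$ is a $G_2$ structure on $T_{(p,\theta)}(M\times S^1)$. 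Smoothness of $\phi$ is immediate since it is an algebraic expression in smooth forms, and $S^1$-invariance is immediate because $\Omega,\omega$ are pulled back and $z$ is $S^1$-invariant.

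\textbf{Converse direction.} Given a $G_2$ structure $\phi$ on $M\times S^1$, let $g_\phi$ be its induced metric. Since $T^*M\subset T^*(M\times S^1)$ is a smoothly varying subbundle and $g_\phi$ is a smooth metric, its unit $g_\phi$-normal to $T^*_pM$ in $T^*_{(p,\theta)}(M\times S^1)$ is a smooth section defined up to sign; the positive-orientation requirement (positive $d\theta$-component, which is open and nonvanishing on the unit normal bundle) singles out a unique smooth sign, giving $z$. Given $z$, Proposition \ref{g2su3ptwise} applied pointwise shows that $\omega$ and $\Re\Omega$ are uniquely and smoothly recovered from $\phi$ (by decomposing $\phi$ according to whether $z$ is a factor), while $\Im\Omega$ is uniquely and smoothly recovered from $*_\phi\phi$ via the identity
\begin{equation}
*_\phi\phi = \tfrac12\omega\wedge\omega + z\wedge\Im\Omega
\end{equation}
established in the proof of Proposition \ref{g2su3ptwise}. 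A priori $(\Omega,\omega)$ is only a section of the bundle of $SU(3)$ structures on $TM$ pulled back to $M\times S^1$. If in addition $\phi$ is $S^1$-invariant, then $g_\phi$ and $*_\phi\phi$ are $S^1$-invariant, hence the $S^1$-invariant subbundle $T^*M$ has $S^1$-invariant unit normal, and so $z$, $\omega$ and $\Omega$ are all $S^1$-invariant. Being $S^1$-invariant sections of bundles pulled back from $M$, the latter two descend uniquely to an $SU(3)$ structure on $M$.

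\textbf{Smoothness of the Fr\'echet maps.} Both directions are given pointwise by algebraic operations (wedge products, the Hodge star of $\phi$, and $g_\phi$-orthogonal projection onto the subbundle $T^*M$), and these operations depend smoothly on the inputs because passage from a $G_2$ three-form to its induced metric is smooth and pointwise algebraic. Smoothness of such pointwise algebraic maps on smooth sections in the $C^\infty$ Fr\'echet topology is standard.

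The only point requiring real attention is verifying in the converse direction that the unit $g_\phi$-normal $z$ varies smoothly with $\phi$: this is the main technical step, but it reduces to smooth dependence of $g_\phi$ on $\phi$ together with the fact that Gram--Schmidt (equivalently, orthogonal projection onto a smooth subbundle in a smooth family of inner products) is smooth. Every other ingredient is routine bookkeeping translating the pointwise statement of Proposition \ref{g2su3ptwise} into a global one.
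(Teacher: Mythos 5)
Your proposal is correct and follows essentially the same route as the paper: pointwise application of Proposition \ref{g2su3ptwise}, identification of $z$ as the positively oriented unit $g_\phi$-normal to $T^*M$ (the paper writes it explicitly as $(\partial/\partial\theta)^\flat/|\partial/\partial\theta|$), and reduction of Fr\'echet smoothness to the smooth dependence of $g_\phi$ on $\phi$ via Hitchin's $B_\phi$ and $K_\phi$. The only detail you gloss over is that this last step uses compactness or asymptotic cylindricality to keep $\det K_\phi$ bounded away from zero; otherwise the arguments match.
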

\begin{proof}
Proposition \ref{g2su3ptwise} proves the first two paragraphs pointwise; we have to show that the resulting sections are smooth, and the final paragraph. We prove the final paragraph in proving smoothness in the first and second. For the first, if $z$, $\omega$ and $\Omega$ are smooth sections of the relevant bundles then so is $\phi = \Re \Omega + z \wedge \omega$, and as this is multilinear it is clearly a smooth function of $(z, \Omega, \omega)$. 

For the second paragraph, we show directly that $z$, $\Omega$, and $\omega$ are smooth functions of $\phi$, and so in particular are themselves smooth if $\phi$ is. We begin by showing that $z$ is smooth. We observe that $ z= \frac{(\ddth)^\flat}{|\ddth|}$ at all points of $M\times S^1$, where $\theta$ is a positively oriented coordinate (meaning $d\theta$ is positive with respect to our choice of orientation).

Indeed, if $u \in T^*_pM \subset T^*_{(p, \theta)}(M \times S^1)$ we have
\begin{equation}
\left\la u, \left(\ddth\right)^\flat\right\ra = u \left(\ddth\right) = 0
\end{equation}
so $(\ddth)^\flat$ is indeed orthogonal to $T_p^*M$, and $\frac{(\ddth)^\flat}{|\ddth|}$ is clearly unit. Since $(\ddth)^\flat(\ddth) = |\ddth|^2>0 $, the orientation is positive, and so $z$ is indeed $\frac{(\ddth)^\flat}{|\ddth|}$. 

Note that although $\ddth$ is independent of the coordinates on $M$, the metric need not be, so $z$ depends on our position on $M$. We have $(\ddth)^\flat = \iota_\ddth g$. The interior product is linear and continuous; the map $g\mapsto |\ddth|$ is clearly smooth, as the square root of the smooth function $g\mapsto g(\ddth, \ddth)$. It is therefore enough, to prove $z$ is smooth, to prove that
\begin{equation}
\phi \mapsto g_\phi
\end{equation}
is smooth. This essentially follows by the computation in Hitchin \cite{hitchinthreeforms}: both
\begin{equation}
\label{eq:bphi}
B_\phi: (u, v) \mapsto \iota_u \phi \wedge \iota_v \phi \wedge \phi
\end{equation}
and $K_\phi$, the reinterpretation of the bilinear form $B_\phi$ as an endomorphism, are smooth functions of $\phi$. Similarly, so are the determinant of $K_\phi$ and \begin{equation}g_\phi =(\det K_\phi)^{-\frac{1}9} B_\phi\end{equation} using compactness and asymptotic cylindricality to ensure that $\det K_\phi$ is bounded away from zero.

Then the division of $\phi$ and $* \phi$ into ``the $z$ part" and ``the other part" is smooth, because it is linear and continuous; it follows that $\Omega$ and $\omega$ are smooth. 
\end{proof}
We also have to check that the correspondence of Propositions \ref{g2su3ptwise} and \ref{g2su3prop1} respects the notions of asymptotically cylindrical structure that we have defined.
\begin{prop}
\label{g2su3acyl}
Suppose that $M$ is a six-dimensional smooth manifold with an end. Let $(\Omega, \omega)$ be an $SU(3)$ structure on $M$ and $z$ a complementary covector field in the sense of Proposition \ref{g2su3prop1} (appropriately oriented). Suppose that $\phi$ is the corresponding $S^1$-invariant $G_2$ structure on $M \times S^1$. 
Then $\phi$ is asymptotically cylindrical if and only if $(\Omega, \omega)$ is asymptotically cylindrical, $z$ is asymptotically translation-invariant, and $z(\frac{\partial}{\partial t}) \to 0$ exponentially uniformly on $N$. 
\end{prop}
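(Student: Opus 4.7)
The plan is to use the explicit pointwise formula $g_\phi = g_{\Omega,\omega} + z\otimes z$ from the proof of Proposition \ref{g2su3ptwise}, together with the $S^1$-invariant decomposition $z = \lambda\, d\theta + \eta$, where $\lambda$ is a positive function on $M$ (by the complementarity and orientation of $z$) and $\eta$ is a $1$-form on $M$; on the end I will further split $\eta = \eta_t\, dt + \hat\eta$ with $\hat\eta$ a section of the pullback of $T^*N$. Since $\Omega,\omega,\lambda,\eta$ are all pulled back from $M$, both $g_\phi$ and $\phi = \Re\Omega + z\wedge\omega$ are polynomial expressions in these tensors, so their asymptotic behaviour is determined by that of $(\Omega,\omega,\lambda,\eta)$.

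For the forward direction, assume $(\Omega,\omega)$ is asymptotically cylindrical, $z$ asymptotically translation-invariant, and condition (c). Then $\tilde z = \tilde\lambda\, d\theta + \tilde{\hat\eta}$ carries no $dt$-component by (c), so $\tilde z\otimes\tilde z$ naturally lives on $N\times S^1$. Writing $g_N$ for the limit of $g_{\Omega,\omega}$ on $N$, the putative limit metric $\tilde g_\phi := g_N + \tilde z\otimes\tilde z$ is a genuine metric on $N\times S^1$, and I will then rewrite
\begin{equation*}
g_\phi - \tilde g_\phi - dt^2 = (g_{\Omega,\omega} - g_N - dt^2) + (z\otimes z - \tilde z\otimes\tilde z).
\end{equation*}
The first summand decays exponentially in every $C^k$-norm by asymptotic cylindricity of $(\Omega,\omega)$, and the second does likewise because $z\otimes z$ is a continuous bilinear function of the asymptotically translation-invariant field $z$. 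To verify that $\phi$ is asymptotically translation-invariant with respect to $g_\phi$, I note that $g_\phi$ and the cylindrical metric $\tilde g_\phi + dt^2$ are uniformly Lipschitz equivalent on the end, so their induced norms on tensor bundles are comparable, whence the identity $\phi - \tilde\phi = \Re(\Omega - \tilde\Omega) + (z-\tilde z)\wedge\omega + \tilde z\wedge(\omega - \tilde\omega)$ yields exponential decay with all derivatives.

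For the backward direction, suppose $\phi$ is asymptotically cylindrical, so $g_\phi$ is asymptotically cylindrical. The only $dt\otimes d\theta$ cross-term in $g_\phi = g_{\Omega,\omega} + z\otimes z$ comes from $z\otimes z$, namely $\lambda\eta_t(dt\otimes d\theta + d\theta\otimes dt)$, since $g_{\Omega,\omega}$ has no $d\theta$-component. As the limit $\tilde g_\phi$ is a metric on $N\times S^1$ and thus has no $dt$-cross-terms, this contribution must decay exponentially; the identity $\lambda = |\ddth|_{g_\phi}$ from the proof of Proposition \ref{g2su3prop1} together with asymptotic cylindricity of $g_\phi$ shows that $\lambda$ is bounded below by a positive constant on the end, so $\eta_t = z(\ddt) \to 0$ exponentially, establishing (c). Once (c) is known, all remaining $dt$-involving terms of $z\otimes z$ also decay, so $g_{\Omega,\omega} = g_\phi - z\otimes z$ differs from the cylindrical metric $(\tilde g_\phi - \tilde z\otimes\tilde z) + dt^2$ by an exponentially decaying remainder, proving asymptotic cylindricity of $g_{\Omega,\omega}$. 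The remaining asymptotic-translation-invariance claims for $z,\Omega,\omega$ will then follow from the Fr\'echet-smoothness of the inverse map $\phi\mapsto(z,\Omega,\omega)$ of Proposition \ref{g2su3prop1}: exponential $C^k_\delta$-decay of $\phi - \tilde\phi$ transfers to the corresponding components via this smooth map and its derivatives.

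The main obstacle is propagating $C^k_\delta$-decay through the nonlinear inverse map of Proposition \ref{g2su3prop1}; this requires uniform Lipschitz (in fact smoothness) bounds on that map over the range of $\phi$, available because asymptotic cylindricity together with compactness of $N$ confines $\phi$ to a set uniformly bounded away from the boundary of the open subset of $G_2$-forms. Beyond that, the proof is essentially bookkeeping with the identity $g_\phi = g_{\Omega,\omega} + z\otimes z$ and its $d\theta$-decomposition, with (c) emerging as precisely the condition that kills the only potentially obstructing cross-term.
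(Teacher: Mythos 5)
Your proposal is correct and follows essentially the same route as the paper: the pointwise identity $g_\phi = g_{\Omega,\omega} + z\otimes z$, the observation that $z(\ddt)$ is (up to the factor $|\ddth|$, bounded below by asymptotic cylindricality) the $dt$--$d\theta$ cross-term of $g_\phi$, and the continuity of the inverse map $\phi\mapsto(z,\Omega,\omega)$ of Proposition \ref{g2su3prop1} with uniform constants to transfer exponential decay to the individual components. The only minor point is one of ordering in the backward direction: the exponential decay of $z\otimes z - \tilde z\otimes\tilde z$ (and hence the asymptotic cylindricality of $g_{\Omega,\omega}$) requires $z\to\tilde z$, which you only obtain from the inverse map afterwards, so that deduction should be placed after that step rather than immediately after establishing $z(\ddt)\to 0$.
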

\begin{proof}
It is clear that if $\Omega$, $\omega$, and $z$ are asymptotically translation invariant, then so too is $\phi$. By Proposition \ref{g2su3ptwise}, the induced asymptotically translation invariant metric has limit $\tilde g_{\Omega, \omega} + \tilde z \otimes \tilde z = g_N + dt \otimes dt + \tilde z \otimes \tilde z$ (again we use $\tilde\cdot$ to denote limit). We thus have to show that $\tilde z \otimes \tilde z$ can be taken as a form on only $N \times S^1$. As $z(\frac\partial{\partial t}) \to 0$ , $\tilde z$ has no $dt$ component and this is indeed the case.

For the converse, we observe that both the asymptotically cylindrical $G_2$ structure $\phi$ and its limit $\tilde \phi$ must split in the usual way and therefore we have
\begin{equation}
\phi = \Re \Omega + z\wedge\omega \to \Re \tilde\Omega + \tilde z \wedge \tilde \omega = \tilde \phi
\end{equation}
with respect to the metric induced by either. Since $\phi \to \tilde \phi$, exponentially with all derivatives, and the map $\phi \mapsto (z, \Omega, \omega)$ is a continuous map of Fr\'echet spaces with implicit constants (in continuity arguments) bounded since $\phi$ is asymptotically translation invariant, we must have $z \to \tilde z$ and so on too. So $z$, $\Omega$, and $\omega$ are asymptotically translation invariant. 

Furthermore, 
\begin{equation}
\label{eq:zofddt}
z\left(\frac\partial{\partial t}\right) = g\left(\ddth, \frac\partial{\partial t}\right)\bigg/\left|\ddth\right|
\end{equation}
and since $\phi$ is asymptotically cylindrical the right hand side of \eqref{eq:zofddt} tends to zero uniformly in $N$ and exponentially in $t$. Thus $\tilde z\otimes \tilde z$ has no $dt \otimes dt$ component, and as in the first paragraph $(\Omega, \omega)$ must be asymptotically cylindrical.
\end{proof}

We now find a condition on $z$ which combined with $(\Omega, \omega)$ being torsion-free implies the $G_2$ structure $\Re \Omega + z \wedge \omega$ is torsion-free. Because torsion-freeness, as a differential equation, is a global condition, we restrict to $M$ compact or asymptotically cylindrical.
\begin{prop}
\label{g2su3prop2}
Suppose that $M$ is a compact six-dimensional smooth manifold. Let $(\Omega, \omega)$ be an $SU(3)$ structure on $M$ and $z$ be a covector field on the product $M\times S^1$ complementary to $T^* M$ with positive orientation. Suppose that $\phi$ is the corresponding $S^1$-invariant $G_2$ structure on $M\times S^1$. Then $\phi$ is torsion-free if and only if $z$ is closed and $(\Omega, \omega)$ is torsion-free.

The same holds if $(\Omega, \omega)$ is an asymptotically cylindrical $SU(3)$ structure and $z$ an asymptotically translation invariant complementary covector field, with $\tilde z(\ddt) = 0$, so that $\phi$ is an asymptotically cylindrical $G_2$ structure. 
\end{prop}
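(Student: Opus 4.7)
For the implication $(\Leftarrow)$, given $dz = d\omega = d\Omega = 0$, I would directly substitute into $d\phi = d\Re\Omega + dz\wedge\omega - z\wedge d\omega$ and $d{*\phi} = \omega\wedge d\omega + dz\wedge\Im\Omega - z\wedge d\Im\Omega$, using $*\phi = \tfrac12\omega^2 + z\wedge\Im\Omega$ from the proof of Proposition \ref{g2su3ptwise}; every term vanishes.

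For $(\Rightarrow)$, Proposition \ref{g2su3prop1} and the $S^1$-invariance of $\phi$ let me write $z = v + f\,d\theta$ with $v$ a $1$-form and $f$ a positive function on $M$. Splitting $\phi$ and $*\phi$ into parts with and without $d\theta$ and using that $d(\alpha_0 + d\theta\wedge\alpha_1) = d_M\alpha_0 - d\theta\wedge d_M\alpha_1$ for $S^1$-invariant $\alpha$, the equations $d\phi = d{*\phi} = 0$ reduce to
\begin{equation*}
d(f\omega) = 0,\quad d(\Re\Omega + v\wedge\omega) = 0,\quad d(f\Im\Omega) = 0,\quad d\bigl(\tfrac12\omega^2 + v\wedge\Im\Omega\bigr) = 0
\end{equation*}
on $M$. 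My aim is to show that $f$ is constant and $dz = 0$; these conclusions then collapse the four equations into $d\omega = 0$, $d\Im\Omega = 0$, and (via the second equation, with $d\omega = 0$ and $dv = 0$) $d\Re\Omega = 0$.

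The key input is that $\phi$ torsion-free implies $g_\phi$ is Ricci-flat, while $S^1$-invariance of $\phi$ (hence of $g_\phi$, which is determined by $\phi$ as in the proof of Proposition \ref{g2su3prop1}) makes $\ddth$ a Killing vector field with $|\ddth|_{g_\phi}^2 = f^2$. Bochner's identity for Killing fields on Ricci-flat manifolds therefore gives $\tfrac12\Delta(f^2) = |\nabla\ddth|^2$. In the compact case, integrating over $M\times S^1$ yields $\nabla\ddth = 0$, so $f$ is constant and the parallel one-form $(\ddth)^\flat = fz$ is closed, forcing $dz = 0$.

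The main obstacle is adapting this Bochner argument to the asymptotically cylindrical case. Setting $M_T = M^\cpt \cup N\times[0,T]$, I would apply the divergence theorem on $M_T \times S^1$ to obtain $\int_{M_T\times S^1}\Delta(f^2) = 2\int_{N\times\{T\}\times S^1}\partial_t(f^2)\,dA$. The hypothesis $\tilde z(\ddt) = 0$ together with Proposition \ref{g2su3acyl} ensures that $f$ is asymptotically translation invariant with limit $\tilde f$ depending only on $N$, so $\partial_t(f^2) \to 0$ exponentially uniformly on $N$; the boundary flux therefore vanishes as $T\to\infty$, yielding $\int_{M\times S^1}|\nabla\ddth|^2 = 0$ and hence parallelism of $\ddth$, after which the remaining algebraic deductions proceed exactly as in the compact case.
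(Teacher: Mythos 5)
Your proof is correct, and its skeleton --- $\ddth$ is a Killing field for the Ricci-flat metric $g_\phi$, a Bochner argument gives $\nabla\ddth = 0$, hence $f = |\ddth|$ is constant and $z$ is closed, after which $d\phi = d{*_\phi\phi} = 0$ separates into the torsion-freeness of $(\Omega,\omega)$ --- is the same as the paper's; the only cosmetic difference in the algebra is that you split along $d\theta$ from the outset (four equations on $M$), whereas the paper keeps $z$ intact and separates the torsion equations using that $z$ is complementary to $T^*M$ once it is known to be parallel. Where you genuinely diverge is the asymptotically cylindrical Bochner step. The paper first passes to the translation-invariant limit structure on $N\times\R\times S^1$, compactifies the $\R$-direction to a circle so that the compact Bochner argument applies there, concludes that $\nabla\ddth$ decays, and only then performs the integration by parts on $M\times S^1$. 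You instead integrate $\tfrac12\Delta(f^2)=|\nabla\ddth|^2$ over the compact exhaustion $M_T\times S^1$ and kill the boundary term directly: since $f^2=|\ddth|^2$ is asymptotically translation invariant with $t$-independent limit (and the limit metric is a product by Proposition \ref{g2su3acyl}), the normal derivative of $f^2$ decays exponentially, so $\int_{M_T\times S^1}|\nabla\ddth|^2$, which is nondecreasing in $T$ and equal to the flux, is bounded by a quantity tending to zero and hence vanishes for every $T$. This is slightly more elementary than the paper's route: it avoids the detour through the limit cylinder and requires no a priori decay of $\nabla\ddth$, only the asymptotic behaviour of the single function $|\ddth|^2$. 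Two trivial points to tidy: the divergence theorem gives the flux through $N\times\{T\}\times S^1$ without your factor of $2$, and the outward unit normal is $\nabla t/|\nabla t|$, which is only asymptotically $\ddt$; both discrepancies are constant or exponentially small and do not affect the conclusion.
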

\begin{proof}
Firstly, given a torsion-free $SU(3)$ structure and $z$ closed, we have
\begin{equation}
\phi = \Re \Omega + z \wedge \omega
\qquad
*\phi = \frac12 \omega \wedge \omega + z \wedge \Im \Omega
\end{equation}
Since $d$ is a real operator, $\bar\Omega$ is closed, and so both $\phi$ and $*\phi$ are closed.

Conversely, if $\phi$ is a torsion-free $S^1$-invariant $G_2$ structure on $M \times S^1$, we begin by considering the covector field $z$. Let $\ddth$ be the vector field on $S^1$ induced by a standard coordinate $\theta$ (positively orientated and inducing the rotation we use for ``$S^1$-invariance").

Since $\phi$ is $S^1$-invariant, $\ddth$ is a Killing field. A Bochner argument (\eg \cite[Theorem 1.84]{besse}) shows that Killing fields on compact Ricci-flat manifolds are parallel, and we know that the metric associated to the torsion-free $G_2$ structure $\phi$ is Ricci-flat. Consequently, if $M$ is compact, $\ddth$ is parallel. We want to show that $\ddth$ is also parallel in the asymptotically cylindrical case. $\tilde \phi$ defines a translation-invariant $G_2$ structure on the limit $N \times \R \times S^1$. $\ddth$ is a translation-invariant Killing vector field on $N \times \R \times S^1$, and thus we may imagine we work on $N \times S^1 \times S^1$ to deduce that $\ddth$ is parallel with respect to the limit metric. Now we know that $\nabla\ddth$ decays, we may do the integration by parts required by the Bochner argument and deduce that $\ddth$ is parallel. 

Hence, $z = \frac{(\ddth)^\flat}{|\ddth|}$ is parallel and in particular closed.

Then the torsion-freeness of $\phi$ yields from the formulae for $\phi$ and $*_\phi \phi$ in terms of $\omega$ and $\Omega$
\begin{equation}
0 = z \wedge d\omega + d \Re \Omega \qquad 0 = \omega \wedge d\omega + z \wedge d\Im \Omega
\end{equation}
Since $d\omega$, $d\Re \Omega$ and $d\Im \Omega$ are all forms on $M$, it follows since $z$ is complementary to the subbundle of such forms that they are all zero, and so $(\Omega, \omega)$ is torsion-free.
\end{proof}
\begin{numrmk}
\label{rmk:almostcytorsion}
In the almost Calabi-Yau case of Joyce \cite{joycecones1, joycecones2, joycecones3, joycecones4, joycecones5} the torsion-freeness conditions are much weaker: it is only required that $d\omega = 0$. Using the $S^1$-invariance of $\phi$ and Cartan's magic formula, this is equivalent to $\iota_{\ddth} d\phi = 0$. Thus almost Calabi-Yau structures on $M$ are identified, up to a choice of covector field $z$, with $S^1$-invariant $G_2$ structures $\phi$ on $M \times S^1$ satisfying $\iota_{\ddth} d\phi = 0$. 
\end{numrmk}
We now introduce some terminology to simplify the rest of the paper. 
\begin{defin}
\label{defin:twistings}
Let $M$ be a six-dimensional manifold. A closed $S^1$-invariant covector field $z = Ld\theta + v$ for which $L > 0$ is called a twisting. If $M$ is an asymptotically cylindrical manifold, we require also that the limit $\tilde z$ satisfies $\tilde z(\ddt) = 0$. 
\end{defin}

Combining all of this section, and restricting to the torsion-free case, we have the critical Theorem C:
\begin{thm}
\label{g2su3alltold}
\label{theoremc}
If $M$ is a compact six-manifold , there is a homeomorphism
\begin{equation*}
\begin{split}
&\{\text{torsion-free $S^1$-invariant $G_2$ structures on $M \times S^1$}\} \\\leftrightarrow& \{\text{torsion-free $SU(3)$ structures on $M$}\} \times \R_{>0} \times \{\text{closed $1$-forms on $M$}\}
\end{split}
\end{equation*}

If $M$ is a six-manifold with an end, there is a homeomorphism
\begin{equation*}
\begin{split}
&\{\text{torsion-free asymptotically cylindrical $S^1$-invariant $G_2$ structures on $M \times S^1$}\} \\\leftrightarrow& \{\text{torsion-free asymptotically cylindrical $SU(3)$ structures on $M$}\} \times \R_{>0} \\&\times \left\{\text{closed asymptotically translation invariant $1$-forms $v$ on $M$ with $\tilde v\left(\ddt\right) = 0$}\right\}
\end{split}
\end{equation*}
In both cases, this homeomorphism is defined by the map 
\begin{equation}
((\Omega, \omega), L, v) \leftrightarrow \Re \Omega + (L d\theta + v) \wedge \omega
\end{equation}
\end{thm}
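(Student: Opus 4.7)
The plan is to assemble Theorem \ref{theoremc} from the three preceding Propositions \ref{g2su3prop1}, \ref{g2su3acyl}, and \ref{g2su3prop2}: they provide, respectively, the smooth correspondence $(z, \Omega, \omega) \leftrightarrow \phi$ for $S^1$-invariant $G_2$ structures, its compatibility with asymptotic cylindricality, and its compatibility with torsion-freeness. The only ingredient not yet in place is the fact that in the torsion-free case the covector field $z$ must decompose as $L\, d\theta + v$ for a positive constant $L$ and a closed $1$-form $v$ on $M$ (asymptotically translation invariant with $\tilde v(\ddt) = 0$ in the acyl case).

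For the forward direction, given a triple $((\Omega, \omega), L, v)$, I would set $z := L\, d\theta + v$, with $v$ pulled back from $M$ to $M \times S^1$. Then $z$ is $S^1$-invariant, closed, and has strictly positive $d\theta$-component; in the acyl case $\tilde z(\ddt) = L\, d\theta(\ddt) + \tilde v(\ddt) = 0$. Proposition \ref{g2su3prop1} yields the $S^1$-invariant $G_2$ structure $\phi = \Re \Omega + z \wedge \omega$, Proposition \ref{g2su3acyl} ensures it is asymptotically cylindrical when the input is, and Proposition \ref{g2su3prop2} ensures it is torsion-free.

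For the backward direction, given a torsion-free $S^1$-invariant $G_2$ structure $\phi$, Proposition \ref{g2su3prop1} (together with Proposition \ref{g2su3acyl} in the acyl setting) produces an $S^1$-invariant triple $(z, \Omega, \omega)$ in which $(\Omega, \omega)$ is an $SU(3)$ structure on $M$ and $z$ is a complementary covector field on $M \times S^1$; Proposition \ref{g2su3prop2} then shows $(\Omega, \omega)$ is torsion-free and $z$ is closed. The main new step is extracting $L$ and $v$. From the proof of Proposition \ref{g2su3prop2}, $\ddth$ is parallel, so $L := |\ddth|$ is a positive constant, and $z(\ddth) = (\ddth)^\flat(\ddth)/|\ddth| = L$. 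Using $S^1$-invariance, I would write $z = f\, d\theta + v$ with $f \in C^\infty(M)$ and $v$ a $1$-form on $M$ (pulled back to $M \times S^1$). Then $dz = df \wedge d\theta + dv = 0$ is a decomposition into the complementary subspaces $\Omega^1(M) \wedge d\theta$ and $\Omega^2(M)$, so both summands vanish: $v$ is closed and $f$ is locally constant, hence constant (since $M$ is connected, appealing in the acyl case to Remark \ref{sunacylconndxs}). Evaluating $z = f\, d\theta + v$ on $\ddth$ gives $f = z(\ddth) = L > 0$, and then $v = z - L\, d\theta$ is the desired closed $1$-form. In the acyl case $\tilde v(\ddt) = \tilde z(\ddt) = 0$ directly.

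The continuity of the bijection in both directions follows from Proposition \ref{g2su3prop1}: the forward map is the smooth map of that proposition precomposed with the continuous linear operation $(L, v) \mapsto L\, d\theta + v$, while the inverse is the smooth map $\phi \mapsto (z, \Omega, \omega)$ postcomposed with the continuous linear extractions $z \mapsto z(\ddth) = L$ and $z \mapsto z - L\, d\theta = v$. In the acyl case these linear operations manifestly respect the extended weighted Fréchet topologies of Definition \ref{defcktopology}, so no additional work is needed. The only potential subtlety I would watch for is the connectedness assumption needed to conclude $f$ is globally constant; this is exactly the content of Remark \ref{sunacylconndxs}.
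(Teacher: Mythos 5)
Your proposal is correct and follows essentially the same route as the paper, which proves Theorem \ref{theoremc} simply by combining Propositions \ref{g2su3prop1}, \ref{g2su3acyl} and \ref{g2su3prop2}, leaving implicit the elementary fact (spelled out by you, and used by the paper later in the proof of Lemma \ref{ccfcohom}) that a closed $S^1$-invariant $1$-form splits as $L\,d\theta + v$ with $L$ constant and $v$ closed on $M$. One small correction: Remark \ref{sunacylconndxs} concerns connectedness of the cross-section $N$, not of $M$, so the global constancy of $L$ really rests on the standing (implicit) assumption that $M$ itself is connected, which the paper also uses without comment.
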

\begin{rmk}
The whole section applies equally if the one-dimensional factor is a line instead of a circle, because by invariance we can join the ends to form a circle. Thus the same argument applies on the end of an asymptotically cylindrical $G_2$ manifold, for instance, but asymptotic cylindricality means $v$ that $L$ must be one and $v$ must be zero, or equivalently that the closed $1$-form $z$ must be $dt$.
\end{rmk}
\section{Moduli spaces}
\label{sec:modulispace}
For the remainder of the paper, we shall assume that all $SU(n)$ and $G_2$ structures are torsion-free unless specifically stated otherwise. 

We now want to push the relationship between torsion-free $SU(3)$ structures and torsion-free $G_2$ structures discussed in section \ref{sec:g2su3} and culminating there in Theorem \ref{g2su3alltold} (Theorem C) slightly further. In this section, we define moduli spaces and prove  Theorem A (Theorem \ref{maintheorema}) on how the $SU(3)$ moduli space relates to the $G_2$ moduli space. The section falls into three parts. In subsection \ref{ssec:s1invg2modspacesetup}, we set up a moduli space of $S^1$-invariant torsion-free $G_2$ structures (Definition \ref{defin:mg2s1}). We choose this $S^1$-invariant $G_2$ moduli space so that we have a homeomorphism between it and the product of the Calabi-Yau moduli space with the ``moduli space" $Z$ of potential twistings $z$, using the relationship between Calabi-Yau structures and $G_2$ structures. We then have to use this bijection to show the Calabi-Yau moduli space is a manifold. In subsection \ref{ssec:s1invg2modspacesmooth}, we prove that the $S^1$-invariant $G_2$ moduli space is locally homeomorphic to the moduli space of $G_2$ structures and so a manifold (Theorem \ref{mg2s1mfd}), by closely following the proof that the $G_2$ moduli space itself is a manifold. We also give an idea for an alternative proof of Theorem \ref{mg2s1mfd} and a discussion of where it runs into difficulty. We then discuss the space $Z$ of classes of twistings in subsection \ref{ssec:spacez}, identifying it as the open subset of the cohomology space $H^1(M \times S^1)$ corresponding to positive $H^1(S^1)$ component. Finally, in subsection \ref{ssec:su3modspacesmooth}, we return to the relationship between Calabi-Yau structures and $S^1$-invariant $G_2$ structures. We show that the projection map from the $S^1$-invariant $G_2$ moduli space to the ``moduli space" $Z$ of potential twistings $z$ is a smooth surjective submersion, so that each $SU(3)$ moduli-space fibre is a smooth manifold. To prove Theorem A (Theorem \ref{maintheorema}), it only then remains to show that all these fibres are diffeomorphic and that the product structure obtained in subsection \ref{ssec:s1invg2modspacesetup} is compatible with the manifold structures. 

Suppose $M$ is a (smooth) compact $6$-manifold. We henceforth restrict attention to $6$-manifolds which admit Calabi-Yau structures to avoid having to consider the possibility of empty moduli spaces. On manifolds with ends we will further restrict to manifolds for which these structures can be chosen asymptotically cylindrical. Quotienting by the pullback action of the identity component of the diffeomorphism group, we make
\begin{defin}
\label{defin:cptsu3modspace}
If $M$ is a compact $6$-manifold,
\begin{equation}
\mathcal M_{SU(3)}(M) = \frac {\{\text{Calabi-Yau structures on $M$}\}}{\text{$\Diff_0$ equivalence}}
\end{equation}
\end{defin}
We make a similar definition in the asymptotically cylindrical case. Recall the definition of $\Diff_0$ on such a manifold from Definition \ref{defin:acylisotopy}. 

\begin{defin}
\label{defin:msu3}
If $M$ is a $6$-manifold with an end,
\begin{equation}
\mathcal M_{SU(3)}(M) = \frac {\{\text{asymptotically cylindrical Calabi-Yau structures on $M$}\}}{\text{$\Diff_0$ equivalence}}
\end{equation}
\end{defin}
There is also a natural action by the rescaling $(\Omega, \omega) \mapsto (a^{\frac32} \Omega, a \omega)$ for a fixed constant $a$. We will not quotient by this, as it makes the setup of the moduli spaces slightly more complex: for details of the results we would get, and an example of the resulting complexity, see Remark \ref{rescalingmodspaces} below. 

In the $G_2$ case, similarly, we restrict to $7$-manifolds that admit (asymptotically cylindrical) torsion-free $G_2$ structures. We correspondingly make
\begin{defin}
If $M$ is a $6$-manifold, either compact or with an end,
\begin{equation*}
\mathcal M_{G_2}(M \times S^1) = \frac {\{\text{(asymptotically cylindrical) torsion-free $G_2$ structures on $M \times S^1$}\}}{\text{$\Diff_0$ equivalence}}
\end{equation*}
\end{defin}
Note that $M \times S^1$ has an end if and only if $M$ does. 
\subsection{Setup of the \texorpdfstring{$S^1$}{S¹}-invariant \texorpdfstring{$G_2$}{G₂} moduli space}
\label{ssec:s1invg2modspacesetup}
By Theorem \ref{g2su3alltold}, we have in both the compact and asymptotically cylindrical cases a bijection roughly given by
\begin{equation}
\label{eq:thmcrough}
\begin{split}
&\{\text{Calabi-Yau structures}\} \oplus \R_{>0} \oplus \{\text{closed $1$-forms}\} \leftrightarrow \\&\{\text{$S^1$-invariant torsion-free $G_2$ structures}\}
\end{split}
\end{equation}
In this subsection, we show that this bijection induces a homeomorphism of moduli spaces. Note that since we have not proved that the asymptotically cylindrical $SU(3)$ moduli space is a manifold, we cannot yet ask for a diffeomorphism. 

Therefore, we first define the moduli space of $S^1$-invariant $G_2$ structures and a ``moduli space" of twistings. We do so precisely so that the map induced from Theorem \ref{g2su3alltold} is a well-defined bijection. 

We shall use the following set of diffeomorphisms
\begin{defin}
\label{defin:s1invdiffeo}
Suppose that $M$ is a compact or asymptotically cylindrical manifold with an $S^1$-invariant Ricci-flat metric $g$. Let the space $\Diff_0^{S^1}$ be the identity path-component of
\begin{equation}
\label{eq:diff0s1wholespace}
\left\{\Phi \in \Diff_0: \Phi_* \ddth = \ddth\right\}
\end{equation}
Elements of \eqref{eq:diff0s1wholespace} shall be called $S^1$-invariant diffeomorphisms. 
\end{defin}
In Proposition \ref{s1invdiffeoweak}, we shall show that $\Diff_0^{S^1}$ is equal to the identity path-component of diffeomorphisms satisfying the weaker condition that $\Phi_* \ddth$ is a Killing field for $g$. 

The diffeomorphisms of $M$ extended by the identity certainly define $S^1$-invariant diffeomorphisms, and so we have 
\begin{lem}
\label{easyextensionlemma}
If $\Phi$ is a (asymptotically cylindrical) diffeomorphism of $M^6$ isotopic to the identity then the diffeomorphism $\hat\Phi: (x, \theta) \mapsto (\Phi(x), \theta)$ of $M^6 \times S^1$ lies in $\Diff_0^{S^1}(M \times S^1)$. If $(\Omega, \omega)$ is a (asymptotically cylindrical) Calabi-Yau structure and $Ld\theta +v$ a twisting, the torsion-free $G_2$ structures $\Phi^*(\Re \Omega)) + (Ld\theta + \Phi^*v) \wedge \Phi^*(\omega)$ and $\Re \Omega + (Ld\theta + v) \wedge \omega$ are identified by $\hat \Phi$. 
\end{lem}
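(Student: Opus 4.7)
The proof breaks into two independent pieces: showing $\hat\Phi \in \Diff_0^{S^1}(M \times S^1)$, and verifying the explicit identification of the two $G_2$ structures. Both are essentially formal once the definitions are unpacked, so my plan is to present them as a straightforward verification.

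For the first piece, I would first observe that $\hat\Phi$ is manifestly a diffeomorphism of $M \times S^1$ (its inverse is $(x,\theta) \mapsto (\Phi^{-1}(x),\theta)$), and that $\hat\Phi_* \ddth = \ddth$ since $\hat\Phi$ fixes the $\theta$ coordinate. In the asymptotically cylindrical case, if $\Phi \to (\tilde\Phi, t+L)$ exponentially in the sense of Definition \ref{acyldiffeo} on the end $N \times [0,\infty)$ of $M$, then $\hat\Phi \to (\tilde\Phi, t+L, \theta \mapsto \theta)$ exponentially on the end $N\times S^1 \times [0,\infty)$ of $M \times S^1$, so $\hat\Phi$ is asymptotically cylindrical. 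To show that $\hat\Phi$ is in the identity component, I would take an isotopy $\Phi_s$ in $\Diff_0(M)$ from $\mathrm{id}_M$ to $\Phi$ and lift it to $\hat\Phi_s: (x,\theta) \mapsto (\Phi_s(x),\theta)$. Each $\hat\Phi_s$ is an $S^1$-invariant (in the asymptotically cylindrical case, asymptotically cylindrical) diffeomorphism, so the path lies in the space appearing in \eqref{eq:diff0s1wholespace}. I would check continuity in the topology of Definition \ref{defin:ascyldifftopol}: the generating vector field of $\hat\Phi_s$ is simply the generating vector field of $\Phi_s$ extended trivially in $\theta$, so the extended weighted norms of Definition \ref{defcktopology} for $\hat\Phi_s$ agree, up to constants independent of $s$, with those for $\Phi_s$; continuity of $s \mapsto \hat\Phi_s$ therefore follows from continuity of $s \mapsto \Phi_s$.

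For the second piece, I would simply compute $\hat\Phi^*(\Re\Omega + (Ld\theta + v)\wedge\omega)$. Since $\pi_M \circ \hat\Phi = \Phi \circ \pi_M$, forms pulled back from $M$ transform by $\hat\Phi^*\alpha = \Phi^*\alpha$ (viewing both sides as forms on $M \times S^1$ via pullback along the projection). Since $\theta \circ \hat\Phi = \theta$, we have $\hat\Phi^* d\theta = d\theta$. Expanding then gives
\begin{equation*}
\hat\Phi^*\bigl(\Re\Omega + (Ld\theta + v) \wedge \omega\bigr) = \Phi^*(\Re\Omega) + (Ld\theta + \Phi^*v) \wedge \Phi^*\omega,
\end{equation*}
which is exactly the claimed identification.

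The main ``obstacle'', which is quite mild, is just bookkeeping the topologies: one needs to know that the topology of Definition \ref{defin:ascyldifftopol} on $\Diff_0(M \times S^1)$ restricts nicely to the image of $\Phi \mapsto \hat\Phi$ from $\Diff_0(M)$, so that an isotopy downstairs lifts to a continuous path upstairs. This is transparent because the extension is by the identity on the $S^1$ factor, so no new decay or regularity issues are introduced on the end $N \times S^1 \times [0,\infty)$, and the limit data $(\tilde\Phi, L)$ on $N$ lift to $(\tilde\Phi \times \mathrm{id}_{S^1}, L)$ on $N \times S^1$.
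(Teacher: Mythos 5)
Your proof is correct, and it is exactly the routine verification the paper has in mind: the paper states this lemma without proof, remarking only that diffeomorphisms of $M$ extended by the identity "certainly" define $S^1$-invariant diffeomorphisms. Your lifting of the isotopy $\Phi_s$ to $\hat\Phi_s$ and the pullback computation using $\hat\Phi^*d\theta = d\theta$ supply precisely the details being taken for granted.
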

We would like $\Phi^*(\Re \Omega)) + (Ld\theta + v) \wedge \Phi^*(\omega)$ and $\Re \Omega + (Ld\theta + v) \wedge \omega$ to be identified in our $S^1$-invariant $G_2$ moduli space, as they correspond to the same element of the $SU(3)$ moduli space with the same twisting. Lemma \ref{easyextensionlemma} says that it is sufficient to choose some more diffeomorphisms so that $\Re \Omega + (Ld\theta + \Phi^*v) \wedge \omega$ and $\Re \Omega + (Ld\theta + v) \wedge \omega$ are identified. More concretely, we shall identify $S^1$-invariant $G_2$ structures where the twisting differs by $df$ for some (asymptotically translation invariant) $f$; $v-\Phi^*v$ is exact and it's clear that the resulting $f$ can be chosen to be asymptotically translation invariant if necessary, by its explicit form as the integral of an asymptotically translation invariant integrand.
\begin{lem}
\label{changingzlemma}
If $(\Omega, \omega)$ is a Calabi-Yau structure on $M$, $Ld\theta + v$ is a twisting, and $f$ is a bounded function on $M$, there is a diffeomorphism $\Phi \in \Diff_0^{S^1}(M \times S^1)$ such that
\begin{equation}
\label{eq:changingzlemma}
\Phi^*(\Re \Omega + (Ld\theta + v)\wedge \omega) = \Re \Omega + (L d\theta + v + df) \wedge \omega
\end{equation}
\end{lem}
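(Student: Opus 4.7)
The natural candidate is the fibrewise rotation by $f/L$: define
\begin{equation}
\Phi : M \times S^1 \to M \times S^1, \qquad \Phi(x,\theta) = \bigl(x,\, \theta + f(x)/L \bigr),
\end{equation}
where the second coordinate is read mod $2\pi$ (or mod the period of $\theta$). This is smooth because $f$ is, and it is a diffeomorphism with inverse $(x,\theta)\mapsto(x,\theta - f(x)/L)$. Since the second coordinate shift is independent of $\theta$, $\Phi$ commutes with the flow of $\ddth$, so $\Phi_*\ddth = \ddth$; hence $\Phi$ is $S^1$-invariant in the sense of Definition \ref{defin:s1invdiffeo}. To put $\Phi$ in the identity component, take the isotopy $\Phi_s(x,\theta) = (x, \theta + sf(x)/L)$ for $s \in [0,1]$, each of which is again a smooth $S^1$-invariant diffeomorphism.

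The computation is then short. Since $f$ is pulled back from $M$, $\Phi^*(d\theta) = d\theta + df/L$, and $\Phi$ fixes the $M$-coordinate so $\Phi^*\Omega = \Omega$, $\Phi^*\omega = \omega$, $\Phi^* v = v$. Therefore
\begin{equation}
\Phi^*\bigl(\Re\Omega + (Ld\theta + v)\wedge\omega\bigr) = \Re\Omega + \bigl(L d\theta + df + v\bigr)\wedge\omega,
\end{equation}
which is the required identity \eqref{eq:changingzlemma}.

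The only genuine issue is the asymptotically cylindrical case. For $\Phi$ to lie in $\Diff_0^{S^1}$ in the sense of Definitions \ref{acyldiffeo} and \ref{defin:acylisotopy}, it must be asymptotically cylindrical, and this requires $f$ to be asymptotically translation invariant so that $\Phi$ limits exponentially to $(n,t,\theta)\mapsto (n,t,\theta+\tilde f(n)/L)$; under our standing convention (stated after Definition \ref{basicacyldefs}) that all fields on asymptotically cylindrical manifolds are taken asymptotically translation invariant, this is automatic from the hypothesis that $f$ is a function on $M$. The isotopy $\Phi_s$ stays in the subspace $\D_\delta^g$ for a suitable $\delta$ (inherited from the decay rate of $\nabla f$) and varies continuously in the topology of Definition \ref{defin:Ddeltatopology}, giving a path to the identity within $\Diff_0^{S^1}$.

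The main (mild) obstacle is therefore just this bookkeeping about asymptotic cylindricality of $\Phi$ and continuity of the isotopy $\Phi_s$; the core algebraic computation is immediate because rotating in the $S^1$-fibre by an amount depending on $x$ adds exactly $df/L$ to $d\theta$ while leaving every form that lives on $M$ untouched.
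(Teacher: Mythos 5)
Your proposal is correct and follows essentially the same route as the paper: the paper also uses the fibrewise shift $\Phi_s(x,\theta) = (x, \theta + sf(x)/L)$, verifies $\Phi_{s*}\ddth = \ddth$ to place $\Phi = \Phi_1$ in $\Diff_0^{S^1}$, and computes $\Phi^*(d\theta) = d\theta + df/L$ while noting that $\Re\Omega$, $\omega$, $v$ are fixed. Your extra remarks on asymptotic translation invariance of $f$ in the asymptotically cylindrical case are consistent with the paper's standing conventions.
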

\begin{proof}
Consider the curve of diffeomorphisms 
\begin{equation}
\label{eq:changingzdiffeo}
\Phi_s: (x, \theta) \mapsto \left(x, \theta + \frac{sf(x)}L\right)
\end{equation}
Each $\Phi_s$ is clearly smooth and smoothly invertible by $(x, \theta) \mapsto (x, \theta - \frac{sf(x)}L)$. Moreover, it is easy to see that $\Phi_{s*} \ddth = \ddth$ for all $s$: hence $\Phi = \Phi_1$ is in $\Diff_0^{S^1}(M \times S^1)$. 

As all the $M^6$ coordinates are left unchanged, and the structure is invariant by $S^1$ (and so $\Re \Omega$, $\omega$ and $v$ are), $\Phi$ acts as the identity on them. However, by definition
\begin{equation}
\begin{aligned}
\Phi^*(d\theta) &=d(\theta\circ\Phi)\\
		&=d\left(\theta + \frac{f(x)}L\right) \\
		&=d\theta + \frac{df}L
\end{aligned}
\end{equation}
We obtain \eqref{eq:changingzlemma}. 
\end{proof}
Lemmas \ref{easyextensionlemma} and \ref{changingzlemma} show that if we quotient by $\Diff_0^{S^1}(M \times S^1)$ we have a well-defined map from the Calabi-Yau moduli space. We shall thus make

\begin{defin}
\label{defin:mg2s1}
\begin{equation}
\mathcal M_{G_2}^{S^1}(M \times S^1) := \frac{\text{$S^1$-invariant torsion-free $G_2$ structures}}{\Diff^{S^1}_0(M\times S^1)}
\end{equation}
\end{defin}
We shall call $\M_{G_2}^{S^1}$ the $S^1$-invariant $G_2$ moduli space. 

It remains to choose the ``moduli space" $Z$ of twistings $z$. Given Definition \ref{defin:mg2s1}, Lemma \ref{changingzlemma} implies that we have to quotient by the differentials of (asymptotically translation invariant) functions in order to make the induced map an injection. Consequently, we make
\begin{defin}
\label{defin:spacez}
Let the set of twisting-classes $Z$ be the quotient of the twistings of Definition \ref{defin:twistings}
\begin{equation}
\frac{\{\text{closed $S^1$-invariant $1$-forms $z = Ld\theta +v:L>0$ (with $\tilde z(\ddt) = 0$) on $M \times S^1$}\}}{\{\text{differentials of $S^1$-invariant (asymptotically translation invariant) functions}\}}
\end{equation}
\end{defin}
$Z$ has a relatively simple description, which is discussed in subsection \ref{ssec:spacez} leading to Lemma \ref{ccfcohom} below. 

We now verify that the map 
\begin{equation}
\M_{SU(3)} \times Z \to \M_{G_2}^{S^1}
\end{equation}
induced from Theorem \ref{g2su3alltold} is a well-defined bijection. Well-definition follows from Lemmas \ref{easyextensionlemma} and \ref{changingzlemma}; injectivity is
\begin{lem}
\label{s1invdiffstr}
Suppose that $\Phi \in \Diff_0^{S^1}(M \times S^1)$ and that $(\Omega, \omega)$ is a Calabi-Yau structure on $M$ with $z = Ld\theta + v$ a closed complementary covector field with positive orientation (\ie $L>0$.) Then, choosing a point on $S^1$ and so an identification of $S^1$ with $\frac \R \Z$, $\Phi$ is of the form $(p, \theta) \mapsto (f(p), \theta + g(p))$ for some smooth functions $f$ and $g$. Consequently, there exist $\Phi_1 \in \Diff_0(M)$ and $\Phi_2$ the time-$1$ flow of $f \ddth$ (for $f$ asymptotically translation invariant, if necessary) such that
\begin{equation}
\Phi^* \Omega = \Phi_2^* \Phi_1^* \Omega \qquad \Phi^* \omega = \Phi_2^* \Phi_1^* \omega \qquad \Phi^* z = \Phi_2^* \Phi_1^* z
\end{equation}
\end{lem}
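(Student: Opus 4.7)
The plan is to exploit the $S^1$-equivariance condition $\Phi_* \ddth = \ddth$ to force $\Phi$ into the split form $(p,\theta) \mapsto (f(p), \theta + g(p))$, and then to read off $\Phi_1$ and $\Phi_2$ directly. The key observation is that $\Phi_* \ddth = \ddth$ is equivalent to $\Phi$ commuting with the flow $\Theta_s$ of $\ddth$, so $\Phi$ permutes the orbits $\{p\} \times S^1$ of the $S^1$-action. Writing $\Phi(p,\theta) = (F(p,\theta), G(p,\theta))$, this forces $F$ to be independent of $\theta$; set $f(p) := F(p,\theta)$. On each orbit, $\Phi$ restricts to an $\R/\Z$-equivariant self-map of the circle, which, after the chosen identification $S^1 \cong \R/\Z$, must be a translation, giving $G(p,\theta) = \theta + g(p)$. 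Smoothness of $f$ and $g$, and $f$ being a diffeomorphism of $M$, follow immediately.

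With this structure in hand, set $\Phi_1(p,\theta) := (f(p), \theta)$ and $\Phi_2(p,\theta) := (p, \theta + g(p))$. Then $\Phi_2$ is by construction the time-one flow of the vector field $g\,\ddth$, and $\Phi_1 \circ \Phi_2 = \Phi$ on the nose, whence $\Phi^* = \Phi_2^* \Phi_1^*$. To see that $\Phi_1 \in \Diff_0(M)$, I choose a continuous path $\Phi_r$ from $\id$ to $\Phi$ in $\Diff_0^{S^1}$; applying the structural observation at each $r$ yields a continuous path $r \mapsto f_r$ from $\id_M$ to $f$ through diffeomorphisms of $M$ (continuity coming from the evident continuity of the map $\Phi \mapsto f$ in the topology of Definition \ref{defin:ascyldifftopol}, respectively the smooth topology in the compact case). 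The three pullback identities then reduce to direct calculations: $\Omega$, $\omega$, and $v$ are lifted from $M$ and $\Phi_2$ fixes $M$-coordinates, while $\Phi_2^* d\theta = d\theta + dg$, so $\Phi_2^* \Phi_1^* \Omega = f^* \Omega = \Phi^* \Omega$, similarly for $\omega$, and $\Phi_2^* \Phi_1^* z = L(d\theta + dg) + f^* v = \Phi^* z$.

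The main obstacle is asymptotic control. In the asymptotically cylindrical case one needs $f$ to be an asymptotically cylindrical diffeomorphism of $M$ and $g$ to be asymptotically translation invariant, so that $\Phi_1$ and $\Phi_2$ are themselves asymptotically cylindrical and the parenthetical condition in the statement holds. For this I apply the splitting argument of the first paragraph to the limit $\tilde\Phi$ on $N \times \R \times S^1$: $S^1$-equivariance and asymptotic cylindricality jointly force $\tilde\Phi$ to have the form $(n,t,\theta) \mapsto (\tilde f(n,t), \theta + \tilde g(n))$ with the translation parameter of $\tilde\Phi$ lying in the $t$-direction, giving the required decay of $f$ to its limit $\tilde f$ as an asymptotically cylindrical diffeomorphism of $M$ and of $g$ to $\tilde g$ along $N$. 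Beyond this check, the lemma is essentially a structural rewriting.
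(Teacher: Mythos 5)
Your proof is correct and follows the same basic strategy as the paper's: extract the split form $(p,\theta)\mapsto(f(p),\theta+g(p))$ from $S^1$-equivariance (the paper pushes the curve $(p,s\theta')$ forward using $\Phi_*\ddth=\ddth$; your commutation-with-$\Theta_s$ argument is the same observation), then take $\Phi_1$ to be the extension of $f$ and let $\Phi_2$ absorb the $\theta$-shift. The second half is where you diverge slightly, and your route is in fact cleaner: the paper does not construct $\Phi_2$ explicitly from $g$, but instead notes that $\Phi^*z-\Phi_1^*z$ is exact (both diffeomorphisms being isotopic to the identity) and invokes Lemma \ref{changingzlemma} to produce some $\Phi_2$; accordingly it only claims the pullback identities, not $\Phi=\Phi_1\Phi_2$ (see the remark following the statement), whereas your direct definition $\Phi_2(p,\theta)=(p,\theta+g(p))$ gives the exact factorisation $\Phi=\Phi_1\circ\Phi_2$. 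The one point you should make explicit is that a priori the splitting only produces $g$ as a \emph{circle-valued} function, while "the time-$1$ flow of $g\ddth$" (and the asymptotically translation invariant function required in the statement) needs a genuine real-valued lift; this is exactly where the hypothesis $\Phi\in\Diff_0^{S^1}$ enters. The paper gets it implicitly through exactness of $\Phi^*z-\Phi_1^*z$ and Lemma \ref{changingzlemma}; in your setup the isotopy $\Phi_r$ you already use to show $f\in\Diff_0(M)$ gives a null-homotopy of $g:M\to S^1$ and hence the lift, but this should be said rather than assumed. Your asymptotic-control paragraph (passing the splitting to the limit on $N\times\R\times S^1$ to see $f$ is asymptotically cylindrical and $g$ asymptotically translation invariant, with the translation component of the limit confined to the $t$-direction) supplies detail the paper leaves implicit and is fine, modulo stating that the exponential decay in Definition \ref{acyldiffeo} transfers to the components $f$ and $g$.
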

\begin{rmk}
In Lemma \ref{s1invdiffstr}, we do not claim that $\Phi = \Phi_1 \Phi_2$, though this will of course be true up to an isometry. 
\end{rmk}
\begin{proof}
Given $0 \in S^1$, write $\Phi(p, 0) = (f(p), g(p))$ for smooth functions $f$ and $g$. Now suppose $(p, \theta') \in S^1$, and consider the curve $(p, s\theta')$ between $(p, 0)$ and $(p, \theta')$ in $M \times S^1$. At all points of this curve its derivative is $\theta'\ddth$. Consequently, the derivative of its image is $\theta' \ddth$ and so its image is $(f(p), g(p) + s\theta')$. Hence $\Phi(p, \theta') = (f(p), g(p) + \theta')$, as required. Hence, $f$ is a diffeomorphism. 

Now let $\Phi_1$ be the extension of the map $f$ as in Lemma \ref{easyextensionlemma}. It is then clear that we have $\Phi_1^* \Omega = \Phi^* \Omega$ and $\Phi_1^* \omega = \Phi^* \omega$. 

The $\theta$ component is preserved by $\Phi_1$, and so $\Phi_1^* (td\theta + v) = td\theta + \Phi^* v$. As before, $\Phi^* v - \Phi_1^* v$ is exact, and in the asymptotically cylindrical case is the differential of an asymptotically translation invariant function. Thus there exists such a $\Phi_2$, by Lemma \ref{changingzlemma}.
\end{proof}
\begin{prop}
\label{setproduct}
\label{topproduct}
The map induced by the bijection of Theorem \ref{g2su3alltold} is a well-defined homeomorphism
\begin{equation}
\label{eq:mg2s1productmap}
\M_{SU(3)}(M) \times Z \to \M_{G_2}^{S^1}(M \times S^1)
\end{equation}
\end{prop}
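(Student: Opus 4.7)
The strategy is to verify in three stages that the set-level map
\[
\bigl((\Omega,\omega),\; Ld\theta + v\bigr) \longmapsto \Re\Omega + (Ld\theta + v)\wedge\omega
\]
descends to a well-defined map on the quotients, that the induced map is a bijection, and finally that both it and its inverse are continuous. The analytic input has essentially already been assembled: Lemmas \ref{easyextensionlemma} and \ref{changingzlemma} will give well-definedness, Lemma \ref{s1invdiffstr} will give injectivity, Theorem \ref{g2su3alltold} will give surjectivity, and Proposition \ref{g2su3prop1} will give the Fr\'echet-smooth formulas needed for continuity.

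\textbf{Well-definedness.} I will consider source representatives $((\Omega,\omega),Ld\theta+v)$ and $((\Phi^*\Omega,\Phi^*\omega),Ld\theta+v+df)$ identified in $\M_{SU(3)}\times Z$, with $\Phi\in\Diff_0(M)$ and $f$ an (asymptotically translation invariant) $S^1$-invariant function. First, Lemma \ref{easyextensionlemma} produces $\hat\Phi\in\Diff_0^{S^1}$ sending $\Re\Omega+(Ld\theta+v)\wedge\omega$ to $\Phi^*\Re\Omega + (Ld\theta+\Phi^*v)\wedge\Phi^*\omega$. I then need a $\Psi\in\Diff_0^{S^1}$ carrying this to $\Phi^*\Re\Omega + (Ld\theta+v+df)\wedge\Phi^*\omega$, which by Lemma \ref{changingzlemma} reduces to writing $v+df-\Phi^*v=d(f+g)$ for a bounded (resp.\ asymptotically translation invariant) function $g$ with $dg=v-\Phi^*v$. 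For this, pick an isotopy $\Phi_t$ from $\id$ to $\Phi$ in $\Diff_0(M)$ with time-dependent generator $X_t$; Cartan's formula and integration in $t$ give $v-\Phi^*v=-d\int_0^1\Phi_t^*(\iota_{X_t}v)\,dt$, and in the asymptotically cylindrical case the integrand is asymptotically translation invariant once the isotopy is chosen in the sense of Definition \ref{defin:acylisotopy}.

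\textbf{Bijectivity.} Surjectivity follows at once: any $S^1$-invariant torsion-free $G_2$ structure on $M\times S^1$ is of the form $\Re\Omega+z\wedge\omega$ by Theorem \ref{g2su3alltold}. For injectivity, I suppose $\Phi\in\Diff_0^{S^1}$ identifies $\Re\Omega_1+z_1\wedge\omega_1$ with $\Re\Omega_2+z_2\wedge\omega_2$. By Lemma \ref{s1invdiffstr} such a $\Phi$ has the form $(p,\theta)\mapsto(f(p),\theta+h(p))$ with $f\in\Diff_0(M)$, so
\[
\Phi^*\bigl(\Re\Omega_2+(L_2 d\theta+v_2)\wedge\omega_2\bigr)=f^*\Re\Omega_2+(L_2 d\theta+L_2\,dh+f^*v_2)\wedge f^*\omega_2.
\]
The uniqueness clause of Theorem \ref{g2su3alltold} then forces $(\Omega_1,\omega_1)=f^*(\Omega_2,\omega_2)$, $L_1=L_2$, and $v_1=L_2\,dh+f^*v_2$. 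The first equation gives $[(\Omega_1,\omega_1)]=[(\Omega_2,\omega_2)]$ in $\M_{SU(3)}$, and $v_1-v_2=L_2\,dh+(f^*v_2-v_2)$ is exact with an asymptotically translation invariant primitive by the same Cartan computation as in the well-definedness step, so $[z_1]=[z_2]$ in $Z$.

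\textbf{Continuity.} The map at the level of representatives is smooth as a map of Fr\'echet spaces in both directions by Proposition \ref{g2su3prop1}, so composing with the quotient projections yields continuous maps from the space of triples into $\M_{G_2}^{S^1}$ and from the space of $S^1$-invariant torsion-free $G_2$ structures into $\M_{SU(3)}\times Z$. Both factor through the respective source quotients by the well-definedness and injectivity arguments above, and the universal property of the quotient topology then delivers continuity of the bijection and of its inverse. The only substantive obstacle, distributed across the well-definedness and injectivity steps, is the careful bookkeeping needed to ensure that in the asymptotically cylindrical case the primitives involved in exact forms are themselves asymptotically translation invariant, so that they lie in the space of functions quotiented out in the definition of $Z$; once Definition \ref{defin:acylisotopy} is in hand this is essentially routine.
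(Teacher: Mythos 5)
Your proposal is correct and follows essentially the same route as the paper: well-definedness via Lemmas \ref{easyextensionlemma} and \ref{changingzlemma}, injectivity via Lemma \ref{s1invdiffstr} together with the uniqueness in Theorem \ref{g2su3alltold}, surjectivity from Theorem \ref{g2su3alltold}, and continuity in both directions from the Fr\'echet continuity of Proposition \ref{g2su3prop1} plus the quotient topologies. Your Cartan-formula bookkeeping for asymptotically translation invariant primitives just makes explicit what the paper asserts in the paragraph preceding Lemma \ref{changingzlemma}.
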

The fact that \eqref{eq:mg2s1productmap} is a homeomorphism follows immediately as the maps between structures are continuous and we just take the quotient topology. However, we know very little about what the topology on the right hand side looks like. 
\subsection{Smoothness of the \texorpdfstring{$S^1$}{S¹}-invariant \texorpdfstring{$G_2$}{G₂} moduli space}
\label{ssec:s1invg2modspacesmooth}
In order to use the homeomorphism of Proposition \ref{topproduct} to show the moduli space of Calabi-Yau structures $\M_{SU(3)}$ is a manifold, we first have to show that $\M_{G_2}^{S^1}$ is a manifold. The objective of this subsection is to prove Theorem \ref{mg2s1mfd}, which says that $\M_{G_2}^{S^1}$ is a manifold and in fact is locally diffeomorphic to $\M_{G_2}$. The idea of the proof is that the constructions of the $G_2$ moduli space due to Hitchin \cite{hitchinthreeforms} and Nordstr\"om \cite{nordstromacyldeformations} work by, given a $G_2$ structure, constructing a geometrically natural premoduli space of structures and arguing that this premoduli space is locally homeomorphic to the moduli space. Since the construction is geometrically natural, if the $G_2$ structure concerned is $S^1$-invariant, the premoduli space also consists of $S^1$-invariant structures, and the result then follows in the same way. 

In the $G_2$ case, the standard result is 
\begin{thm}[\cite{hitchinthreeforms} for (a), {\cite[Proposition 6.18]{nordstromacyldeformations}} for (b), cf. {\cite[Theorem 7.1]{ebin}} for (c)(i), (c)(ii) by combining the proofs of (a) and (b) with (c)(i)]
\label{g2slicetheorem}
Let $M$ be a six-dimensional manifold. 
\begin{enumerate}[a)]
\item 
If $M$ is compact, the moduli space of torsion-free $G_2$ structures on $M \times S^1$ is a smooth manifold. It is locally diffeomorphic to $H^3(M\times S^1) \cong H^3(M) \oplus H^2(M)$ by the well-defined map that takes a representative of a moduli class to its cohomology class. 

\item 
If $M$ has an end, the moduli space of asymptotically cylindrical torsion-free $G_2$ structures on $M \times S^1$ is a smooth manifold. It is locally diffeomorphic to a submanifold of $H^3(M \times S^1) \oplus H^2(N \times S^1) \cong H^3(M) \oplus H^2(M) \oplus H^2(N) \oplus H^1(N)$, by the well-defined map taking a representative $\phi$ of a moduli class with limit $\tilde \phi = \tilde \phi_1 + dt \wedge \tilde\phi_2$ to the pair $([\phi], [\tilde \phi_2])$. This submanifold is wholly determined by the requirement that there be a Calabi-Yau structure $(\Omega, \omega)$ on $N \times S^1$ with $(\Re \Omega, \omega) = (\tilde \phi_1, \tilde \phi_2)$, which follows from section \ref{sec:g2su3}. 

\item In either case, given a (asymptotically cylindrical) torsion-free $G_2$ structure $\phi$ on $M \times S^1$, we may find a set $U$ of such structures containing $\phi$ and with the following properties. 
\begin{enumerate}[i)]
\item All the structures in the chart have the same group $\Aut$ of (asymptotically cylindrical) automorphisms isotopic to the identity. 

\item There are neighbourhoods $D$ of $[\id] \in \frac{\Diff_0}{\Aut}$ and $V$ of $\phi$ in the set of all (asymptotically cylindrical) torsion-free $G_2$ structures such that the pullback map defines a bijection between the product $D \times U$ and $V$ (in fact, a homeomorphism). 
\end{enumerate}
The set $U$ is called a slice neighbourhood (or just a slice). 
\end{enumerate}
\end{thm}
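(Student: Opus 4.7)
The plan is to prove Theorem \ref{g2slicetheorem} by constructing, around each torsion-free $G_2$ structure, a smooth slice transverse to the $\Diff_0$-orbit which is parametrised by an appropriate cohomological period map. I would follow Hitchin's approach for the compact case and extend it using the weighted elliptic theory of Lockhart--McOwen in the asymptotically cylindrical case.

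First, given a torsion-free $G_2$ structure $\phi_0$, I would look for nearby torsion-free structures of the form $\phi = \phi_0 + \eta$ with $\eta$ closed and small. Using Hodge theory with respect to $g_{\phi_0}$, I decompose $\eta = \eta_h + d\alpha$, with $\eta_h$ harmonic (in the acyl case, asymptotically translation-invariant with harmonic limit) and impose the gauge condition $d^*\alpha = 0$. The remaining condition $d*_\phi \phi = 0$ is then a nonlinear PDE in $\alpha$ whose linearisation at $\alpha = 0$ is essentially the Hodge Laplacian on coexact $3$-forms, and hence elliptic (weighted-elliptic for a small $\delta > 0$ that avoids the indicial roots in the acyl case). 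The implicit function theorem should then produce, for each small $\eta_h$, a unique small $\alpha$ such that $\phi_0 + \eta_h + d\alpha$ is torsion-free, giving the required slice.

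In the compact case (a), this immediately yields a local diffeomorphism of the moduli space with a neighbourhood of $[\phi_0]$ in $H^3(M \times S^1)$. In the asymptotically cylindrical case (b), working in the weighted H\"older spaces $C^{k, \alpha}_\delta$ of Definition \ref{defcktopology}, the slice is parametrised not just by $[\phi]$ but also by the limit data $[\tilde \phi_2]$: the extra $H^2(N \times S^1)$ freedom arises because the cokernel of the weighted Hodge Laplacian on $3$-forms picks up translation-invariant harmonic terms at the end. The submanifold condition of (b) then expresses the algebraic restriction on $(\tilde\phi_1, \tilde\phi_2)$ needed to come from a genuine translation-invariant torsion-free $G_2$ structure on the cylindrical end, which by section \ref{sec:g2su3} is equivalent to asking for a Calabi-Yau structure on $N \times S^1$.

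For (c), the slice $U$ is precisely the image produced by the implicit function theorem above. Constancy of $\Aut$ along a sufficiently small $U$ follows because $\Aut(\phi)$ is determined by the parallel tensors of the induced metric $g_\phi$, whose dimension is upper semicontinuous in $\phi$ and hence locally constant on the open subset of $U$ where it attains its infimum. The local product $V \cong D \times U$ then comes from the transversality of the gauge slice to the $\Diff_0$-orbit: $D$ is parametrised by vector fields in an $L^2$-complement of the Killing fields of $\phi_0$ (using the topology of Definition \ref{defin:Ddeltatopology} in the acyl case), exponentiated to diffeomorphisms. The main obstacle throughout is the asymptotically cylindrical Fredholm analysis in (b): choosing the right weight $\delta$, computing the cokernel of the gauge-fixed linearisation, and verifying it corresponds exactly to the combined cohomology $H^3(M \times S^1) \oplus H^2(N \times S^1)$. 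Once that analytic machinery is established, following \cite{nordstromacyldeformations}, the remaining steps parallel the compact case closely.
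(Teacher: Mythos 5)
Your overall strategy (fix a gauge, solve the torsion-free equation by the implicit function theorem, parametrise the solutions by harmonic representatives of cohomology, and redo the analysis in weighted H\"older spaces on the end) is the same as in the sources this theorem is quoted from (Hitchin for (a), Nordstr\"om for (b)), but there is a genuine gap at the central analytic step. The gauge condition $d^*\alpha = 0$ on the potential of the exact part fixes only the choice of potential; it does not break the diffeomorphism invariance of the equation, so the linearisation is not ``essentially the Hodge Laplacian on coexact forms''. Concretely, because the assignment $\phi \mapsto *_\phi \phi$ commutes with pullback, its derivative at the torsion-free structure $\phi_0$ sends $\L_X \phi_0$ to $\L_X(*_{\phi_0}\phi_0)$, which is closed; writing $\L_X\phi_0 = d(\iota_X\phi_0) = d\alpha_X$ with $\alpha_X$ the coexact part of $\iota_X\phi_0$, every such $\alpha_X$ lies in your gauge-fixed domain and in the kernel of your linearised operator. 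As $X$ ranges over all vector fields this kernel is infinite-dimensional, so the implicit function theorem cannot produce ``a unique small $\alpha$ for each $\eta_h$'': indeed $\Phi^*\phi_0$, for $\Phi \in \Diff_0$ close to the identity, is itself of the form $\phi_0 + d\alpha$ with $\alpha$ small and coclosed, giving an infinite-dimensional family of torsion-free solutions with $\eta_h = 0$. For the same reason the assertion that (a) ``immediately'' yields a local diffeomorphism with $H^3(M\times S^1)$ is unjustified: local injectivity of the period map --- that nearby cohomologous torsion-free structures are $\Diff_0$-equivalent --- is precisely what a genuine slice must supply, and your gauge does not supply it. The repair is the one used by Hitchin (and by Nordstr\"om in the asymptotically cylindrical case), recorded in the paper as Propositions \ref{cptslice} and \ref{cptdetails}: take the slice $E$ of closed $3$-forms with $d^*$ lying in $\Omega^2_{14}$, which is $L^2$-orthogonal to the Lie-derivative directions $\L_X\phi_0$ (the $\Omega^2_7$ component is exactly the diffeomorphism direction), and then apply the implicit function theorem to $F(\phi) = P(*_0 *_\phi \phi)$ on $E$, whose derivative has kernel the harmonic forms and is surjective onto exact forms; the product statement (c)(ii) then requires its own application of the inverse function theorem to the pullback map on $\Diff_0 \times E$, not merely the remark that the slice is transverse.

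Two smaller points. For (c)(i), your semicontinuity argument controls at best the dimension of the automorphism group, and only near points where that dimension attains its local infimum, which need not include $\phi_0$; the theorem asserts that the group $\Aut$ itself is literally the same for all structures in the chart, which in the cited proofs comes out of the Ebin-type slice construction rather than from semicontinuity. For (b), your description of the extra $H^2(N\times S^1)$ parameters as limit data is correct in spirit, but the argument is organised differently in Nordstr\"om's proof (first the deformation theory of the Calabi-Yau limit on $N \times S^1$ and the identification of which limits occur, then a slice using vector fields whose limits are Killing fields); the gap identified above would in any case propagate into your treatment of the asymptotically cylindrical case.
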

We concentrate on the proofs of (a) and (b); (c)(i) follows in exactly the same way in the $S^1$-invariant case as in the general case, and then (c)(ii) also follows. We begin with (a), as it is simpler. From the work of Hitchin, we extract Propositions \ref{cptslice} and \ref{cptdetails} which together prove (a). Of course, since we rely on the implicit function theorem, these should properly be stated in terms of suitable Banach spaces. However, the choice of Banach spaces is straightforward and of no relevance to the introduction of $S^1$-invariance; consequently we omit it. 

We first need a slice for the $\Diff_0$ action at some $G_2$ structure $\phi$, that is, essentially, a local cross-section of the quotient. To choose a slice, we need the following standard fact about two-forms on a manifold with a torsion-free $G_2$ structure. 
\begin{lem}[cf. {\cite[Lemma 11.4]{salamonbook}}]
\label{twoformssplitting}
Let the seven-manifold $X$ admit a $G_2$ structure $\phi$. We then have an isomorphism of bundles
\begin{equation}
\bigwedge\nolimits^{\!2} T^*(X) = \bigwedge\nolimits^{\!2}_7 \oplus \bigwedge\nolimits^{\!2}_{14}
\end{equation}
where $\bigwedge^2_7$ is a rank-seven bundle given by contractions of $\phi$ with tangent vectors, $\bigwedge^2_{14}$ is a rank-fourteen bundle, and the fibres of these sub-bundles are orthogonal with respect to the inner product induced on two-forms by $\phi$. 
\end{lem}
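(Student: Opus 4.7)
The plan is to reduce the claim to a pointwise statement about the representation of $G_2$ on the exterior square of its standard representation, and then let the $G_2$ structure on $X$ propagate this linear-algebraic decomposition to a bundle decomposition.

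First I would work at a single point $p \in X$. Choose a basis $e_1, \ldots, e_7$ of $T_p^*X$ in which $\phi_p$ takes the standard form \eqref{eq:g2structure}; the stabiliser of $\phi_p$ in $GL(T_p X)$ is then a copy of $G_2$. Because $G_2 \subset SO(7)$, the metric that $\phi$ induces on $T_pX$ and hence on $\bigwedge^2 T_p^* X$ is $G_2$-invariant, and $\bigwedge^2 T_p^* X$ becomes a 21-dimensional real $G_2$-representation.

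Next I would exhibit the rank-seven factor. Consider the linear map $T_p X \to \bigwedge^2 T_p^*X$ defined by $v \mapsto \iota_v \phi_p$. Since $\phi_p$ is $G_2$-invariant, this map is $G_2$-equivariant; a short explicit computation using \eqref{eq:g2structure} verifies it is injective, so its image is a 7-dimensional $G_2$-invariant subspace, which we call $\bigwedge\nolimits^{\!2}_7|_p$. Its orthogonal complement under the $G_2$-invariant inner product is then automatically a 14-dimensional $G_2$-invariant subspace, which we call $\bigwedge\nolimits^{\!2}_{14}|_p$; orthogonality is by construction, so the pointwise decomposition and orthogonality claims hold at $p$.

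Finally I would promote this to a global bundle statement. Because the $G_2$-structure $\phi$ on $X$ is, equivalently, a reduction of the frame bundle to a principal $G_2$-bundle, and because both subspaces $\bigwedge\nolimits^{\!2}_7|_p$ and $\bigwedge\nolimits^{\!2}_{14}|_p$ were defined in a manifestly $G_2$-invariant fashion (one as the image of the $G_2$-equivariant contraction map $v \mapsto \iota_v\phi$, the other as its orthogonal complement under the $G_2$-invariant inner product), they assemble into globally defined smooth subbundles $\bigwedge^2_7, \bigwedge^2_{14} \subset \bigwedge^2 T^*X$ of the indicated ranks, orthogonal with respect to the induced fibrewise inner product.

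The main (really, only) obstacle is the pointwise verification that $v \mapsto \iota_v \phi_p$ is injective of rank seven; this is a direct calculation from \eqref{eq:g2structure}. (Equivalently, one may invoke the well-known $G_2$-decomposition $\bigwedge^2\R^7 \cong V_7 \oplus V_{14}$ into standard and adjoint representation, both irreducible and inequivalent, in which case orthogonality is immediate by Schur's lemma applied to the $G_2$-invariant inner product.) Everything else is formal, and in particular smoothness of the subbundles is automatic because they are preserved under the $G_2$ transition functions of the reduced frame bundle.
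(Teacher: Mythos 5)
Your proof is correct. The paper itself gives no proof of this lemma — it is quoted as a standard fact with a citation to Salamon — and your argument (pointwise $G_2$-representation theory on $\bigwedge^2 T_p^*X$, $G_2$-equivariance and injectivity of $v \mapsto \iota_v\phi_p$, taking the orthogonal complement, then globalising via the reduction of the frame bundle to a principal $G_2$-bundle, or equivalently via the constant-rank smooth bundle map) is exactly the standard argument behind the cited result; the one step you leave as a computation, injectivity of the contraction map, is routine and can also be read off from the nondegeneracy encoded in the form $B_\phi$ of \eqref{eq:bphi}.
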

We apply Lemma \ref{twoformssplitting} in the case where $X = M \times S^1$. It is virtually sufficient to prove the following proposition. Note that here we make no torsion-freeness assumption on the three-forms other than $\phi$. 
\begin{prop}[cf. {\cite[{bottom of p.23}]{hitchinthreeforms}}]
\label{cptslice}
Let $\phi$ be a torsion-free $G_2$ structure on the compact seven-manifold $M \times S^1$. Let
\begin{equation}
E = \{\alpha \in \Omega^3(M \times S^1):  d\alpha = 0, d^*\alpha \in \Omega^2_{14}\}
\end{equation}
where $\Omega^2_{14}$ is the subspace of $2$-forms which at every point are in the subbundle $\bigwedge\nolimits^{\!2}_{14}$. 

Then $E$ is $L^2$-orthogonal to the space of Lie derivatives $\L_X \phi$ of $\phi$, with respect to the metric at $\phi$, and the sum of these spaces is the set of all three-forms. (We choose the Banach spaces so that the projection onto $E$ is continuous). Consequently, locally $E$ is transverse to the orbits of the identity component of the diffeomorphism group. 
\end{prop}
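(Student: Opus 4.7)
The plan is to prove the orthogonality by an integration-by-parts manoeuvre and the spanning claim (which, to give the transversality to $\Diff_0$-orbits stated afterward, I read at the level of closed three-forms, since every orbit-tangent direction $\L_X\phi$ is exact and so closed) by inverting a second-order elliptic operator on the compact manifold $M\times S^1$. The key initial observation is that, because $\phi$ is torsion-free and hence closed, Cartan's formula reduces to $\L_X \phi = d\iota_X \phi$; moreover Lemma \ref{twoformssplitting} gives a bundle isomorphism from $T(M\times S^1)$ to $\bigwedge\nolimits^{\!2}_7$ via $X \mapsto \iota_X\phi$, so the orbit-tangent space is exactly $\{d\beta:\beta\in\Omega^2_7\}$.

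Orthogonality is then immediate: for $\alpha \in E$ and a vector field $X$,
\begin{equation*}
\la \alpha, \L_X\phi\ra_{L^2} = \la \alpha, d\iota_X\phi\ra = \la d^*\alpha, \iota_X\phi\ra,
\end{equation*}
and this last pairing vanishes pointwise (and hence integrally) because $d^*\alpha\in\Omega^2_{14}$ by the definition of $E$, while $\iota_X\phi \in \Omega^2_7$, and the two subbundles are pointwise orthogonal by Lemma \ref{twoformssplitting}.

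For the spanning claim, given a closed three-form $\gamma$, I would seek to write $\gamma = \alpha + d\iota_X\phi$ with $\alpha \in E$. The closedness $d\alpha = 0$ is automatic from $d\gamma=0$ and $d^2 = 0$, so what remains is to arrange $\pi_7 d^*\alpha = 0$, i.e.\ to solve
\begin{equation*}
PX \mathrel{:=} \pi_7 d^*d \iota_X\phi = \pi_7 d^*\gamma
\end{equation*}
for a vector field $X$, where $\pi_7$ is the orthogonal projection onto $\Omega^2_7$. Through the isomorphism $X \leftrightarrow \iota_X\phi$ the operator $P$ is second-order and formally self-adjoint, as $\la PX, Y\ra = \la d\iota_X\phi, d\iota_Y\phi\ra$; a principal-symbol computation (it is essentially $d^*d$ conjugated by bundle isomorphisms) shows $P$ is elliptic. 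Standard Hodge theory on compact $M\times S^1$ then gives $\mathrm{im}\,P = (\ker P)^\perp$, and the right-hand side lies in this complement because any $X\in\ker P$ satisfies $\|d\iota_X\phi\|^2 = \la PX, X\ra = 0$ and hence $\la\pi_7 d^*\gamma, \iota_X\phi\ra = \la d^*\gamma, \iota_X\phi\ra = \la\gamma, d\iota_X\phi\ra = 0$. Solving gives the desired $X$, after which $\alpha := \gamma - d\iota_X\phi$ lies in $E$; transversality to the $\Diff_0$-orbits is then immediate since those orbits have tangent directions precisely of the form $\L_X\phi$.

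The main hurdle I anticipate is the cleanest verification of ellipticity and self-adjointness of $P$ in a Banach-space completion in which the projection onto $E$ is continuous; however this is standard in the $G_2$ slice-theorem literature (e.g.\ Hitchin \cite{hitchinthreeforms}), and the only specifically seven-dimensional ingredient needed beyond Lemma \ref{twoformssplitting} is that compactness of $M\times S^1$ lets Hodge theory close the argument without modification.
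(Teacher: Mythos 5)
Your proposal is correct, and since the paper does not reproduce a proof of Proposition \ref{cptslice} but defers to Hitchin, your argument — writing $\L_X\phi = d\iota_X\phi$ and identifying the orbit directions with $d(\Omega^2_7)$ via $X\mapsto\iota_X\phi$, getting orthogonality by integration by parts together with the pointwise orthogonality of $\bigwedge^2_7$ and $\bigwedge^2_{14}$, and getting the spanning by the Fredholm alternative for the formally self-adjoint elliptic operator $X\mapsto\pi_7 d^*d\iota_X\phi$ (whose kernel consists of infinitesimal automorphisms) — is precisely the standard slice argument of the cited source; the only pointwise input you leave implicit in the ellipticity check is that $\xi\wedge\iota_X\phi\neq 0$ for nonzero $\xi$ and $X$, which does hold for $G_2$ and makes the symbol positive definite on $\bigwedge^2_7$. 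Your reading of ``the set of all three-forms'' as the closed three-forms is the right one (both summands are closed), and it agrees with how the paper itself uses the statement in Corollary \ref{cor:g2cpts1invarslice}.
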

Now we have to pass to the torsion-free $G_2$ structures in the slice $E$. 
\begin{prop}[{\hspace{1sp}\cite[p. 35]{hitchinthreeforms}}]
\label{cptdetails}
Let $\phi_0$ be a torsion-free $G_2$ structure on $M \times S^1$ as in the previous proposition, and let $E$ be as stated there. Define a map $F$ from a neighbourhood of $\phi_0 \in E$ to exact forms by $F(\phi) = P(*_0 * \phi)$ where $*_0$ is the Hodge star induced by $\phi_0$, $*$ is the Hodge star induced by $\phi$, and $P$ is the orthogonal projection onto exact forms induced by $\phi_0$. 

If $F(\phi) = 0$ for $\phi$ sufficiently close to $\phi_0$, then $\phi$ is itself a torsion-free $G_2$ structure. 

The derivative $DF$ has kernel consisting of harmonic forms and is surjective to the exact forms, proving (a) of Theorem \ref{g2slicetheorem}. 
\end{prop}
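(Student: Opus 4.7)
My plan is to prove this by an implicit function theorem argument: Proposition \ref{cptslice} already arranges $E$ as a local slice for the $\Diff_0$-action on closed $G_2$ structures, so it remains to identify the torsion-free structures inside $E$ as the zero set of the smooth Fredholm map $F$, and to show that the linearisation of $F$ at $\phi_0$ has kernel the harmonic 3-forms (giving the expected cohomological parametrisation) and is surjective onto exact 3-forms.

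For the first claim, I would begin by noting that being a $G_2$ 3-form is an open condition in $\Omega^3$, so every $\phi \in E$ sufficiently close to $\phi_0$ is automatically a $G_2$ structure; together with $d\phi = 0$, built into the definition of $E$, only $d*\phi = 0$ remains to be checked. The hypothesis $F(\phi) = P(*_0 * \phi) = 0$ says that $*_0 * \phi$ has no exact component in its Hodge decomposition with respect to $\phi_0$, so $*_0 * \phi = h + d^*_0 \gamma$ for some $\phi_0$-harmonic $h$ and some $\gamma$. Both summands are $d^*_0$-closed, so $d^*_0(*_0 * \phi) = 0$. Using $d^*_0 = \pm\, *_0 d *_0$ and $*_0 *_0 = \pm \id$ on each degree, this reduces to $*_0(d*\phi) = 0$, and invertibility of $*_0$ gives $d*\phi = 0$.

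For the second claim, I would first compute $DF$ at $\phi_0$. Because $\phi_0$ is torsion-free (hence $\phi_0$-harmonic), one has $F(\phi_0) = \pm P(\phi_0) = 0$. Applying Hitchin's linearisation formula for $\phi \mapsto *_\phi \phi$ at a $G_2$ structure, expressed via the $G_2$-type decomposition $\Omega^3 = \Omega^3_1 \oplus \Omega^3_7 \oplus \Omega^3_{27}$, yields an explicit expression for $DF(\dot\phi)$ as the projection onto exact 3-forms of a fixed linear combination of the three type-components of $\dot\phi$. I would then show $\ker DF = \H^3$: harmonic 3-forms clearly lie in $T_{\phi_0}E$ and are sent to harmonic forms, which have no exact component, so $\H^3 \subseteq \ker DF$; conversely, for $\dot\phi \in T_{\phi_0}E$ with $DF(\dot\phi) = 0$, the parallelism of the type decomposition at a torsion-free $G_2$ structure, together with $d\dot\phi = 0$ and the slice condition $d^*_0 \dot\phi \in \Omega^2_{14}$, would force $d^*_0 \dot\phi = 0$, so $\dot\phi$ is harmonic. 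For surjectivity onto exact 3-forms, I would run a Hodge-theoretic adjoint argument showing that the cokernel is trivial, using that any element orthogonal to the image, when paired with well-chosen variations within $T_{\phi_0}E$, must itself vanish.

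The main technical obstacle is the identification $\ker DF|_{T_{\phi_0}E} = \H^3$: this relies on a delicate interplay between the $G_2$-type decomposition of 3-forms, the slice condition $d^*_0 \dot\phi \in \Omega^2_{14}$, and the parallelism of the type decomposition at a torsion-free structure, together with a Bochner- or Weitzenb\"ock-style manipulation to pass from the information $DF(\dot\phi) = 0$ to the coclosed condition $d^*_0 \dot\phi = 0$. Once both claims are established, the implicit function theorem on a suitable Banach completion (\eg $C^{k, \alpha}$) yields that a neighbourhood of $\phi_0$ in $F^{-1}(0) \cap E$ is a smooth finite-dimensional manifold of dimension $\dim \H^3(M\times S^1) = b^3(M) + b^2(M)$, canonically parametrised by its de Rham cohomology class in $H^3(M \times S^1)$. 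Combined with Proposition \ref{cptslice}, this yields Theorem \ref{g2slicetheorem}(a).
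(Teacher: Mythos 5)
First, a point of comparison: the paper does not prove this proposition at all — it is quoted from Hitchin \cite{hitchinthreeforms}, with the explicit remark that the Banach-space formulation is omitted because it is irrelevant to the $S^1$-invariance arguments that follow. So your attempt can only be judged on its own terms, and its first half stands: $P(*_0 *_\phi \phi)=0$ puts $*_0 *_\phi\phi$ in the harmonic-plus-coexact part of the $g_0$-Hodge decomposition, hence $d^*_0(*_0 *_\phi\phi)=0$, which unwinds via $d^*_0 = \pm *_0 d\, *_0$ to $d\, *_\phi\phi = 0$; together with $d\phi=0$ from membership of $E$ and openness of the $G_2$ condition (the only place closeness to $\phi_0$ is used), this gives torsion-freeness. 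Likewise your identification of $DF$ at $\phi_0$ through the linearisation $\dot\phi \mapsto *_0\bigl(\tfrac43\pi_1\dot\phi + \pi_7\dot\phi - \pi_{27}\dot\phi\bigr)$, and the observation that harmonic forms lie in $T_{\phi_0}E$ and have harmonic type components (parallelism of the decomposition at a torsion-free structure), correctly yields $\H^3 \subseteq \ker DF$.

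The genuine gap is that the two claims carrying all the analytic weight — the converse inclusion $\ker DF \subseteq \H^3$ and surjectivity onto exact forms — are announced rather than proved. Saying that parallelism, closedness and the slice condition "would force $d^*_0\dot\phi=0$" is precisely the statement requiring an argument: $DF(\dot\phi)=0$ only tells you that $\bigl(\tfrac43\pi_1+\pi_7-\pi_{27}\bigr)\dot\phi$ is coclosed, and extracting $d^*_0\dot\phi=0$ from this needs the specific computation (rewrite the operator as $-\,\mathrm{id} + \tfrac73\pi_1 + 2\pi_7$, then use $d\dot\phi=0$, the condition $d^*_0\dot\phi\in\Omega^2_{14}$ and integration by parts to kill the extra terms); this is where a wrong combination of coefficients would make the statement false, so it cannot be waved through. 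Similarly, the "Hodge-theoretic adjoint argument" for surjectivity must confront the fact that an exact variation $d\eta$ lies in $T_{\phi_0}E$ only if $d^*_0 d\eta \in \Omega^2_{14}$, so the admissible test variations have to be constructed (in Hitchin's argument, by solving an auxiliary elliptic equation), not merely invoked. As written, your proposal reproduces the strategy of the source the paper cites but stops short of the two steps that constitute its substance.
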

We have to transfer Propositions \ref{cptslice} and \ref{cptdetails} and their asymptotically cylindrical analogues from the work of Nordstr\"om \cite{nordstromacyldeformations} to the $S^1$-invariant setting. We begin with the slice in both the compact and asymptotically cylindrical setting. 

We first need to show that $\Diff_0^{S^1}$ has a well-defined tangent space so that we can still work with the space of Lie derivatives. In other words, we must prove
\begin{prop}
\label{diff0s1submfd}
$\Diff_0^{S^1}$ is a manifold. Its tangent space is given by the $S^1$-invariant vector fields.
\end{prop}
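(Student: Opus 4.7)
The plan is to construct an explicit chart on $\Diff_0^{S^1}$ near each point using the exponential map of the $S^1$-invariant Ricci-flat metric $g$, and to exploit the $S^1$-equivariance of this exponential map to identify $S^1$-invariant diffeomorphisms with $S^1$-invariant vector fields. This amounts to realising $\Diff_0^{S^1}$ as the fixed point set of the (conjugation) $S^1$-action $\Phi \mapsto \Theta_s \circ \Phi \circ \Theta_{-s}$ on $\Diff_0$, which is the standard mechanism by which one knows fixed sets of compact group actions are submanifolds.

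First, I would set up a chart at the identity of $\Diff_0$. Recall from Definition~\ref{defin:Ddeltatopology} and Definition~\ref{defin:ascyldifftopol} that near the identity, an asymptotically cylindrical diffeomorphism is parameterised by the triple $(\tilde\Phi, L, V)$ in the decomposition $\Phi = \exp_g V \circ (\tilde\Phi, t + L)$. Since $g$ is $S^1$-invariant, so is its Levi-Civita connection and hence its exponential map. Concretely, for any vector field $X$ on $M \times S^1$ small enough in the topology of Definition~\ref{defcktopology},
\begin{equation}
\Theta_s \circ \exp_g(X) \circ \Theta_{-s} = \exp_g\bigl((\Theta_s)_* X\bigr).
\end{equation}
Consequently $\exp_g(X)$ commutes with $\Theta_s$ for every $s$ if and only if $(\Theta_s)_* X = X$ for every $s$, \ie $X$ is $S^1$-invariant. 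In the asymptotically cylindrical case, the same equivariance applies on the cylindrical limit $N \times S^1 \times \R$ with its limit (necessarily $S^1$-invariant) metric, so that the parameters $(\tilde\Phi, L, V)$ satisfy the $S^1$-invariance condition iff $\tilde\Phi$ commutes with $\Theta_s$ on $N \times S^1$, $V$ is $S^1$-invariant, and $L$ is any real number (the $\R$ shift commutes with $\Theta_s$ automatically).

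For a chart at a general $\Phi \in \Diff_0^{S^1}$, I would left-translate: the map $X \mapsto \Phi \circ \exp_g(X)$ gives a chart on $\Diff_0$ near $\Phi$ (the topologies on $\D$ are constructed precisely so that this works), and since $\Phi$ itself commutes with $\Theta_s$, the composite $\Phi \circ \exp_g(X)$ commutes with $\Theta_s$ iff $\exp_g(X)$ does iff $X$ is $S^1$-invariant. Thus $\Diff_0^{S^1}$ is locally cut out as a closed linear subspace of the model space of $\Diff_0$, with the $S^1$-invariant vector fields as tangent space. Taking the identity path component then gives a submanifold with the asserted tangent space at every point by translation.

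The main technical point, and hence the place requiring the most care, is compatibility with the topology of Definitions \ref{defin:Ddeltatopology}–\ref{defin:ascyldifftopol} in the asymptotically cylindrical case: one must check that restricting the parameterisation $(\tilde\Phi, L, V)$ to those triples which are $S^1$-invariant does give an open neighbourhood of $\Phi$ in $\Diff_0^{S^1}$ (rather than a lower-dimensional slice), and that the topology induced on $S^1$-invariant vector fields by Definition~\ref{defcktopology} is the correct one. Both follow, however, from the fact that the pushforward $(\Theta_s)_*$ is an isometry of the extended weighted topologies, so that averaging over $s\in S^1$ realises the $S^1$-invariant vector fields as the image of a continuous projection. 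Everything else is formal.
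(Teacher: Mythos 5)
Your proposal is correct and follows essentially the same route as the paper: parameterise a neighbourhood of the identity in $\Diff_0$ by vector fields via the exponential map of the $S^1$-invariant metric $g$, use the fact that the rotations $\Theta$ are isometries so that $\Theta \circ \exp_g(X) \circ \Theta^{-1} = \exp_g(\Theta_* X)$, and conclude that near the identity the $S^1$-invariant diffeomorphisms correspond exactly to the $S^1$-invariant vector fields (the paper handles general points by invoking the group structure of $\Diff_0^{S^1}$ rather than explicit left-translation, which is the same mechanism).
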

\begin{proof}
Since $\Diff_0^{S^1}$ is a group, it suffices to show it is locally a manifold around the identity. A neighbourhood of the identity in $\Diff_0$ is given, using the Riemannian exponential map for some metric $g$, by a neighbourhood of zero in the vector fields on $M$. 

We choose $g$ to be $S^1$-invariant. From Lemma \ref{s1invdiffstr} and a trivial calculation, the $S^1$-invariant diffeomorphisms are the diffeomorphisms of the form $\Phi(p, \theta) = (f(p), g(p) + \theta)$; equivalently, they are precisely the diffeomorphisms that commute with the rotations $\Theta$. We must show that the vector fields in this neighbourhood such that $\exp_v$ commutes with $\Theta$ are a submanifold with the specified tangent space. In fact we shall show that they are precisely the $S^1$-invariant vector fields small enough to be in the neighbourhood; the intersection of a vector subspace with the neighbourhood is clearly a submanifold. 

Suppose given a sufficiently small vector field $v$. We have to compare $\Theta \exp_v(p, \theta)$ and $\exp_v \Theta(p, \theta)$, for some $(p, \theta) \in M \times S^1$. Suppose that $\Theta$ is rotation by $\theta'$. $\Theta\exp_v(p, \theta)$ is the endpoint of the image under $\Theta$ of the geodesic with initial velocity $v_{(p, \theta)}$; since the metric is $S^1$-invariant, $\Theta$ is an isometry, and so $\Theta\exp_v(p, \theta)$ is the endpoint of the geodesic with initial velocity $(\Theta_* v)_{(p, \theta + \theta')}$. On the other hand, $\exp_v\Theta(p, \theta)$ is the endpoint of the geodesic with initial velocity $v_{(p, \theta + \theta')}$. By assumption, $v$ and hence $\Theta_* v$ are sufficiently small that the exponential map is injective, and consequently we have $\exp_v \Theta = \Theta \exp_v$ if and only if $\Theta_* v = v$.
\end{proof}
That is, the tangent space to the orbit of a $G_2$ structure $\phi$ under $\Diff_0^{S^1}$ is given by the Lie derivatives $\L_X \phi$ with $X$ $S^1$-invariant. We shall prove that for $\phi$ $S^1$-invariant, this is equivalent to the $S^1$-invariant Lie derivatives $\L_X \phi$. We begin with the simplest case: an $S^1$-invariant $G_2$ structure on a compact manifold.
\begin{lem}
\label{g2cpts1invardiffeo}
Let $M$ be a compact six-dimensional manifold and let $\phi$ be an $S^1$-invariant torsion-free $G_2$ structure on $M \times S^1$. Suppose that $X$ is a vector field on $M \times S^1$ and $\L_X \phi$ is $S^1$-invariant. Then $X$ is $S^1$-invariant. 
\end{lem}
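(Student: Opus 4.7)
The plan is to decompose $X$ into its $S^1$-average and a residual, then show the residual vanishes by proving it is both parallel / $S^1$-invariant and has zero $S^1$-average. Concretely, let $\Theta_s$ denote rotation by $s$, and set $\bar X = \frac{1}{2\pi} \int_0^{2\pi} \Theta_s^* X \, ds$; this is manifestly $S^1$-invariant, and writing $X = \bar X + X'$ the residual satisfies $\overline{X'} = 0$. It suffices to prove $X' = 0$.

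First I claim $\L_{X'} \phi = 0$. By hypothesis $\L_X \phi$ is $S^1$-invariant, and $\L_{\bar X} \phi$ is $S^1$-invariant because both $\bar X$ and $\phi$ are; hence by linearity so is $\L_{X'} \phi$. On the other hand, since $\phi$ is $S^1$-invariant, the naturality identity $\Theta_s^* \L_Y \phi = \L_{\Theta_s^* Y} \phi$ lets the $S^1$-average commute past the Lie derivative in the vector-field slot, so $\overline{\L_{X'} \phi} = \L_{\overline{X'}} \phi = \L_0 \phi = 0$. But an $S^1$-invariant tensor equals its own average, so in fact $\L_{X'} \phi = 0$.

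Thus $X'$ is an infinitesimal automorphism of the torsion-free $G_2$ structure $\phi$, hence a Killing field for the induced Ricci-flat metric. The compact Bochner argument already invoked in the proof of Proposition \ref{g2su3prop2} to show $\ddth$ is parallel applies verbatim and shows $X'$ is parallel; the same argument of course also gives that $\ddth$ is parallel. Torsion-freeness of the Levi-Civita connection then yields
\begin{equation*}
[\ddth, X'] = \nabla_{\ddth} X' - \nabla_{X'} \ddth = 0,
\end{equation*}
so $X'$ is itself $S^1$-invariant, and therefore equal to its own $S^1$-average $\overline{X'} = 0$. Hence $X = \bar X$ is $S^1$-invariant, as required.

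The only substantive input is the Bochner step, which needs compactness essentially; everything else is a clean averaging argument which will have to be adapted (for instance by imposing decay conditions on $X$ or by working on a single fibre) when the result is eventually extended to the asymptotically cylindrical setting later in the paper.
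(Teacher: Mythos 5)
Your argument is correct, and it reaches the conclusion by a noticeably different route from the paper's. The paper also begins by establishing that Killing fields on the compact Ricci-flat product are parallel (the Bochner step you cite from the proof of Proposition \ref{g2su3prop2}) and hence $S^1$-invariant, but it then commutes Lie derivatives to get $\L_{[X, \ddth]}\phi = 0$, concludes that $[X,\ddth]$ is $S^1$-invariant, and finishes with a local-coordinate computation: $[[X,\ddth],\ddth]=0$ forces the coefficients of $X$ to be affine in $\theta$, and the non-existence of a global function $\theta$ on the circle kills the linear terms. You instead split $X = \bar X + X'$ by averaging over the rotation, use naturality of the Lie derivative together with $S^1$-invariance of $\phi$ to show $\L_{X'}\phi$ is both $S^1$-invariant and has zero average, deduce $\L_{X'}\phi = 0$, and then use parallelism of $X'$ and of $\ddth$ to get $[\ddth, X'] = 0$, so that the zero-average field $X'$ vanishes. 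Both proofs rest on the same two inputs (naturality of $\L$ with respect to the isometric rotation, and Killing $\Rightarrow$ parallel on a compact Ricci-flat manifold, with the implicit step that an infinitesimal automorphism of $\phi$ is Killing because the metric is an algebraic function of $\phi$); the trade-off is that your averaging argument avoids the coordinate/periodicity computation and makes the role of the compact circle fibre transparent, whereas the paper's bracket formulation is what it reuses verbatim in Lemma \ref{rests1invardiffeo} for the asymptotically cylindrical and Calabi-Yau cross-section cases, where — as you note — your approach would need the decay hypotheses mainly to run the Bochner integration by parts rather than the averaging itself.
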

\begin{proof}
We begin by showing that any Killing field $X$ is $S^1$-invariant. As $M$ is Ricci-flat, a Killing field is parallel; hence $\L_\ddth X = [X, \ddth] = 0$ and this is equivalent to $S^1$-invariance. 

Now suppose that $\L_X \phi$ is $S^1$-invariant, but not necessarily zero. Then we have $\L_\ddth \L_X \phi = 0 = \L_X \L_\ddth \phi$, and so $\L_{[X, \ddth]} \phi = 0$. By the previous paragraph we find that $[X, \ddth]$ is $S^1$-invariant. 

We may now work locally on $M$. Pick some open subset of $M$ on which we have coordinates $x_1, \ldots, x_n$, and on this subset write
\begin{equation}
X = a_0 \ddth + \sum_{i=1}^n a_i \frac{\partial}{\partial x_i} 
\end{equation}
We see by elementary computation that 
\begin{equation}
0 = [[X, \ddth], \ddth] = \frac{\partial^2 a_0} {\partial \theta^2} \ddth + \sum_{i=1}^n \frac{\partial^2 a_i}{\partial \theta^2} \frac{\partial}{\partial x_i}
\end{equation}
It follows that each of the $a_i$ ($i$ possibly zero) is of the form $A_i \theta + B_i$, where $A_i$ and $B_i$ are functions on $M$. But as there is no globally defined function $\theta$, $A_i$ must be identically zero. It follows that $X$ is independent of $\theta$, as required.
\end{proof}
In the asymptotically cylindrical case, we will need a couple of statements very similar to Lemma \ref{g2cpts1invardiffeo}. 
\begin{lem} 
\begin{enumerate}[i)]
\item Let $N$ be a compact five-dimensional manifold and let $(\Omega, \omega)$ be an $S^1$-invariant Calabi-Yau structure on $N \times S^1$. Suppose that $X$ is a vector field on $N \times S^1$ and $\L_X \Re \Omega$ is $S^1$-invariant. Then $X$ is $S^1$-invariant, and in particular any Killing field is $S^1$-invariant.
\item Let $M$ be a six-dimensional manifold with an end and let $\phi$ be an asymptotically cylindrical torsion-free $G_2$ structure on $M \times S^1$. Suppose that $X$ is an asymptotically translation invariant vector field on $M \times S^1$ such that $\L_X \phi$ is also $S^1$-invariant. Then $X$ is $S^1$-invariant; in particular, again, any Killing field is $S^1$-invariant.
\end{enumerate}
\label{rests1invardiffeo}
\end{lem}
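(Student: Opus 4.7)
My plan is to follow the three-step strategy of Lemma \ref{g2cpts1invardiffeo}: (i) show that vector fields annihilating the relevant geometric structure are $S^1$-invariant, (ii) apply the commutator identity $\L_{[\ddth, X]} = [\L_\ddth, \L_X]$ together with $S^1$-invariance of the structure to deduce $[\ddth, [\ddth, X]] = 0$, and (iii) carry out the local coordinate calculation of Lemma \ref{g2cpts1invardiffeo} to conclude $X$ is $S^1$-invariant. Steps (ii) and (iii) transfer directly from the original argument; the work lies in Step (i).

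For part (i), I would show that any vector field $Y$ on the compact Calabi-Yau manifold $N \times S^1$ with $\L_Y \Re\Omega = 0$ is parallel. Since a stable $3$-form determines a complex structure $J$ by Hitchin's construction, the condition $\L_Y \Re\Omega = 0$ yields $\L_Y J = 0$, so $Y$ is a real holomorphic vector field. On a compact K\"ahler Ricci-flat manifold, a standard Bochner argument (e.g.\ Kobayashi, \emph{Transformation Groups in Differential Geometry}, Theorem III.5.2) shows every real holomorphic vector field is parallel. By the infinitesimal form of Lemma \ref{sunauto}, $\ddth$ is Killing and hence also parallel on this compact Ricci-flat manifold. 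Thus $[\ddth, Y] = \nabla_\ddth Y - \nabla_Y \ddth = 0$, so $Y$ is $S^1$-invariant. Applying this to $Y = [\ddth, X]$, Step (ii) yields $\L_{[\ddth, X]} \Re\Omega = 0$ (using $\L_\ddth \Re\Omega = 0$ by $S^1$-invariance), and Step (iii) completes the argument.

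For part (ii), $\L_Z \phi = 0$ directly implies that $Z$ is a Killing field, since $g_\phi$ is determined by $\phi$. The substantive task is to show that an asymptotically translation invariant Killing field $Y$ on $M \times S^1$ is parallel. The limit $\tilde Y$ is translation invariant in $t$ and Killing with respect to the limit metric on $N \times \R \times S^1$, so, invoking the compactification device of Proposition \ref{g2su3prop2}, we may view $\tilde Y$ as a Killing field on the compact Ricci-flat manifold $N \times S^1 \times S^1$, where the Bochner argument gives $\tilde Y$ parallel. Consequently $\nabla Y$ decays exponentially on $M \times S^1$. Then on truncations $\{t \leq T\}$ the Bochner identity $\nabla^* \nabla Y = \mathrm{Ric}(Y) = 0$ for Killing $Y$ on a Ricci-flat manifold, combined with integration by parts, gives
\begin{equation*}
\int_{\{t \leq T\}} |\nabla Y|^2 \, dV = \int_{\{t = T\}} \la \nabla_\nu Y, Y \ra \, dA,
\end{equation*}
whose boundary term vanishes as $T \to \infty$ by exponential decay of $\nabla Y$ and boundedness of $|Y|$. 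Hence $\nabla Y = 0$; combined with $\ddth$ parallel (from Proposition \ref{g2su3prop2}) this gives $[\ddth, Y] = 0$, and Steps (ii) and (iii) finish the proof as before.

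The principal difficulty is Step (i) for part (ii): running the Bochner argument on the noncompact manifold. This reduces via the limit to a compact Bochner argument and a boundary integral that vanishes thanks to exponential decay; no other substantive obstruction arises, and the commutator and local coordinate steps proceed essentially verbatim from the compact case.
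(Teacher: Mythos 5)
Your proposal is correct and follows essentially the same route as the paper: for (i) you use Hitchin's result that the complex structure (equivalently $\Im\Omega$) is determined by $\Re\Omega$ to conclude that a field annihilating $\Re\Omega$ is holomorphic, then the Kobayashi/Bochner argument on the compact Ricci-flat K\"ahler manifold to get it parallel, and for (ii) you reduce to showing asymptotically translation invariant Killing fields are parallel via the limit on $N \times S^1 \times S^1$ plus integration by parts, exactly as the paper does by citing Proposition \ref{g2su3prop2}. The concluding commutator and local coordinate steps are the same transfer of Lemma \ref{g2cpts1invardiffeo} that the paper invokes.
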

\begin{proof}
(ii) is essentially identical. The only part of Lemma \ref{g2cpts1invardiffeo} that required compactness of $M$ was showing that Killing fields are parallel, and this was briefly explained in Proposition \ref{g2su3prop2}. 

(i) is slightly more involved, as it is not immediately clear that $\L_X \Re \Omega = 0$ implies that $X$ is a Killing field and so parallel. However, it follows from Hitchin \cite{hitchinthreeforms}, as follows.

Hitchin proves that $\Im \Omega$ can be determined at each point from $\Re \Omega$. Moreover, the map $\Re \Omega \mapsto \Im \Omega$ is smooth. Hence, if $\L_X \Re \Omega = 0$, we have a curve of Calabi-Yau structures corresponding to a curve of diffeomorphisms; at each point $\Im \Omega$ depends smoothly on $\Re \Omega$, and so $\L_X \Im \Omega$ depends linearly on $\L_X \Re \Omega$, and must in turn be zero. Hence, if $\L_X \Re \Omega =0$, then $\L_X \Omega =0$, \ie $X$ is holomorphic. But then it follows by the argument from Kobayashi \cite[Theorem III.5.2]{kobayashitransformations} mentioned before Lemma \ref{sunauto} that $\L_{[X, \ddth]} \omega = 0$. Then $X$ is Killing, and we can apply the previous argument. 
\end{proof}
A similar argument proves a non-infinitesimal version. It does not quite imply Lemma \ref{g2cpts1invardiffeo} as we would need to integrate up: we would need to show that a vector field $X$ with $\L_X \phi$ $S^1$-invariant induces a curve $\Phi_s$ of diffeomorphisms so that $\Phi_s^* \phi$ is $S^1$-invariant for all $s$ sufficiently small. 
\begin{prop}
\label{s1invdiffeoweak}
Let $M \times S^1$ have an $S^1$-invariant Ricci-flat metric $g$. The space $\Diff_0^{S^1}$ defined in Definition \ref{defin:s1invdiffeo} is also the identity path-component of the set
\begin{equation}
\label{eq:diff0s1biggerwholespace}
\{\Phi \in \Diff_0(M \times S^1): \Phi_* \ddth \text{ is Killing}\}
\end{equation}
\end{prop}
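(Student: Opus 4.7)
The inclusion of $\Diff_0^{S^1}$ into the identity path-component of \eqref{eq:diff0s1biggerwholespace} is immediate, since $\ddth$ itself is Killing for the $S^1$-invariant metric $g$. For the converse, suppose $\Phi$ lies in the identity path-component of \eqref{eq:diff0s1biggerwholespace} and let $\Phi_s$ be a continuous path connecting $\id$ to $\Phi$ through this component. The plan is to show that the pushforward $X_s := \Phi_{s*}\ddth$ is constantly equal to $\ddth$, which then places the whole path, and therefore $\Phi$ itself, in \eqref{eq:diff0s1wholespace}.

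By hypothesis each $X_s$ is Killing for $g$; since $g$ is Ricci-flat (being the metric of a torsion-free $G_2$ structure), the Bochner argument recalled in the proof of Proposition \ref{g2su3prop2} forces $X_s$ to be parallel, with the same argument applying in the asymptotically cylindrical case via the translation-invariant limit to justify the integration by parts. Moreover each $X_s$ has $2\pi$-periodic flow: the time-$t$ flow of $\Phi_{s*}\ddth$ is $\Phi_s \circ \phi^t_\ddth \circ \Phi_s^{-1}$, which is the identity at $t = 2\pi$.

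The (asymptotically translation invariant) parallel vector fields on $M \times S^1$ form a finite-dimensional abelian Lie algebra $V$: any such field is determined by its value at a single point, and $[X,Y] = \nabla_X Y - \nabla_Y X = 0$ for parallel $X, Y$. Parallel fields have constant pointwise norm and are therefore complete on the complete manifold $M \times S^1$, so they integrate to a connected abelian Lie subgroup $T \subseteq \Isom(M \times S^1)$, necessarily isomorphic to some $\R^a \times T^b$. The kernel of the exponential map $V \to T$ is accordingly a discrete lattice, and so the subset of $V$ consisting of vectors with $2\pi$-periodic flow is discrete. Since $\Phi \mapsto \Phi_*\ddth$ is continuous from the topology of Definition \ref{defin:ascyldifftopol} (or Definition \ref{defin:Ddeltatopology} in the asymptotically cylindrical case) to the $C^\infty$ (resp.\ $C^\infty_\delta$) topology on vector fields, $s \mapsto X_s$ is a continuous path in $V$ taking values in this discrete set and with $X_0 = \ddth$; hence $X_s \equiv \ddth$, completing the proof. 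The main delicate point is verifying in the asymptotically cylindrical setting that the parallel asymptotically translation invariant fields really do integrate to a bona fide connected Lie subgroup of isometries, but this follows from completeness together with the constant-norm property, and the restriction to asymptotically translation invariant fields keeps $V$ finite-dimensional as required.
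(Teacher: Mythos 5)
Your argument is correct, but it follows a genuinely different route from the paper. The paper fixes the trivial inclusion and then shows that \eqref{eq:diff0s1wholespace} is both closed (continuity of pushforward) and open inside \eqref{eq:diff0s1biggerwholespace}: for openness it works in local coordinates, using Killing $\Rightarrow$ parallel (as in Proposition \ref{g2su3prop2}) to show $\Phi_* \ddth = A\ddth$ with $A$ a locally constant integer-valued function, which $C^1$-proximity to a diffeomorphism already in \eqref{eq:diff0s1wholespace} forces to equal $1$; connectedness of the identity component then finishes the proof. You instead argue directly along a connecting path: each $X_s = \Phi_{s*}\ddth$ is Killing, hence parallel by the same Bochner step, has periodic flow by conjugation, and the parallel fields with periodic time-$2\pi$ flow form a discrete subset of the finite-dimensional space of parallel fields, so the continuous path $s \mapsto X_s$ is constant and equal to $\ddth$. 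Your approach avoids the chart-by-chart analysis and the integer-valued-function argument, and gives the slightly stronger conclusion that the whole path lies in \eqref{eq:diff0s1wholespace}; the price is that it leans on the Lie group structure of $\Isom(M \times S^1)$ (Myers--Steenrod) and completeness of parallel fields, which the paper's elementary coordinate computation does not need. Two points worth tightening: (i) for the Bochner step in the asymptotically cylindrical case you should note explicitly that $\Phi_{s*}\ddth$ is asymptotically translation invariant, which holds because $\Phi_s$ is an asymptotically cylindrical diffeomorphism, so its pushforward of $\ddth$ differs from the translation-invariant field $\tilde\Phi_{s*}\ddth$ by exponentially decaying terms; (ii) discreteness of the periodic-flow set is best justified directly — the kernel of $\exp$ restricted to the abelian algebra of parallel fields is a closed subgroup of that finite-dimensional vector space containing no line, since a nonzero vector field cannot have identically trivial flow — rather than by first asserting the image is of the form $\R^a \times T^b$, which presupposes part of what is being shown.
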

\begin{proof}
Because $\ddth$ is certainly a Killing field for the metric $g$, it is clear that \eqref{eq:diff0s1wholespace} is a subspace of \eqref{eq:diff0s1biggerwholespace}, and consequently $\Diff_0^{S^1}(M \times S^1)$ is contained in the identity path-component of \eqref{eq:diff0s1biggerwholespace}. It suffices to show that \eqref{eq:diff0s1wholespace} is open and closed in \eqref{eq:diff0s1biggerwholespace}, for then connectedness of the identity component of \eqref{eq:diff0s1biggerwholespace} implies it is all of $\Diff_0^{S^1}(M \times S^1)$. 

Closedness follows immediately from continuity of the pushforward. 

For openness, we suppose that $\Phi_0$ is in $\Diff_0^{S^1}$; we need to show that there is an open neighbourhood $U$ of $\Phi_0$ in $\Diff_0$ such that if $\Phi \in U$ and $\Phi_* \ddth$ is Killing, then $\Phi_* \ddth$ is $\ddth$. We work around some $p \in M$. We note first that by the argument of Lemma \ref{s1invdiffstr}, $\Phi_0$ carries $\{p\} \times S^1$ onto $\{q\} \times S^1$ for some point $q$ of $M$ depending on $p$. Let $V$ be a small chart around $p$ and let $W$ be a small chart around $q$ such that $\Phi_0(V \times S^1) \subset W \times S^1$. Choose a smaller neighbourhood $V'$ whose closure is compact and contained in $V$; then for $\Phi$ sufficiently close to $\Phi_0$, $\Phi(V' \times S^1) \subset W \times S^1$. 

We may consequently analyse $\Phi$ in terms of the coordinates on $U \times S^1$ and $V \times S^1$; that is, we shall write
\begin{equation}
\Phi(x_1, \ldots, x_n, \theta) = (y_1(x_1, \ldots, x_n, \theta), \ldots, y_n(x_1, \ldots, x_n, \theta), \theta'(x_1, \ldots, x_n, \theta))
\end{equation}
Now $\Phi_* \ddth$ is Killing, and so parallel as in the proof of Proposition \ref{g2su3prop2}, hence $S^1$-invariant. It follows that for each $i$, $\frac{\partial^2 y_i}{\partial \theta^2} = 0$. We deduce that for fixed $x_1, \ldots, x_n$, $y_i = A_i \theta + B_i$, where $A_i$ and $B_i$ depend on $x_1, \ldots, x_n$. It follows immediately that $A_i = 0$, as otherwise we do not have a well-defined map from the circle. Hence, we find that
\begin{equation}
\Phi_* \ddth = A \ddth
\end{equation}
for some function $A(x_1, \ldots, x_n, \theta)$. Taking the second derivatives of $\theta'$ in the same way, $A$ must be independent of $\theta$; since we still need a well-defined map from the circle, we also know that $A \in \mathbb{Z}$ for all points of $M$. Since $\Phi$ is close to $\Phi_0$ in the $C^1$ topology and $\Phi_{0*} \ddth = \ddth$ we obtain $A = 1$ identically, as required.
\end{proof}
\begin{rmk}
It follows immediately from Proposition \ref{s1invdiffeoweak} that the identity component of the isometry group of a manifold $M \times S^1$ with an $S^1$-invariant Ricci-flat metric $g$ is contained in $\Diff_0^{S^1}$. Hence, when we quotient by the automorphism group as in Theorem \ref{g2slicetheorem}(c)(ii), it doesn't matter whether we work with $\Diff_0$ or $\Diff_0^{S^1}$ as in a local neighbourhood of the origin the subgroups of isometries are the same. In practice, of course, the fact that Killing fields are $S^1$-invariant would also prove that the subgroups of isometries are the same locally around the identity. 
\end{rmk}
We can now show that slices such as that in Proposition \ref{cptslice} restrict to slices in the $S^1$-invariant setting. For the asymptotically cylindrical case, we will need analogous results to Proposition \ref{cptslice} for Calabi-Yau structures as well, and so Proposition \ref{generals1invarslice} contains three cases corresponding to Lemma \ref{g2cpts1invardiffeo} and the two cases of Lemma \ref{rests1invardiffeo}. 
\begin{prop}
\label{generals1invarslice}
Suppose that $W \times S^1$ is a six- or seven-dimensional manifold, where $W$ is one of a compact or asymptotically cylindrical six-manifold, or a compact five-manifold, and has either an $S^1$-invariant torsion-free $G_2$ structure $\phi$ or an $S^1$-invariant Calabi-Yau structure $(\Omega, \omega)$ respectively. Let $\alpha$ be $\phi$ or $\Re \Omega$ respectively. Suppose that
\begin{equation}
Y = E \oplus \{L_X \alpha: X \text{ vector field}\}
\end{equation}
is an orthogonal splitting for some vector spaces of three-forms $E$ and $Y$.

Let $E'$ and $Y'$ be the intersection of $E$ and $Y$ respectively with the set of $S^1$-invariant three-forms. Then we have
\begin{equation}\label{s1invslicesecondeq}
Y' = E' \oplus \{L_X \phi: X \text{ $S^1$-invariant vector field}\}
\end{equation}
again as an orthogonal splitting. 
\end{prop}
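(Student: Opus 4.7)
The plan is to derive the $S^1$-invariant splitting from the ambient one by a uniqueness argument driven by $S^1$-equivariance of all ingredients. Since $\alpha$ is $S^1$-invariant, the rotation $\Theta_\theta$ is an isometry of the induced metric $g_\alpha$, hence acts as an isometry on the $L^2$ inner product on three-forms. In the intended applications $E$ is constructed from $\alpha$ alone (for instance as in Proposition \ref{cptslice}, using $d^\ast$ and the bundle $\bigwedge^2_{14}$, both natural in the metric of $\alpha$), so $E$ and $Y$ are preserved by $\Theta_\theta^\ast$. The Lie derivative subspace is preserved too, since $\Theta_\theta^\ast(L_X \alpha) = L_{\Theta_\theta^\ast X}\alpha$ and $\Theta_\theta^\ast$ is a bijection on vector fields.

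First I would note that orthogonality of $E'$ and $\{L_X \alpha : X \text{ $S^1$-invariant}\}$ is automatic, since both are subspaces of the orthogonal summands $E$ and $\{L_X \alpha\}$ of the given splitting. For the direct-sum property, take $\beta \in Y'$ and use the ambient splitting to write $\beta = e + L_X \alpha$ with $e \in E$. Applying $\Theta_\theta^\ast$ and using $\Theta_\theta^\ast \beta = \beta$, the uniqueness of the ambient decomposition forces $\Theta_\theta^\ast e = e$ and $\Theta_\theta^\ast(L_X \alpha) = L_X \alpha$ for every $\theta$. Hence $e \in E'$ and $L_X\alpha$ is $S^1$-invariant. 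Finally, applying whichever of Lemma \ref{g2cpts1invardiffeo} or Lemma \ref{rests1invardiffeo} matches the case of $W$ — the compact six-manifold torsion-free $G_2$ case, the asymptotically cylindrical torsion-free $G_2$ case, or the compact five-manifold Calabi-Yau case — we deduce that $X$ itself is $S^1$-invariant, yielding the required decomposition.

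The main (and essentially only non-formal) obstacle is the last step: passing from $S^1$-invariance of $L_X \alpha$ to $S^1$-invariance of $X$. This is where the torsion-freeness of $\alpha$ is used substantively, via the Bochner-type argument in Lemmas \ref{g2cpts1invardiffeo} and \ref{rests1invardiffeo} that exploits Ricci-flatness of the induced metric to force Killing fields to be parallel. Everything else is essentially bookkeeping around the fact that the orthogonal splitting is $\Theta_\theta^\ast$-equivariant, and uniqueness pushes an $S^1$-invariant element down to $S^1$-invariant summands.
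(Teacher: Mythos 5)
Your proposal is correct and follows essentially the same route as the paper: use that the rotation $\Theta$ is an isometry of the metric induced by the $S^1$-invariant $\alpha$ together with $\Theta^*(\L_X\alpha)=\L_{\Theta^{-1}_*X}\alpha$ to see that the decomposition of an $S^1$-invariant element of $Y$ has $S^1$-invariant summands, and then invoke Lemmas \ref{g2cpts1invardiffeo} and \ref{rests1invardiffeo} to replace $X$ by an $S^1$-invariant vector field. The only (minor) difference is that where you justify $\Theta^*$-stability of $E$ by its naturality in the intended applications, the paper gets the same conclusion for an abstract $E$ by characterising the Lie-derivative component as the unique $\gamma$ with $\la\beta-\gamma,\L_X\alpha\ra=0$ for all $X$; your argument can be closed the same way, since $\Theta^*e=\beta-\L_{\Theta^{-1}_*X}\alpha$ lies in $Y$ and is orthogonal to every $\L_{X'}\alpha$, hence lies in $E$.
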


\begin{proof}
Lemmas \ref{g2cpts1invardiffeo} and \ref{rests1invardiffeo} say that the space of Lie derivatives on the right hand side of \eqref{s1invslicesecondeq} is precisely
\begin{equation}
\{\text{$\L_X \alpha$: $X$ vector field}\} \cap \{\text{$S^1$-invariant forms}\}
\end{equation}
and thus we just need to show that the orthogonal splitting is preserved when we intersect throughout with $S^1$-invariant forms. The two components obviously remain orthogonal, so it suffices to show that the projection to the Lie derivative part is $S^1$-equivariant, \ie commutes with the pullback by any rotation $\Theta$. We observe that $\Theta^* \L_X \phi = \L_{\Theta^{-1}_* X} \Theta^* \phi$ for all $X$, again using the $S^1$-invariance of $\phi$.

Now given a $3$-form $\beta$ its Lie derivative part is the unique $3$-form $\gamma$ such that 
\begin{equation}
\la \beta - \gamma, \L_X \alpha \ra = 0 \qquad \text{ for all vector fields $X$}
\end{equation}
As $\Theta$ is an isometry, we have for any $X$
\begin{equation}
\la \Theta^* \beta - \Theta^* \gamma, \L_X \alpha \ra = \la \beta - \gamma, (\Theta^{-1})^* \L_X \alpha \ra  = \la \beta - \gamma, \L_{\Theta^{-1}_* X} \alpha = 0
\end{equation}
and thus the Lie derivative part of $\Theta^* \beta$ is $\Theta^* \gamma$. In particular, if $\beta$ is $S^1$-invariant its Lie derivative part is $S^1$-invariant. Thus its $E$ part is too, and lies in $E'$. We obtain the orthogonal splitting
\begin{equation}
\{\text{closed $S^1$-invariant $3$-forms}\} = E' \oplus \{\text{$\L_X \phi$: $[X, \ddth]=0$}\} \qedhere
\end{equation}
\end{proof}
Applying Proposition \ref{generals1invarslice} to Proposition \ref{cptslice}, we obtain
\begin{cor}
\label{cor:g2cpts1invarslice}
Let $\phi$ be an $S^1$-invariant $G_2$ structure on $M \times S^1$. With respect to $\phi$, let 
\begin{equation}
E' = \{\alpha \in \Omega^3(M \times S^1): \alpha \text{ $S^1$-invariant}, d\alpha = 0, d^*\alpha \in \Omega^2_{14}\}
\end{equation}
where $\Omega^2_{14}$ is the subbundle of $2$-forms which at every point are in the subbundle corresponding to the $14$-dimensional subrepresentation of $G_2$ on the space of alternating two-forms. Then $E'$ is $L^2$-orthogonal to the space of Lie derivatives $\L_X \phi$ of $\phi$ for $X$ $S^1$-invariant, with respect to the metric at $\phi$, and the sum of these spaces is the set of all three-forms. Consequently, locally $E$ is transverse to the orbits of $\Diff_0$. 
\end{cor}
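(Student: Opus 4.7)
The plan is to derive this essentially as a translation of Proposition \ref{cptslice} to the $S^1$-invariant setting, using Proposition \ref{generals1invarslice} to handle the restriction. First I would invoke Proposition \ref{cptslice} at the $S^1$-invariant torsion-free $G_2$ structure $\phi$, obtaining an $L^2$-orthogonal splitting of the closed three-forms (in an appropriate Banach completion) as $E \oplus \{\L_X \phi : X\text{ a vector field on } M \times S^1\}$, where $E$ is as defined there with respect to the metric induced by $\phi$.

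Next I would check that the space $E'$ appearing in the corollary coincides with $E \cap \{S^1\text{-invariant three-forms}\}$. Because $\phi$ is $S^1$-invariant, so are the induced metric, the Hodge star, and the subbundle splitting $\bigwedge^2 = \bigwedge^2_7 \oplus \bigwedge^2_{14}$ of Lemma \ref{twoformssplitting}: each of the defining conditions $d\alpha=0$ and $d^*\alpha \in \Omega^2_{14}$ is preserved under pullback by any rotation $\Theta$, so intersecting $E$ with the $S^1$-invariant three-forms produces precisely $E'$.

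I would then apply Proposition \ref{generals1invarslice} in its compact $G_2$ case with $\alpha = \phi$ and the splitting just described. Its conclusion directly yields the required $L^2$-orthogonal decomposition, with the Lie derivative factor given by $\L_X \phi$ for $S^1$-invariant $X$; the critical identification of those Lie derivatives which are $S^1$-invariant with those of $S^1$-invariant vector fields is the content of Lemma \ref{g2cpts1invardiffeo}, invoked internally by that proposition. For the transversality claim, by Proposition \ref{diff0s1submfd} the tangent to the $\Diff_0^{S^1}$-orbit of $\phi$ at $\phi$ is exactly the space of $S^1$-invariant Lie derivatives $\L_X\phi$, so the splitting exhibits $E'$ as a local transverse complement to these orbits.

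I do not anticipate a genuine obstacle: every nontrivial piece has already been established. The analytic content sits in Proposition \ref{cptslice}, the $S^1$-equivariant refinement (including continuity of the projection onto the Lie-derivative factor and its commutation with $\Theta^*$) in Proposition \ref{generals1invarslice}, and the identification of the Lie algebra of $\Diff_0^{S^1}$ in Proposition \ref{diff0s1submfd}; the corollary merely assembles them.
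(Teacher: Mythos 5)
Your proposal is correct and follows exactly the paper's route: the paper obtains this corollary precisely by applying Proposition \ref{generals1invarslice} (with $\alpha=\phi$, using that the metric, Hodge star and the $\bigwedge^2_7\oplus\bigwedge^2_{14}$ splitting are $S^1$-invariant, so that $E'$ is the $S^1$-invariant part of the $E$ of Proposition \ref{cptslice}) to the splitting of Proposition \ref{cptslice}, with Lemma \ref{g2cpts1invardiffeo} and Proposition \ref{diff0s1submfd} supplying the identification of the Lie-derivative factor and the tangent space to the $S^1$-invariant diffeomorphism orbit. The extra details you spell out are exactly the checks the paper leaves implicit.
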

We now proceed to the $S^1$-invariant version of the implicit function theorem argument for Proposition \ref{cptdetails}. We have to show that when we pass to the $S^1$-invariant setting the derivative $DF$ is still surjective and has the same kernel. Again, we state and prove a more general version that will be used for the asymptotically cylindrical case. We first prove some easy lemmas saying that harmonic forms are $S^1$-invariant. The proof is very similar to the proof of Lemma \ref{sunauto}. 

\begin{lem}
\label{cptharms1invar}
Suppose $M \times S^1$ is a compact manifold with an $S^1$-invariant Riemannian metric, and $\alpha$ a harmonic form on it. Then $\alpha$ is $S^1$-invariant.
\end{lem}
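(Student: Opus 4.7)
The plan is to use the uniqueness of harmonic representatives of cohomology classes on a compact manifold, together with the fact that the rotation action consists of isometries isotopic to the identity.

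First I would observe that since the metric is $S^1$-invariant, every rotation $\Theta_s$ (the time-$s$ flow of $\ddth$) is an isometry of $M \times S^1$. Consequently $\Theta_s^*$ commutes with $d$ and with the codifferential $d^*$, and hence with the Hodge Laplacian $\Delta$. Therefore $\Theta_s^* \alpha$ is harmonic whenever $\alpha$ is.

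Next I would note that $\Theta_s$ is smoothly isotopic to the identity through the path $\{\Theta_r\}_{0 \le r \le s}$, so $\Theta_s^* \alpha$ and $\alpha$ represent the same de Rham cohomology class; their difference $\Theta_s^* \alpha - \alpha$ is exact. But it is also harmonic, as the difference of two harmonic forms. On a compact oriented Riemannian manifold the only exact harmonic form is zero (since $\langle d\beta, d\beta\rangle = \langle \beta, d^*d\beta\rangle = 0$ for a harmonic $d\beta$, which in this case follows from the Hodge decomposition), hence $\Theta_s^* \alpha = \alpha$ for every $s$.

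Finally, differentiating the identity $\Theta_s^* \alpha = \alpha$ at $s = 0$ yields $\L_{\ddth} \alpha = 0$, which is precisely the $S^1$-invariance of $\alpha$. The only step requiring any care is the appeal to the uniqueness of the harmonic representative, but on a compact manifold this is standard Hodge theory and presents no obstacle; the argument is entirely formal once one has noted the commutation of $\Theta_s^*$ with $\Delta$.
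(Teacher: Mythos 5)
Your proof is correct and follows essentially the same route as the paper: rotations are isometries isotopic to the identity, so $\Theta^*\alpha$ is harmonic and cohomologous to $\alpha$, and uniqueness of harmonic representatives (Hodge decomposition) forces $\Theta^*\alpha = \alpha$. The concluding differentiation step is harmless, since by Definition \ref{defin:s1inv} invariance under pullback by the rotations is already the required notion of $S^1$-invariance.
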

\begin{proof}
Consider a rotation $\Theta$. As $\Theta$ is isotopic to the identity, $[\Theta^* \alpha] = [\alpha]$. As the metric is $S^1$-invariant, $\Theta$ is also an isometry and so $\Theta^* \alpha$ is also a harmonic form. Thus, by Hodge decomposition, $\Theta^*\alpha = \alpha$, \ie $\alpha$ is $S^1$-invariant.
\end{proof}
\begin{lem}
\label{acylharms1invar}
Suppose $M \times S^1$ is a manifold with an end $N \times S^1 \times (0, \infty)$, equipped with an asymptotically cylindrical  $S^1$-invariant Riemannian metric, and $\alpha$ an asymptotically translation invariant harmonic form on it. Then $\alpha$ is $S^1$-invariant.
\end{lem}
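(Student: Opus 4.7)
The plan is to adapt Lemma~\ref{cptharms1invar} by establishing the infinitesimal statement $\L_\ddth\alpha = 0$ directly. Applying the compact lemma to the cross-section $N\times S^1$ will control the limit $\tilde\alpha$; an integration-by-parts argument in the style of asymptotically cylindrical Hodge theory will then promote this control back to $\alpha$ itself.

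First I would analyse the limit $\tilde\alpha$ of $\alpha$ on the cylinder $N\times S^1\times\R$. As the exponentially convergent limit of the harmonic form $\alpha$ under a converging cylindrical metric, $\tilde\alpha$ is harmonic for the translation-invariant limit metric. Decomposing $\tilde\alpha = \tilde\alpha_1 + dt\wedge\tilde\alpha_2$ with $\tilde\alpha_1,\tilde\alpha_2$ forms on $N\times S^1$, independence of $t$ forces each $\tilde\alpha_i$ to be separately harmonic on the compact cross-section $N\times S^1$, equipped with the restricted limit metric, which is still $S^1$-invariant. Lemma~\ref{cptharms1invar} applied on $N\times S^1$ then gives that both $\tilde\alpha_i$ are $S^1$-invariant, so $\L_\ddth\tilde\alpha = 0$.

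Next, set $\beta := \L_\ddth\alpha$. Because $\ddth$ is a Killing field for the $S^1$-invariant metric, $\L_\ddth$ commutes with the Hodge Laplacian, so $\beta$ is harmonic. Cartan's formula combined with $d\alpha=0$ gives $\beta = d\gamma$, where $\gamma := \iota_\ddth\alpha$ is asymptotically translation invariant and in particular bounded. Moreover, $\beta$ is itself asymptotically translation invariant with limit $\L_\ddth\tilde\alpha = 0$, hence decays exponentially on the end.

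To conclude, I would apply Stokes' theorem on $M\times S^1$, using $d*\beta = 0$ by harmonicity, to obtain
\begin{equation*}
\|\beta\|_{L^2}^2 = \int_{M\times S^1} d\gamma\wedge *\beta = \lim_{T\to\infty}\int_{\{t=T\}} \gamma\wedge *\beta = 0,
\end{equation*}
the boundary term vanishing by the exponential decay of $\beta$ against the bounded $\gamma$. Thus $\beta = 0$ and $\alpha$ is $S^1$-invariant. The only real subtlety is justifying this integration by parts on a non-compact manifold, but this is a standard asymptotically cylindrical Hodge-theoretic computation (cf.\ \cite{lockhartmcowen}, \cite{nordstromacyldeformations}) made rigorous precisely by the exponential decay of $\beta$ secured in the previous step.
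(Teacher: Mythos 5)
Your proof is correct, but it takes a genuinely different route from the paper. The paper stays with the finite rotation $\Theta$ and leans on Nordstr\"om's asymptotically cylindrical Hodge theory: it decomposes $\alpha$ (via \cite[Theorem 5.9]{nordstromacyldeformations}) into an exact harmonic, a coexact harmonic and a decaying harmonic piece, identifies $\Theta^*\beta$ with $\beta$ for the exact and coexact pieces by injectivity of the map to the limit (whose $S^1$-invariance comes from Lemma \ref{cptharms1invar} on the cross-section, as in your first step), and handles the decaying piece by injectivity of the map to cohomology. You instead argue infinitesimally: $\beta=\L_\ddth\alpha$ is harmonic because $\ddth$ is Killing, exact by Cartan's formula, and exponentially decaying because its limit $\L_\ddth\tilde\alpha$ vanishes by the compact lemma on $N\times S^1$; then a single integration by parts, justified by the decay of $\beta$ against the bounded primitive $\iota_\ddth\alpha$ and by $d^*\beta=0$, kills $\beta$. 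What your route buys is independence from the full decomposition theorem and its injectivity statements -- only the boundary-term estimate is needed; what it costs is that you must make explicit the standing convention, also implicit in the paper's appeal to Theorem 5.9, that an asymptotically translation invariant harmonic form is closed and coclosed (you use $d\alpha=0$ in Cartan's formula and $d^*\alpha=0$, via $[\L_\ddth,d^*]=0$, to get $d^*\beta=0$; alternatively $d^*\beta=0$ follows from $\Delta\beta=0$ and the exponential decay of $\beta$ by the same cutoff integration by parts). With that standard fact cited, your argument is complete and rigorous.
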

\begin{proof}
We apply asymptotically cylindrical Hodge theory, such as in \cite{nordstromacyldeformations}, and the argument in Lemma \ref{cptharms1invar}. By Theorem 5.9 of that paper, $\alpha$ is the sum of a decaying harmonic form, an exact harmonic form, and a coexact harmonic form. We first show that the exact harmonic form $\beta$, say, is $S^1$-invariant. 

By the discussion after Theorem 5.9, the map from the exact harmonic form to its limit is injective. By Lemma \ref{cptharms1invar}, its limit is $S^1$-invariant. Thus, the exact harmonic form $\Theta^* \beta$ has the same limit: so $\Theta^* \beta = \beta$, as required. By the same argument the coexact harmonic form is also $S^1$-invariant. Taking the difference, we now have to show that the decaying harmonic form, $\gamma$ say, is $S^1$-invariant. Again, $\Theta^* \gamma$ is a decaying harmonic form, and it follows from \cite[Theorem 5.9]{nordstromacyldeformations} that the map from such forms to cohomology is injective. Thus $\Theta^* \gamma = \gamma$ exactly as in Lemma \ref{cptharms1invar}. 
\end{proof}

We now prove our general $S^1$-invariance proposition which we will apply to Proposition \ref{cptdetails}. 
\begin{prop}
\label{generals1invardetails}
Let $M$ be a compact or asymptotically cylindrical manifold. Let $F: X \to Y$ be a smooth $S^1$-equivariant nonlinear map between Banach spaces of (asymptotically translation invariant) differential forms on $M \times S^1$ (with  $S^1$-invariant norms). Suppose that $DF$ is surjective and its kernel consists of harmonic forms. Suppose further that in the compact case $X$ is continuously contained in the space of $L^2$ forms and in the asymptotically cylindrical case we can find an $S^1$-invariant complement $W$ to the kernel of $DF$ in $X$. Then the restriction
\begin{equation}
F^{S^1}: X' := X \cap \{S^1\text{-invariant forms}\} \to Y \cap \{S^1\text{-invariant forms}\} =: Y'
\end{equation}
is a well-defined map, with $DF^{S^1}$ surjective and the same kernel as $DF$. 
\end{prop}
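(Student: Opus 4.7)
The plan is to verify three things in turn: that $F^{S^1}$ maps into $Y'$ (well-definedness), that $\ker DF^{S^1} = \ker DF$, and that $DF^{S^1}$ is surjective onto $Y'$. Throughout I will use the $S^1$-equivariance of $F$, which by differentiation at any $S^1$-invariant point yields $S^1$-equivariance of the linear map $DF$.

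For well-definedness, I would simply observe that if $\alpha \in X'$ then $\Theta^* F(\alpha) = F(\Theta^* \alpha) = F(\alpha)$ for every rotation $\Theta$, so $F(\alpha) \in Y'$. For the kernel identity, the inclusion $\ker DF^{S^1} \subseteq \ker DF \cap X'$ is trivial; for the reverse inclusion, elements of $\ker DF$ are harmonic by hypothesis, and Lemma \ref{cptharms1invar} (compact case) or Lemma \ref{acylharms1invar} (asymptotically cylindrical case) shows such forms are $S^1$-invariant, so $\ker DF \subseteq X'$ and in fact sits inside $\ker DF^{S^1}$.

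For surjectivity, the two cases split according to which hypothesis is available. In the asymptotically cylindrical case, I would use the $S^1$-invariant complement $W$ directly: for $y \in Y'$, let $x \in W$ be the unique preimage under $DF|_W$; then $\Theta^* x \in W$ (since $W$ is $S^1$-invariant), and $DF(\Theta^* x) = \Theta^* DF(x) = \Theta^* y = y$, so $\Theta^* x = x$ by uniqueness, \ie $x \in X' \cap W$. In the compact case, I would instead average: define $P(\alpha) = \int_{S^1} \Theta_s^* \alpha \, ds$, where $\Theta_s$ is rotation by $s$. Given any $x \in X$ with $DF(x) = y \in Y'$, the form $P(x)$ is $S^1$-invariant, and $DF(P(x)) = P(DF(x)) = P(y) = y$ by linearity and $S^1$-equivariance of $DF$.

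The main obstacle I anticipate is verifying that $P$ actually returns elements of $X$ in the compact case, \ie that the Bochner integral $\int_{S^1} \Theta_s^* \alpha \, ds$ converges in $X$ and not merely in a weaker space. For this one needs strong continuity of the $S^1$-action on $X$. Since the norm on $X$ is $S^1$-invariant the action is by isometries, and on the dense subspace of smooth forms it is continuous in $s$ by smoothness of the flow; the role of the $L^2$ inclusion hypothesis is then to let one define $P(\alpha)$ first as an $L^2$ integral and, combining the uniform boundedness on $X$ with density of smooth forms, identify this $L^2$-limit as the sought element of $X$.
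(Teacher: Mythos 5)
Your well-definedness and kernel arguments, and your surjectivity argument in the asymptotically cylindrical case, are exactly the paper's proof: equivariance for well-definedness, Lemmas \ref{cptharms1invar} and \ref{acylharms1invar} for the kernel, and the uniqueness-in-the-$S^1$-invariant-complement trick for surjectivity. The divergence is in the compact case, and there your averaging argument has a genuine gap as justified. The proposition is stated for Banach spaces of forms which, in this paper, are H\"older-type spaces ($C^{k,\alpha}$ or $C^{k,\alpha}_\delta$), and on such spaces the rotation action is \emph{not} strongly continuous and smooth forms are \emph{not} dense; so your proposed resolution of the "main obstacle" — continuity of $s \mapsto \Theta_s^*\alpha$ on a dense subspace of smooth forms, hence a Bochner integral in $X$ — is not available. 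One can still define $P(\alpha)$ pointwise (or as an $L^2$-valued integral, using $X \hookrightarrow L^2$) and check $P(\alpha) \in X$ with $S^1$-invariant output, but the step you actually need, $DF(P(x)) = P(DF(x))$, then requires commuting $DF$ with an integral that converges only in $L^2$ or pointwise; since the hypotheses give $DF$ as a bounded map in the $X$, $Y$ norms only (not an $L^2$-bounded operator, nor one known to commute with a merely pointwise average), this commutation is not justified by what you have written. So the compact-case surjectivity is not closed as it stands.

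The repair is available from your own hypotheses and is what the paper does: the kernel of $DF$ consists of harmonic forms, hence is finite-dimensional on the compact manifold and, by Lemma \ref{cptharms1invar}, $S^1$-invariant; using the continuous inclusion $X \hookrightarrow L^2$ and the $S^1$-invariance of the metric, the $L^2$-orthogonal complement $W$ of the kernel inside $X$ is a closed, $S^1$-invariant complement. With that $W$ in hand, the compact case is finished by exactly the same uniqueness argument you already gave for the asymptotically cylindrical case (the unique preimage in $W$ of an $S^1$-invariant form is fixed by every $\Theta^*$, hence $S^1$-invariant). In other words, the role of the $L^2$ hypothesis is to manufacture the invariant complement, not to make sense of an averaging operator; replacing your averaging paragraph by this observation yields the paper's proof verbatim and treats both cases uniformly.
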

\begin{proof}
Since $F$ is $S^1$-equivariant, the image of an $S^1$-invariant form under it is an $S^1$-invariant form. Consequently, $F^{S^1}$ is a well-defined map. 

We now consider the derivative. By hypothesis, its kernel consists of (asymptotically translation invariant) harmonic forms; by Lemmas \ref{cptharms1invar} and \ref{acylharms1invar} we know that these are $S^1$-invariant, and consequently we know that the kernel of $DF^{S^1}$ agrees with the kernel of $DF$. For surjectivity, note first that $DF^{S^1}$ must also be $S^1$-equivariant.

Suppose we have a form $\alpha \in F(X')$, and let $\dot \alpha$ be a tangent to $Y'$ at $\alpha$, so we have to show that $\dot \alpha$ is in the image of $DF^{S^1}$. Since $DF$ is surjective, we can find $\dot \beta$ with $DF(\dot \beta) = \dot \alpha$. There is always an $S^1$-invariant complement $W$ to the finite-dimensional kernel of $DF$; in the asymptotically cylindrical case, this is assumed, and in the compact case we may let $W$ be the $L^2$-orthogonal complement and note that since the metric is $S^1$-invariant, $W$ is also $S^1$-invariant. We suppose that $\dot \beta$ lies in $W$. 

Now, given a  rotation $\Theta$, since $DF^{S^1}$ is $S^1$-equivariant and $\dot \alpha$ is $S^1$-invariant, $\Theta^* \dot \beta$ also maps to $\dot \alpha$. Since $W$ is $S^1$-invariant, $\Theta^* \dot \beta$ is also in $W$. Consequently the difference $\dot \beta - \Theta^* \dot \beta$ is in $W$ and maps to zero; hence it is zero, and $\dot \beta$ is $S^1$-invariant. Hence, $\dot \alpha = DF^{S^1} (\dot \beta)$, and $DF^{S^1}$ is surjective.
\end{proof}
Applying Proposition \ref{generals1invardetails} to Proposition \ref{cptdetails} gives 
\begin{cor}
\label{cor:cpts1invardetails}
Let $\phi_0$ be a torsion-free $S^1$-invariant $G_2$ structure on $M \times S^1$ and let $E$ and $F$ be as in Proposition \ref{cptdetails}. When we restrict all the spaces to be $S^1$-invariant (passing to $E'$ and so forth), the kernel of $DF$ is unchanged and it remains surjective.
\end{cor}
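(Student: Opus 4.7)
The plan is to obtain the corollary as a direct application of the general machinery in Proposition \ref{generals1invardetails}. The hypotheses of that proposition are $S^1$-equivariance and smoothness of $F$, surjectivity of $DF$ with kernel consisting of harmonic forms, and (in the compact case) a continuous inclusion $X \hookrightarrow L^2$. I would therefore check these in turn for the $F$, $X$, $Y$ of Proposition \ref{cptdetails}.

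First, I would verify $S^1$-equivariance of $F(\phi) = P(*_0 *_\phi \phi)$. Because $\phi_0$ is $S^1$-invariant, the induced metric $g_{\phi_0}$, the Hodge star $*_0$, and the $L^2$-projection $P$ onto exact forms are all $S^1$-equivariant. The operation $\phi \mapsto *_\phi \phi$ is defined naturally from $\phi$ alone, so commutes with pullback by any rotation $\Theta$; composing, $F$ is $S^1$-equivariant. Smoothness, needed implicitly, follows from the smoothness of $\phi \mapsto g_\phi$ established in the proof of Proposition \ref{g2su3prop1}. For the Banach-space setup, I would take $X$ and $Y$ to be Hölder spaces of suitable regularity defined via $g_{\phi_0}$: their norms are then automatically $S^1$-invariant, and on the compact manifold $M \times S^1$ the continuous inclusion $X \hookrightarrow L^2$ is immediate. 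The remaining hypotheses --- surjectivity of $DF$ and the identification of $\ker DF$ with the harmonic three-forms --- are exactly what Proposition \ref{cptdetails} asserts.

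With these checks in place, Proposition \ref{generals1invardetails} applies directly: $F^{S^1}$ is well-defined on $E'$, $DF^{S^1}$ is surjective, and its kernel coincides with that of $DF$ (identified via Lemma \ref{cptharms1invar} with the $S^1$-invariant harmonic forms). There is no substantive obstacle; the real content lies in Proposition \ref{generals1invardetails} and Lemma \ref{cptharms1invar}, and the corollary is essentially a bookkeeping statement confirming that the compact $G_2$ slice of Proposition \ref{cptdetails} restricts cleanly to the $S^1$-invariant setting.
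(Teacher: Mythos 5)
Your proposal is correct and follows essentially the same route as the paper: both apply Proposition \ref{generals1invardetails}, checking $S^1$-equivariance of $F$ via the invariance of $\phi_0$ (hence of $*_0$ and $P$) and the naturality of $\phi \mapsto *_\phi\phi$, and quoting Proposition \ref{cptdetails} for surjectivity of $DF$ and the harmonicity of its kernel. Your extra remarks on the choice of H\"older spaces and the $L^2$ inclusion are fine bookkeeping that the paper leaves implicit.
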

\begin{proof}
We apply Proposition \ref{generals1invardetails}. We have to check that $F$ is $S^1$-equivariant and that its kernel consists of harmonic forms. The fact the kernel consists of harmonic forms is stated in Proposition \ref{cptdetails}. For $S^1$-equivariance, we recall that $F(\phi) = P(*_0 * \phi)$ where $*_0$ and $*$ are the Hodge stars induced by $\phi_0$ and $\phi$ and $P$ is the orthogonal projection onto exact forms induced by $\phi_0$. Since $\phi_0$ is $S^1$-invariant, $*_0$ and $P$ are $S^1$-equivariant ($P$ was essentially proved to be so as part of Proposition \ref{generals1invarslice}). On the other hand, it is clear that the map $\phi \mapsto * \phi$ is $S^1$-equivariant. Consequently, $F$ is $S^1$-equivariant. By Proposition \ref{generals1invardetails}, the result follows. 
\end{proof}
Corollaries \ref{cor:g2cpts1invarslice} and \ref{cor:cpts1invardetails} essentially prove the compact case of Theorem \ref{mg2s1mfd}. 

The asymptotically cylindrical case is similar but more involved. Propositions \ref{generals1invarslice} and \ref{generals1invardetails} will suffice to prove everything, but we have to apply them to the proof of smoothness of the asymptotically cylindrical $G_2$ moduli space in Nordstr\"om \cite{nordstromacyldeformations} which is substantially more complicated than Hitchin's proof for the compact case. We consequently only summarise this case.

We begin by considering the limit as in \cite[section 4]{nordstromacyldeformations}. The limit is a torsion-free Calabi-Yau structure on the cross-section $N \times S^1$, so first of all requires us to define an $S^1$-invariant Calabi-Yau moduli space on $N \times S^1$. 

As in the compact $G_2$ case, the proof is fundamentally that we first choose a slice and then consider the torsion-freeness map from that slice. In our case, we work with a base Calabi-Yau structure $(\Omega, \omega)$ which is $S^1$-invariant. The slice is defined in \cite[Proposition 4.7]{nordstromacyldeformations}, by identifying an orthogonal complement to the Lie derivative of $\Re \Omega$ (essentially symmetrically to that in Proposition \ref{cptslice}. It is then not necessary to consider the Lie derivatives of $\omega$, as each diffeomorphism can be identified by its $\Re \Omega$ part. The Calabi-Yau case of Proposition \ref{generals1invarslice} says that when all forms in these splittings are taken $S^1$-invariant, the reduced orthogonal complement is still an orthogonal complement.

The map defining torsion-freeness is $F$ in \cite[Definition 4.12]{nordstromacyldeformations}, viz.
\begin{equation}
\label{eq:cycrosssectionF}
F(\beta, \gamma) = (P_1(*\hat\beta), P_2(\beta \wedge \gamma), \frac14 \beta \wedge \hat \beta - \frac16 \gamma^3)
\end{equation}
where, for $(\beta, \gamma)$ close to $(\Re \Omega, \omega)$, $\hat \beta$ is the imaginary part of the unique decomposable complex $3$-form of which $\beta$ is the real part, $P_1$ is the orthogonal projection to those three-forms in the slice which are orthogonal to harmonic forms, and $P_2$ is an orthogonal projection on closed five-forms induced by the Calabi-Yau structure (to the harmonic forms and exterior derivatives of $(3, 1) + (1, 3)$ forms). 

Wedge products are clearly $S^1$-equivariant. The map $\beta \mapsto \hat \beta$ is $S^1$-equivariant by uniqueness of the decomposable complex $3$-form, so to show that $F$ is $S^1$-equivariant we only need to show that the two projections are. However, the Calabi-Yau structure with respect to which these splittings are taken is $S^1$-invariant, so the subspaces that these splittings project to and their complements are $S^1$-invariant. It follows as in Proposition \ref{generals1invarslice} that the projection maps are $S^1$-equivariant. 

\cite[Proposition 4.14]{nordstromacyldeformations} says that $DF$ is surjective and \cite[Proposition 4.15]{nordstromacyldeformations} says that the kernel of $DF$ consists of harmonic forms. By Proposition \ref{generals1invardetails} it follows that the kernel is the same and the derivative remains surjective when we pass to the $S^1$-invariant case.   

This proves that the $S^1$-invariant moduli space of Calabi-Yau structures on $N \times S^1$ is a smooth manifold locally diffeomorphic to that for all Calabi-Yau structures on $N \times S^1$. 

Only a subspace of these Calabi-Yau structures might arise as limits of a torsion-free $G_2$ structure: we have to check that this subspace is still a manifold. In the non-$S^1$-invariant case, this is \cite[Proposition 6.2]{nordstromacyldeformations}. Again, the proof is that structures arising as limits correspond to the kernel of two nonlinear maps (taken consecutively). Note that the tangent space to the Calabi-Yau moduli space already consists of harmonic forms, so this hypothesis of Proposition \ref{generals1invardetails} does not need checking. The nonlinear maps concerned are composites of the wedge product, orthogonal projections to $S^1$-invariant subspaces determined by the base Calabi-Yau structure, and the orthogonal projection to the complement of such an $S^1$-invariant subspace with respect to the metric induced by the Calabi-Yau structure we consider. These are clearly all $S^1$-equivariant, and so Proposition \ref{generals1invardetails} shows that the derivatives remain surjective, so that the subspace of the $S^1$-invariant moduli space corresponding to limits of $S^1$-invariant torsion-free $G_2$ structures is indeed a submanifold.

We now must pass to the full asymptotically cylindrical setting. We must, here, restrict to asymptotically cylindrical structures and diffeomorphisms with a fixed decay rate $\delta >0$ to define our Banach spaces, as in \cite{nordstromacyldeformations}. 

\label{s1invarg2slice}
The slice we take in the full asymptotically cylindrical setting is given in \cite[Proposition 6.11]{nordstromacyldeformations}. We restrict to the subspace of asymptotically translation invariant closed $3$-forms with suitable limits, and in particular to vector fields (defining diffeomorphisms) whose limits are Killing fields for the limit structure. We then take $E$ to be the subspace satisfying $d^* \alpha \in \Omega^2_{14}$ as in Proposition \ref{cptslice}. Then we again have that $E$ and the space of Lie derivatives are orthogonal complements. By Proposition \ref{generals1invarslice}, it follows that this orthogonal splitting is preserved when we pass to the $S^1$-invariant setting. 

The final map we consider is $F$ of \cite[Definition 6.13]{nordstromacyldeformations}: $F(\phi) = P(*_0 * \phi)$ exactly as in Proposition \ref{cptdetails}; by \cite[Proposition 6.15]{nordstromacyldeformations} the kernel of $F$ is precisely the torsion-free $G_2$ structures. Exactly as in Corollary \ref{cor:cpts1invardetails}, $F$ is $S^1$-equivariant. \cite[Proposition 6.17]{nordstromacyldeformations} says that the kernel of the derivative, when we restrict to the slice, consists exactly of harmonic forms, and the derivative is surjective. Moreover, the kernel is all the harmonic forms with suitable limits, and by construction every nonzero limit arises as a limit of a harmonic form,  and decaying forms orthogonal to harmonic forms form an $S^1$-invariant complement for the kernel. Consequently by Proposition \ref{generals1invardetails} the derivative has the same kernel and is surjective in the $S^1$-invariant setting too. 

We have now essentially proved
\begin{thm}
\label{mg2s1mfd}
In both the compact and asymptotically cylindrical cases $\M_{G_2}^{S^1}$ is a manifold and is locally diffeomorphic to $\M_{G_2}$.
\end{thm}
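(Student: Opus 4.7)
The plan is to package together the ingredients developed throughout the subsection. The outline mirrors the Hitchin and Nordstr\"om proofs that $\M_{G_2}$ is a smooth manifold, with $S^1$-invariance inserted at every step. Concretely, pick a base $S^1$-invariant torsion-free $G_2$ structure $\phi_0$, exhibit a slice transverse to the $\Diff_0^{S^1}$-action consisting of $S^1$-invariant closed three-forms, apply the implicit function theorem to the $S^1$-equivariant torsion-free map on this slice, and finally compare with the unrestricted construction to extract the local diffeomorphism statement.

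First I would set up the slice. In the compact case this is precisely Corollary \ref{cor:g2cpts1invarslice}, which gives an $L^2$-orthogonal complement $E'$ to the $S^1$-invariant Lie derivatives of $\phi_0$; combined with Proposition \ref{diff0s1submfd}, this identifies a local transversal to the $\Diff_0^{S^1}$-orbits. In the asymptotically cylindrical case I would use the more elaborate Nordstr\"om slice (\cite[Proposition 6.11]{nordstromacyldeformations}) restricted to $S^1$-invariant asymptotically translation invariant three-forms with $S^1$-invariant limits, where the needed orthogonal splitting persists by Proposition \ref{generals1invarslice}. This step requires having first constructed the $S^1$-invariant Calabi-Yau moduli space on $N \times S^1$ together with the subspace of limits actually arising from $G_2$ structures on $M \times S^1$, by applying Propositions \ref{generals1invarslice} and \ref{generals1invardetails} to Nordstr\"om's cross-section constructions (his Propositions 4.7, 4.14, 4.15, and 6.2).

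Next I would apply the implicit function theorem. The torsion-free map $F(\phi) = P(*_0 * \phi)$ is $S^1$-equivariant because $\phi_0$, and hence $*_0$ and the projection $P$, are $S^1$-invariant. Corollary \ref{cor:cpts1invardetails} in the compact case and Proposition \ref{generals1invardetails} applied to \cite[Proposition 6.17]{nordstromacyldeformations} in the asymptotically cylindrical case guarantee that $DF^{S^1}$ is surjective with kernel equal to the harmonic forms (with the prescribed limit behaviour in the acyl case). The implicit function theorem then gives a smooth local parameterisation of the $S^1$-invariant torsion-free structures in the slice, proving that $\M_{G_2}^{S^1}$ is a smooth manifold whose tangent space at $[\phi_0]$ is the space of (suitable) harmonic forms.

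For the local diffeomorphism with $\M_{G_2}$, I would observe that Lemmas \ref{cptharms1invar} and \ref{acylharms1invar} make every (asymptotically translation invariant) harmonic form $S^1$-invariant automatically. Therefore the kernels of $DF$ and $DF^{S^1}$ coincide, so the tangent spaces of $\M_{G_2}^{S^1}$ and $\M_{G_2}$ at any $S^1$-invariant structure agree under the natural inclusion, and the inclusion is a local diffeomorphism on a chart provided by the slice construction. The main conceptual obstacle was really isolating the right abstract statements (Propositions \ref{generals1invarslice} and \ref{generals1invardetails}) so that both Hitchin's compact argument and Nordstr\"om's more intricate asymptotically cylindrical argument transfer uniformly; once those are in hand, the theorem becomes a bookkeeping exercise in checking $S^1$-equivariance of each map appearing in the standard proofs, which is routine since all base objects (metrics, Hodge stars, orthogonal projections onto $S^1$-invariant subspaces) are $S^1$-equivariant.
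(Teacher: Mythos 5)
Your proposal is correct and follows essentially the same route as the paper: an $S^1$-invariant slice (Corollary \ref{cor:g2cpts1invarslice}, respectively Proposition \ref{generals1invarslice} applied to Nordstr\"om's slice), the $S^1$-equivariant implicit function theorem step (Corollary \ref{cor:cpts1invardetails}, respectively Proposition \ref{generals1invardetails} applied to Nordstr\"om's Propositions 4.14, 4.15, 6.2 and 6.17), and the observation via Lemmas \ref{cptharms1invar} and \ref{acylharms1invar} that harmonic forms are automatically $S^1$-invariant, so the premoduli space is unchanged and the local identification with $\M_{G_2}$ follows. The only point the paper adds beyond your sketch is the final routine passage from fixed-regularity, fixed-decay-rate Banach spaces back to smooth structures, where $S^1$-invariance plays no role.
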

The remaining parts of this proof are passing from the reduced-regularity and fixed-decay-rate Banach spaces to smooth and all asymptotically cylindrical structures: $S^1$-invariance is irrelevant to these, which thus follow exactly as in \cite{hitchinthreeforms} and \cite{nordstromacyldeformations}. 

Note that the map $\M_{G_2}^{S^1} \to \M_{G_2}$ is not known to be an inclusion map as it need not be globally injective.
\begin{rmk}
It is tempting to try to prove Theorem \ref{mg2s1mfd} more directly. We outline this ``more direct" proof in the compact case, and explain where it runs into difficulty. We know from Theorem \ref{g2slicetheorem} that the slice neighbourhood for $\M_{G_2}$ around an $S^1$-invariant $G_2$ structure $\phi_0$ must consist of $S^1$-invariant $G_2$ structures, because we know its representatives are preserved by the isometry $\Theta$ of $\phi_0$, and thus we always have a local continuous map $\M_{G_2} \to \M_{G_2}^{S^1}$. As $\Diff_0^{S^1} \subset \Diff_0$, and an $S^1$-invariant $G_2$ structure is a $G_2$ structure, we always also have a well-defined continuous map $\M_{G_2}^{S^1} \to \M_{G_2}$ as in the last proof. If we could show that locally these were inverse to each other, and so defined a homeomorphism, we would immediately get the manifold structure on $\M_{G_2}^{S^1}$. We fix neighbourhoods so that these maps are well-defined, and work entirely on these neighbourhoods (so references to injectivity are only to local injectivity).

It is clear that the composition $\M_{G_2} \to \M_{G_2}^{S^1} \to \M_{G_2}$ is the identity. We need to check that the map $\M_{G_2}^{S^1} \to \M_{G_2}$ is injective; it then follows the other composition is also the identity. We know that every point in the image has a slice representative and so it suffices to check that if $\phi_0$ is in the slice neighbourhood and $\Phi \in \Diff_0$, then $\phi_1 = \Phi^* \phi_0$ is also $\Psi^* \phi_0$ for some $\Psi \in \Diff_0^{S^1}$. 

If we have a curve $\phi_t$ of $S^1$-invariant $G_2$ structures from $\phi_0$ to $\phi_1$ then, at least after passing to a suitable $C^{k, \alpha}$ space, we know by Theorem \ref{g2slicetheorem} that for all $t$, $\phi_t$ is the pullback of an element in the slice and can thus be written as $\Phi_t^* \hat\phi_t$ with $\hat\phi_t$ always in the slice. (In particular, $\hat\phi_1 = \hat\phi_0 = \phi_0$). We know also, as $\hat\phi_t$ is in the slice, that $\hat \phi_t$ is $S^1$-invariant. Theorem \ref{g2slicetheorem} says that isometries of $\hat \phi_t$ are isometries of $\phi_0$, and it follows that $\Phi_t^* \phi_0$ is also $S^1$-invariant, by considering the isometry $\Phi_t \circ \Theta \circ \Phi_t^{-1}$ of $\hat\phi_t$. It then follows from Proposition \ref{s1invdiffeoweak} that $\Phi_t$ is $S^1$-invariant for all $t$; in particular, $\Phi_1$ is $S^1$-invariant, and this proves that the map $\M_{G_2}^{S^1} \to \M_{G_2}$ is (locally) injective. 

Thus this proof reduces to
\begin{claim}
\label{starclaim}
The $S^1$-invariant torsion-free $G_2$ structures form a locally path-connected subset of torsion-free $G_2$ structures.
\end{claim}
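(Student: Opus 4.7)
The plan is to use Theorem \ref{g2su3alltold} (Theorem C) to translate the claim into a statement about torsion-free $SU(3)$ structures on $M$. Under the homeomorphism of Theorem C, a neighbourhood of $\phi_0$ (corresponding to some triple $((\Omega_0,\omega_0), L_0, v_0)$) in the space of $S^1$-invariant torsion-free $G_2$ structures on $M \times S^1$ corresponds to a neighbourhood of this triple in the product of torsion-free $SU(3)$ structures on $M$, $\R_{>0}$, and closed $1$-forms (asymptotically translation invariant, with $\tilde v(\ddt) = 0$ in the asymptotically cylindrical case). Since $\R_{>0}$ is an open interval and the admissible closed $1$-forms form a linear subspace, these two factors are trivially locally path-connected. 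Thus the Claim reduces to local path-connectedness of the space of torsion-free $SU(3)$ structures on $M$ in the $C^\infty$ topology.

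To establish this, I would follow Hitchin's slice strategy from \cite{hitchinthreeforms}. Given any torsion-free $SU(3)$ structure $(\Omega_0, \omega_0)$, the goal is to construct a finite-dimensional smooth slice of torsion-free $SU(3)$ structures transverse to the $\Diff_0$-orbit through $(\Omega_0, \omega_0)$, and then, using that $\Diff_0$ is a Fr\'echet Lie group, to exhibit a neighbourhood of $(\Omega_0, \omega_0)$ in the torsion-free $SU(3)$ structures as homeomorphic to a product of an open subset of $\Diff_0/\Aut$ with the slice. Both factors are locally path-connected (each is a manifold), so the product is too; the inverse function theorem applied to the torsion-freeness map pulled back to the slice ensures that the submanifold of torsion-free representatives is nonempty and locally homeomorphic to a Euclidean ball. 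In the compact case, Hitchin's work directly provides such a slice construction for $SU(3)$.

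The main obstacle is the asymptotically cylindrical case: no slice theorem for torsion-free asymptotically cylindrical $SU(3)$ structures appears in the literature. Adapting Hitchin's argument would require essentially the same asymptotically cylindrical Hodge theory and weighted H\"older analysis developed in \cite{nordstromacyldeformations} for $G_2$, applied directly to the $SU(3)$ torsion-freeness conditions $d\Omega = d\omega = 0$. This would effectively duplicate the analytic content of the indirect route through the $G_2$ correspondence used in subsection \ref{ssec:su3modspacesmooth}. In short, the ``direct'' route to Theorem \ref{mg2s1mfd} via Claim \ref{starclaim} does not save work relative to the indirect route using Propositions \ref{generals1invarslice} and \ref{generals1invardetails}, which is why the latter is adopted in the body of the paper.
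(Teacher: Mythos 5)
Your reduction via Theorem \ref{g2su3alltold} is valid, and your closing assessment matches the paper's own: this Claim sits inside a remark precisely because the paper does not prove it either, observing instead that any proof seems to need the same analysis as the main argument. But taken as a proof, your proposal has a genuine gap: the asymptotically cylindrical case is exactly the step you leave undone. Local path-connectedness of torsion-free asymptotically cylindrical $SU(3)$ structures on the six-manifold $M$ is not available in the literature (the paper stresses in the introduction that the asymptotically cylindrical Calabi--Yau moduli space ``has not been studied in the sense we will use''), and producing the required slice and implicit-function-theorem argument for $d\Omega = d\omega = 0$, with the correct limit data and weighted spaces, is precisely the nontrivial analytic content; saying it ``would require essentially the same analysis'' defers rather than supplies the proof. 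Even the compact case is not Hitchin alone: Hitchin's slice concerns $\Re \Omega$, and the treatment of the full pair $(\Re\Omega, \omega)$ is Nordstr\"om's (\cite[section 4]{nordstromacyldeformations}), which the paper invokes only for the compact cross-section $N \times S^1$.

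Insofar as the paper justifies the Claim at all, it does so by a different and cheaper route that stays on the $G_2$ side: locally, the $S^1$-invariant torsion-free structures are exhibited as pullbacks by $\Diff_0^{S^1}$ (a manifold, by Proposition \ref{diff0s1submfd}) of the torsion-free structures in the $S^1$-invariant slice, which form a manifold by Corollaries \ref{cor:g2cpts1invarslice} and \ref{cor:cpts1invardetails}; a product of manifolds is locally path-connected. Your route through $SU(3)$ structures on $M$ is not circular, but it demands strictly more new analysis than this, which defeats the purpose the Claim serves in the remark, namely to bypass those two implicit-function-theorem applications.
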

Without torsion-freeness this claim is evident because of the openness of $G_2$ structures. The natural thing to do is to take a path of torsion-free $G_2$ structures and average out the rotation, but we cannot take averages because the map defining the torsion is non-linear. Removing the remaining torsion would therefore require some analysis: it should be possible, but is unlikely to be easier than the arguments we have outlined in this subsection.

To complete this extended remark, we will observe where Claim \ref{starclaim} comes in our original proof, or, essentially equivalently, how we have local path-connectedness for all torsion-free $G_2$ structures (before passing to $S^1$-invariant ones). The idea is that we only use the whole set of torsion-free $G_2$ structures as the product of diffeomorphisms and the slice. First, we show that the set of torsion-free $G_2$ structures is locally this product, essentially by using the implicit function theorem on $(\Phi, \phi) \mapsto \Phi^* \phi$; then we use the implicit function theorem again to determine that the torsion-free $G_2$ structures in what's left are also a manifold, and so locally path-connected. Since we already know that the diffeomorphisms are a manifold, so locally path-connected, we know that the set of torsion-free $G_2$ structures is a product of locally path-connected spaces and so locally path-connected. It would perhaps be possible to combine these applications of the implicit function theorem and show Claim \ref{starclaim} directly, by showing that $S^1$-invariant torsion-free $G_2$ structures are themselves a manifold; but the whole point of this ``more direct argument" is to avoid these two applications of the implicit function theorem (which correspond, for instance, to Corollaries \ref{cor:g2cpts1invarslice} and \ref{cor:cpts1invardetails}). 
\end{rmk}
Before turning to the components $\M_{SU(3)}$ and $Z$ of $\M_{G_2}^{S^1}$, we deal with the question promised after Definition \ref{defin:msu3}, of what would happen if we defined our moduli spaces to also quotient by the rescaling action. 
\begin{numrmk}
\label{rescalingmodspaces}
Suppose for simplicity that $M$ is compact; similar arguments will apply in the asymptotically cylindrical case. We know that there is a natural rescaling action on both $SU(3)$ structures and $G_2$ structures. The action induced on $S^1$-invariant $G_2$ structures by rescaling of $SU(3)$ structures is not just rescaling: it maps the $G_2$ structures $a^{\frac32} \Re \Omega + az \wedge \omega$ to $ \Re \Omega + z \wedge \omega$. Consequently, if we quotient by rescaling of $SU(3)$ structures, we have to quotient by this partial rescaling of $S^1$-invariant $G_2$ structures, otherwise Proposition \ref{setproduct} no longer holds. If we also quotient by rescaling the $G_2$ structures, we find in particular that $\Re \Omega + z \wedge \omega$ is identified with $a^{\frac32} \Re \Omega + a^\frac32 z \wedge \omega$ and hence with $\Re \Omega + a^\frac12 z \wedge \omega$; that is, we are also quotienting by rescaling of $Z$. The natural slice to take for $Z$ is to recall that an element $z$ of $Z$ is of the form $L [d\theta] + [v]$ for some $[v] \in H^1(M)$, and merely insist that $L=1$. An easy calculation shows that if $L=1$ and $(\Omega, \omega)$ induces a metric of volume one, then so too does $\Re \Omega + z \wedge \omega$. 

Consequently, we could quotient by these and establish the following analogue of Proposition \ref{setproduct}:
\begin{equation}
\begin{split}
&\{[\Omega, \omega] \in \M_{SU(3)}: \Vol([\Omega, \omega]) = 1\} \times H^1(M) \\=\,& \{[\phi = \Re \Omega + z \wedge \omega] \in \M_{G_2}^{S^1}: \Vol([\phi]) = \Vol([\Omega, \omega]) = 1\}
\end{split}
\end{equation}
Of course, the analogue of Theorem \ref{mg2s1mfd} in this case is completely false: even if we quotient by rescaling of $G_2$ structures, so that we work with $\{[\phi] \in \M_{G_2}: \Vol([\phi]) = 1\}$, $\{[\phi = \Re \Omega + z \wedge \omega] \in \M_{G_2}^{S^1}: \Vol([\phi]) = \Vol([\Omega, \omega]) = 1\}$ must be a proper subspace. Consequently, to prove that this space is smooth we would essentially have to proceed by the same argument as in this subsection and then continue. It is in this sense that we claimed after Definition \ref{defin:msu3} that quotienting by the rescaling action added additional complexity for no practical gain. 
\end{numrmk}

\subsection{The space of twisting classes \texorpdfstring{$Z$}{Z}}
\label{ssec:spacez}
We now know that $\M_{G_2}^{S^1} = Z \times \M_{SU(3)}$ is a manifold; it remains to show that both factors are manifolds and if we take the product manifold structure on the right hand side this identification is a diffeomorphism. We begin with the quotient $Z$ of Definition \ref{defin:spacez}. $Z$ is clearly an open subset of a vector space; the purpose of this subsection is to obtain a description of this vector space intrinsic to $M$. In the compact case, this is straightforward; in the asymptotically cylindrical case, we will use standard asymptotically cylindrical Hodge theory. First of all, we need a standard lemma which we will use later to set up gluing as well. 
\begin{lem}[compare {\cite[(2.5)]{nordstromgluing}}]
\label{endexactness}
Suppose that $\alpha$ is an exponentially decaying (with all derivatives) closed form on the end $N \times (0, \infty)$ of an asymptotically cylindrical manifold $M$ with cross-section $N$. Then there is a form $\eta$ on $N \times (0, \infty)$ such that $d\eta = \alpha|_{N \times (0, \infty)}$. 
\end{lem}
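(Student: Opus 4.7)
The plan is to produce an explicit primitive via a homotopy operator adapted to the product structure on the end. I would split $\alpha$ into horizontal and vertical pieces relative to the cylinder and integrate the vertical piece along $t$, exploiting the exponential decay for convergence.

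Concretely, any form on $N \times (0, \infty)$ may be written uniquely as
\begin{equation}
\alpha = \alpha_1 + dt \wedge \alpha_2,
\end{equation}
where $\alpha_1(\cdot, t)$ and $\alpha_2(\cdot, t)$ are forms on $N$ depending smoothly on $t$ and satisfying $\iota_{\partial/\partial t} \alpha_i = 0$. Writing $d = d_N + dt \wedge \partial_t$, the equation $d\alpha = 0$ decomposes as
\begin{equation}
d_N \alpha_1 = 0, \qquad \partial_t \alpha_1 = d_N \alpha_2.
\end{equation}
Since $\alpha$ (and hence $\alpha_2$) decays exponentially with all derivatives, the form
\begin{equation}
\eta(\cdot, t) := -\int_t^\infty \alpha_2(\cdot, s)\, ds
\end{equation}
is a well-defined smooth $N$-valued form on $N \times (0, \infty)$, again with exponential decay of all derivatives (in particular I may differentiate under the integral sign).

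A direct computation, using the fundamental theorem of calculus and the compatibility relation $\partial_s \alpha_1 = d_N \alpha_2$ with $\alpha_1 \to 0$ as $s \to \infty$, then yields
\begin{equation}
d\eta = -\int_t^\infty d_N \alpha_2(\cdot, s)\, ds + dt \wedge \alpha_2(\cdot, t) = -\int_t^\infty \partial_s \alpha_1(\cdot, s)\, ds + dt \wedge \alpha_2 = \alpha_1 + dt \wedge \alpha_2 = \alpha,
\end{equation}
as required.

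There is no real obstacle; the only content of the lemma is having a product structure on the end (so that the decomposition and the homotopy operator exist) together with enough decay to make the improper integral and its derivatives converge absolutely. Both are built into the hypotheses. The sole sanity check is that the decomposition of $d\alpha = 0$ is used in exactly the right way, and that $\alpha_1 \to 0$ at infinity (which follows from exponential decay) so that the boundary term in the integration along $s$ vanishes.
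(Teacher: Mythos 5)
Your proof is correct and is essentially the argument the paper relies on (via the formula cited from Nordstr\"om's (2.5)): decompose $\alpha = \alpha_1 + dt \wedge \alpha_2$ and integrate the $dt$-component from $t$ to $\infty$, using exponential decay for convergence and the closedness relations to recover $\alpha_1$. As a bonus, your explicit $\eta$ is itself exponentially decaying, which is exactly the extra feature the paper later extracts from this construction in the proof of Lemma \ref{ccfcohom}.
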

We now claim our intrinsic description of the vector space quotient. This is the only place where we explicitly need Remarks \ref{sunacylconndxs} and its $G_2$ analogue: that all interesting Ricci-flat asymptotically cylindrical manifolds have connected cross-section $N$. 
\begin{lem}
\label{ccfcohom}
Let $M \times S^1$ be compact or asymptotically cylindrical. In the case that $M \times S^1$ is asymptotically cylindrical, suppose further that the cross-section $N$ of $M$ is connected. Then the quotient
\begin{equation}
\label{eq:quotientz}
\frac{\text{closed $S^1$-invariant covector fields $z$ (with $z(\frac{\partial}{\partial t}) \to 0$ exponentially)}}{\text{differentials of $S^1$-invariant (asymptotically translation invariant) functions}}
\end{equation}
is isomorphic to $H^1(M \times S^1) = H^1(M) \times \R$; in particular, if we restrict to $Z$, where the $[d\theta]$ component must be positive, we get the open subset $Z = H^1(M) \times \R_{>0}$.
\end{lem}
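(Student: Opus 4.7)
The plan is to define a map $\Phi$ from the quotient \eqref{eq:quotientz} to $H^1(M \times S^1)$ by sending a representative $z$ to its de Rham cohomology class, verify it is a bijection, and combine this with the K\"unneth identification $H^1(M \times S^1) \cong H^1(M) \oplus \R\langle[d\theta]\rangle$. Well-definedness is immediate since the numerator consists of closed forms and the denominator of exact ones.

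For injectivity, suppose $z = df$ for some smooth function $f$ on $M \times S^1$. In the asymptotically cylindrical case, since $z$ is asymptotically translation invariant and $\tilde z(\ddt) \to 0$ exponentially, standard asymptotically cylindrical de Rham theory (or direct integration along the $t$-direction) allows $f$ to be chosen asymptotically translation invariant. Averaging $f$ over the $S^1$-action produces an $S^1$-invariant function $\bar f$ which remains asymptotically translation invariant in the relevant case; using the $S^1$-invariance of $z$, one has $d\bar f = z$, so the class of $z$ in the quotient is trivial.

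For surjectivity, K\"unneth decomposes a given class $c \in H^1(M \times S^1)$ as $c = c_M + L[d\theta]$ with $c_M \in H^1(M)$ and $L \in \R$. In the compact case, any closed representative $v$ of $c_M$ gives $z = L d\theta + v$, which is $S^1$-invariant, closed, and represents $c$. In the asymptotically cylindrical case, the main technical obstacle is arranging the limit of $v$ to be tangent to $N$. By asymptotically cylindrical Hodge theory (for instance \cite[Theorem 5.9]{nordstromacyldeformations}), $c_M$ admits an asymptotically translation invariant closed representative $v'$ whose limit takes the form $\tilde v' = a\, dt + w$, where $a \in \R$ is a constant (using connectedness of $N$) and $w$ is a closed $1$-form on $N$. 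Picking a smooth function $\tau: M \to \R$ agreeing with $t$ on the end, $d\tau$ is globally exact on $M$ with limit $dt$, so $v := v' - a\,d\tau$ is cohomologous to $v'$, asymptotically translation invariant, and has limit $\tilde v = w$ tangent to $N$. Then $z = L d\theta + v$ represents $c$, and restricting to the open subset $L > 0$ identifies $Z$ with $H^1(M) \times \R_{>0}$.
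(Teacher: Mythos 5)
Your proof is correct, and while your injectivity argument is in substance the paper's (handle the limit, integrate the decaying part, and average over the $S^1$-action to make the potential invariant -- note that your parenthetical remark is sound: since $z(\ddt)$ decays exponentially, \emph{any} primitive $f$ is automatically asymptotically translation invariant), your surjectivity argument takes a genuinely different route. The paper represents a class of $H^1(M \times S^1)$ by a bounded harmonic $1$-form and then quotes two pieces of machinery: the identification $\H^1_\abs = \H^1_\bdd$ (Nordstr\"om's Corollary 5.13, which says no bounded harmonic $1$-form has a $dt$ term in its limit, so the boundary condition is automatic) and Lemma \ref{acylharms1invar} (harmonic forms for an $S^1$-invariant metric are $S^1$-invariant). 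You instead split the class by K\"unneth as $c_M + L[d\theta]$, take a representative of $c_M$ pulled back from $M$ (hence trivially $S^1$-invariant, so Lemma \ref{acylharms1invar} is not needed), and enforce the boundary condition by hand by subtracting $a\,d\tau$ with $\tau$ a cutoff of $t$; this only requires knowing that every class on $M$ admits \emph{some} asymptotically translation invariant closed representative, not the sharper abs-equals-bounded statement, and the constancy of $a$ is just closedness of the limit plus connectedness of $N$. The trade-off: the paper's route reuses Hodge-theoretic results it needs elsewhere and produces canonical (harmonic) representatives, while yours is more elementary and self-contained at this point, at the mild cost of the explicit correction term and of implicitly using connectedness of $M$ and $N$ in both halves of the argument.
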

\begin{rmk}
Asymptotically cylindrical Hodge theory is well-studied, and Lemma \ref{ccfcohom} is essentially a result between that for bounded harmonic forms and that for arbitrary closed forms with exponential growth, \ie between the generalisations to asymptotically cylindrical manifolds of Propositions 6.13 and 6.18 of Melrose \cite{melrose} -- these say that each of these is given by first cohomology and Lemma \ref{ccfcohom} is that arbitrary closed forms with specified limit over a suitable quotient does as well.
\end{rmk}
\begin{proof}
It is clear that $Z$ corresponds to cohomology classes containing a positive $[d\theta]$ component, so it is enough to show that \eqref{eq:quotientz} is isomorphic to $H^1(M \times S^1)$. In the compact case, \eqref{eq:quotientz} reduces to 
\begin{equation}
\label{eq:cptquotientz}
\frac{\text{closed $S^1$-invariant covector fields $z$}}{\text{differentials of $S^1$-invariant functions}}
\end{equation}
Since a closed $S^1$-invariant covector field is of the form $v + c dt$ where $v$ is a closed $1$-form on $M$ and $c$ is a constant, and an $S^1$-invariant function is just a function on $M$, it is easy to see that \eqref{eq:cptquotientz} is indeed isomorphic to $H^1(M) \oplus \R = H^1(M \times S^1)$. 

In the asymptotically cylindrical case, we will use the following facts of asymptotically cylindrical Hodge theory, for an asymptotically cylindrical manifold $M$ with connected cross-section. 
\begin{enumerate}[i)]
\item $H^1 \cong \H^1_\abs(M) = \H^1_\bdd(M)$, where this isomorphism is given by taking the cohomology class of a bounded harmonic (and so closed) form, $\H^1_\bdd$ is the set of bounded harmonic $1$-forms, and $\H^1_\abs$ is the set of bounded harmonic $1$-forms having the same boundary condition that $v(\ddt) \to 0$ exponentially.
\item An exact decaying $S^1$-invariant $1$-form is the differential of a decaying $S^1$-invariant function.
\end{enumerate}
The first point is simply \cite[Corollary 5.13]{nordstromacyldeformations} with different notation: that says that the map from bounded harmonic $1$-forms to cohomology is an isomorphism, and that no bounded harmonic $1$-form has $dt$ as its limit. The second is easier: it is required to set up the Hodge theory, but here we can do it concretely using Lemma \ref{endexactness}. If $\alpha=df$ is our $1$-form, Lemma \ref{endexactness} says that on the end, we can explicitly find a $g$ with $dg = \alpha$. An examination of the proof shows that $g$ is itself exponentially decaying. On the end, $d(f-g) = 0$, so that $f-g$ is a constant, $c$ say. Then $f-c$ is an exponentially decaying function with $d(f-c) =\alpha$, as required. 

We clearly have a map from closed $S^1$-invariant covector fields $z$ on $M \times S^1$, with $z(\ddt) \to 0$ exponentially, to $H^1(M \times S^1)$. We have to apply (i) and (ii) to show that the induced map from \eqref{eq:quotientz} to $H^1(M \times S^1)$ is a well-defined bijection. Since we quotient by exact forms, it is clearly well-defined. To show that it is injective, we have to show that if $v$ is $S^1$-invariant with appropriate limit and represents the zero cohomology class, then it is the differential of an asymptotically translation invariant $S^1$-invariant function. Since $[v]=0$, and $v$ has appropriate limit, the cohomology class of the limit $[\tilde v] = 0$. Thus $\tilde v = dg$ for some $g$ defined on $N$; we may assume $g$ is $S^1$-invariant, since $v$ is, for instance by averaging $g$ around the circle factor. Then $v-d(\psi g)$, where as in equation \eqref{eq:cutoff} $\psi$ is one for $t$ large and zero for $t$ small, represents the zero cohomology class and has zero limit, so by (ii) we have $v -d(\psi g) = dh$ for some decaying function $h$; since $v-d(\psi g)$ is $S^1$-invariant, we may assume that $h$ is. Then we have $v = d(\psi g+ h)$; $\psi g+h$ is an asymptotically translation invariant $S^1$-invariant function, as required. 

To show surjectivity, by the isomorphism in (i) it is enough to show that every bounded harmonic $1$-form defines a class of \eqref{eq:quotientz}; but, by (i) and Lemma \ref{acylharms1invar}, this is immediate.
\end{proof}
\subsection{Smoothness of the \texorpdfstring{$SU(3)$}{SU(3)} moduli space}
\label{ssec:su3modspacesmooth}
We now turn to the $\M_{SU(3)}$ factor. Since we have it as a subspace of $\M_{G_2}^{S^1}$ by Proposition \ref{topproduct}, and we understand the structure of $\M_{G_2}^{S^1}$ by Theorem \ref{mg2s1mfd}, we have a reasonable knowledge of its structure as a topological space. It remains to understand the $\M_{SU(3)}$ factor as a smooth manifold. We use the projection $\pi_Z$ from $\M_{G_2}^{S^1}$ to $Z$. We will show that $\pi_Z$ is a submersion. Its fibres are precisely $\M_{SU(3)}$'s, and so the implicit function theorem will give a family of manifold structure on $\M_{SU(3)}$. We then have to check that the manifold structure is independent of which fibre we take, and that consequently we indeed have a smooth product; this establishes Theorem A. 

Firstly, we now know that $\M_{SU(3)}(M)$ is locally homeomorphic to a subset of cohomology. 
\begin{prop}
\label{su3cohomrep}
In the compact case, \begin{equation}\label{eq:su3cohomrepcpt}[\Omega, \omega] \mapsto ([\Re \Omega], [\omega]) \in H^3(M) \oplus H^2(M)\end{equation} is a local homeomorphism to its image. In the asymptotically cylindrical case, \begin{equation}\label{eq:su3cohomrepacyl}[\Omega, \omega] \mapsto ([\Re \Omega], [\omega], [\Re \tilde \Omega_2], [\tilde \omega_2]) \in H^3(M) \oplus H^2(M) \oplus H^2(N) \oplus H^1(N)\end{equation} is a local homeomorphism to its image, where $\Re \tilde\Omega_2$ and $\tilde \omega_2$ are, as in Theorem \ref{g2slicetheorem}, the appropriate components of $ \Re \tilde\Omega = \Re \tilde\Omega_1 + dt \wedge \Re \tilde\Omega_2$ and $\tilde \omega = \tilde \omega_1 + dt\wedge \tilde \omega_2$.
\end{prop}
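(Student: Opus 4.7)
The plan is to realise the map of interest as the composition of three maps already established: first, inclusion of $\M_{SU(3)}(M)$ into $\M_{SU(3)}(M) \times Z \cong \M_{G_2}^{S^1}(M \times S^1)$ as a fibre of the product decomposition from Proposition \ref{topproduct}; second, the local diffeomorphism $\M_{G_2}^{S^1} \to \M_{G_2}$ from Theorem \ref{mg2s1mfd}; and third, the local diffeomorphism of $\M_{G_2}$ to cohomology from Theorem \ref{g2slicetheorem}. The composition will automatically be a local homeomorphism onto its image, and it remains only to identify the image explicitly with the claimed cohomology data.

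Concretely, I would fix the twisting $z_0 := d\theta$, which by Lemma \ref{ccfcohom} represents the element $(1, 0) \in \R_{>0} \times H^1(M) = Z$. The slice inclusion $[\Omega, \omega] \mapsto ([\Omega, \omega], [z_0])$ is a topological embedding; composing with the homeomorphism of Proposition \ref{topproduct} carries $[\Omega, \omega]$ to $[\Re\Omega + d\theta \wedge \omega] \in \M_{G_2}^{S^1}$, and the remaining two local diffeomorphisms then transport this into cohomology.

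In the compact case, Theorem \ref{g2slicetheorem}(a) delivers the cohomology class
\begin{equation*}
[\Re\Omega + d\theta \wedge \omega] = [\Re\Omega] + [d\theta] \wedge [\omega] \in H^3(M \times S^1),
\end{equation*}
and under the Künneth isomorphism $H^3(M \times S^1) \cong H^3(M) \oplus H^2(M)$ this is identified with $([\Re\Omega], [\omega])$, verifying \eqref{eq:su3cohomrepcpt}. In the asymptotically cylindrical case, Theorem \ref{g2slicetheorem}(b) also records $[\tilde\phi_2] \in H^2(N \times S^1)$, where $\tilde\phi = \tilde\phi_1 + dt \wedge \tilde\phi_2$. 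Using $\tilde z_0 = d\theta$ together with the decompositions $\Re\tilde\Omega = \Re\tilde\Omega_1 + dt \wedge \Re\tilde\Omega_2$ and $\tilde\omega = \tilde\omega_1 + dt \wedge \tilde\omega_2$, a direct rearrangement yields
\begin{equation*}
\tilde\phi_2 = \Re\tilde\Omega_2 - d\theta \wedge \tilde\omega_2,
\end{equation*}
whose Künneth decomposition in $H^2(N) \oplus H^1(N)$ recovers $([\Re\tilde\Omega_2], [\tilde\omega_2])$ up to an inessential sign. Together with the computation of the $H^3(M \times S^1)$ component, this gives \eqref{eq:su3cohomrepacyl}.

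The argument involves no new analytic input: every local diffeomorphism used is already established. The only step requiring genuine care is bookkeeping — ensuring the slice determined by fixing $z_0$ passes cleanly through each arrow in the chain (in particular, that the local diffeomorphism $\M_{G_2}^{S^1} \to \M_{G_2}$ continues to be a local homeomorphism onto its image when restricted to that slice, which is automatic) and that the Künneth identifications are applied consistently. This will be the main, though routine, obstacle.
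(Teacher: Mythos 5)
Your proposal is correct and follows essentially the same route as the paper: include $\M_{SU(3)}$ into $\M_{G_2}^{S^1}$ by fixing the twisting $d\theta$ (Proposition \ref{topproduct}), use the local homeomorphism to $\M_{G_2}$ and hence to cohomology (Theorems \ref{mg2s1mfd} and \ref{g2slicetheorem}), and unpack the result with the K\"unneth theorem on $M \times S^1$ and, in the asymptotically cylindrical case, on $N \times S^1$ as well. Your identification $\tilde\phi_2 = \Re\tilde\Omega_2 - d\theta \wedge \tilde\omega_2$ matches the paper's bookkeeping up to sign, which is indeed immaterial.
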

\begin{proof}
We first apply Theorem \ref{mg2s1mfd}, which says that locally $\M_{G_2}^{S^1}$ is homeomorphic to $\M_{G_2}$ and hence to cohomology. Then, in the compact case, the result follows by combining Proposition \ref{topproduct} with the K\"unneth theorem. Specifically, given a point $[\Omega, \omega]$, take a neighbourhood of $[\Re \Omega + d\theta \wedge \omega]$ in $\M_{G_2}^{S^1}$ that is homeomorphic to a neighbourhood in $\M_{G_2}(M\times S^1)$ and so to a neighbourhood in $H^3(M \times S^1)$. Then the map \eqref{eq:su3cohomrepcpt} is given by the composition
\begin{equation*}
\begin{tikzcd}[row sep=0pt]
\M_{SU(3)} \ar{r} &\M_{G_2}^{S^1} \ar{r} &H^3(M \times S^1) \ar{r} &H^3(M) \oplus H^2(M) \\[0pt]
[\Omega', \omega'] \ar[mapsto]{r} & {[\Re \Omega' + d\theta \wedge \omega']} \ar[mapsto]{r} & {[\Re \Omega' + d\theta \wedge \omega']} \ar[mapsto]{r} & ([\Re \Omega'], [\omega'])
\end{tikzcd}
\end{equation*}
Consequently, \eqref{eq:su3cohomrepcpt} is continuous because every individual step is. The inverse can be written in exactly the same way and so is also continuous (the last map being the projection $\M_{G_2}^{S^1} \to \M_{SU(3)}$).

In the asymptotically cylindrical case, the only difficulty is that we have to use the K\"unneth theorem on the cross-section as well. The map from $[\Omega', \omega']$ to $[\phi' = \Re \Omega' + d\theta \wedge \omega']$ is a local homeomorphism to its image exactly as in the compact case. We already know that (with the notation of Theorem \ref{g2slicetheorem}) the map
\begin{equation}
[\phi'] \mapsto ([\phi'], [\tilde\phi'_2]) \in H^3(M\times S^1) \oplus H^2(N\times S^1)
\end{equation}
is a local homeomorphism to its image; finally, the map from $H^3(M\times S^1) \oplus H^2(N\times S^1)$ to $H^3(M) \oplus H^2(M) \oplus H^2(N) \oplus H^1(N)$ is again continuous in both directions, using the K\"unneth theorem for both $H^3(M)$ and $H^2(N)$.
\end{proof}

We now turn to the projection map 
\begin{equation}
\label{eq:pizdef}
\pi_Z: \M_{G_2}^{S^1}(M \times S^1) \to Z
\end{equation}
We will show first that $\pi_Z$ is smooth, then that it is a surjective submersion; the implicit function theorem then implies that the fibres, which are clearly $\M_{SU(3)}$'s, have manifold structures. 

For smoothness, we work locally, and so may assume we have a subset of torsion-free $G_2$ structures (open in some suitable slice). We already know from Proposition \ref{g2su3prop1} that the map taking a $G_2$ structure to the twisting $z$ is smooth (as a map of Fr\'echet spaces). Since the map from $z$ to its cohomology class $[z]$ is linear and continuous, it is evidently smooth. Thus \eqref{eq:pizdef} defines a smooth map between finite-dimensional manifolds. 

To show that $\pi_Z$ is a surjective submersion we will use the following elementary 
\begin{lem}
\label{smoothcurves}
Suppose that $(\Omega, \omega)$ is an (asymptotically cylindrical) Calabi-Yau structure on $M$ and that $z(s)$ is a smooth curve of closed $1$-forms with $[z(s)] \in Z$ for all $s$. Then the curve
\begin{equation}
[\Re \Omega + z(s) \wedge \omega ] \in \M^{S^1}_{G_2}(M \times S^1)
\end{equation}
is smooth.
\end{lem}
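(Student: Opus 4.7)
The plan is to prove this by post-composition with the local diffeomorphism into cohomology provided by Theorems \ref{g2slicetheorem} and \ref{mg2s1mfd}, reducing the statement to a purely linear-algebraic observation.

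First, I would observe that $s \mapsto \Re \Omega + z(s) \wedge \omega$ is a smooth curve in the Fr\'echet space of (asymptotically translation invariant) $3$-forms: $s \mapsto z(s)$ is smooth by hypothesis, and the wedge product is bilinear and continuous. By Theorem \ref{g2su3alltold}, each form in this curve is an $S^1$-invariant torsion-free $G_2$ structure, so we have a smooth curve of structures. Smoothness of the induced curve in the moduli space is a local property, so fix $s_0$ and work in a small interval around it.

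Next, I would invoke Theorem \ref{mg2s1mfd}, which says that in a neighbourhood of $[\Re \Omega + z(s_0) \wedge \omega]$, the moduli space $\M_{G_2}^{S^1}$ is diffeomorphic to an open subset of $\M_{G_2}$; and then Theorem \ref{g2slicetheorem}, which says that $\M_{G_2}$ in turn is locally diffeomorphic to (a submanifold of) cohomology, via the map taking $[\phi]$ to $[\phi]$ (in the compact case) or to $([\phi],[\tilde\phi_2])$ (in the asymptotically cylindrical case). It therefore suffices to check smoothness of the composition
\begin{equation*}
s \mapsto \bigl([\Re \Omega + z(s) \wedge \omega], [\widetilde{\Re\Omega + z(s) \wedge \omega}_2]\bigr).
\end{equation*}

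Thirdly, the computation in cohomology gives
\begin{equation*}
[\Re \Omega + z(s) \wedge \omega] = [\Re \Omega] + [z(s)] \cup [\omega],
\end{equation*}
and since $s \mapsto [z(s)]$ is smooth (the map from closed forms to de Rham cohomology is continuous and linear) and the cup product is bilinear, this varies smoothly in $s$. In the asymptotically cylindrical case, writing $\tilde z(s) = L(s) d\theta + \tilde v(s)$ with $\tilde v(s)(\ddt) = 0$, and decomposing $\Re \tilde \Omega = \Re\tilde\Omega_1 + dt \wedge \Re \tilde\Omega_2$ and $\tilde \omega = \tilde\omega_1 + dt \wedge \tilde\omega_2$, the $dt$-component of the limit works out to $\tilde\phi_2(s) = \Re \tilde\Omega_2 - \tilde z(s) \wedge \tilde\omega_2$, whose cohomology class is again linear in $[z(s)]$ and hence smooth in $s$.

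The argument has no real obstacle: all hard analysis has been absorbed into Theorems \ref{g2slicetheorem} and \ref{mg2s1mfd}, and what remains is only the observation that cup product in cohomology is a smooth bilinear operation. The one thing to be mildly careful of is matching the conventions in the asymptotically cylindrical case, namely using $\tilde z(s)(\ddt) = 0$ to ensure $\tilde z(s)$ contributes nothing to the $dt$-component coming from its own limit, so that $\tilde\phi_2(s)$ has the stated form.
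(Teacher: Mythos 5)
Your proposal is correct and follows essentially the same route as the paper: observe that $s \mapsto \Re\Omega + z(s)\wedge\omega$ is a smooth curve of closed forms, then use Theorems \ref{mg2s1mfd} and \ref{g2slicetheorem} to reduce smoothness in $\M_{G_2}^{S^1}$ to smoothness of the induced curve of cohomology classes, which holds because the map from closed forms to cohomology is linear and continuous. The only difference is that you spell out the cohomology class via the cup product and the limit decomposition, which the paper leaves implicit.
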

\begin{proof}
It is obvious that $\Re \Omega + z(s) \wedge \omega$ is a smooth curve of closed three-forms. The map to $\M^{S^1}_{G_2}$ is given locally by taking certain cohomology classes, by Theorem \ref{g2slicetheorem}. This map is linear and continuous and so is smooth to cohomology classes; hence it is smooth to the image of $\M_{G_2}^{S^1}$ in cohomology. 
\end{proof}

Lemma \ref{smoothcurves} yields
\begin{prop}
\label{surjsubm}
The map $\pi_Z$ of \eqref{eq:pizdef} is a surjective submersion. 
\end{prop}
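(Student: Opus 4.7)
The plan is to deduce both surjectivity and the submersion property directly from the concrete formula $\phi = \Re\Omega + z\wedge\omega$ of Theorem~\ref{g2su3alltold}, leveraging Lemma~\ref{smoothcurves} to build explicit smooth lifts of curves in $Z$.

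First I would handle surjectivity. By our standing assumption, $M$ admits a (compact or asymptotically cylindrical) Calabi-Yau structure $(\Omega,\omega)$. Given any class $[z] \in Z$, pick a representative $z = Ld\theta + v$ with $L>0$ (and, in the asymptotically cylindrical case, $\tilde z(\partial/\partial t)=0$, using Lemma~\ref{ccfcohom} to arrange this). Then Theorem~\ref{g2su3alltold} produces the torsion-free $S^1$-invariant $G_2$ structure $\phi = \Re\Omega + z\wedge\omega$, and by construction $\pi_Z([\phi]) = [z]$.

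Next I would show $d\pi_Z$ is surjective at each point $[\phi] \in \M_{G_2}^{S^1}$. Write $\phi = \Re\Omega + z\wedge\omega$ via Theorem~\ref{g2su3alltold}, and fix a tangent vector $[\dot z] \in T_{[z]}Z$; since $Z$ is an open subset of $H^1(M\times S^1)$ by Lemma~\ref{ccfcohom}, I may represent $[\dot z]$ by a closed $S^1$-invariant $1$-form $\dot z$ (with $\tilde{\dot z}(\partial/\partial t)=0$ in the asymptotically cylindrical case). Consider the smooth curve $z(s) = z + s\dot z$ in the space of twistings; for $|s|$ small the positivity of the $d\theta$-component and the appropriate limit condition are preserved, so $[z(s)] \in Z$ and this defines a smooth curve through $[z]$ with initial velocity $[\dot z]$. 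By Lemma~\ref{smoothcurves}, the lift $s \mapsto [\Re\Omega + z(s)\wedge\omega]$ is a smooth curve in $\M_{G_2}^{S^1}$ through $[\phi]$, and $\pi_Z$ sends it to the curve $s\mapsto [z(s)]$ in $Z$. Differentiating at $s=0$, the image of the initial velocity of the lifted curve under $d\pi_Z$ is exactly $[\dot z]$.

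Since $[\dot z]$ was an arbitrary element of $T_{[z]}Z$, this exhibits $d\pi_Z$ as surjective at every point, and combined with smoothness of $\pi_Z$ established immediately before the proposition, this concludes that $\pi_Z$ is a surjective submersion. I do not expect a genuine obstacle here: everything is essentially a bookkeeping consequence of Theorem~\ref{g2su3alltold}, Lemma~\ref{ccfcohom}, and Lemma~\ref{smoothcurves}. The only mild subtlety is verifying that perturbations $z+s\dot z$ remain in the space of twistings (positive $d\theta$-component, and the limit condition on the end), which holds for $|s|$ small by continuity, so this is enough to conclude.
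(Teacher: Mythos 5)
Your proposal is correct and follows essentially the same route as the paper: surjectivity via the product identification of Proposition \ref{topproduct} (equivalently, constructing $\Re\Omega + z\wedge\omega$ directly from Theorem \ref{g2su3alltold}), and the submersion property by lifting the straight-line curve $z+s\dot z$ (which stays in $Z$ by openness) through Lemma \ref{smoothcurves} and differentiating at $s=0$.
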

\begin{proof}
Surjectivity is already done, since $\pi_Z$ is the projection from a product. 

To prove $\pi_Z$ is a submersion, we have to show that given a tangent vector $[y] \in H^1(M \times S^1)$ at $[z] \in Z$, for every $[\phi] \in \pi_Z^{-1}([z])$ there is a tangent vector at $[\phi]$ that maps to $[y]$ under $D\pi_Z$. By Proposition \ref{topproduct}, we know any such $[\phi] = [\Re \Omega + z \wedge \omega]$ for some Calabi-Yau structure $(\Omega, \omega)$ and representative $z$. Pick some representative $y$ for the tangent. $z+sy$ is a smooth curve and, by openness of $Z$, $[z+sy] \in Z$ for $s$ small enough. By Lemma \ref{smoothcurves}, therefore, $\gamma(s) = [\Re \Omega + (z+sy)\wedge \omega]$ is a curve in $\M_{G_2}^{S^1}$ through $[\phi]$. We consider its tangent at $[\phi] = \gamma(0)$.
\begin{equation}
{
\renewcommand{\dot}{\left.\frac{d}{ds}\right|_{0}}
D\pi_Z\left(\dot\gamma\right) = \dot{(\pi\circ\gamma)} = \dot{([s\mapsto z+sy])} = [y]
}
\end{equation}
and so we have a submersion.
\end{proof}
In particular, the implicit function theorem now proves that every $\M_{SU(3)}$ fibre has a smooth structure, possibly different for each fibre. As we already have a topological product, we next  show that all these smooth structures are the same and that the projection map $\M_{G_2}^{S^1} \to \M_{SU(3)}$ is smooth; it is then straightforward to show that we obtain a smooth product.

\begin{prop}
Suppose $[z_1]$ and $[z_2]$ are classes of $Z$. The map $\pi_Z^{-1}([z_1]) \to \pi_Z^{-1}([z_2])$ given using the product structure of Proposition \ref{topproduct} by projection to $\M_{SU(3)}$ and inclusion is a diffeomorphism when these fibres are equipped with their submanifold smooth structures. 
\end{prop}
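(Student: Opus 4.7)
The plan is to realise the given map between fibres as the restriction to $\pi_Z^{-1}([z_1])$ of a smooth translation $T$ defined on a neighbourhood of $\pi_Z^{-1}([z_1])$ in $\M_{G_2}^{S^1}$. Fix representative $1$-forms $z_1, z_2$ for the classes $[z_1], [z_2] \in Z$. At the level of $S^1$-invariant torsion-free $G_2$ structures, define
\[ T(\phi) = \Re\Omega + (z + z_2 - z_1) \wedge \omega, \]
where $\phi = \Re\Omega + z \wedge \omega$ is the canonical decomposition of Proposition \ref{g2su3prop1}. Provided $[z]$ is close to $[z_1]$, the form $z + z_2 - z_1$ is still a twisting in the sense of Definition \ref{defin:twistings}, so by Proposition \ref{g2su3prop2} the image $T(\phi)$ is an $S^1$-invariant torsion-free $G_2$ structure. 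Under the bijection of Proposition \ref{topproduct} this operation corresponds to $([(\Omega,\omega)], [z]) \mapsto ([(\Omega,\omega)], [z] + [z_2] - [z_1])$; its restriction to $\pi_Z^{-1}([z_1])$ lands in $\pi_Z^{-1}([z_2])$ and coincides with the map whose smoothness we wish to establish.

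First I would check that $T$ descends to a well-defined map on moduli classes, independently of the choice of representatives. Given $\Psi \in \Diff_0^{S^1}$, Lemma \ref{s1invdiffstr} expresses its action as the composition of the extension $\hat\Phi_1$ of some $\Phi_1 \in \Diff_0(M)$ and a $\theta$-flow $\Phi_2$. A direct computation shows that $T(\Psi^*\phi)$ and $\Psi^*T(\phi)$ agree in their $(\Omega,\omega)$-components and differ in their $z$-component by $(z_2 - z_1) - \Phi_1^*(z_2 - z_1)$ (the terms involving $\Phi_2$ cancelling trivially). Since $\Phi_1 \in \Diff_0(M)$ acts trivially on $H^1(M)$, this difference is exact and can be written as $dh$ for an $S^1$-invariant (and, where appropriate, asymptotically translation-invariant) function $h$. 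Lemma \ref{changingzlemma} then realises the shift $z \mapsto z + dh$ as pullback by an element of $\Diff_0^{S^1}$, so $[T(\Psi^*\phi)] = [T(\phi)]$ in $\M_{G_2}^{S^1}$. The same manoeuvre shows that replacing $z_1, z_2$ by cohomologous representatives gives the same map on moduli.

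Next I would verify smoothness of $T$. Theorems \ref{g2slicetheorem}(c) and \ref{mg2s1mfd} give, near any class $[\phi_0]$, a slice of $S^1$-invariant torsion-free $G_2$ structures providing a smooth local section of the quotient map from structures to $\M_{G_2}^{S^1}$. The structure-level operation $\phi \mapsto \Re\Omega + (z + z_2 - z_1)\wedge\omega$ is smooth as a map of Fréchet spaces by Proposition \ref{g2su3prop1} together with continuity of the wedge product, and the quotient projection back to $\M_{G_2}^{S^1}$ is smooth by construction of the moduli-space manifold structure. Composing these three smooth maps yields smoothness of $T$. Swapping the roles of $z_1$ and $z_2$ produces $T^{-1}$, smooth by the same argument, so $T$ is a local diffeomorphism from a neighbourhood of $\pi_Z^{-1}([z_1])$ to a neighbourhood of $\pi_Z^{-1}([z_2])$. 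Since it maps the submanifold $\pi_Z^{-1}([z_1])$ onto the submanifold $\pi_Z^{-1}([z_2])$, the restriction is itself a diffeomorphism between them.

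The main technical hurdle is the well-definedness bookkeeping in the first step, since twisting classes are defined only modulo differentials of $S^1$-invariant functions and $\Diff_0^{S^1}$ splits into two essentially different kinds of diffeomorphism whose interactions with $T$ must be tracked separately. Once Lemma \ref{changingzlemma} is invoked to convert exact-form discrepancies into pullbacks by $\theta$-flows, the smoothness step reduces to a routine combination of the slice theorem with the smoothness of the $G_2$--$SU(3)$ decomposition already established.
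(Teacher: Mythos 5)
Your proof is correct, but it takes a genuinely different route from the paper. The paper never constructs an ambient map on $\M_{G_2}^{S^1}$: it works entirely in the local cohomology coordinates supplied by Theorems \ref{mg2s1mfd} and \ref{g2slicetheorem}, and exhibits an explicit \emph{linear} map on $H^3(M)\oplus H^2(M)$ (via K\"unneth, with an extra identity-plus-scaling factor on $H^2(N)\oplus H^1(N)$ in the asymptotically cylindrical case), namely $([\alpha],[\beta])\mapsto([\alpha]+\tfrac{[v_2]-[v_1]}{L_1}\wedge[\beta],\tfrac{L_2}{L_1}[\beta])$, which is checked to agree with the fibre-to-fibre map on $\pi_Z^{-1}([z_1])$; linearity then gives smoothness with no further work. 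You instead build a translation $T$ of the twisting at the level of structures, verify descent to moduli using Lemmas \ref{s1invdiffstr} and \ref{changingzlemma}, and get smoothness by composing a slice section with the Fr\'echet-smooth decomposition of Proposition \ref{g2su3prop1} and the (linear, continuous) passage to cohomology classes -- the same style of argument the paper itself uses in Lemma \ref{smoothcurves} and Proposition \ref{pizsmooth}, so this is legitimate, and it buys you an ambient local diffeomorphism of $\M_{G_2}^{S^1}$ near the fibre with the inverse coming for free by swapping $z_1$ and $z_2$; the cost is the extra well-definedness bookkeeping and the reliance on the slice providing a smooth family of structures, which the paper's purely cohomological extension avoids. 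Two small points to tidy: when $L_1\neq L_2$ the $\Phi_2$-terms do \emph{not} cancel trivially -- since $z_2-z_1$ has a $d\theta$-component, the $\theta$-flow contributes an extra term of the form $(L_2-L_1)\,dg$ -- but this is exact, so your conclusion that the discrepancy is $dh$ and hence removable by Lemma \ref{changingzlemma} survives; and in the asymptotically cylindrical case you should note, as in the proof of Lemma \ref{ccfcohom}, that $h$ can indeed be chosen asymptotically translation invariant so that Lemma \ref{changingzlemma} applies.
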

\begin{proof}
We first prove the case where $M$ is compact. Fix a class $[\phi_1 = \Re \Omega + z_1 \wedge \omega]$ in $\pi_Z^{-1}([z_1])$ and its image $[\phi_2 = \Re \Omega + z_2 \wedge \omega]$ in $\pi_Z^{-1}([z_2])$. Locally around these two, we know by Theorems \ref{mg2s1mfd} and \ref{g2slicetheorem} that $\M_{G_2}^{S^1}$ is locally diffeomorphic to $H^3(M \times S^1)$. Consequently, the fibres $\pi_Z^{-1}([z_1])$ and $\pi_Z^{-1}([z_2])$ are locally submanifolds of $H^3(M \times S^1$). The map in the statement defines a map between these submanifolds, which we want to show is smooth. It suffices to show that there is a smooth map on $H^3(M \times S^1)$ which agrees with this map on $\pi_Z^{-1}[z_1]$. 

Recall from Lemma \ref{ccfcohom} that $Z$ is $\R_{>0} \times H^1(M)$. Suppose that $[z_1] = L_1 [d\theta] + [v_1]$ and $[z_2] = L_2 [d\theta] + [v_2]$. By the K\"unneth theorem, we know that $H^3(M \times S^1) \cong H^3(M) \oplus H^2(M)$. We define a map on $H^3(M \times S^1) \cong H^3(M) \oplus H^2(M)$ by
\begin{equation}
H^3(M) \oplus H^2(M) \ni ([\alpha], [\beta]) \mapsto ([\alpha] + \frac{[v_2]-[v_1]}{L_1} \wedge [\beta], \frac{L_2}{L_1}[\beta])
\end{equation}
This is linear and so certainly smooth. Suppose now that $([\alpha], [\beta]) \in \pi_Z^{-1}([z_1])$ is close to $[\phi_1]$. Then $[\alpha + d\theta \wedge \beta] = [\Re \Omega' + (L_1 d\theta + v_1) \wedge \omega']$. It follows that $[\alpha] = [\Re \Omega' + v_1 \wedge \omega']$ and $[\beta] = [L_1 \omega']$; hence the image of this map is $([\Re \Omega' + v_2 \wedge \omega'], [L_2 \omega']) = [\Re \Omega' + (L_2 d\theta + v_2) \wedge \omega']$. This is precisely the image under the map in the statement, and this proves the result in the compact case.

The asymptotically cylindrical case is very similar: the additional linear map $H^2(N) \oplus H^1(N) \ni ([\tilde \alpha], [\tilde \beta]) \mapsto ([\tilde \alpha], \frac{L_2}{L_1} [\tilde beta])$ for the limit factor behaves identically, and the fact that $\M_{G_2}^{S^1}$ is only diffeomorphic to a submanifold of $H^3(M \times S^1) \oplus H^2(N \times S^1)$ does not affect the argument. 
\end{proof}
We can be more concrete about what the smooth structure on the fibre $\pi_Z^{-1}([z])$ is. To set up our moduli space of gluing data in Proposition \ref{su3ghatcharts}, we will need to know that the moduli spaces have coordinates corresponding to a suitable set of structures (essentially slice coordinates as in the $G_2$ case). 
\begin{prop}
\label{su3slicecoords}
Suppose that $[\Omega, \omega] \in \M_{SU(3)}$, and that $(\Omega, \omega)$ is a Calabi-Yau structure representing it. Then there exists a subset $U$ of Calabi-Yau structures containing $(\Omega, \omega)$ such that $U$ is diffeomorphic to a neighbourhood of $[\Omega, \omega] \in \M_{SU(3)}$, and such that the group of automorphisms $(\Omega', \omega')$ isotopic to the identity is independent of $(\Omega', \omega') \in U$. 
\end{prop}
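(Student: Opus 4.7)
The plan is to derive this Calabi-Yau slice theorem from the $G_2$ slice Theorem~\ref{g2slicetheorem}(c), applied to the associated $S^1$-invariant $G_2$ structure $\phi = \Re \Omega + d\theta \wedge \omega$ on $M \times S^1$. I will pick a $\Diff_0^{S^1}$-slice $U_{G_2}$ at $\phi$; as in the proof of Theorem~\ref{mg2s1mfd}, this slice can be taken to consist of $S^1$-invariant torsion-free $G_2$ structures, to be homeomorphic onto a neighborhood of $[\phi]$ in $\M_{G_2}^{S^1}$, and to carry a common $\Diff_0^{S^1}$-automorphism group, which I call $A$. Since $\pi_Z$ is a smooth submersion by Proposition~\ref{surjsubm}, the intersection $U_{G_2}' := U_{G_2} \cap \pi_Z^{-1}([d\theta])$ is a submanifold of $U_{G_2}$ mapping bijectively onto a neighborhood of $[\Omega, \omega]$ in the fiber $\pi_Z^{-1}([d\theta]) \cong \M_{SU(3)}$.

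\textbf{Producing $U$.} Each $\phi' \in U_{G_2}'$ decomposes as $\phi' = \Re \Omega' + z' \wedge \omega'$ via Proposition~\ref{g2su3ptwise}, with $[z'] = [d\theta]$ automatically. Set $U := \{(\Omega', \omega') : \phi' \in U_{G_2}'\}$; this is a smooth family of $SU(3)$ structures by Proposition~\ref{g2su3prop1}. The map $\phi' \mapsto (\Omega', \omega')$ is bijective onto $U$, because distinct $\phi'$ in $U_{G_2}$ yield distinct classes in $\M_{G_2}^{S^1}$, and two classes with $[z'] = [d\theta]$ coincide iff their $\M_{SU(3)}$-components do by Proposition~\ref{topproduct}. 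Composing, $U$ is identified diffeomorphically with a neighborhood of $[\Omega, \omega]$ in $\M_{SU(3)}$.

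\textbf{Constancy of the automorphism group.} For $(\Omega', \omega') \in U$ coming from $\phi'$, set $\phi_U := \Re \Omega' + d\theta \wedge \omega'$. Lemma~\ref{changingzlemma} supplies a $\Psi \in \Diff_0^{S^1}$ of the form $(p, \theta) \mapsto (p, \theta + h(p))$ with $\phi_U = \Psi^* \phi'$, hence $\Aut_{\Diff_0^{S^1}}(\phi_U) = \Psi^{-1} A \Psi$; a short direct calculation shows that conjugation by such a $\Psi$ preserves the projection $\pi_M : \Diff_0^{S^1} \to \Diff_0(M)$, $(f, g) \mapsto f$, so $\pi_M(\Aut_{\Diff_0^{S^1}}(\phi_U)) = \pi_M(A)$ is already independent of $(\Omega', \omega') \in U$. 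It remains to identify this image with $\Aut_{\Diff_0(M)}(\Omega', \omega')$. Writing an element of $\Aut_{\Diff_0^{S^1}}(\phi_U)$ as $\Phi(p, \theta) = (f(p), \theta + g(p))$ (Lemma~\ref{s1invdiffstr}) and combining $\Phi^* \phi_U = \phi_U$ with $\Phi^* (*\phi_U) = *\phi_U$, the split by $d\theta$-degree yields $f^* \omega' = \omega'$, $f^* \Im \Omega' = \Im \Omega'$, and $dg \wedge \Im \Omega' = 0$; since wedging with $\Im \Omega'$ is injective on one-forms (an easy algebraic check in the standard basis of Proposition~\ref{g2su3ptwise}), $g$ is constant and then $f^* \Re \Omega' = \Re \Omega'$, showing $f \in \Aut_{\Diff_0(M)}(\Omega', \omega')$. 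The reverse containment is immediate by extension by identity.

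\textbf{Main obstacle.} The delicate step is the last one: a priori, a $\Diff_0^{S^1}$-automorphism of $\phi_U$ admits an extra function-valued $\theta$-shift $g(p)$ with no direct $SU(3)$ interpretation, so $\pi_M(\Aut_{\Diff_0^{S^1}}(\phi_U))$ could strictly contain $\Aut_{\Diff_0(M)}(\Omega', \omega')$. Using $*\phi_U$ as well as $\phi_U$ (and hence the full torsion-freeness, via the associated Hodge star) supplies exactly the additional constraint needed to force $dg = 0$.
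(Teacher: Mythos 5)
Your proposal is correct and follows essentially the same route as the paper: both take the $S^1$-invariant $G_2$ slice with constant automorphism group from Theorems \ref{g2slicetheorem} and \ref{mg2s1mfd}, define $U$ as the Calabi-Yau parts of the slice structures with twisting class $[d\theta]$, and handle the automorphism constancy by trading Calabi-Yau automorphisms for $G_2$ automorphisms via the $\theta$-shears of Lemma \ref{changingzlemma} and the $M$-part projection of Lemma \ref{s1invdiffstr}. The only (harmless) variation is your final step: you identify $\pi_M(\Aut_{\Diff_0^{S^1}}(\phi_U))$ with $\Aut_{\Diff_0(M)}(\Omega',\omega')$ by an explicit $d\theta$-degree computation with $\phi_U$ and $*\phi_U$, forcing $dg=0$ via injectivity of wedging with $\Im\Omega'$, whereas the paper gets the same conclusion from the uniqueness of the $(z,\Omega,\omega)$-decomposition of Proposition \ref{g2su3ptwise} without normalising the twisting to $d\theta$.
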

\begin{proof}
First we write $\phi = \Re \Omega + d\theta \wedge \omega$; $\phi$ is an $S^1$-invariant $G_2$ structure. Consequently, by Theorems \ref{mg2s1mfd} and \ref{g2slicetheorem}, there exists a chart $V$ for $\M_{G_2}^{S^1}$ diffeomorphic to a set of torsion-free $S^1$-invariant $G_2$ structures and such that the group of automorphisms of $\phi'$ isotopic to the identity is independent of $\phi' \in V$. 

By Theorem \ref{g2su3alltold}, such torsion-free $S^1$-invariant $G_2$ structures $\phi'$ are given by a twisting $z'$ and a Calabi-Yau structure $(\Omega', \omega')$. Let $U$ be the set of Calabi-Yau structures 
\begin{equation}
U = \{(\Omega', \omega'): \exists z' \in [d\theta] \text{s.t.\ } \Re \Omega' + z' \theta \wedge \omega' \in V\}
\end{equation}
$U$ is precisely the set $\pi_Z^{-1}([d\theta])$ expressed in the local coordinates provided by $V$: hence $U$ defines a chart for $\M_{SU(3)}$ containing $(\Omega, \omega)$, as needed.

It remains to check that the automorphisms isotopic to the identity don't vary with the Calabi-Yau structure in $U$. We apply the ideas of subsection \ref{ssec:s1invg2modspacesetup}. Suppose that $(\Omega', \omega')$ and $(\Omega'', \omega'')$ are structures in $U'$, and $\Phi$ is an automorphism of $(\Omega', \omega')$ isotopic to the identity. There are $z', z'' \in [d\theta]$ such that $\Re \Omega' + z' \wedge \omega', \Re \Omega'' + z'' \wedge \omega'' \in V$; since $\Phi^* z' - z'$ is exact, by Lemma \ref{changingzlemma} we may find a diffeomorphism $\Psi$ corresponding to a time-$1$ flow in the $\ddth$ direction such that $\Phi \circ \Psi$ is an automorphism of the $G_2$ structure $\Re \Omega' + z' \wedge \omega'$ (clearly isotopic to the identity). Consequently it is an automorphism of $\Re \Omega'' + z'' \wedge \omega''$, and so its $M$ part in the sense of Lemma \ref{s1invdiffstr} is an automorphism of $(\Omega'', \omega'')$. It is easy to see that this $M$ part is precisely $\Phi$, which proves the result.
\end{proof}

Now we have a fixed smooth structure on $\M_{SU(3)}$ we can prove
\begin{prop}
\label{pizsmooth}
The projection map $\M_{G_2}^{S^1} \to \M_{SU(3)}$ is smooth. 
\end{prop}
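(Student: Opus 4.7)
The plan is to work locally and exploit the fact, already proved in the preceding proposition, that the ``change of fibre'' map between any two level sets of $\pi_Z$ extends to a smooth linear map on the ambient cohomology. The projection $\M_{G_2}^{S^1} \to \M_{SU(3)}$ can be factored as first extracting $[z]=\pi_Z([\phi])$ and then applying such a change-of-fibre map to carry $[\phi]$ into a fixed reference fibre; if we can make both steps smooth jointly in their inputs, smoothness of the projection will follow.

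Concretely, fix a point $[\phi_0]=[\Re\Omega_0+z_0\wedge\omega_0]\in\M_{G_2}^{S^1}$ and use its class $[z_0]\in Z$ as the reference class defining the smooth structure on $\M_{SU(3)}$ as the fibre $\pi_Z^{-1}([z_0])$. By Theorems \ref{mg2s1mfd} and \ref{g2slicetheorem}, and after applying the K\"unneth theorem (also to the cross-section in the asymptotically cylindrical case, as in Proposition \ref{su3cohomrep}), a neighbourhood of $[\phi_0]$ in $\M_{G_2}^{S^1}$ is diffeomorphic to an open subset of (a submanifold of) $H^3(M)\oplus H^2(M)$ (plus $H^2(N)\oplus H^1(N)$ in the asymptotically cylindrical case), via $[\Re\Omega+z\wedge\omega]\mapsto([\Re\Omega+v\wedge\omega],[L\omega])$ when $[z]=L[d\theta]+[v]$. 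In these coordinates the projection $\pi_Z$ is just the smooth assignment $([\alpha],[\beta])\mapsto (L,[v])$, where $L$ and $[v]$ are obtained linearly from $([\alpha],[\beta])$ together with Proposition \ref{surjsubm}.

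Now I would recycle the explicit formula from the preceding proposition: for fixed $[z_1]=L_1[d\theta]+[v_1]$ and $[z_2]=L_2[d\theta]+[v_2]=[z_0]$, the change-of-fibre map $\pi_Z^{-1}([z_1])\to\pi_Z^{-1}([z_0])$ is the restriction of the linear map
\begin{equation}
([\alpha],[\beta])\longmapsto\Bigl([\alpha]+\tfrac{[v_0]-[v_1]}{L_1}\wedge[\beta],\tfrac{L_0}{L_1}[\beta]\Bigr).
\end{equation}
The idea is to let $[z_1]$ be the value $\pi_Z([\phi])$ rather than a fixed class; since $L_1>0$ on $Z$, the resulting expression is a rational function of $([\alpha],[\beta])$ with no poles and hence smooth in $([\alpha],[\beta])$ jointly. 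The composition of $\pi_Z$ with this ``varying'' change-of-fibre map sends $[\phi]$ into $\pi_Z^{-1}([z_0])\cong\M_{SU(3)}$, and by inspection the result agrees with the projection $[\phi]=[\Re\Omega+z\wedge\omega]\mapsto[(\Omega,\omega)]$. The asymptotically cylindrical case is handled identically, with the extra linear factor $([\tilde\alpha],[\tilde\beta])\mapsto([\tilde\alpha],\tfrac{L_0}{L_1}[\tilde\beta])$ for the limit components as in the preceding proposition.

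The only point requiring care is checking that the formula really does compute the projection in the chosen coordinates, so that one is not inadvertently using a smooth extension that differs from the true projection on a subvariety; but this is precisely what the calculation at the end of the proof of the preceding proposition verifies, and the same verification works with $[z_1]$ varying. I expect no new obstacle beyond this bookkeeping, because all the analytic content (smoothness of $\pi_Z$, identification of fibres with $\M_{SU(3)}$, local diffeomorphism with cohomology) is already in place.
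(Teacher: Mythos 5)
Your argument is correct and follows essentially the paper's own route: both proofs express the projection in local cohomology (K\"unneth) coordinates and reduce its smoothness to the already-established smoothness of the splitting $[\phi]\mapsto[z]$, the paper writing the map as addition of $[d\theta-z']\wedge[\omega']$ and you as the affine change-of-fibre formula with varying source class, which amount to the same formula. The only slip is the claim that $L$ and $[v]$ are ``obtained linearly'' from $([\alpha],[\beta])$ --- they are not linear functions of the K\"unneth coordinates, since the splitting $\phi=\Re\Omega+z\wedge\omega$ is nonlinear --- but this is harmless because all you actually need is smoothness of $\pi_Z$, which is established just before Lemma \ref{smoothcurves} via the slice and Proposition \ref{g2su3prop1}.
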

\begin{proof}
We now know that cohomology classes provide local coordinates for both $\M_{G_2}^{S^1}$ and its submanifold $\M_{SU(3)} = \pi_Z^{-1}([d\theta])$, and therefore it is enough to show the smoothness of the projection map at the level of cohomology classes ($H^3(M)$ in the compact case, and $H^3(M) \oplus H^2(N)$ as in Theorem \ref{g2slicetheorem} in the asymptotically cylindrical case). We take a neighbourhood $U = \M' \times Z'$ with $\M'$ and $Z'$ are both charts, by the fact that $\M^{S^1}_{G_2}$ is a topological product.

For compact manifolds, the projection map becomes
\begin{equation}
[\phi'] = [\Re \Omega' + z' \wedge \omega'] \mapsto [\Re \Omega' + d\theta \wedge \omega']
\end{equation}
that is, it is the addition of $[d\theta-z'] \wedge [\omega']$. We can work on a slice neighbourhood, so $\phi'$ is smooth, and then $[z']$ and $[\omega']$ are smooth by Proposition \ref{g2su3prop1}. In the asymptotically cylindrical case, we note that $\tilde z'(\ddt)$ and $\tilde z(\ddt)$ are zero by the boundary conditions of Definition \ref{defin:twistings}; thus the map on the $H^2(N)$ term is the identity, and certainly smooth.
\end{proof}

Together these yield Theorem A, the culmination of our work on deformations.
\begin{thm}
\label{maintheorema}
\label{smoothproduct}
\label{su3acyldeformation}
\begin{equation}
\M^{S^1}_{G_2} = Z \times \M_{SU(3)}(M)
\end{equation}
where $\M_{SU(3)}(M)$, the Calabi-Yau moduli space, is a manifold and this is a smooth product. 
\end{thm}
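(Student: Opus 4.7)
The plan is to assemble the diffeomorphism from the work already done. First I would equip $\M_{SU(3)}(M)$ with its smooth structure by applying the implicit function theorem to the smooth surjective submersion $\pi_Z : \M^{S^1}_{G_2} \to Z$ from Proposition \ref{surjsubm}. Each fibre $\pi_Z^{-1}([z])$ thereby becomes a smooth submanifold of $\M^{S^1}_{G_2}$; the (unlabelled) proposition preceding Proposition \ref{su3slicecoords} showed that the canonical identification between any two fibres induced by the product structure of Proposition \ref{topproduct} is a diffeomorphism, so this transfers to a well-defined smooth structure on $\M_{SU(3)}(M)$, proving the first conclusion. The dimension count comes from Theorem \ref{mg2s1mfd} and Lemma \ref{ccfcohom}: $\dim \M_{SU(3)}(M) = \dim \M^{S^1}_{G_2} - \dim Z$, which in the compact case is $b^3(M) + b^2(M) - 1 - b^1(M)$.

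Next I would consider the smooth map
\[
\Psi: \M^{S^1}_{G_2}(M \times S^1) \longrightarrow \M_{SU(3)}(M) \times Z, \qquad [\phi] \mapsto (\pi_{SU(3)}[\phi],\, \pi_Z[\phi]),
\]
whose components are smooth by Proposition \ref{pizsmooth} and by the smoothness of $\pi_Z$ noted just before Lemma \ref{smoothcurves}. The map $\Psi$ is bijective by Proposition \ref{topproduct}, so it only remains to show that $d\Psi$ is an isomorphism at every point. The kernel of $d\pi_Z$ at $[\phi]$ is the tangent space to the fibre through $[\phi]$, and since $\pi_{SU(3)}$ restricted to that fibre is by construction a diffeomorphism onto $\M_{SU(3)}(M)$, the restriction of $d\pi_{SU(3)}$ to $\ker d\pi_Z$ is an isomorphism onto $T\M_{SU(3)}(M)$. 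Together with surjectivity of $d\pi_Z$, this forces $d\Psi$ to be injective, and the dimension match above promotes it to an isomorphism. The inverse function theorem then shows $\Psi$ is a local diffeomorphism, and bijectivity upgrades this to a global diffeomorphism, which is the second conclusion.

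There is no genuine obstacle: the analytic and geometric content of Theorem A has already been extracted in the preceding subsections, and what remains is assembly. The one point worth checking carefully is the compatibility of $\pi_{SU(3)}$ with the fibre identifications used to define the smooth structure on $\M_{SU(3)}(M)$. This holds because on $\pi_Z^{-1}([z])$ one has $\pi_{SU(3)}[\Re \Omega + z \wedge \omega] = [\Omega, \omega]$, which agrees with the identification of $\pi_Z^{-1}([d\theta])$ with $\M_{SU(3)}(M)$ coming from Proposition \ref{topproduct} (or, for $[z] \neq [d\theta]$, with that identification postcomposed with the canonical diffeomorphism between fibres). In the asymptotically cylindrical case the same argument goes through verbatim once one tracks the limit contributions on both sides using Theorem \ref{g2slicetheorem}(b) and the asymptotically cylindrical parts of Lemma \ref{ccfcohom} and Proposition \ref{su3cohomrep}; these match up by construction, so no new difficulty appears.
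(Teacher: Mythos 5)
Your proposal is correct and follows essentially the same route as the paper: the smooth structure on $\M_{SU(3)}$ via the implicit function theorem applied to $\pi_Z$, the fibre-identification proposition, smoothness of both projections, and the observation that $D\pi_Z \oplus D\pi_{\M_{SU(3)}}$ is an isomorphism because $\ker D\pi_Z$ is the fibre tangent space on which $D\pi_{\M_{SU(3)}}$ is (essentially) the identity, so the inverse function theorem plus the bijection of Proposition \ref{topproduct} yields the global smooth product. The only cosmetic difference is your use of a dimension count to upgrade injectivity of the derivative, where the paper deduces the isomorphism directly from surjectivity of $D\pi_Z$.
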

\begin{proof}
We now have a single smooth structure on the fibre $\M_{SU(3)}$ such that both projections of the product $\M^{S^1}_{G_2} = \M_{SU(3)} \times Z$ are smooth. We have to show that the combination of these two has an isomorphism for its derivative. 

We work at $[\phi] \in \pi_Z^{-1}([z])$. We know by Proposition \ref{surjsubm} that $D\pi_Z: T \M_{G_2}^{S^1} \to TZ$ is surjective and by construction its kernel is $T\pi_Z^{-1}(z)$. On the other hand, $\pi_{\M_{SU(3)}}: \pi_Z^{-1}([z]) \to \M_{SU(3)}$ is essentially the identity, and so $D\pi_{\M_{SU(3)}}: T\pi_Z^{-1}([z]) \to T\M_{SU(3)}$ is also the identity. It follows immediately that $D\pi_Z \oplus D\pi_{\M_{SU(3)}}: T\M^{S^1}_{G_2} \to TZ \oplus T\M_{SU(3)}$ is an isomorphism, and so that we have a local diffeomorphism for the smooth product structure. Hence, as smoothness is a local property, we have a global smooth product.
\end{proof}
\section{Gluing}
\label{sec:gluing}
We now turn to questions of gluing. The objective of this section is to prove Theorem B (Theorem \ref{maintheoremb}), which states that the gluing map on Calabi-Yau structures induced from the gluing map on $G_2$ structures defines a local diffeomorphism from a moduli space of gluing data to the moduli space of Calabi-Yau structures. 

We must first show that we can induce a gluing map of Calabi-Yau structures from the gluing map of $G_2$ structures. We do this in subsection \ref{ssec:structuregluing}: we analyse the proof of the gluing result for $G_2$ structures found in Kovalev \cite[section 5]{kovalevtwistedconnected} to prove that asymptotically cylindrical $S^1$-invariant $G_2$ structures can be glued to form an $S^1$-invariant $G_2$ structure, and then Theorem \ref{theoremc} (Theorem C) gives us a large family of gluing maps (Theorem \ref{su3structuregluing}). In subsection \ref{ssec:gluingtomod}, we show that this family of gluing maps defines a unique map to the moduli space $\M_{SU(3)}$ of Definition \ref{defin:cptsu3modspace}. In subsection \ref{ssec:gluingmodspacesetup} we set up the moduli space of gluing data. Chiefly we follow \cite{nordstromgluing}, but in a few places this paper was abbreviated from Nordstr\"om's thesis \cite{nordstromthesis} and we need the full version. As this moduli space is induced from the moduli spaces on the asymptotically cylindrical ends, a result analogous to Theorem \ref{smoothproduct} (Theorem A) remains true: this result is Theorem \ref{gluingdataspacesproduct} below. In subsection \ref{ssec:removingtildes}, we then restrict to the data that may be glued, and define the gluing map on the moduli space of gluing data. Finally, in subsection \ref{ssec:finalssec}, we analyse the gluing map of $G_2$ structures in terms of this product structure on the moduli space of gluing data, and identify what deformations in each component correspond to. This analysis enables us to prove Theorem B, by saying that the deformations of $G_2$ gluing data corresponding to deformations of the Calabi-Yau gluing data give Calabi-Yau deformations, but that the deformations corresponding to the twistings do not affect the final Calabi-Yau structure. 
\subsection{Gluing of structures}
\label{ssec:structuregluing}
In this subsection, we show that Calabi-Yau structures can be glued. We briefly review the perturbation argument for $G_2$ gluing. We then show in Theorem \ref{su3structuregluing} that this argument passes to the $S^1$-invariant case, using uniqueness, and so defines a collection of gluing maps for Calabi-Yau structures. 

We can identify suitable pairs of asymptotically cylindrical manifolds. 
\begin{defin}
\label{comdefin:matchingmanifolds}
Suppose that $M_1$ and $M_2$ are manifolds with ends, with corresponding cross-sections $N_1$ and $N_2$. $M_1$ and $M_2$ are said to match with the identification $F$ if we have an orientation-reversing diffeomorphism $F: N_1 \to N_2$. Such an $F$ induces further orientation-preserving maps
\begin{equation}
\label{eq:fextension}
\begin{aligned}
F:N_1 \times S^1 \times (0, 1) &\to N_2 \times S^1 \times (0, 1) & F:N_1 \times  (0, 1) &\to N_2 \times (0, 1)\\
(n, \theta, t) &\mapsto (F(n), \theta, 1-t) & (n, t) &\mapsto (F(n), 1-t))
\end{aligned}
\end{equation}
Fix $T>1$, a ``gluing parameter". In practice $T$ will be taken large enough to provide various analytic estimates. 
Let
\begin{equation}
M_1 \supset M_1^{\tr T} := M_1^{\cpt} \cup N_1 \times (0, T)
\end{equation}
and define $M_2^{\tr T}$ similarly. Using \eqref{eq:fextension}, $F$ defines an orientation-preserving diffeomorphism between $N_1 \times (T-1, T)$ and $N_2 \times (T-1, T)$. Then we consider
\begin{equation}
\label{eq:gluedmanifold}
M^T = \frac{M_1^{\tr T} \cup M_2^{\tr T}}F
\end{equation}
the identification of these two manifolds by $F$. $M^T$ is a closed and oriented manifold. By virtue of our extension of $F$, we also see that if we do the same with $M_1 \times S^1$ and $M_2 \times S^1$ we just get  $M^T \times S^1$. 
\end{defin}
If $F_1$ and $F_2$ are isotopic diffeomorphisms $N_1 \to N_2$ then the manifolds $M^T$ constructed using them are diffeomorphic. Thus our definition depends on the isotopy class of $F$, which is essentially an arbitrary choice: we shall ignore this choice for the time being, though it will re-emerge in Propositions \ref{g2ghatcharts} and \ref{su3ghatcharts}. We may then suppress $F$ and just write $N_1 = N_2$. 

For any $T$ and $T'$,  $M^T$ and $M^{T'}$ are diffeomorphic. $T$ dependence is important when we glue structures, however: we could apply appropriate diffeomorphisms to effectively vary $T$ for the structures without actually varying $T$ for the manifold, but to do so would make the notation more complex for no practical gain. Thus we retain $T$. 

Now, given a pair of structures, they consist of closed forms. We would like to patch them together.
\begin{defin}[{\cite[p.190]{nordstromgluing}}]
\label{comdefin:matchingforms}
Let $M_1$ and $M_2$ be matching manifolds with ends as in Definition \ref{comdefin:matchingmanifolds}, and let $g_1$ and $g_2$ be asymptotically cylindrical metrics on them. Suppose that $\alpha_1$ and $\alpha_2$ are asymptotically translation invariant $p$-forms on $M_1$ and $M_2$ respectively. The diffeomorphism $F$ (extended as in Definition \ref{comdefin:matchingmanifolds}) induces a pullback map $F^*$ from the limiting bundle $\bigwedge^p T^*M_2 |_{N_2}$ to $\bigwedge^p T^*M_1|_{N_1}$. $\alpha_1$ and $\alpha_2$ are said to match if the image of $\tilde \alpha_2$ under $F^*$ is $\tilde \alpha_1$. 

In particular, asymptotically cylindrical Calabi-Yau and $G_2$ structures are said to match if the forms defining them match, and twistings (as in Definition \ref{defin:twistings}) are said to match if they match as forms. 

Let $T$ and $M^T$ be as in Definition \ref{comdefin:matchingmanifolds}. Suppose that $\alpha_1$ and $\alpha_2$ are a pair of matching differential forms, and that $\alpha_1$ and $\alpha_2$ are both closed. Then the limits $\tilde \alpha_i$ are closed on $N_i$, and hence, treated as constants, on the end of $M_i$. 

By Lemma \ref{endexactness} we then have that $\alpha_i - \tilde \alpha_i$ is exact on the end, and so can be written as $d\beta_i$ there. As in equation \eqref{eq:cutoff}, let $\psi_T$ have
\begin{equation}
\psi_T(t) = \begin{cases}1&t>T-1 \\ 0 &t \leq T-2\end{cases}
\end{equation}
and define
\begin{equation}
\alpha'_i = \alpha_i - d(\psi_T \beta_i)
\end{equation}
on the end, and $\alpha_i$ off the end. 

On the overlap of $M_1^{\tr T}$ and $M_2^{\tr T}$, $\alpha'_i = \tilde \alpha_i$, and so the two forms are identified by $F$. Thus they define a global tensor field $\alpha^T$ on $M^T$, and $\alpha^T$ is closed because $\alpha'_i$ are closed. 

Write $\alpha^T = \gamma_T(\alpha_1, \alpha_2)$; that is, $\gamma_T$ is the gluing map giving a closed form on $M^T$ from a closed matching pair.
\end{defin}
Given a pair of matching torsion-free $G_2$ structures, Definition \ref{comdefin:matchingforms} yields a (not necessarily torsion-free) $G_2$ structure $\phi^T$ on $M^T$. By construction, $d\phi^T = 0$ and $d*_{\phi^T}\phi^T$ can be bounded with all derivatives by bounds decaying exponentially in $T$. We can thus perturb $\phi^T$ to find a torsion-free $G_2$ structure.  Proposition \ref{g2structuregluingsetup} and Theorem \ref{g2structuregluinggeneral} below carry out this perturbation. The proposition, which provides the setup, is essentially due to Joyce and the theorem is summarised from Kovalev \cite[section 5]{kovalevtwistedconnected}, though the same result can be obtained by using the work of Joyce. The second paragraph of the theorem is easy to establish from the proof, using lower semi-continuity of the first eigenvalue of the Laplacian in the metric (e.g. \cite[Lemma 5.5]{nordstromacyldeformations}). In Theorem \ref{g2structuregluinggeneral}, we restrict to the case where the seven-manifold is of the form $M^T \times S^1$; the same proof applies for a seven-manifold glued as in Definition \ref{comdefin:matchingmanifolds}. 
\begin{prop}[{\hspace{1sp}\cite[Theorem 10.3.7]{joycebook}}]
\label{g2structuregluingsetup}
Let $X$ be a compact Riemannian seven-manifold whose metric is defined by a closed, but not necessarily torsion-free, $G_2$ structure $\phi$. Let $\la \cdot, \cdot \ra$ be the induced inner product on differential forms. Suppose $\hat \phi$ is a sufficiently small $4$-form such that $d\hat\phi = d*_{\phi}\phi$ (that is, $*_{\phi}\phi - \hat\phi$ is close to $*_\phi \phi$ and closed) and $\eta$ is a sufficiently small $2$-form satisfying a certain equation of the form
\begin{equation}
\label{eq:g2gluingequation}
(dd^* + d^*d)\eta + * d \left(\left(1+ \frac13 \la d\eta, \phi\ra\right)\hat \phi\right) - * dR(d\eta) = 0
\end{equation}
where the remainder term $R$ satisfies $|R(d\eta) - R(d\xi)| \leq \epsilon |d\eta - d\xi|(|\eta| + |\xi|)$ for some constant $\epsilon$. 

Then $\phi + d\eta$ is a torsion-free $G_2$ structure. 
\end{prop}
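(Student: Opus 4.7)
The plan is to verify both conditions of Definition \ref{defin:g2tfree} for the three-form $\phi + d\eta$. Closedness $d(\phi + d\eta) = 0$ is immediate from $d\phi = 0$ and $d^2 = 0$, and $\phi + d\eta$ is in fact a $G_2$ structure (not merely a closed three-form) because the $G_2$ condition cuts out an open subset of $\Lambda^3 T^* X$ and $d\eta$ is assumed small. The substantive step is the co-closedness condition $d *_{\phi + d\eta}(\phi + d\eta) = 0$.

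The approach is a pointwise expansion of the nonlinear map $\psi \mapsto *_\psi \psi$ around $\phi$, using the $G_2$-irreducible decomposition $\Lambda^3 T^* X = \Lambda^3_1 \oplus \Lambda^3_7 \oplus \Lambda^3_{27}$. Writing $d\eta = \pi_1(d\eta) + \pi_7(d\eta) + \pi_{27}(d\eta)$, a computation in a $G_2$-adapted frame gives an expansion
\begin{equation*}
*_{\phi + d\eta}(\phi + d\eta) = *_\phi \phi + \tfrac{4}{3} * \pi_1(d\eta) + * \pi_7(d\eta) - * \pi_{27}(d\eta) + Q(d\eta),
\end{equation*}
where the remainder $Q$ is at least quadratic in $d\eta$ with a Lipschitz estimate of the stated form. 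The scalar identity $\pi_1(d\eta) = \tfrac{1}{7} \la d\eta, \phi \ra \phi$ produces the $\la d\eta, \phi \ra$ factor appearing in the equation.

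The next step is to apply $d$ to this expansion and substitute $d *_\phi \phi = d\hat\phi$ from the hypothesis, converting co-closedness into a differential equation for $\eta$. The $\pi_1$ piece combines with $d\hat\phi$ to produce the single term $d[(1 + \tfrac{1}{3} \la d\eta, \phi \ra) \hat\phi]$; the $\pi_7$ component of $d\eta$, after exploiting the $G_2$-theoretic identities relating contractions against $\phi$ to the decomposition of three-forms together with the standard Hodge duality $d^* \sim *d*$, rearranges to reproduce the Hodge Laplacian $(dd^* + d^* d)\eta$; and the $\pi_{27}$ component together with the quadratic term $Q$ is collected into the nonlinear remainder $R(d\eta)$, whose Lipschitz bound follows from smoothness of the $G_2$ projections and the quadratic control on $Q$. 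The main obstacle is purely algebraic bookkeeping, identifying each term in the expansion with a term in the stated equation, rather than any genuinely analytic difficulty; the calculation follows \cite[Chapter 10]{joycebook}. Once these identifications are made, any $\eta$ solving the given equation makes $\phi + d\eta$ both closed and co-closed, hence torsion-free.
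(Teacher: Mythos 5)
Your opening moves are fine (closedness is trivial, the $G_2$ condition is open, and the linearisation of $\psi \mapsto *_\psi\psi$ at $\phi$ is indeed $\chi \mapsto *(\tfrac43\pi_1\chi + \pi_7\chi - \pi_{27}\chi)$ with a quadratically small remainder), and note that the paper itself gives no proof here -- the proposition is quoted from \cite{joycebook} -- so the only question is whether your reconstruction of Joyce's argument works. It does not: the step you dismiss as ``purely algebraic bookkeeping'' is precisely where the content of the theorem lies, and the specific identifications you assert are not available. Most decisively, the term $dd^*\eta$ in \eqref{eq:g2gluingequation} cannot arise from expanding $d*_{\phi+d\eta}(\phi+d\eta)$ and rearranging: every term of that expansion depends on $\eta$ only through $d\eta$, whereas $dd^*\eta$ does not (replace $\eta$ by $\eta$ plus a closed, non-coclosed form). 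So \eqref{eq:g2gluingequation} is not a rewriting of the co-closedness condition, and ``any $\eta$ solving the given equation makes $\phi + d\eta$ co-closed'' requires an argument that your sketch does not contain; one standard route on the compact $X$ is to apply $d^*$ to \eqref{eq:g2gluingequation}, note that $d^*(*d\beta) = \pm *dd\beta = 0$ kills the $*d(\cdot)$ terms, conclude $\Delta(d^*\eta)=0$ and hence $dd^*\eta = 0$, and only then compare with the expansion. The term-by-term assignments are also wrong: $\tfrac43 *\pi_1(d\eta) = \tfrac{4}{21}\la d\eta, \phi\ra\, *_\phi\phi$ involves the order-one form $*_\phi\phi$, not the small form $\hat\phi$, so it cannot ``combine with $d\hat\phi$'' into $d[(1+\tfrac13\la d\eta,\phi\ra)\hat\phi]$ -- the discrepancy is linear in $d\eta$ and of unit size; the Laplacian is not produced by the $\pi_7$-component ($d^*d\eta = -*d*d\eta$ involves all of $d\eta$, and on a flat model $d*\pi_7(d\eta) \neq *\Delta\eta$); and $*\pi_{27}(d\eta)$ is linear in $d\eta$, so it cannot be ``collected into'' a remainder obeying $|R(d\eta)-R(d\xi)| \leq \epsilon|d\eta-d\xi|(|\eta|+|\xi|)$, since with $\xi = 0$ that bound forces $R$ to be quadratically small.

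In short, your proposal reproduces the easy frame of the argument (closedness, openness, the eigenvalues $\tfrac43, 1, -1$ of the linearisation, $\pi_1\chi = \tfrac17\la\chi,\phi\ra\phi$) but omits its two genuinely nontrivial ingredients: the exact manipulation -- using, e.g., $\tfrac43\pi_1 + \pi_7 - \pi_{27} = -\mathrm{id} + \tfrac73\pi_1 + 2\pi_7$, the closedness of $*_\phi\phi - \hat\phi$, and identities special to closed $G_2$ structures -- which accounts for the leftover linear terms (a $\la d\eta,\phi\ra(*_\phi\phi - \hat\phi)$ piece and a $\pi_7$/$\pi_{27}$ piece) and produces the precise form of \eqref{eq:g2gluingequation}, and the argument that the artificially inserted $dd^*\eta$ is harmless. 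As written, the derivation leaves these terms unaccounted for, so torsion-freeness of $\phi + d\eta$ does not follow. You should either carry out the computation of \cite[\S 10.3]{joycebook} in full or simply cite it, as the paper does.
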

\begin{thm}
\label{g2structuregluinggeneral}
Let $M^T \times S^1$ be a compact seven-manifold constructed as in Definition \ref{comdefin:matchingmanifolds}, with $\phi^T$ given by gluing asymptotically cylindrical $G_2$ structures as in Definition \ref{comdefin:matchingforms}.  We may choose $\hat\phi^T$ by using the approximate gluing (as in Definition \ref{comdefin:matchingforms}) of the closed forms $*_{\phi_1}\phi_1$ and $*_{\phi_2}\phi_2$ as our closed approximation to $*_\phi \phi$. Then for $T > T_0$ sufficiently large we may find a small $2$-form $\eta$ solving \eqref{eq:g2gluingequation}. $d\eta$ is unique of its size given $\hat \phi^T$. 

Moreover, $\phi^T + d\eta$ can be chosen to be continuous in the structures $\phi_1$ and $\phi_2$ with respect to the extended weighted $C^\infty$ topology defined in Definition \ref{defcktopology}, and $T_0$ can be chosen to be upper semi-continuous in these structures. 
\end{thm}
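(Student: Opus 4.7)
The plan is to apply Proposition \ref{g2structuregluingsetup}, so that the proof reduces to producing a sufficiently small solution $\eta$ of the fixed PDE \eqref{eq:g2gluingequation} for $T$ large, together with verifying its dependence on the input. First I would check the algebraic side: with $\hat\phi^T := *_{\phi^T}\phi^T - \gamma_T(*_{\phi_1}\phi_1, *_{\phi_2}\phi_2)$, the required identity $d\hat\phi^T = d*_{\phi^T}\phi^T$ is immediate because $\gamma_T$ produces a closed form. More substantively, the approximate gluing only differs from $*_{\phi^T}\phi^T$ on the neck $N\times(T-2,T-1)$, where the differentials of cut-offs are applied to the exponentially decaying tails $*_{\phi_i}\phi_i - \widetilde{*_{\phi_i}\phi_i}$; thus $\hat\phi^T$, like the original torsion $d*_{\phi^T}\phi^T$, is bounded with all derivatives by a constant multiple of $e^{-\delta T}$ in a suitable weighted norm.

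The main step is to solve \eqref{eq:g2gluingequation} by contraction mapping. Writing it as a fixed point problem
\[
\eta = -(dd^* + d^*d)^{-1}\!\left[\,*d\!\left(\!\left(1+\tfrac13\la d\eta,\phi^T\ra\right)\hat\phi^T\right) - *dR(d\eta)\,\right],
\]
where the inverse Laplacian is taken on the orthogonal complement of harmonic forms, I would show the right-hand side is a contraction on a ball of radius $Ce^{-\delta T/2}$ in a H\"older space such as $C^{2,\alpha}$ on the exact $2$-forms of $M^T\times S^1$, then bootstrap to smoothness by elliptic regularity. The hard part is getting a uniform-in-$T$ bound on the Green's operator, i.e.\ a lower bound on the first nonzero eigenvalue of the Hodge Laplacian on exact $2$-forms independent of $T$ for $T\geq T_0$. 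This is where the lower semi-continuity of the first eigenvalue of the Laplacian in the metric enters, in the form of \cite[Lemma 5.5]{nordstromacyldeformations}: the glued metric on $M^T\times S^1$ converges, in the relevant sense, to the asymptotically cylindrical data on $M_i\times S^1$ as $T\to\infty$, whose Laplacians have Green's operators bounded uniformly on exact forms, yielding the uniform lower bound. With the Green's operator controlled, the exponential smallness of $\hat\phi^T$ and the quadratic smallness of $R$ make the iteration strictly contracting on a ball of radius $O(e^{-\delta T/2})$, producing a unique fixed point there, of size $O(e^{-\delta T})$.

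The remaining assertions are soft consequences. Uniqueness of $d\eta$ at its size is immediate from the contraction property: any second solution within the ball would contradict strict contraction. For continuity of $\phi^T + d\eta$ in $(\phi_1,\phi_2)$ in the extended weighted $C^\infty$ topology, I would observe that each piece of data in the iteration ($\phi^T$, $\hat\phi^T$, the Hodge star, the remainder $R$, and the Green's operator) depends continuously on $(\phi_1,\phi_2)$, and the fixed point of a family of contractions with locally uniform constants depends continuously on the parameters; elliptic bootstrapping then promotes this continuity to $C^\infty$. For upper semi-continuity of $T_0$, the threshold is determined by a finite collection of explicit estimates on norms of $\hat\phi^T$ and on the Green's operator, each of which is an open condition in $(\phi_1,\phi_2)$ by the same semi-continuity used to obtain the Laplacian bound; hence the infimum of admissible gluing parameters is upper semi-continuous in the input structures.
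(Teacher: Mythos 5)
Your overall strategy---reduce to Proposition \ref{g2structuregluingsetup}, take $\hat\phi^T = *_{\phi^T}\phi^T - \gamma_T(*_{\phi_1}\phi_1, *_{\phi_2}\phi_2)$, observe that it is supported on the neck and is $O(e^{-\delta T})$ with all derivatives, and solve \eqref{eq:g2gluingequation} by a fixed-point iteration---has the same shape as the argument the paper invokes from Kovalev. But there is a genuine gap at the key analytic step: you assert a lower bound on the first nonzero eigenvalue of the Hodge Laplacian on exact $2$-forms of $M^T \times S^1$ that is \emph{uniform in $T$}, justified by lower semi-continuity of the first eigenvalue in the metric together with ``convergence'' of the glued metric to the asymptotically cylindrical pieces. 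This does not work. As $T \to \infty$ the manifold degenerates by stretching a cylindrical neck; the limiting objects are non-compact and their Laplacians have $0$ in the essential spectrum (the cross-section $N \times S^1 \times S^1$ carries harmonic forms in the relevant degrees), so no uniform spectral gap can be extracted by a semi-continuity argument, and in such neck-stretching families small eigenvalues do appear---already on functions $\lambda_1 = O(T^{-2})$ by a test-function argument on the neck. Moreover, lower semi-continuity of $\lambda_1$ in the metric (\cite[Lemma 5.5]{nordstromacyldeformations}) concerns varying the metric on a fixed compact manifold: in the paper it is used only for the \emph{second} paragraph of the theorem, i.e.\ for continuity of $\phi^T + d\eta$ in $(\phi_1, \phi_2)$ and upper semi-continuity of $T_0$, not to control the dependence on $T$.

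The $T$-dependence is exactly the delicate point of the cited proof (Kovalev, section 5; equivalently Joyce's Theorem 10.3.7 hypotheses), and it is handled differently: either one shows that the norm of the relevant inverse operator grows at most polynomially (or sub-exponentially, e.g.\ $O(e^{\epsilon T})$ for every $\epsilon > 0$) in $T$, which is then dominated by the exponential decay $O(e^{-\delta T})$ of $\hat\phi^T$, so that the iteration still closes for all $T \geq T_0$; or one verifies Joyce-type hypotheses---uniform bounds on the curvature and injectivity radius of $g_{\phi^T}$ together with smallness of the torsion in $C^0$, $L^2$ and $L^{14}$---whose a priori estimates bound $d\eta$ directly and never require inverting the Laplacian on $\eta$ with a $T$-independent constant. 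With that step repaired, the remainder of your outline (uniqueness of $d\eta$ from the contraction, continuity of the solution in $(\phi_1, \phi_2)$ with locally uniform constants, and upper semi-continuity of $T_0$ via eigenvalue semi-continuity) is consistent with what the paper asserts.
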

A straightforward extension of Theorem \ref{g2structuregluinggeneral} yields the theorem that Calabi-Yau structures can be glued.
\begin{thm}
\label{su3structuregluing}
Suppose $M_1$ and $M_2$ are asymptotically cylindrical Calabi-Yau threefolds. Let $(\Omega_1, \omega_1)$ and $(\Omega_2, \omega_2)$ be Calabi-Yau structures on $M_1$ and $M_2$ matching in the sense of Definition \ref{comdefin:matchingforms}, and let $(z_1, z_2)$ be a pair of twistings matching in the same sense. $\phi_1 = \Re \Omega_1 + z_1 \wedge \omega_1$ and $\phi_2 = \Re \Omega_2 + z_2 \wedge \omega_2$ define are $S^1$-invariant torsion-free $G_2$ structures, matching in the same sense. Write $\phi^T$ for the approximate gluing of these torsion-free $G_2$ structures given by Definition \ref{comdefin:matchingforms}. There exists $T_0 > 0$ such that, for all $T> T_0$, $\phi^T$ can be perturbed to give an $S^1$-invariant torsion-free $G_2$ structure on $M^T \times S^1$. In particular, we get a Calabi-Yau structure $(\Omega^T, \omega^T)$ on $M^T$. 

For each choice of matching twistings, this procedure gives a well-defined and continuous map $(\Omega_1, \omega_1, \Omega_2, \omega_2) \mapsto (\Omega^T, \omega^T)$ of Calabi-Yau structures. 
\end{thm}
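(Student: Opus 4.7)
My plan is to deduce Theorem \ref{su3structuregluing} from Theorem \ref{g2structuregluinggeneral} combined with Theorem \ref{g2su3alltold} (Theorem C). The first step is to observe that the ingredients of the $G_2$ perturbation argument are manifestly compatible with the $S^1$-action: since the cutoff functions $\psi_T$ depend only on $t$ and since the $\phi_i$ are $S^1$-invariant by hypothesis, the approximately glued form $\phi^T$ is $S^1$-invariant; because the induced metric of an $S^1$-invariant $G_2$ structure is $S^1$-invariant, so too is each $*_{\phi_i}\phi_i$, and hence so is $\hat\phi^T$. Moreover the linear operators $d, d^*, *$ and the inner product $\la\cdot,\cdot\ra$ appearing in \eqref{eq:g2gluingequation} are all defined from the $S^1$-invariant background $\phi^T$, so the whole equation \eqref{eq:g2gluingequation} for $\eta$ is $S^1$-equivariant.

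Next I would apply Theorem \ref{g2structuregluinggeneral} to obtain, for $T>T_0$, a small solution $\eta$ with $d\eta$ unique of its size. Because the equation is $S^1$-equivariant, for any rotation $\Theta$ the pullback $\Theta^*\eta$ is also a small solution, so $d(\Theta^*\eta) = \Theta^* d\eta$ agrees with $d\eta$ by the uniqueness clause. Thus $d\eta$ is $S^1$-invariant, and hence $\phi^T + d\eta$ is an $S^1$-invariant torsion-free $G_2$ structure on $M^T\times S^1$. By Theorem \ref{g2su3alltold}, this $G_2$ structure corresponds to a unique triple consisting of a Calabi-Yau structure $(\Omega^T,\omega^T)$ on $M^T$, a positive constant $L^T$, and a closed $1$-form $v^T$ on $M^T$; extracting the Calabi-Yau factor yields the desired $(\Omega^T,\omega^T)$.

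For continuity, the approximate gluing $\phi^T$ and the correction $\hat\phi^T$ are continuous in $(\phi_1,\phi_2)$ with respect to the extended weighted topology by inspection of Definition \ref{comdefin:matchingforms}; Theorem \ref{g2structuregluinggeneral} then gives continuity of $\phi^T+d\eta$ in $(\phi_1,\phi_2)$; and the map $(\phi_1,\phi_2)\mapsto(\Omega_i,\omega_i,z_i)$ (in either direction) is continuous by the Fr\'echet-space smoothness statement in Proposition \ref{g2su3prop1}. Composing these continuous maps with the extraction of the Calabi-Yau component from Theorem \ref{g2su3alltold} gives the required continuous map $(\Omega_1,\omega_1,\Omega_2,\omega_2)\mapsto(\Omega^T,\omega^T)$ for each fixed pair of matching twistings.

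The only real obstacle is securing the $S^1$-invariance of the perturbation $d\eta$. The uniqueness route above is clean, but one must verify that the notion of ``unique of its size'' in Theorem \ref{g2structuregluinggeneral} genuinely forces $\Theta^*(d\eta) = d\eta$ rather than merely the weaker statement that both are solutions. Should this fail, the fallback is to re-run Kovalev's contraction-mapping argument with all function spaces restricted to their $S^1$-invariant subspaces: the relevant analytic estimates (Laplacian lower bounds via \cite[Lemma 5.5]{nordstromacyldeformations}, Sobolev embeddings, and so on) pass to the $S^1$-invariant subspaces without change because the background data is $S^1$-invariant, essentially the same argument used throughout subsection \ref{ssec:s1invg2modspacesmooth} to restrict the slice theorem.
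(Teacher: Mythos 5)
Your proposal is correct and follows essentially the same route as the paper: $S^1$-invariance of $\phi^T$ and $\hat\phi^T$, $S^1$-equivariance of equation \eqref{eq:g2gluingequation}, the uniqueness clause of Theorem \ref{g2structuregluinggeneral} forcing $\Theta^* d\eta = d\eta$, extraction of $(\Omega^T,\omega^T)$ via the $SU(3)$--$G_2$ correspondence, and continuity inherited from Theorem \ref{g2structuregluinggeneral}. The hedge at the end is unnecessary: since $\Theta$ is an isometry of the $S^1$-invariant glued metric, $\Theta^*\eta$ is a solution of the same size, so the paper's (and your) uniqueness argument goes through as stated.
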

\begin{proof}
By Propositions \ref{g2su3prop1} and \ref{g2su3prop2}, $\Re \Omega_1 + z_1 \wedge \omega_1$ and $\Re \Omega_2 + z_2 \wedge \omega_2$ are indeed $S^1$-invariant torsion-free $G_2$ structures on $M_i \times S^1$; they obviously match, since taking the real part and the wedge product commute with pullback. The approximate gluing procedure of Definition \ref{comdefin:matchingforms} is clearly invariant under the rotation, and so our approximate gluing $\phi^T$ is $S^1$-invariant. We know by Theorem \ref{g2structuregluinggeneral} that for $T>T_0$ sufficiently large we can perturb $\phi^T$ to a torsion-free $G_2$ structure: we need to check that that structure is $S^1$-invariant. 

To follow the theorem, we need $\hat \phi^T$ to be $S^1$-invariant. If a $G_2$ structure $\phi$ is $S^1$-invariant, then $*_\phi \phi$ is also $S^1$-invariant, because pullback by the isometric rotation $\Theta$ commutes with the Hodge star. Thus, the approximation to $*_\phi \phi$ given by gluing $*_{\phi_1} \phi_1$ and $*_{\phi_2}\phi_2$ is $S^1$-invariant and hence so is $\hat \phi^T$. 

We may now check that the solution $d\eta$, where $\eta$ solves \eqref{eq:g2gluingequation}, is $S^1$-invariant. Since $\phi^T$, and so the metric being used, and $\hat \phi^T$ are both $S^1$-invariant, the operator defining \eqref{eq:g2gluingequation} commutes with $\Theta$, hence if $\eta$ satisfies \eqref{eq:g2gluingequation}, so too does $\Theta^* \eta$. The uniqueness statement then implies that $\Theta^* d\eta = d\Theta^* \eta = d\eta$, \ie that $d\eta$ is $S^1$-invariant. 

Using Propositions \ref{g2su3prop1} and \ref{g2su3prop2} again, $\phi^T + d\eta$ then yields our Calabi-Yau structure $(\Omega^T, \omega^T)$ on $M^T$. 

The claim of continuity on structures is immediate from the claim in Theorem \ref{g2structuregluinggeneral}. 
\end{proof}
We have recovered in more generality the result of Doi--Yotsutani\cite{doiyotsutani}. Their argument proceeds as in the first paragraph of the proof of Theorem \ref{su3structuregluing}, except with slightly more assumptions, to obtain a torsion-free $G_2$ structure on $M^T \times S^1$. They then argue as follows. 
\begin{lem}[{\hspace{1sp}\cite[Lemma 3.14]{doiyotsutani}}]
\label{dylemma314}
Suppose $M^T$ is a simply connected manifold and $M^T \times S^1$ admits a torsion-free $G_2$ structure. Then $M^T$ admits a Ricci-flat K\"ahler metric. 
\end{lem}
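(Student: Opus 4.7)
The plan is to exploit the Cheeger--Gromoll splitting theorem on the universal cover, following the strategy outlined in the excerpt itself. Since $M^T$ is simply connected, the universal cover of $M^T \times S^1$ is $M^T \times \R$, and pulling back along this covering gives a torsion-free $G_2$ structure on $M^T \times \R$; in particular the induced metric is complete and Ricci-flat. Because $M^T \times S^1$ is compact, the free homotopy class generating $\pi_1(M^T \times S^1) \cong \Z$ is represented by a closed geodesic $\gamma$, and any lift of $\gamma$ to $M^T \times \R$ is a complete geodesic line. Applying the Cheeger--Gromoll splitting theorem then yields an isometric splitting $M^T \times \R \cong Y \times \R$ for some complete Ricci-flat Riemannian six-manifold $Y$.

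Next, I would observe that the parallel unit vector field tangent to the $\R$-factor of $Y \times \R$ is preserved by the holonomy of the lifted $G_2$ structure. Since the stabiliser in $G_2$ of a nonzero vector is $SU(3)$, the holonomy is contained in $SU(3)$, and by the correspondence established in Theorem \ref{g2su3alltold} (applied on the line rather than the circle, as permitted by the remark following that theorem) the splitting determines a torsion-free $SU(3)$ structure on $Y$, whose induced metric is Ricci-flat and K\"ahler by Proposition \ref{prop:suntorsionfreeness}.

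It remains to transfer the Ricci-flat K\"ahler metric from $Y$ to $M^T$, and this is the main obstacle. The deck transformation group $\Z$ of the cover $M^T \times \R \to M^T \times S^1$ acts by isometries of $M^T \times \R \cong Y \times \R$, and since an isometry of a de Rham product of this form must preserve the splitting up to a factor, the generator acts as $(\phi_Y, T_a)$ for some isometry $\phi_Y$ of $Y$ and a nonzero translation $T_a$ of $\R$. The quotient is therefore a mapping torus of $\phi_Y$, which forces $Y$ to be compact (since $M^T \times S^1$ is) and forces this mapping torus to be diffeomorphic to $M^T \times S^1$. At this point one invokes the classification theory of simply connected compact six-manifolds (as in Doi--Yotsutani) together with the fact that $M^T$ is simply connected to conclude that $M^T \cong Y$ as smooth manifolds, and the pullback of the K\"ahler structure on $Y$ under this diffeomorphism provides the desired Ricci-flat K\"ahler metric on $M^T$. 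As the excerpt already notes, this last step is precisely what the present paper's alternative approach is designed to avoid.
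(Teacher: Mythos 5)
Your proposal is correct in outline, but it is essentially the original Doi--Yotsutani argument --- universal cover, Cheeger--Gromoll splitting, holonomy reduction to $SU(3)$, and then the classification theory of compact simply connected spin six-manifolds to identify $M^T$ with the six-dimensional factor $Y$ --- which the paper records only as a sketch for context. The paper's own proof is the ``concrete version'': assuming the torsion-free $G_2$ structure on $M^T \times S^1$ is $S^1$-invariant (which costs nothing in the gluing setting, by the proof of Theorem \ref{su3structuregluing}), it writes $\phi = \Re \Omega + (L\, d\theta + v) \wedge \omega$ via Theorem \ref{g2su3alltold}, uses $b^1(M^T) = 0$ to write $v = df$, and exhibits the covering $\pi(x, \theta) = (x, [\theta - f(x)/L])$ explicitly, so that the universal cover is $M^T \times \R$ carrying the product structure $\Re \Omega + L\, d\theta \wedge \omega$ and the six-dimensional factor is $M^T$ itself; the Calabi--Yau structure $(\Omega, \omega)$ then sits directly on $M^T$, with no appeal to the splitting theorem or to six-manifold classification, and with an explicit relation between the structures being glued and the resulting one. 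What your route buys is a proof of the lemma as literally stated, without $S^1$-invariance; what it costs is exactly the heavy machinery and loss of explicitness the paper is designed to avoid, since your final step $M^T \cong Y$ genuinely rests on the classification theory and a verification that the relevant invariants agree, as carried out in \cite{doiyotsutani}.

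Two details to tighten if you keep your route. First, a lift of an arbitrary closed geodesic representing the generator of $\pi_1$ is a complete geodesic but need not be a line; to feed Cheeger--Gromoll you should use the standard argument producing a line in the universal cover of a compact manifold with infinite fundamental group (minimizing geodesics from $p$ to $g^n p$ for a deck transformation $g$ of infinite order, together with compactness of a fundamental domain). Second, the identification $M^T \times \R \cong Y \times \R$ only gives a homotopy equivalence $M^T \simeq Y$ directly, and you also need $Y$ simply connected (from the mapping-torus exact sequence and $\pi_1(M^T \times S^1) \cong \Z$) and spin (it carries an $SU(3)$ structure); the upgrade from homotopy equivalence to diffeomorphism is precisely where the classification theory enters.
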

\begin{proof}[Sketch of original proof]
Consider the universal cover $M^T \times \R$ of $M^T \times S^1$. $M^T \times \R$ also admits a torsion-free $G_2$ structure, and so a Ricci-flat metric, and by the Cheeger-Gromoll splitting theorem\cite{cheegergromoll1} the metric on $M^T \times \R$ is given by a Riemannian product $N \times \R$. The metric induced on $N$ is Ricci-flat K\"ahler, by holonomy considerations. By classification theory for compact simply connected spin $6$-manifolds, $M^T$ and $N$ are diffeomorphic; hence $M^T$ admits a Ricci-flat K\"ahler metric. 
\end{proof}
Our work makes this argument much more concrete, as well as generalising to the not necessarily simply connected case. We shall assume that the torsion-free $G_2$ structure on $M^T \times S^1$ is $S^1$-invariant; by the proof of Theorem \ref{su3structuregluing}, this assumption requires no further hypotheses on the structures to be glued. 
\begin{proof}[Concrete version of proof of Lemma \ref{dylemma314} if $G_2$ structure is $S^1$-invariant]
Using $S^1$-invariance, we write the torsion-free $G_2$ structure on $M^T \times S^1$ as $\Re \Omega + (Ld\theta + v) \wedge \omega$, where $z = Ld\theta + v$ is a twisting. 

We describe the Riemannian universal cover of $M^T \times S^1$ with the $G_2$ structure $\Re \Omega + (Ld\theta + v) \wedge \omega$. We need to equip the universal cover $M^T \times \R$ with a torsion-free $G_2$ structure, and we take the torsion-free $G_2$ structure $\Re \Omega + (L d\theta) \wedge \omega$, where $\theta$ is the coordinate along $\R$. 

We now need to define a projection $\pi: M^T \times \R \to M^T \times S^1$ such that
\begin{equation}
\label{eq:universalcover}
\pi^* (\Re \Omega + (L d\theta + v) \wedge \omega) = \Re \Omega + Ld\theta \wedge \omega
\end{equation}
Since $M^T$ is simply connected, $b^1(M^T) = 0$ and so we may write $v = df$ for some function $f$ on $M^T$. Define $\pi$ by $(x, \theta) \mapsto (x, [\theta - \frac{f(x)}L])$; it is easy to see that $\pi$ satisfies \eqref{eq:universalcover} and so is a Riemannian covering of $M^T \times S^1$ by $M^T \times \R$; since $M^T$ is simply connected, it is the universal cover. Note that the torsion-free $G_2$ structure $\Re \Omega + (L d\theta) \wedge \omega$ is the product structure that can be obtained by using Cheeger-Gromoll, which shows immediately that $N = M^T$. 

We thus get the Calabi-Yau structure $(\Omega, \omega)$ on $M^T$, and in particular a Ricci-flat K\"ahler metric. 
\end{proof}
The major gain from our concrete approach is that we get an explicit identification of $N$ and $M^T$ (the identity), and an explicit universal cover; in particular, we obtain a relation between the Calabi-Yau structures we glue and the resulting Calabi-Yau structure. 
\subsection{Gluing to \texorpdfstring{$\M_{SU(3)}$}{moduli space}}
\label{ssec:gluingtomod}
This gluing map is not obviously independent of the twistings $z_1$ and $z_2$. However, we now show that the gluing map is independent of $z_1$ and $z_2$ as a map to the moduli space $\M_{SU(3)}(M^T)$ defined in Definition \ref{defin:cptsu3modspace}: that is, the Calabi-Yau structure may depend on the twistings, but different twistings result in Calabi-Yau structures that are at worst pullbacks of each other. 

We use cohomology. We know that the perturbation made in Theorem \ref{g2structuregluinggeneral} is an exact form, so does not change the cohomology class of the $G_2$ structure. This cohomology class can be decomposed, for instance as in the proof of Proposition \ref{pizsmooth}, into cohomology classes corresponding to the twisting and the Calabi-Yau structure. If it were the case that the cohomology classes corresponding to the Calabi-Yau structure did not change under this perturbation, then since they originally are just given by gluing the Calabi-Yau structures using Definition \ref{comdefin:matchingforms} (which we will prove momentarily), they would be independent of the twistings used. It would then follow from Proposition \ref{su3cohomrep} that the $\M_{SU(3)}$ class is also independent of the twistings used. 

Unfortunately, it is not quite true that the cohomology classes corresponding to the Calabi-Yau structure do not change under the perturbation: whilst it is the case for $[\Re\Omega]$, it is possible we might have to rescale $[\omega]$. In this subsection, we adjust the previous paragraph to provide a correct argument proving the result in a similar way.

We begin with the following simple lemma saying that in cohomology the wedge product of patched forms is the patching of the wedge products.
\begin{lem}
\label{wedgeproductgluing}
Suppose that $\alpha_1$, $\beta_1$ and $\alpha_2$, $\beta_2$ are two pairs of closed matching asymptotically translation invariant forms on $M_1$ and $M_2$, so that $\alpha_1 \wedge \beta_1$ and $\alpha_2 \wedge \beta_2$ is also a pair of closed matching asymptotically translation invariant forms. The approximate gluing of Definition \ref{comdefin:matchingforms} gives well-defined cohomology classes $[\gamma_T(\alpha_1, \alpha_2)]$, $[\gamma_T(\beta_1, \beta_2)]$, and $[\gamma_T(\alpha_1 \wedge \beta_1, \alpha_2 \wedge \beta_2)]$. Then
\begin{equation}
[\gamma_T(\alpha_1 \wedge \beta_1, \alpha_2 \wedge \beta_2)] = [\gamma_T(\alpha_1, \alpha_2)] \wedge [\gamma_T(\beta_1, \beta_2)]
\end{equation}
\end{lem}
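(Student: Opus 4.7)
The plan is to exhibit an explicit primitive showing the two glued forms differ by an exact form on $M^T$. Using Lemma \ref{endexactness}, write $\alpha_i = \tilde\alpha + dA_i$ and $\beta_i = \tilde\beta + dB_i$ on the end of $M_i$, where $\tilde\alpha$ and $\tilde\beta$ are the common limits (identified by $F$). The key algebraic observation is that
\begin{equation*}
C_i := A_i\wedge\beta_i + (-1)^{|\tilde\alpha|}\tilde\alpha\wedge B_i
\end{equation*}
is a primitive for $\alpha_i\wedge\beta_i - \tilde\alpha\wedge\tilde\beta$ on the end, as one checks using $d\beta_i = 0 = d\tilde\alpha$. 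Hence $\gamma_T(\alpha_1\wedge\beta_1,\alpha_2\wedge\beta_2)$ is given on the end by $\alpha_i\wedge\beta_i - d(\psi_T C_i)$, and by $\alpha_i\wedge\beta_i$ off the end.

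Next I would compute the difference of the two candidate forms directly on the end:
\begin{equation*}
\alpha_i\wedge\beta_i - d(\psi_T C_i) - (\alpha_i - d(\psi_T A_i))\wedge(\beta_i - d(\psi_T B_i)).
\end{equation*}
Expanding, using that $\alpha_i$ and $\beta_i$ are closed, and substituting $\psi_T C_i = \psi_T A_i\wedge\beta_i + (-1)^{|\tilde\alpha|}\tilde\alpha\wedge\psi_T B_i$ together with $\alpha_i = \tilde\alpha + dA_i$, the terms involving $\tilde\alpha\wedge\psi_T B_i$ and $\psi_T A_i\wedge\beta_i$ cancel, leaving (after a short manipulation) the exact form
\begin{equation*}
d\bigl((1-\psi_T)\,A_i\wedge d(\psi_T B_i)\bigr).
\end{equation*}
On the compact parts of $M_i$ both glued forms equal $\alpha_i\wedge\beta_i$, so the difference is trivially exact there.

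The final step is to verify that $\eta_T := (1-\psi_T)\,A_i\wedge d(\psi_T B_i)$, defined on the end of each $M_i$ and extended by $0$ over the compact part, patches consistently to a smooth global form on $M^T$. Since $1-\psi_T$ vanishes for $t\geq T-1$, $\eta_T$ vanishes near the neck junction and so extends by zero across $F$; since $d(\psi_T B_i)$ vanishes for $t\leq T-2$, the extension by zero onto the compact part is smooth. Hence $\gamma_T(\alpha_1\wedge\beta_1,\alpha_2\wedge\beta_2) - \gamma_T(\alpha_1,\alpha_2)\wedge\gamma_T(\beta_1,\beta_2) = d\eta_T$ globally on $M^T$, giving the desired equality of cohomology classes.

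The main obstacle is purely bookkeeping: keeping track of the signs arising from graded commutation of $\tilde\alpha$ with $B_i$, and checking that the primitive extends smoothly across both the compact/end transition and the neck identification. Neither is conceptually difficult, since $\psi_T$ and $1-\psi_T$ have disjoint closures with the relevant regions by construction.
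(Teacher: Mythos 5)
Your proposal is correct and takes essentially the same route as the paper: the paper also writes $\alpha_i = \tilde\alpha + d\alpha'$, $\beta_i = \tilde\beta + d\beta'$, uses exactly the same primitive $\alpha'\wedge\tilde\beta + (-1)^{\deg\alpha}\tilde\alpha\wedge\beta' + \alpha'\wedge d\beta'$ for $\alpha_i\wedge\beta_i - \tilde\alpha\wedge\tilde\beta$, and exhibits the difference of the two glued forms as the differential of a form supported in $t\in(T-2,T-1)$, hence away from the identified region, so that the primitives from the two ends patch to a global form on $M^T$. Indeed the paper's primitive $\varphi\alpha'\wedge d\beta' - \varphi\alpha'\wedge d(\varphi\beta')$ (with $\varphi = 1-\psi_T$) simplifies to exactly your $(1-\psi_T)\,A_i\wedge d(\psi_T B_i)$, and your algebra and support analysis check out.
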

\begin{proof}
It is sufficient to show that on $M_1^{\tr T}$ the cutoff of $\alpha_1 \wedge \beta_1$ and the wedge product $(\text{cutoff of } \alpha_1 ) \wedge (\text{cutoff of } \beta_1 ) $ differ by $d\gamma_1$ where $\gamma_1$ is supported away from the identified region of $M_1^{\tr T}$, for then when we identify we get
\begin{equation}
\gamma_T(\alpha_1 \wedge \beta_1, \alpha_2 \wedge \beta_2) - \gamma_T(\alpha_1, \alpha_2)\wedge \gamma_T(\beta_1, \beta_2) = d\gamma_1 + d\gamma_2
\end{equation}
with $\gamma_1$ and $\gamma_2$ having disjoint support and the result follows.

We will therefore drop the subscripts. As in Definition \ref{comdefin:matchingforms}, divide $\alpha$ and $\beta$, on the end, into a limit part and an exact part
\begin{equation}
\alpha = \tilde \alpha + d\alpha' \qquad \beta = \tilde \beta + d\beta'
\end{equation}
We then get
\begin{equation}
\label{eq:alphawedgebeta}
\alpha \wedge \beta = \tilde \alpha \wedge \tilde \beta + d(\alpha' \wedge \tilde \beta + (-1)^{\deg \alpha} \tilde \alpha \wedge \beta' + \alpha' \wedge d\beta')
\end{equation}
To simplify notation, set $\varphi = 1-\psi_T$, so $\varphi= 0$ for $t>T-1$ and $1$ for $t\leq T-2$ ($\psi_T$ is as defined in equation \eqref{eq:cutoff} and used in Definition \ref{comdefin:matchingforms}). The cutoffs of $\alpha$ and $\beta$ are $\tilde \alpha + d(\varphi \alpha')$ and $\tilde \beta + d(\varphi \beta')$ and it follows that the wedge product of the cutoffs is given exactly by replacing $\alpha'$ and $\beta'$ with $\varphi \alpha'$ and $\varphi \beta'$ in \eqref{eq:alphawedgebeta}. Similarly the cutoff of the wedge product is given by introducing a $\varphi$ into the exterior derivative in \eqref{eq:alphawedgebeta}.
On taking the difference, all terms but the last then cancel, to give
\begin{equation}
\label{eq:wedgegluingdifference}
\begin{split}
&d(\varphi \alpha' \wedge d \beta' - \varphi \alpha' \wedge d (\varphi \beta'))\\
={}&d(\varphi \alpha' \wedge d\beta' - \varphi^2 \alpha' \wedge d\beta' - \varphi \alpha' \wedge d\varphi \wedge \beta' )
\end{split}
\end{equation}
Since $\varphi -\varphi^2 = \varphi(1-\varphi)$ and $\varphi d\varphi$ are both supported in $(T-2, T-1)$, \eqref{eq:wedgegluingdifference} is supported in $(T-2, T-1)$; that is, away from the identified region. Thus we have the claimed result.
\end{proof}
Combining Lemma \ref{wedgeproductgluing} with standard results on the cohomology ring of a compact K\"ahler manifold, we obtain our result on how the cohomology classes $[\Re\Omega]$ and $[\omega]$ differ from the approximate gluings of the $\Re\Omega_i$ and $\omega_i$. 
\begin{prop}
\label{gluingcohombehaves}
Suppose that $\Re \Omega_1 + (L d\theta + v_1) \wedge \omega_1$ and $\Re \Omega_2 + (L d\theta +v_2) \wedge \omega_2$ are matching torsion-free asymptotically cylindrical $S^1$-invariant $G_2$ structures obtained from torsion-free asymptotically cylindrical $(\Omega_i, \omega_i)$ and twistings $L_i d\theta + v_i$ by Propositions \ref{g2su3prop1} and \ref{g2su3prop2}. (Note that $L_1 = L_2$ since these two $G_2$ structures match.)

Suppose that for some $T$ these glue as in Theorem \ref{su3structuregluing} to the $S^1$-invariant torsion-free $G_2$ structure $\phi$ and we have $\phi = \Re \Omega + (L' d\theta + v) \wedge \omega$ for a Calabi-Yau structure $(\Omega, \omega)$ and twisting $L' d\theta + v$. Then there exists $c>0$ such that
\begin{equation}\begin{split}
L' = cL \qquad [\Re \Omega] = [\gamma_T(\Re \Omega_1, \Re \Omega_2)] \\ [\omega] = \frac1c[\gamma_T(\omega_1, \omega_2)] \qquad [v] = c [\gamma_T(v_1, v_2)]
\end{split}\end{equation}
\end{prop}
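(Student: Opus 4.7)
My plan is to work entirely at the level of cohomology in $H^3(M^T \times S^1)$, using the fact that the perturbation $d\eta$ produced by Theorem \ref{g2structuregluinggeneral} is exact, so that $[\phi] = [\gamma_T(\phi_1, \phi_2)]$. I would first expand both sides. The left hand side is $[\phi] = [\Re \Omega] + (L'[d\theta] + [v]) \wedge [\omega]$. For the right hand side, linearity of $\gamma_T$ combined with Lemma \ref{wedgeproductgluing}, together with the trivial observation that the matching parts $L d\theta$ glue to $L d\theta$, gives $[\gamma_T(\phi_1, \phi_2)] = [\gamma_T(\Re \Omega_1, \Re \Omega_2)] + (L[d\theta] + [\gamma_T(v_1, v_2)]) \wedge [\gamma_T(\omega_1, \omega_2)]$.

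Next, I would apply the K\"unneth decomposition $H^3(M^T \times S^1) \cong H^3(M^T) \oplus H^2(M^T)$ to split the resulting identity into two pieces. The coefficient of $[d\theta]$ gives $L' [\omega] = L [\gamma_T(\omega_1, \omega_2)]$, so setting $c := L'/L > 0$ immediately yields $L' = cL$ and $[\omega] = c^{-1} [\gamma_T(\omega_1, \omega_2)]$. Substituting into the $H^3(M^T)$ part and rearranging gives
\begin{equation}
[\Re \Omega] - [\gamma_T(\Re \Omega_1, \Re \Omega_2)] = \bigl(c[\gamma_T(v_1, v_2)] - [v]\bigr) \wedge [\omega].
\end{equation}

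The main obstacle is separating the two sides of this identity, and my plan is to wedge once more with $[\omega]$ and then invoke Hard Lefschetz. The identity $\Re \Omega \wedge \omega = 0$ from Definition \ref{defin:sunstructure}(iv) kills $[\Re \Omega] \wedge [\omega]$, and the same identity applied to each piece, $\Re \Omega_i \wedge \omega_i = 0$, combined with Lemma \ref{wedgeproductgluing}, gives $[\gamma_T(\Re \Omega_1, \Re \Omega_2)] \wedge [\gamma_T(\omega_1, \omega_2)] = 0$, hence $[\gamma_T(\Re \Omega_1, \Re \Omega_2)] \wedge [\omega] = 0$. The identity therefore reduces to $\bigl(c[\gamma_T(v_1, v_2)] - [v]\bigr) \wedge [\omega]^2 = 0$ in $H^5(M^T)$. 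Since $(\Omega, \omega)$ is a torsion-free $SU(3)$ structure on the closed $6$-manifold $M^T$, Proposition \ref{prop:suntorsionfreeness} makes $(M^T, \omega)$ a compact K\"ahler threefold, so Hard Lefschetz applies: $\wedge [\omega]^2 \colon H^1(M^T) \to H^5(M^T)$ is an isomorphism. Therefore $[v] = c[\gamma_T(v_1, v_2)]$, and feeding this back into the $H^3$ identity yields the remaining equality $[\Re \Omega] = [\gamma_T(\Re \Omega_1, \Re \Omega_2)]$.
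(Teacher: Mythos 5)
Your proposal is correct and follows essentially the same route as the paper's proof: cohomological invariance under the exact perturbation, Lemma \ref{wedgeproductgluing} plus linearity of $\gamma_T$ to expand $[\gamma_T(\phi_1,\phi_2)]$, the K\"unneth splitting to extract $c = L'/L$ and $[\omega] = \tfrac1c[\gamma_T(\omega_1,\omega_2)]$, and then the identity $\Re\Omega\wedge\omega = 0$ (applied to the glued structure and, via Lemma \ref{wedgeproductgluing}, to the pieces) to separate the remaining classes. The only difference is in the last step and is essentially cosmetic: the paper observes that $[\Re\Omega]$ and $[\gamma_T(\Re\Omega_1,\Re\Omega_2)]$ are primitive and reads off the primitive and $[\omega]\wedge H^1$ components of the Lefschetz decomposition, whereas you wedge the identity once more with $[\omega]$ and invoke the hard Lefschetz isomorphism $\wedge[\omega]^2\colon H^1(M^T)\to H^5(M^T)$ --- two equivalent ways of using the same theorem on the compact K\"ahler threefold $(M^T,\omega)$.
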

\begin{proof}
Removing the torsion does not affect the cohomology class of $\phi$, so we have
\begin{equation}
[\gamma_T(\Re \Omega_1 + (L d\theta + v_1) \wedge \omega_1, \Re \Omega_2 + (L d\theta + v_2) \wedge \omega_2)] = [\Re \Omega + (L' d\theta + v) \wedge \omega]
\end{equation}
Using Lemma \ref{wedgeproductgluing} and the obvious linearity of $\gamma_T$, we obtain
\begin{equation}
[\gamma_T(\Re \Omega_1, \Re \Omega_2)] + [L d\theta + \gamma_T(v_1, v_2)] \wedge [\gamma_T(\omega_1, \omega_2)] = [\Re \Omega] + [L' d\theta + v] \wedge [\omega]
\end{equation}
Since $\gamma_T$ can be defined on any pair of matching asymptotically cylindrical manifolds and commutes with matching maps of such pairs (provided the cutoff functions are chosen appropriately), and we can choose inclusions so that $(M_1, M_2) \into (M_1 \times S^1, M_2 \times S^1)$ is such a pair, $[\gamma_T(\Re \Omega_1, \Re \Omega_2)]$, $[\gamma_T(v_1, v_2)]$, and $[\gamma_T(\omega_1, \omega_2)]$ are in the subset $H^* (M^T)$ of $H^*(M^T\times S^1)$ (corresponding to having no $d\theta$ terms). Evidently, $[\Re\Omega]$, $[v]$, and $[\omega]$ also lie in the subset $H^* (M^T)$. 

By the K\"unneth theorem, therefore, we have
\begin{equation}
L [\gamma_T(\omega_1, \omega_2)] = L' [\omega]
\end{equation}
Recalling that $L$ and $L'$ are both positive, set $c = \frac{L'}L$. Hence $[\omega] = \frac1c [\gamma_T(\omega_1, \omega_2)]$. 

Taking the other component from the K\"unneth isomorphism, and writing $[\gamma_T(\omega_1, \omega_2)]$ as $c[\omega]$, we have
\begin{equation}\label{eq:star}
[\gamma_T(\Re \Omega_1, \Re \Omega_2)] + [\gamma_T(v_1, v_2)] \wedge c[\omega] = [\Re \Omega] + [v] \wedge [\omega]
\end{equation}
Now $[\omega]$ is a K\"ahler class on $M^T$ and we have 
\begin{equation}
\label{eq:prim3cohom}
[\omega] \wedge [\Re \Omega] = 0 = [\gamma_T(\Re \Omega_1 \wedge \omega_1, \Re \Omega_2 \wedge \omega_2)] = [\gamma_T(\Re \Omega_1, \Re \Omega_2)] \wedge c[\omega]
\end{equation}
again using Lemma \ref{wedgeproductgluing}. Since $c \neq 0$, \eqref{eq:prim3cohom} means that $[\Re \Omega]$ and $[\gamma_T(\Re \Omega_1, \Re \Omega_2)]$ are classes of primitive $3$-cohomology. The remaining two equations now follow from the Lefschetz decomposition as the primitive $3$-cohomology and $1$-cohomology components of \eqref{eq:star}. 
\end{proof}
We now prove that the constant $c$ of Proposition \ref{gluingcohombehaves} doesn't change as we change the twistings. It is clear that if $\Omega$ is known, $c$ is determined for each $\omega$ by condition iii) of Definition \ref{defin:sunstructure}:
\begin{equation}
\Re \Omega \wedge \Im \Omega = \frac{(-2i^{n+1})^{n-1}}{n!} \omega^n
\end{equation}
and that if $[\Omega]$ is known, exactly the same applies for each $[\omega]$. Using torsion-freeness we can say a little more, passing to $[\Re \Omega]$. 
\begin{lem}
\label{ccontrol}
Let $(\Omega, \omega)$ be a Calabi-Yau structure. There is an open neighbourhood $U$ of $(\Omega, \omega)$ in Calabi-Yau structures, and $\epsilon>0$, such that if $(\Omega_1, \omega_1)$ and $(\Omega_2, \omega_2)$ both lie in $U$, with $[\Re \Omega_1] = [\Re \Omega_2]$ and $[\omega_1] = C[\omega_2]$ for $|C-1|<\epsilon$, then $C=1$. 
\end{lem}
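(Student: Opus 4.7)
The plan is to use the normalisation condition iii) of Definition \ref{defin:sunstructure}, which for $n = 3$ reads $\Omega \wedge \bar\Omega = -\frac{4i}{3}\omega^3$; taking real parts and integrating gives
\begin{equation*}
\int_M \Re\Omega \wedge \Im\Omega = \tfrac{2}{3}\int_M \omega^3
\end{equation*}
for every Calabi-Yau structure on a compact six-manifold $M$. The right-hand side is topological: it equals $\tfrac{2}{3}[\omega]^3$ evaluated on the fundamental class. So if the left-hand side can be shown to depend only on $[\Re\Omega]$ for Calabi-Yau structures in a small neighbourhood $U$ of $(\Omega, \omega)$, then the hypothesis $[\Re\Omega_1] = [\Re\Omega_2]$ forces $\int_M \omega_1^3 = \int_M \omega_2^3$; combined with $[\omega_1] = C[\omega_2]$ this gives $C^3 \int_M \omega_2^3 = \int_M \omega_2^3$, and since $\omega_2$ is K\"ahler the integral is positive, hence $C^3 = 1$. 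As $C$ is real (both cohomology classes are), $C = 1$.

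The key step is therefore to establish that $\int_M \Re\Omega \wedge \Im\Omega$ depends only on $[\Re\Omega]$ for Calabi-Yau structures in a neighbourhood of $(\Omega, \omega)$. For this I would invoke Hitchin's analysis of stable $3$-forms \cite{hitchinthreeforms}: pointwise, $\Im\Omega = \hat\Omega(\Re\Omega)$ is a smooth diffeomorphism-equivariant function of the stable $3$-form $\Re\Omega$. Hitchin showed that the closed stable $3$-forms $\rho$ with $d\hat\Omega(\rho) = 0$ (equivalently, those arising as $\Re\Omega$ of a Calabi-Yau-type complex structure) are non-degenerate critical points modulo $\Diff_0$ of the functional $V(\rho) = \int_M \rho \wedge \hat\Omega(\rho)$ restricted to a fixed cohomology class. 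The implicit function theorem then yields a slice neighbourhood of $\Re\Omega$ in which every nearby cohomology class contains a unique such critical point up to $\Diff_0$. Shrinking $U$ so that $\Re\Omega_1$ and $\Re\Omega_2$ both fall in this slice neighbourhood, the assumption $[\Re\Omega_1] = [\Re\Omega_2]$ produces $\Phi \in \Diff_0$ with $\Phi^*\Re\Omega_2 = \Re\Omega_1$. The equivariance of $\hat\Omega$ then gives $\Phi^*\Im\Omega_2 = \Im\Omega_1$, and diffeomorphism-invariance of integration gives $\int_M \Re\Omega_1 \wedge \Im\Omega_1 = \int_M \Re\Omega_2 \wedge \Im\Omega_2$, as required.

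Combining these two steps concludes the argument with $C^3 = 1$ and therefore $C = 1$; the restriction $|C - 1| < \epsilon$ plays no role beyond ensuring that both $(\Omega_i, \omega_i)$ can be arranged to lie in $U$ in the first place. The main obstacle is the need to invoke Hitchin's uniqueness statement at the level of $3$-forms alone, with no reference to $\omega$: this is what allows us to compare the values $\int_M \Re\Omega_i \wedge \Im\Omega_i$ across two structures which a priori carry different K\"ahler data. Once that uniqueness is in hand, the normalisation iii) converts the resulting equality of Hitchin functionals into the desired equality of $\int_M \omega_i^3$, and the rest is arithmetic.
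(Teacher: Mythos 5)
Your proof is correct (for compact $M$, which is the setting in which the lemma is actually applied and also the setting of the paper's own proof), but it takes a genuinely different route. The paper stays inside the machinery it has already built: it passes to the $S^1$-invariant $G_2$ classes obtained from the two structures with twistings $d\theta$ and a suitable constant multiple of $d\theta$, chosen so that the two classes have the same image in $H^3(M \times S^1)$; the local homeomorphism of $\M_{G_2}^{S^1}$ with cohomology (Theorems \ref{g2slicetheorem} and \ref{mg2s1mfd}) then forces equality in $\M_{G_2}^{S^1}$, and $C=1$ is read off from uniqueness of the twisting class in the product decomposition of Proposition \ref{topproduct} -- the hypothesis $|C-1|<\epsilon$ is exactly what keeps the second class inside the chart. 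You bypass the $G_2$ picture entirely: condition iii) gives $\int_M \Re\Omega \wedge \Im\Omega = \tfrac23 \int_M \omega^3$ for each structure, Hitchin's local uniqueness (up to $\Diff_0$) of closed stable $3$-forms with closed dual in a fixed cohomology class, together with naturality of $\rho \mapsto \hat\rho$, shows that the left-hand side depends only on $[\Re\Omega]$ near $\Re\Omega$, and then $C^3[\omega_2]^3 = [\omega_2]^3$ with $[\omega_2]^3 > 0$ gives $C=1$. What your approach buys: it uses no bound on $C$ at all, so it yields a slightly stronger local statement (relevant to the remark following the lemma about whether the result might hold globally), and it isolates the real content as the local determination of $\int_M \Re\Omega \wedge \Im\Omega$ by $[\Re\Omega]$. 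What it costs: it invokes Hitchin's nondegeneracy/local moduli theorem for $3$-forms directly (legitimate at $\Re\Omega$ since the reference structure is genuinely Calabi-Yau, hence K\"ahler, whereas the paper only uses this indirectly through the moduli-space results already established), it is tied to compactness through the integrals, and it silently uses that $\Phi \in \Diff_0$ is orientation-preserving so that $\widehat{\Phi^*\rho} = \Phi^*\hat\rho$, and that both structures induce the same orientation so that $\int_M \omega_2^3 > 0$ -- none of these is a gap, but they should be recorded. (A minor slip of phrasing: $\Omega \wedge \bar\Omega$ is purely imaginary, so the identity follows by dividing by $-2i$ rather than by ``taking real parts''.)
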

\begin{proof}
We work locally around the class of $\M_{G_2}^{S^1}$ corresponding to $\Re \Omega + d\theta \wedge \omega$. We know that there is an open subset $V$ of $\M_{G_2}^{S^1}$ around this class which is homeomorphic to an open subset of $H^3(M \times S^1)$ as in Theorem \ref{g2slicetheorem} and Theorem \ref{mg2s1mfd}. By reducing the open set if necessary, we may assume using Proposition \ref{topproduct} that $V$ is a product of open sets $U'$ and $W$ in $\M_{SU(3)}$ and $Z$ respectively. Let $U$ be the set of structures whose moduli class is in $U'$. $W$ is an open set containing $[d\theta]$ so it contains an interval of the line $\R [d\theta]$; choose $\epsilon<\frac12$ so that it contains $(1-2\epsilon, 1+2\epsilon) [d\theta]$. 

Now suppose given two structures $(\Omega_i, \omega_i)$ as in the statement. Clearly, the $S^1$-invariant torsion-free $G_2$ class $[\Re \Omega_1 + d\theta \wedge \omega_1]$ lies in $V$, and since $|\frac1C -1| < \frac{\epsilon}{1-\epsilon} < 2\epsilon$, the same is true of the class $[\Re \Omega_2 + \frac1Cd\theta \wedge \omega_2]$. Moreover, we have the equality
\begin{equation}
[\Re \Omega_1 + d\theta \wedge \omega_1] = [\Re \Omega_2 + \frac1Cd\theta \wedge \omega_2]
\end{equation}
in $H^3$. Since $V$ is homeomorphic to its image in $H^3(M \times S^1)$, we also have equality in $\M_{G_2}^{S^1}$. Applying Proposition \ref{topproduct} again, we find that $C=1$.
\end{proof}
\begin{rmk}
A natural question for further study is whether the constant $c$ of Proposition \ref{gluingcohombehaves} is necessarily one. This would mean that any such gluing of $S^1$-invariant $G_2$ structures does not fundamentally alter the length of the circle factor, and seems natural if the gluing of the Calabi-Yau structures can be done without reference to the $G_2$ structures. On the other hand, if we regard $c$ instead as a possible rescaling of the holomorphic volume form $\Omega$, it is not obvious that the scaling of the holomorphic volume form should be preserved by gluing. Changing how we regard $c$ would superficially affect much of the below, as $c$ would have to be controlled in different places, but would not make it substantially different. An interesting preliminary question would be whether the scaling of the holomorphic volume form is uniquely determined by the cohomology, that is, whether Lemma \ref{ccontrol} is true globally. 
\end{rmk}
We now use Lemma \ref{ccontrol} to prove our foreshadowed well-definition result.
\begin{prop}
\label{weakwelldef}
Let $(\Omega_1, \omega_1)$ and $(\Omega_2, \omega_2)$ be matching Calabi-Yau structures; let $z_1$ and $z_2$ be two matching twistings, and $z'_1$ and $z'_2$ be another matching pair of twistings. Then, for neck-length parameter $T$ sufficiently large (depending on a curve of $G_2$ structures that will appear in the proof), the $\M_{SU(3)}$ parts of the results of gluing the $G_2$ structures $\Re \Omega_i + z_i \wedge \omega_i$ and $\Re \Omega_i + z'_i \wedge \omega_i$ are equal, and the $Z$ parts are given by the same multiple of the approximate gluing, though the approximate gluing may be different in the two cases. 
\end{prop}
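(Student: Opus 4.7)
The plan is to interpolate continuously between the two pairs of twistings and apply the product decomposition of $\M_{G_2}^{S^1}$ together with Proposition \ref{gluingcohombehaves} and Lemma \ref{ccontrol} to show the $\M_{SU(3)}$ class is preserved. Set $z_i(s) = (1-s)z_i + sz'_i$ for $s \in [0,1]$; since matching is a linear condition on limits, each pair $(z_1(s), z_2(s))$ is matching, and the corresponding $S^1$-invariant torsion-free $G_2$ structures $\phi_i(s) = \Re \Omega_i + z_i(s) \wedge \omega_i$ depend continuously on $s$ in the extended weighted $C^\infty$ topology. By the upper semi-continuity of $T_0$ in Theorem \ref{g2structuregluinggeneral} and compactness of $[0,1]$, I can fix a single $T$ so that the gluing of Theorem \ref{su3structuregluing} succeeds for every $s \in [0,1]$, producing a continuous family $\phi(s)$ of $S^1$-invariant torsion-free $G_2$ structures on $M^T \times S^1$. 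Via Theorem \ref{theoremc}, these correspond to a continuous family $(\Omega(s), \omega(s), z(s))$ on $M^T$, with $z(s) = L(s) d\theta + v(s)$.

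Applying Proposition \ref{gluingcohombehaves} at each $s$ produces $c(s) > 0$ with $[\Re \Omega(s)] = [\gamma_T(\Re \Omega_1, \Re \Omega_2)]$, $[\omega(s)] = c(s)^{-1}[\gamma_T(\omega_1, \omega_2)]$, $L(s) = c(s) L_s$ (where $L_s$ is the common $[d\theta]$ component of $z_1(s), z_2(s)$), and $[v(s)] = c(s)[\gamma_T(v_1(s), v_2(s))]$. In particular $[\Re \Omega(s)]$ is independent of $s$, while $c(s)$ depends continuously on $s$ since $[\omega(s)]$ does and $[\gamma_T(\omega_1, \omega_2)] \neq 0$.

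The crucial step is to show $c(s)$ is constant. Fix $s_0 \in [0,1]$ and invoke Lemma \ref{ccontrol} to obtain a neighbourhood $U$ of $(\Omega(s_0), \omega(s_0))$ and $\epsilon > 0$. By continuity, for all $s$ in a neighbourhood of $s_0$ we have $(\Omega(s), \omega(s)) \in U$ and $|c(s_0)/c(s) - 1| < \epsilon$; since $[\Re \Omega(s)] = [\Re \Omega(s_0)]$ and $[\omega(s_0)] = (c(s_0)/c(s))[\omega(s)]$, the lemma forces $c(s) = c(s_0)$. Hence $c$ is locally constant, and by connectedness of $[0,1]$ globally constant with common value $c$. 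Consequently both $[\Re \Omega(s)]$ and $[\omega(s)]$ are $s$-independent, so by Proposition \ref{su3cohomrep} the continuous map $s \mapsto [\Omega(s), \omega(s)] \in \M_{SU(3)}(M^T)$ is locally constant, hence constant. The classes at $s = 0$ and $s = 1$ thus coincide, and the $Z$ parts read $c[\gamma_T(z_1, z_2)]$ and $c[\gamma_T(z'_1, z'_2)]$ respectively, with the same multiplier $c$.

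The main obstacle is the interplay between the nonlinearity of the gluing perturbation (which forces us to work with a one-parameter family rather than a direct comparison) and the requirement that a single neck length $T$ suffice for the entire curve; the upper semi-continuity of $T_0$ combined with compactness of $[0,1]$ resolves this, and the remainder of the argument is cohomological bookkeeping around Proposition \ref{gluingcohombehaves} and Lemma \ref{ccontrol}.
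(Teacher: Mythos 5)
Your proposal is correct and takes essentially the same route as the paper's own proof: interpolate the twistings along a path, fix a single $T$ via upper semi-continuity of $T_0$ and compactness of $[0,1]$, split the glued $S^1$-invariant $G_2$ structures, and combine Proposition \ref{gluingcohombehaves} with Lemma \ref{ccontrol} to show $c(s)$ is locally constant, hence constant, so that the cohomology classes and therefore (by Proposition \ref{su3cohomrep}) the $\M_{SU(3)}$ class are fixed along the curve. The only quibble is that in your application of Lemma \ref{ccontrol} the ratio should be $c(s)/c(s_0)$ rather than its reciprocal, which affects nothing.
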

\begin{proof}
The set of matching pairs of twistings is precisely $\R_{>0}$ times the vector space of matching closed $1$-forms (with appropriate limits) on $M_1$ and $M_2$. Therefore it is path-connected, and so there exists a path in it $(z_1(s), z_2(s))$ with $z_i(0) = z_i$ and $z_i(1) = z'_i$. 

For $T$ sufficiently large (by semi-continuity of a minimal $T$ from Theorem \ref{g2structuregluinggeneral} and compactness of $[0, 1]$), the resulting pairs of matching torsion-free $S^1$-invariant $G_2$ structures
\begin{equation}
(\Re \Omega_1 + z_1(s) \wedge \omega_1, \Re \Omega_2 + z_2(s) \wedge \omega_2)
\end{equation}
can be glued to give torsion-free $S^1$-invariant $G_2$ structures $\phi(s)$; $\phi(s)$ is a continuous curve, again by Theorem \ref{g2structuregluinggeneral}. Using the proof of Proposition \ref{g2su3prop1}, we can split these up as
\begin{equation}
\phi(s) = \Re \Omega(s) + z(s) \wedge \omega(s)
\end{equation}
for continuous curves of Calabi-Yau structures and twistings. In the notation of Proposition \ref{gluingcohombehaves}, it follows that $L'$ is continuous and so $c$ is (because these are determined from $z(s)$); hence we know that the cohomology classes satisfy
\begin{align}
[\Re \Omega(s)] &= [\gamma_T(\Re \Omega_1, \Re \Omega_2)]\\
[\omega(s)] &= \frac1{c(s)}[\gamma_T(\omega_1, \omega_2)]
\end{align}
for a continuous positive function $c(s)$. In particular, we see that $(\Omega(s), \omega(s))$ is a continuous curve of Calabi-Yau structures with $[\Re \Omega(s)]$ fixed and $[\omega(s)]$ only varying in a line. It follows from Lemma \ref{ccontrol} that $c(s)$ is locally constant, and hence it is constant; thus, $[\Re \Omega(s)]$ and $[\omega(s)]$ are both fixed, and so so is the moduli class of $(\Omega(s), \omega(s))$, which proves the first claim.  

The second claim follows since we also have $[z(s)] = c(s) [\gamma_T(z_1(s), z_2(s)]$ from Proposition \ref{gluingcohombehaves}.
\end{proof}
Proposition \ref{weakwelldef} essentially says that the images under gluing of a pair of pairs of $G_2$ structures differing just by varying the twistings themselves just differ by varying the twisting and potentially diffeomorphism. 

In subsection \ref{ssec:finalssec}, we shall analyse the gluing map between moduli spaces for $G_2$ closely to work out how the gluing map between Calabi-Yau moduli spaces behaves. Proposition \ref{weakwelldef} will be used to say that a variation corresponding to a twisting glues to a variation corresponding a twisting: we would like to know what happens when we vary the Calabi-Yau structure or the gluing parameter $T$.

In varying the Calabi-Yau structure, there are two complications over varying the twisting. Firstly, it is not at all clear that $\M_{SU(3)}$ is connected, so we will need to assume the existence of the curve used in the proof of Proposition \ref{weakwelldef}. In any case, the factor $c$ may vary. 
\begin{prop}
\label{su3structurebehaves}
Suppose that $z_1$ and $z_2$ are a pair of matching twistings. Let  $(\Omega_1, \omega_1)$ and $(\Omega_2, \omega_2)$ be a pair of matching Calabi-Yau structures, and let $(\Omega'_1, \omega'_1)$ and $(\Omega'_2, \omega'_2)$ be another such pair. Suppose that there exists a continuous curve through matching pairs of Calabi-Yau structures joining these pairs.  Then, for neck-length parameter sufficiently large (depending on this curve), the $Z$ parts of the results of gluing the $G_2$ structures $\Re \Omega_i + z_i \wedge \omega_i$ and $\Re \Omega'_i + z_i \wedge \omega'_i$ are proportional.
\end{prop}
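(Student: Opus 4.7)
My plan is to run the argument of Proposition \ref{weakwelldef} with the roles of twisting and Calabi-Yau structure interchanged: there, a straight-line path of matching twistings interpolates between two glued $G_2$ structures; here, the path of matching Calabi-Yau structures assumed in the hypothesis plays the same role. Let $s \mapsto ((\Omega_1(s), \omega_1(s)), (\Omega_2(s), \omega_2(s)))$, $s \in [0,1]$, be the given continuous curve, with endpoints the two pairs in the statement. Pair each point of it with the fixed matching twistings $(z_1, z_2)$ to form, by Propositions \ref{g2su3prop1} and \ref{g2su3prop2}, a continuous path of matching torsion-free $S^1$-invariant $G_2$ structures $\phi_i(s) = \Re \Omega_i(s) + z_i \wedge \omega_i(s)$.

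Compactness of $[0,1]$ together with the upper semi-continuity of the threshold $T_0$ in Theorem \ref{g2structuregluinggeneral} permits a single neck-length $T$ large enough that $(\phi_1(s), \phi_2(s))$ can be glued for every $s$; the continuity statement of that theorem then supplies a continuous curve $\phi(s)$ of torsion-free $S^1$-invariant $G_2$ structures on $M^T \times S^1$. Using Proposition \ref{g2su3prop1}, split $\phi(s) = \Re \Omega(s) + z(s) \wedge \omega(s)$ and write $z(s) = L'(s)\,d\theta + v(s)$; both $L'(s)$ and $v(s)$ then depend continuously on $s$. Applying Proposition \ref{gluingcohombehaves} at each $s$ (using that the common twisting constant is $L = L_1 = L_2$) produces a continuous positive function $c(s) = L'(s)/L$ with
\[
L'(s) = c(s)\,L, \qquad [v(s)] = c(s)\,[\gamma_T(v_1, v_2)].
\]
Since $\gamma_T$ is linear and $\gamma_T(L\,d\theta, L\,d\theta) = L\,d\theta$, it follows that
\[
[z(s)] = L'(s)[d\theta] + [v(s)] = c(s)\bigl(L[d\theta] + [\gamma_T(v_1, v_2)]\bigr) = c(s)\,[\gamma_T(z_1, z_2)].
\]
Evaluating at $s = 0$ and $s = 1$ shows that the two $Z$-parts are positive multiples of the single class $[\gamma_T(z_1, z_2)] \in H^1(M^T \times S^1)$, hence proportional with ratio $c(1)/c(0)$.

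The essential difference from the proof of Proposition \ref{weakwelldef} is that, because $[\Re \Omega(s)]$ and $[\omega(s)]$ genuinely move with $s$, one cannot invoke Lemma \ref{ccontrol} to force $c(s)$ to be locally constant; in general $c(0) \neq c(1)$, which is exactly why the conclusion is proportionality rather than equality. Beyond that, the only point requiring care is that the continuity ingredients — the assumed curve, the gluing map from Theorem \ref{g2structuregluinggeneral}, and the splitting into $\M_{SU(3)}$ and $Z$ parts from Proposition \ref{g2su3prop1} — all live in compatible topologies, and that a single $T$ can be chosen uniformly along the compact parameter interval. Both are immediate, so no further analytic obstacle is expected; the hardest conceptual step is recognising that the varying scaling $c(s)$ is an unavoidable feature rather than a defect, and that it aligns precisely with the weakened conclusion being asked for.
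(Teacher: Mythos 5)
Your proposal is correct and follows essentially the same route as the paper: the paper's proof likewise runs the argument of Proposition \ref{weakwelldef} along the given curve of matching Calabi-Yau structures (choosing $T$ uniformly by upper semi-continuity of $T_0$ and compactness of the parameter interval), splits the resulting continuous curve of glued structures as $\Re\Omega(s) + z(s)\wedge\omega(s)$, and reads off $[z(s)] = c(s)[\gamma_T(z_1,z_2)]$ from Proposition \ref{gluingcohombehaves}, concluding proportionality of the endpoint classes. Your additional remark that Lemma \ref{ccontrol} is unavailable here, so $c$ may genuinely vary and only proportionality survives, is exactly the point the paper's weaker conclusion reflects.
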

\begin{proof}
By choosing $T$ large, as in the proof of Proposition \ref{weakwelldef}, we get a continuous curve of glued structures; write them as $\Omega(s) + z(s) \wedge \omega(s)$. By Proposition \ref{gluingcohombehaves}, we know that $[z(s)] = c(s)[\gamma_T(z_1, z_2)]$. Hence $[z(0)] = \frac{c(0)}{c(1)}[z(1)]$. 
\end{proof}

The only remaining question is the effect of varying the neck-length parameter $T$: again we obtain
\begin{prop}
\label{lengthstructurebehaves}
Suppose $(\Omega_i, \omega_i)$ are matching Calabi-Yau structures and $z_i$ are matching twistings. Let $T$ and $T'$ be a pair of positive reals exceeding the minimal gluing parameter $T_0$ for the associated matching $G_2$ structures $\Re \Omega_i + z_i \wedge \omega_i$. Then as in Proposition \ref{su3structurebehaves} the $Z$ parts of the glued structures are proportional. 
\end{prop}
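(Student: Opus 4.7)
The plan is to mimic the arguments of Propositions \ref{weakwelldef} and \ref{su3structurebehaves}, using a continuous path in the gluing parameter in place of a path in twistings or Calabi-Yau structures. Because the input structures are now fixed, the only parameter to deform is $T$ itself, but there is a small initial complication: the manifolds $M^T$ and $M^{T'}$ are different, so we cannot literally apply Lemma \ref{ccontrol} to a curve on a single manifold without first identifying them.

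First I would join $T$ and $T'$ by a continuous path $T(s) \in (T_0, \infty)$ for $s \in [0,1]$. For each $s$ the approximate glued $G_2$ structure $\phi^{T(s)}$ on $M^{T(s)} \times S^1$ admits a unique small perturbation $d\eta(s)$ solving \eqref{eq:g2gluingequation}, producing an $S^1$-invariant torsion-free $G_2$ structure $\phi(s)$. Next, I would pull everything back to a single model. Let $F_s : M^{T(0)} \to M^{T(s)}$ be the neck-stretching diffeomorphism which is the identity on $M_1^{\mathrm{cpt}} \cup M_2^{\mathrm{cpt}}$ and linearly reparametrises the neck coordinate; then $F_s^* \phi^{T(s)}$ varies continuously in $s$ in the $C^\infty$ topology on a fixed manifold, and the implicit function theorem used in Theorem \ref{g2structuregluinggeneral} makes $F_s^* \phi(s)$ a continuous curve of torsion-free $S^1$-invariant $G_2$ structures on $M^{T(0)} \times S^1$. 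Split it as $F_s^* \phi(s) = \Re\Omega(s) + z(s) \wedge \omega(s)$.

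Proposition \ref{gluingcohombehaves}, pulled back by $F_s$, produces a continuous positive function $c(s)$ with
\begin{equation}
[\Re\Omega(s)] = F_s^* [\gamma_{T(s)}(\Re\Omega_1,\Re\Omega_2)],\quad [\omega(s)] = \tfrac{1}{c(s)} F_s^* [\gamma_{T(s)}(\omega_1,\omega_2)],\quad [z(s)] = c(s)\, F_s^* [\gamma_{T(s)}(z_1,z_2)].
\end{equation}
The key observation is that for any matching closed pair $(\alpha_1,\alpha_2)$ the class $F_s^* [\gamma_{T(s)}(\alpha_1,\alpha_2)] \in H^*(M^{T(0)})$ is independent of $s$. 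This is because, under Mayer--Vietoris, $H^*(M^T) \cong H^*(M_1^{\mathrm{cpt}}) \oplus_{H^*(N)} H^*(M_2^{\mathrm{cpt}})$, and $\gamma_T(\alpha_1,\alpha_2)$ restricts to $\alpha_i$ on each $M_i^{\mathrm{cpt}}$ for every $T$; the neck-stretching $F_s$ is the identity on these compact pieces and so induces this canonical Mayer--Vietoris identification. Accepting this, $[\Re\Omega(s)]$ is constant and $[\omega(s)]$ varies only by a positive scalar along the curve of Calabi-Yau structures $(\Omega(s),\omega(s))$, so Lemma \ref{ccontrol} applied along a finite cover of $[0,1]$ by neighbourhoods on which the hypothesis holds shows $c(s)$ is locally constant and hence constant. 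Consequently $[z(0)]$ and $[z(1)]$ differ only by the fixed factor $c(0)/c(1) = 1$ applied to a common class, and in particular are proportional, as required.

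The one step that needs real care rather than bookkeeping is the claim that $F_s^*[\gamma_{T(s)}(\alpha_1,\alpha_2)]$ is $s$-independent; this is essentially tautological via the Mayer--Vietoris description above, but it does use that the primitives $\beta_i$ chosen in Definition \ref{comdefin:matchingforms} can be taken independently of $T$, so that the modifications $d(\psi_T \beta_i)$ are exact for each $T$ and do not shift the cohomology class under the identification. Once this is in place, the continuity argument and application of Lemma \ref{ccontrol} proceed exactly as in Propositions \ref{weakwelldef} and \ref{su3structurebehaves}.
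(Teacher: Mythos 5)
Your ``key observation'' is false as stated, and the justification given for it is where the error lies: Mayer--Vietoris does \emph{not} identify $H^k(M^T\times S^1)$ with the fibre product of the $H^k(M_i^{\cpt}\times S^1)$ over $H^k(N\times S^1)$; the sequence also contributes the image of the connecting homomorphism $H^{k-1}(N\times S^1)\to H^k(M^T\times S^1)$, i.e.\ classes restricting to zero on both compact pieces. That image is exactly where the $T$-dependence of the approximate gluing lives: under the neck-stretching identification $F_s$, the class $F_s^*[\gamma_{T(s)}(\alpha_1,\alpha_2)]$ shifts by (essentially) $(T(s)-T(0))$ times the connecting-map image of the class of the $dt$-component $\tilde\alpha_2$ of the common limit $\tilde\alpha=\tilde\alpha_1+dt\wedge\tilde\alpha_2$. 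For $\Re\Omega_i$ and $\omega_i$ these $dt$-components are generically nonzero modulo restrictions, so your claims that $[\Re\Omega(s)]$ is constant and $[\omega(s)]$ moves only by a scalar are unjustified and in general false; indeed, if the observation held for every matching closed pair it would apply to the $G_2$ forms themselves and make the glued cohomology class independent of the neck length, contradicting the role of the $T$-direction as a complement to the matching-pair directions used in Theorem \ref{g2localdiffeoofmodspace} (Nordstr\"om's Proposition 4.4). Consequently the appeal to Lemma \ref{ccontrol} and the conclusion $c(0)/c(1)=1$ are not established by your argument.

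The proposition itself only needs invariance of $[\gamma_{T(s)}(z_1,z_2)]$, and for the twistings this does hold, but for a reason you never invoke: they are $1$-forms whose common limit satisfies $\tilde z(\ddt)=0$ (Definition \ref{defin:twistings}); equivalently, in degree one the connecting map from $H^0(N\times S^1)$ vanishes because the cross-section is connected, so a degree-one class is determined by its restrictions to the two pieces. This is exactly how the paper argues: because the limit of the $z_i$ has no $dt$ term, the natural diffeomorphism pulls back the gluing at the larger parameter to the smaller one up to an error that does not change the class, by \cite[Proposition 3.2]{nordstromgluing}. Once $[\gamma_{T(s)}(z_1,z_2)]$ is known to be independent of $s$, proportionality is immediate from $[z(s)]=c(s)[\gamma_{T(s)}(z_1,z_2)]$; no constancy of $c$ is needed (and the paper does not claim it). So the fix is to restrict the invariance claim to the twistings, justify it by the no-$dt$-term hypothesis (or the degree-one Mayer--Vietoris injectivity), and drop the detour through $[\Re\Omega(s)]$, $[\omega(s)]$ and Lemma \ref{ccontrol}.
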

\begin{proof}
Choose a curve $T(s)$ from $T$ to $T'$, always greater than $T_0$. As in Propositions \ref{weakwelldef} and \ref{su3structurebehaves}, write the curve of glued structures as $\Omega(s) + z(s) \wedge \omega(s)$ and use Proposition \ref{gluingcohombehaves} to get $[z(s)] = c(s)[\gamma_{T(s)}(z_1, z_2)]$.

The statement follows as in Proposition \ref{su3structurebehaves} if $[\gamma_{T(s)}(z_1, z_2)]$ is independent of $s$. Because the common limit of $z_1$ and $z_2$ has no $dt$ term, the natural diffeomorphism pulls back the gluing with a large $T$ to the smaller $T$ with only a compactly supported error (see \cite[Proposition 3.2]{nordstromgluing}), and so $[\gamma_{T(s)}(z_1, z_2)]$ is indeed independent of $s$. 
\end{proof}
\subsection{Moduli spaces of gluing data}
\label{ssec:gluingmodspacesetup}
By combining Theorem \ref{su3structuregluing} with Proposition \ref{weakwelldef}, we have thus shown that there is a single well-defined gluing map from matching pairs of Calabi-Yau structures to the moduli space $\M_{SU(3)}(M^T)$. We now define a moduli space of gluing data and show that this gluing map induces a well-defined map between these moduli spaces. For the definition, we follow the ideas and notation for the $G_2$ case in \cite{nordstromgluing}. Here, Nordstr\"om restricts to the special case in which the first Betti number of the glued manifold is zero for simplicity, though the result is true in general. In our case, $b^1(M^T \times S^1)$ is clearly nonzero, and though we could similarly argue for the special case when $b^1(M^T) = 0$, we will follow the full generality analysis provided by Nordstr\"om in \cite[subsection 6.3.2]{nordstromthesis} in the relevant place. 

In this subsection, we define a quotient which we expect to define a sensible space of gluing data, and show that this quotient is a manifold. The idea here, which is used in \cite{nordstromgluing}, is to use a sequence of larger and larger spaces, and show each in turn is a manifold. The smallest is the space $\B$ of ``matching moduli classes"; the second is the space $\hat \G$ of ``moduli classes of matching pairs", and finally we end up with the space $\tilde \G$ of ``moduli classes of matching pairs and gluing parameters" which we require. 

We first review the definitions and results in the $G_2$ case: these pass to the $S^1$-invariant $G_2$ case with very little additional work. We show that $\tilde \G_{G_2}^{S^1}$ is a principal $\R$-bundle over $\hat \G_{G_2}^{S^1}$, and that $\hat \G_{G_2}^{S^1}$ is a bundle over $\B_{G_2}^{S^1}$, which defines its coordinates (Proposition \ref{g2ghatcharts}). We provide some detail of the proof that $\hat \G_{G_2}^{S^1}$ is a bundle over $\B_{G_2}^{S^1}$, as this material is not available in \cite{nordstromgluing} and is not fully given even in \cite{nordstromthesis}. 

We then pass simultaneously to the analogous spaces for Calabi-Yau structures and the relationship between the $G_2$ and Calabi-Yau cases. We show that the analogous spaces for Calabi-Yau structures are smooth and the inclusion maps from the Calabi-Yau versions to the $G_2$ versions are smooth. For smoothness of the spaces, we use both similar methods to the $G_2$ case and what we already known about the relationship between $G_2$ and Calabi-Yau structures. The final theorem of this subsection (Theorem \ref{gluingdataspacesproduct}) is a gluing-data version of Theorem A (Theorem \ref{maintheorema}), saying that the space of ``$S^1$-invariant $G_2$ gluing data" is a product of the space of ``Calabi-Yau gluing data" with a suitable space of twistings in the sense of Definition \ref{defin:twistings}. 

We now begin by summarising the $G_2$ case, with the minor changes required to make the results $S^1$-invariant. We will make some of the definitions in greater generality, however, as otherwise we would have to make exactly parallel definitions in the Calabi-Yau case. 

To define a moduli space of gluing data, we need to define an action on matching structures by matching pairs of diffeomorphisms. The type of the structure is irrelevant here. Consequently, we shall write $\upsilon$ for the structures, which we shall use generally in this subsection when giving an argument that applies in both cases.  We recall from Definition \ref{acyldiffeo} that the limit of an asymptotically cylindrical diffeomorphism $\Phi$ is a pair $(\tilde\Phi, L)$ such that the diffeomorphism decays to $(n, t) \mapsto (\tilde\Phi(n), t+L)$. 
\begin{defin}[{cf. \cite[Definition 2.3]{nordstromgluing}}]
\label{comdefin:matchingdiffeo}
Suppose $M_1$, $M_2$, and $F$ are as in Definition \ref{comdefin:matchingmanifolds}. Suppose that $\Phi_1$ and $\Phi_2$ are asymptotically cylindrical diffeomorphisms of $M_1$ and $M_2$ with limits $(\tilde\Phi_i, L_i)$. They are said to match if $F^{-1}\tilde\Phi_2F = \tilde\Phi_1$ as a map $N_1 \to N_1$. Note that we do not require $L_1 = L_2$. 

A matching pair $(\Phi_1, \Phi_2)$ is isotopic to the identity as a matching pair if $\Phi_1$ and $\Phi_2$ are both asymptotically cylindrically isotopic to the identity as in Definition \ref{defin:acylisotopy} and we may choose the isotopies $\Phi_{1, s}$ and $\Phi_{2, s}$ such that the pair of diffeomorphisms $(\Phi_{1, s}, \Phi_{2, s})$ matches for all $s$. As in Definition \ref{defin:acylisotopy}, we shall simply speak of a pair of diffeomorphisms being isotopic to the identity. 

We define an action of matching pairs $(\Phi_1, \Phi_2)$ with limits $(\tilde \Phi_i, L_i)$ on triples $(\upsilon_1, \upsilon_2, T)$ where $\upsilon_1$ and $\upsilon_2$ are (asymptotically cylindrical Calabi-Yau or, potentially $S^1$-invariant, $G_2$) structures on $M_1$ and $M_2$ respectively and $T$ is a real number, which will eventually be the gluing parameter, by
\begin{equation}
\label{eq:matchingaction}
(\Phi_1, \Phi_2)(\upsilon_1, \upsilon_2, T) =  (\Phi_1^*\upsilon_1, \Phi_2^*\upsilon_2, T-\frac12(L_1+L_2))
\end{equation}
\end{defin}
\eqref{eq:matchingaction} is clearly an action on triples of structures. The first thing to show is that \eqref{eq:matchingaction} preserves the subspace of triples where the structures match as in Definition \ref{comdefin:matchingforms}, but this is obvious from the definition of matching diffeomorphisms. Therefore we make
\begin{defin}
\label{defin:g2gtilde}
Let 
\begin{equation}
\tilde \G_{G_2}^{S^1} = \frac{\text{matching pairs of torsion-free $S^1$-invariant $G_2$ structures and parameters $T$}}{\text{matching pairs of $S^1$-invariant diffeomorphisms isotopic to the identity}}
\end{equation}
$S^1$-invariant diffeomorphisms are as defined in Definition \ref{defin:s1invdiffeo}, for consistency of our moduli spaces. 
\end{defin}
The orbit of a matching pair of $G_2$ structures under matching pairs of diffeomorphisms isotopic to the identity is closed: by Theorem \ref{g2slicetheorem}, the orbit of a $G_2$ structure under diffeomorphisms isotopic to the identity is closed (and in fact the diffeomorphisms converge to a diffeomorphism giving the new point of the orbit), so we only have to check that a pair of diffeomorphisms being a matching pair isotopic to the identity is a closed condition, which is obvious. Hence, $\tilde G_{G_2}^{S^1}$ is Hausdorff. The same applies for Calabi-Yau structures, combining Theorem \ref{g2slicetheorem} with Theorem A (Theorem \ref{maintheorema}) and using the closedness of exact forms.

We consider two additional spaces of gluing data, both of which are smaller than $\tilde \G_{G_2}^{S^1}$, from which we can construct $\tilde \G_{G_2}^{S^1}$, and hence infer that it is a manifold. First of all, we define the smallest possible space of gluing data: the subspace of the product of the moduli spaces on each part corresponding to matching moduli classes.
\begin{defin}
\label{defin:g2bspace}
Let $\B_{G_2}^{S^1}$ be the space of matching pairs in the $S^1$-invariant $G_2$ moduli spaces on $M_1$ and $M_2$, that is:
\begin{equation}
([\phi_1], [\phi_2]) \in \M_{G_2}^{S^1}(M_1) \times \M_{G_2}^{S^1}(M_2)
\end{equation}
such that there exist representatives $\phi_1$ and $\phi_2$ matching in the sense of Definition \ref{comdefin:matchingforms}. 
\end{defin}
The final space is the space of pairs of matching classes quotiented by matching diffeomorphisms, defined as for $\tilde \G$ but forgetting the parameter $T$. Because we have to deal with two different kinds of structures the notation is already quite involved, so we give it a specific name. The action by matching pairs of diffeomorphisms is just that restricted from Definition \ref{comdefin:matchingdiffeo}.

\begin{defin}
\label{defin:g2ghat}
Let 
\begin{equation}
\hat \G_{G_2}^{S^1} = \frac{\text{matching pairs of torsion-free $S^1$-invariant $G_2$ structures}}{\text{matching pairs of $S^1$-invariant diffeomorphisms isotopic to the identity}}
\end{equation}
\end{defin}
The notation we are adopting is rather different from that of \cite{nordstromgluing}. In that, our $\B$ is called $\M_y$, following a general principle of using $y$ subscripts to denote matching objects; our space $\tilde \G$ is just denoted $(\mathcal{X}_y \times \R)/\D_y$, and our space $\hat \G$ is denoted $\mathcal{X}_y/\D_y$, or $\B$. It is possible that our using $\B$ for a different space may cause confusion; as in \cite{nordstromgluing}, the reason for this notation is that it is the base space of a suitable bundle. 

It is clear that $\tilde \G_{G_2}^{S^1}$ is a principal $\R$-bundle over $\hat \G_{G_2}^{S^1}$ Therefore, by taking the natural smooth structure on such a bundle, it is enough to show that $\hat \G_{G_2}^{S^1}$ is smooth.

The argument is essentially that $\hat \G_{G_2}^{S^1}$ is a manifold because it is a covering space of $\B_{G_2}^{S^1}$. First, therefore, we have to check that $\B_{G_2}^{S^1}$ is a manifold. We have
\begin{prop}[cf. {\cite[Proposition 4.3]{nordstromgluing}}]
\label{g2btheorem}
$\B_{G_2}^{S^1}$ is a smooth manifold. Moreover, around the classes of any matching pair of structures there exist charts for $\B_{G_2}^{S^1}$ consisting of matching pairs of structures.
\end{prop}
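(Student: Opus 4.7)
The plan is to realise $\B_{G_2}^{S^1}$ as a fibre product of the two moduli spaces $\M_{G_2}^{S^1}(M_i \times S^1)$ over a common ``cross-sectional'' moduli space of limits. The limit of an asymptotically cylindrical $S^1$-invariant torsion-free $G_2$ structure on $M_i \times S^1$ is translation-invariant and, by the mechanism of Theorem \ref{theoremc} applied on the cylinder $(N_i \times S^1) \times \R$, determined by an $S^1$-invariant Calabi-Yau structure on $N_i \times S^1$. This gives a limit map $\partial_i \colon \M_{G_2}^{S^1}(M_i \times S^1) \to \mathcal{L}_i$ into such a cross-sectional moduli space, and the matching diffeomorphism $F$ induces an identification $\mathcal{L}_1 \cong \mathcal{L}_2$ (flipping orientation on the $N_i$ factor). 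Then $\B_{G_2}^{S^1}$ is precisely the preimage of the diagonal under $\partial_1 \times \partial_2$, and will be a smooth manifold once $\partial_1$ and $\partial_2$ are seen to be smooth submersions.

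First I would establish smoothness of $\partial_i$. Combining Theorem \ref{mg2s1mfd} with Theorem \ref{g2slicetheorem}(b), $\M_{G_2}^{S^1}(M_i \times S^1)$ is locally diffeomorphic to a submanifold of $H^3(M_i \times S^1) \oplus H^2(N_i \times S^1)$ via the map $[\phi] \mapsto ([\phi], [\tilde\phi_2])$; in these coordinates $\partial_i$ is projection onto the $H^2(N_i \times S^1)$ factor followed by the identification of this cohomology with a class in the cross-sectional Calabi-Yau moduli space (smooth by the lower-dimensional analogue of Theorem \ref{maintheorema} applied to $N_i \times S^1$). Next I would verify submersivity: Theorem \ref{g2slicetheorem}(b) characterises the image of the (non-$S^1$-invariant) limit map as precisely the submanifold of cross-sectional classes admitting a compatible Calabi-Yau structure, so $\partial_i$ is locally surjective onto $\mathcal{L}_i$. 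Surjectivity of its derivative is built into the construction of the asymptotically cylindrical $G_2$ moduli space in \cite{nordstromacyldeformations}; to pass to the $S^1$-invariant setting, one applies Proposition \ref{generals1invardetails} to the nonlinear maps defining the slice and torsion-freeness conditions, which are $S^1$-equivariant because they are built out of Hodge stars, cutoffs in $t$, and wedge products with $S^1$-invariant forms. That $\B_{G_2}^{S^1}$ is a manifold then follows from the standard fact that the fibre product of two submersions is smooth.

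For the charts consisting of matching pairs, take slice neighbourhoods $U_i \ni \phi_i$ as in Theorem \ref{g2slicetheorem}(c)(ii); each nearby pair $([\phi_1'], [\phi_2']) \in \B_{G_2}^{S^1}$ has unique slice representatives in $U_1 \times U_2$, but these need not already match. However, their ($F$-identified) limits represent the same class in $\mathcal{L}_1$, and so differ by an $S^1$-invariant diffeomorphism of $N_1 \times S^1$ isotopic to the identity. Extending this diffeomorphism to an asymptotically cylindrical one on $M_2$ via Definition \ref{acyldiffeo} and pulling back $\phi_2'$ yields a matching representative that depends smoothly on the slice parameters, giving the required chart.

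The main obstacle is the submersion property of $\partial_i$ in the $S^1$-invariant setting: concretely, every infinitesimal Calabi-Yau deformation on $N_i \times S^1$ must lift to an infinitesimal $S^1$-invariant $G_2$ deformation on $M_i \times S^1$. The standard lift proceeds by cutoff-extension of the infinitesimal limit deformation followed by a projection onto the slice; $S^1$-invariance of the lift requires $S^1$-equivariance of each step, which is exactly the input already assembled in subsection \ref{ssec:s1invg2modspacesmooth} and invoked through Proposition \ref{generals1invardetails}.
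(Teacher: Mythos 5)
Your route is genuinely different from the paper's, which disposes of this proposition in a few lines: by Theorem \ref{mg2s1mfd}, $\M_{G_2}^{S^1}(M_i\times S^1)$ is locally diffeomorphic to an open subset of $\M_{G_2}(M_i\times S^1)$, so $\B_{G_2}^{S^1}$ is locally an open subset of the non-$S^1$-invariant space $\B_{G_2}$ already shown to be smooth in \cite[Proposition 4.3]{nordstromgluing}, and the matching-pair charts are read off from that proof. There is no need to redo the boundary-value analysis in the $S^1$-invariant setting.

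More importantly, your argument as written has a genuine gap at its central step. The limit map $\partial_i$ is \emph{not} a submersion onto the moduli space of ($S^1$-invariant) Calabi-Yau structures on $N_i\times S^1$: only a proper submanifold of cross-sectional classes arises as limits of asymptotically cylindrical torsion-free $G_2$ structures. This is exactly the content of \cite[Proposition 6.2]{nordstromacyldeformations}, discussed in the paragraphs preceding Theorem \ref{mg2s1mfd}, and it reflects the usual ``half lives, half dies'' phenomenon: the restriction map $H^3(M_i\times S^1)\to H^3(N_i\times S^1)$ is in general far from surjective, so the class $[\tilde\phi_1]$ of the limit is constrained. Theorem \ref{g2slicetheorem}(b) does not say what you attribute to it; it constrains which boundary data can occur, rather than asserting that every cross-sectional Calabi-Yau class occurs. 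If instead you take $\mathcal{L}_i$ to be the image of $\partial_i$, then $\partial_1(\M_{G_2}^{S^1}(M_1\times S^1))$ and $\partial_2(\M_{G_2}^{S^1}(M_2\times S^1))$ are in general \emph{different} submanifolds of the common cross-sectional moduli space, and the fibre product defining $\B_{G_2}^{S^1}$ is taken over that common space; smoothness then requires either transversality of the two images (equivalently, that the sum of the two boundary tangent images spans the cross-sectional tangent space) or a constant-rank/linearity argument for the matching condition in the cohomological charts. That is precisely the nontrivial content of Nordström's proof, and your proposal never addresses it, so the ``standard fact that the fibre product of two submersions is smooth'' is not available. (Your chart construction in the final step also asserts, rather than proves, smooth dependence of the correcting diffeomorphism on the slice parameters; in the paper this kind of statement needs Proposition \ref{g2sliceneatprop}. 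But that is secondary to the submersion/transversality gap.)
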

Strictly, of course, Nordstr\"om's argument is for the case of $\B_{G_2}$, defined as in Definition \ref{defin:g2bspace} but removing the constraints on $S^1$-invariance; but we have shown that locally $\M_{G_2}^{S^1}$ is an open subset of $\M_{G_2}$, and therefore locally $\B_{G_2}^{S^1}$ is an open subset of $\B_{G_2}$. As open subsets of a submanifold are submanifolds, the $S^1$-invariant result is immediate. The existence of such charts is not part of \cite[Proposition 4.3]{nordstromgluing} but is clear from its proof.

We now proceed to show $\hat\G_{G_2}^{S^1}$ is a manifold. We give details for this proof, as it is not found in full generality in \cite{nordstromgluing} but only in \cite{nordstromthesis}; even there, not all the details are given. The factor by which $\hat \G_{G_2}^{S^1}$ is bigger than $\B_{G_2}^{S^1}$ appears in \cite[p.140]{nordstromthesis} (except of course for not requiring $S^1$-invariance), though we use a somewhat different setup. As it will reappear in the Calabi-Yau case, in Proposition \ref{su3ghatcharts}, we define it separately. It could be defined in general, as it only depends on the Riemannian metric, but to prove its properties we need slice results analogous to those of Theorem \ref{g2slicetheorem} and Proposition \ref{g2sliceneatprop}. These results could be obtained in the Riemannian case from Ebin\cite{ebin}. 

We first need to weaken the notion of isotopic with fixed limit from Definition \ref{defin:acylisotopy}. We will be interested in isotopies $\Phi_s$ such that for some fixed diffeomorphism $\Psi$, $(\Psi, \Phi_s)$ is an isotopy of matching pairs in the sense of Definition \ref{comdefin:matchingdiffeo}, and thus we make
\begin{defin}
\label{defin:isotopywithfixeddiffn}
Suppose that $\Phi$ and $\Psi$ are asymptotically cylindrical diffeomorphisms of an asymptotically cylindrical manifold $M$, isotopic in the sense of Definition \ref{defin:acylisotopy}. An isotopy is a curve $\Phi_s$ of asymptotically cylindrical diffeomorphisms; taking limits, an isotopy gives us a curve $(\tilde \Phi_s, L_s)$ of diffeomorphisms of $N$ and real numbers. $\Phi$ and $\Psi$ are isotopic with fixed $\Diff(N)$ limit if there is an isotopy such that $\tilde \Phi_s$ is independent of $s$. 
\end{defin}
To have fixed $\Diff(N)$ limit is clearly weaker than having fixed limit in the sense of Definition \ref{defin:acylisotopy}, and is precisely what is needed to obtain an isotopy of matching pairs. 
\begin{defin}
\label{defin:spacea}
Suppose that $(M_1, g_1)$ and $(M_2, g_2)$ are matching asymptotically cylindrical Ricci-flat manifolds, with metrics induced from torsion-free $G_2$ or Calabi-Yau structures. Suppose that the cross-section is $N$ and $g_1$ and $g_2$ induce the metric $\tilde g$ on it, suppressing the diffeomorphism $F$ of Definitions \ref{comdefin:matchingmanifolds} and \ref{comdefin:matchingforms}.  Consider the set of diffeomorphisms
\begin{equation}
\label{eq:bigspacea}
\left\{\begin{gathered}\text{diffeomorphisms of $N \times [0, 3]$ of the form $(n, t) \mapsto (f_t(n), t)$ where} \\ \text{$f_t = \id$ for $t \in [0, 1]$, $f_t = f_2$ for $t \in [2, 3]$, and $f_2$ is an isometry of $\tilde g$}\end{gathered}\right\}
\end{equation}
Let $\tilde A(\tilde g)$ be the quotient of this set by isotopies preserving the diffeomorphism on $N \times ([0, 1] \cup [2, 3])$. 

It is clear that a class of $\tilde A(\tilde g)$ induces diffeomorphisms on $M_1$ and $M_2$, and that taking a different representative gives diffeomorphisms that are isotopic with fixed limit in the sense of Definition \ref{defin:acylisotopy}. Consider the subgroup of $\tilde A(\tilde g)$ consisting of classes whose induced diffeomorphisms on $M_1$ are isotopic with fixed $\Diff(N)$ limit to isometries of $M_1$, and the analogous subgroup for $M_2$. Consider the subgroup $G$ generated by these two subgroups, and let $A(g_1, g_2)$ be the quotient $\frac{\tilde A(\tilde g)}{G}$. 

By a minor abuse of notation, when the metric is induced from Calabi-Yau structures we shall write $\tilde A(\tilde \Omega, \tilde \omega)$ and $A(\Omega_1, \omega_1, \Omega_2, \omega_2)$.

By a slightly larger abuse of notation, when the metric is induced from a $S^1$-invariant torsion-free $G_2$ structures, we will write $\tilde A(\tilde \phi)$ and $A(\phi_1, \phi_2)$ to be the sets given by requiring all the diffeomorphisms above to be $S^1$-invariant in the sense of Definition \ref{defin:s1invdiffeo}.
\end{defin}
Elements of $\tilde A(\tilde g)$ give an ``action" on pairs of structures, thus:
\begin{defin}
\label{defin:theamap}
Suppose that $g_1$ and $g_2$ are Ricci-flat metrics on $M_1$ and $M_2$ induced by matching Calabi-Yau or torsion-free $G_2$ structures. Let $\tilde A(\tilde g)$ be as in Definition \ref{defin:spacea}. Suppose that $g'_1$ and $g'_2$ are two other Ricci-flat metrics induced by matching structures. Define an action of $\tilde A(\tilde g)$ on the set of such metrics by
\begin{equation}
\label{eq:theamap}
[\Phi](g_1, g_2) = (g_1, \Phi^* g_2)
\end{equation}
\end{defin}
Note that \eqref{eq:theamap} is not well-defined; however, it is well-defined up to pullback by a matching pair of diffeomorphisms isotopic to the identity, and in practice we will only be using \eqref{eq:theamap} as a map to spaces (such as $\hat\G_{G_2}^{S^1}$) where we have quotiented by pullback by matching pairs isotopic to the identity. 

We hope that $A(\phi_1, \phi_2)$ will define the other smooth component in a local statement ``$\hat \G_{G_2}^{S^1} = A(\phi_1, \phi_2) \times \B_{G_2}^{S^1}$". We thus need to know that $A(\phi_1, \phi_2)$ is smooth. 
\begin{prop}
\label{spaceanice}
With the notation of Definition \ref{defin:spacea}, $\tilde A(\tilde g)$ is a finite-dimensional abelian Lie group. Its tangent space at the class of the identity is the space of Killing fields on $N$. A diffeomorphism defining a class of $\tilde A(\tilde g)$ defines a class of $G$ precisely if its image under the map of Definition \ref{defin:theamap} is in the orbit of $(g_1, g_2)$ by matching pairs isotopic to the identity and  so $G$ is a closed subgroup, with tangent spaces the sums of the subspaces of Killing fields on $N$ that have extensions to Killing fields on $M_1$ and $M_2$. Hence $A(g_1, g_2)$ is also a finite-dimensional manifold, and the map of Definition \ref{defin:theamap} passes to a well-defined map of $A(g_1, g_2)$ on structures up to pullback by matching  pairs of diffeomorphisms isotopic to the identity. 

Locally around the class of the identity, $A(\phi_1, \phi_2)$ as defined is equal to $A(g_1, g_2)$ for the induced metrics. 
\end{prop}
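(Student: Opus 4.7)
The plan is to analyse $\tilde A(\tilde g)$ and then characterise $G$ inside it. A class in $\tilde A(\tilde g)$ is determined by its terminal isometry $f_2 \in \Isom(\tilde g)$ together with a homotopy class rel endpoints of a path in $\Diff_0(N)$ from $\id$ to $f_2$, giving a short exact sequence
\begin{equation*}
1 \to \pi_1(\Diff_0(N), \id) \to \tilde A(\tilde g) \to \Isom(\tilde g) \cap \Diff_0(N) \to 1
\end{equation*}
with discrete kernel. By the Myers--Steenrod theorem, $\Isom(\tilde g)$ is a finite-dimensional Lie group, so $\tilde A(\tilde g)$ is a finite-dimensional Lie group with tangent space at the identity given by the Killing fields on $N$. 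The abelian property follows from the Bochner argument used in the proof of Proposition \ref{g2su3prop2}: since $(N, \tilde g)$ is Ricci-flat as the limit of a Ricci-flat asymptotically cylindrical metric, Killing fields are parallel and hence commute, so the Lie algebra is abelian and the identity component of $\tilde A(\tilde g)$ is a torus cover.

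I would then establish the two inclusions characterising $G$. For the forward direction, if $[\Phi] \in G_i$, the induced diffeomorphism $\Phi_{M_i}$ is isotopic with fixed $\Diff(N)$ limit to an isometry $\sigma_i$ of $g_i$; the corresponding matching-pair isotopy $(\Phi_{M_1, s}, \Phi_{M_2, s})$ is admissible because the $\Diff(N)$ limit is preserved throughout, so in the quotient by matching pairs isotopic to the identity the image of $[\Phi]$ under Definition \ref{defin:theamap} is realised by the isometry $\sigma_i$ acting on $g_i$, which is trivial. Generators of $G$ are built from $G_1$ and $G_2$, and each acts trivially; multiplicativity extends this to all of $G$. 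Conversely, suppose $(g_1, \Phi^* g_2) = (\Psi_1^* g_1, \Psi_2^* g_2)$ for a matching pair $(\Psi_1, \Psi_2)$ isotopic to the identity. Then $\Psi_1 \in \Isom(g_1)$ while $\sigma_2 := \Psi_2 \Phi^{-1} \in \Isom(g_2)$, and $[\Phi]$ factors as the product of the class of $\Psi_1$ (in $G_1$) and the class of $\sigma_2^{-1}$ (in $G_2$), hence lies in $G$.

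From this characterisation, the tangent space statement follows by differentiation: an infinitesimal generator of $G_i$ is a Killing field on $N$ that extends to a Killing field on $M_i$, so the tangent space of $G$ is the sum of these two subspaces of Killing fields on $N$. Closedness of $G$ is immediate from closedness of the orbit of $(g_1, g_2)$ under the continuous action of the group of matching pairs isotopic to the identity. Consequently $A(g_1, g_2) = \tilde A(\tilde g)/G$ is a finite-dimensional smooth manifold, and the map of Definition \ref{defin:theamap} descends to $A(g_1, g_2)$ by construction.

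The local equality $A(\phi_1, \phi_2) = A(g_1, g_2)$ at the identity reduces to the observation that on a compact Ricci-flat manifold every Killing field commutes with the parallel Killing field $\ddth$ (again by Bochner), so every Killing field is automatically $S^1$-invariant. Hence restricting the diffeomorphisms in Definition \ref{defin:spacea} to be $S^1$-invariant does not alter the germ at the identity of either $\tilde A$ or $G$. The main obstacle will be the bidirectional description of $G$: matching up elements acting trivially on $(g_1, g_2)$ with explicit products of generators in $G_1 \cup G_2$ requires careful management of the limit data, and verifying that the infinitesimal generators of $G_i$ are precisely the Killing fields on $N$ extending to Killing fields on $M_i$ combines the slice theorem of Theorem \ref{g2slicetheorem} with the isometry/automorphism equivalence of Lemma \ref{sunauto}.
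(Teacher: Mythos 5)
Your overall strategy is the paper's: identify $\tilde A(\tilde g)$ with paths in $\Diff_0(N)$ from the identity to isometries modulo homotopy rel endpoints (so that compactness of $\Isom(\tilde g)$ gives a finite-dimensional Lie group whose tangent space at the identity is the Killing fields, abelian on the identity component because Killing fields on the Ricci-flat cross-section are parallel), characterise $G$ as precisely the classes whose image under the map of Definition \ref{defin:theamap} lies in the orbit, deduce closedness from closedness of that orbit, and obtain the last paragraph from $S^1$-invariance of Killing fields on Ricci-flat manifolds. However, two steps are asserted where the real work lies. In the converse direction of the characterisation, ``the class of $\Psi_1$ (in $G_1$)'' is not yet defined: $\Psi_1$ is an asymptotically cylindrical diffeomorphism of $M_1$, not an element of $\tilde A(\tilde g)$. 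The substantive content of this direction is to manufacture such an element: the isotopy of the matching pair $(\Psi_1,\Psi_2)$ to the identity has a common $\Diff(N)$-limit path, which defines a class $[\tilde\Psi]\in\tilde A(\tilde g)$, and one must then verify that the induced diffeomorphisms of $\tilde\Psi$ on $M_1$ and $M_2$ are isotopic with fixed $\Diff(N)$ limit to $\Psi_1$ and $\Psi_2$ (running one isotopy backwards against the other), so that $[\tilde\Psi]$ lies in the $M_1$-subgroup while $[\Phi][\tilde\Psi]^{-1}$ lies in the $M_2$-subgroup via the isometry $\Phi\Psi_2^{-1}$. Without this construction your factorisation does not parse. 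Relatedly, in the forward direction for $M_1$-type generators you need that the pair of extensions $(\Phi_{M_1},\Phi_{M_2})$ of a single class of $\tilde A(\tilde g)$ is itself isotopic to the identity as a matching pair, since only the $M_2$-extension appears in \eqref{eq:theamap}; your appeal to ``the isometry $\sigma_i$ acting on $g_i$'' glosses this.

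The second gap is the tangent space of $G$: it does not simply ``follow by differentiation'' of your description. That route requires showing each $G_i$ is a Lie subgroup whose tangent space is exactly the Killing fields on $N$ extending to Killing fields on $M_i$ (the inclusion ``tangent to $G_i$ $\Rightarrow$ extendable'' needs, for instance, a differentiable choice of the comparison isometries along a curve in $G_i$), and that the subgroup generated by $G_1$ and $G_2$ has tangent space the sum of theirs. The paper explicitly notes that extracting the tangent space from the description of $G$ is possible but more complicated, and instead differentiates the orbit condition directly: if $(0,\L_X\upsilon_2)=(\L_{Y_1}\upsilon_1,\L_{Y_2}\upsilon_2)$ for a matching pair of vector fields $(Y_1,Y_2)$, then $Y_1$ is Killing on $M_1$ and $X-Y_2$ is Killing on $M_2$, and matching of $Y_1$ with $Y_2$ gives the sum decomposition at once (with the converse by reversing the argument). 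As written, your proposal leaves both of these points open; the remaining parts (the structure of $\tilde A(\tilde g)$, closedness, and the local identification of $A(\phi_1,\phi_2)$ with $A(g_1,g_2)$) agree with the paper's treatment.
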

\begin{proof}
To show $\tilde A(\tilde g)$ is a finite-dimensional Lie group, we note that it is equivalent, by careful use of bump functions, to the space of curves in $\Diff_0(N)$ from the identity to isometries of $N$, modulo homotopy with fixed end points. The corresponding group in \cite[Definition 6.3.6]{nordstromthesis} is in fact defined as this space of curves modulo homotopy. As in \cite{nordstromthesis}, standard arguments show that its identity component (in the sense of isotopy through diffeomorphisms in the subset of \eqref{eq:bigspacea}) is the universal cover of the identity component of the isometry group of $N$. The identity component of the isometry group is a compact Lie group, with Lie algebra given by the Killing fields. Hence, the space of Killing fields is the universal cover, and by considering each component in turn we consequently have that the whole of $\tilde A(\tilde g)$ is a finite-dimensional Lie group, with tangent space at the class of the identity given by the Killing fields on $N$. 

We now have to show that a diffeomorphism defining a class of $\tilde A(\tilde g)$ defines a class of $G$ if and only if its image under the map of Definition \ref{defin:theamap} is in the orbit of the matching pair of structures inducing $(g_1, g_2)$ by matching pairs of diffeomorphisms isotopic to the identity. Since this orbit is closed by the remark after Definition \ref{defin:g2gtilde}, and we can find local continuous maps giving representatives from $\tilde A(\tilde g)$ to diffeomorphisms since $\tilde A(\tilde g)$ is finite-dimensional, we may then deduce that $G$ is a closed subgroup and the rest of the first paragraph follows. We note from Lemma \ref{sunauto} and and the remark after Definition \ref{defin:precedesg2auto} that, within $\Diff_0$, an isometry of a metric induced by a Calabi-Yau or torsion-free $G_2$ structure is an automorphism of that structure. We shall, as in Definition \ref{comdefin:matchingdiffeo}, write $(\upsilon_1, \upsilon_2)$ for the matching pair of structures, which is either $(\phi_1, \phi_2)$ or $(\Omega_1, \omega_1, \Omega_2, \omega_2)$. 

Note that if we take a different diffeomorphism representing the same class of $\tilde A(\tilde g)$, the two diffeomorphisms are isotopic, by an isotopy preserving limits. Since such an isotopy changes the result by a matching pair of diffeomorphisms isotopic to the the identity, using this different representative does not affect whether the image by the map of Definition \ref{defin:theamap} lies in the orbit, so the argument will be independent of the representative we choose. 

Firstly, if the diffeomorphism $\Phi$ representing a class of $\tilde A(\tilde g)$ is isotopic, preserving $\Diff(N)$ limit, to an automorphism $\Psi$ of $\upsilon_2$, then we have that $\Psi^{-1}\Phi$ is isotopic, preserving $\Diff(N)$ limit, to the identity. Consequently the matching pair $(id, \Psi^{-1}\Phi)$ is isotopic to the identity, and hence $(\upsilon_1, \Phi^* \upsilon_2) = (\upsilon_1, \Phi^* (\Psi^{-1})^* \upsilon_2)$ lies in the orbit by matching pairs isotopic to the identity. Consequently, representatives of the part of $G$ corresponding to $M_2$ map into the orbit under the map of Definition \ref{defin:theamap}. 

Secondly, if the diffeomorphism $\Phi$ representing a class of $\tilde A(\tilde g)$ is isotopic, preserving $\Diff(N)$ limits, to an automorphism $\Psi$ of $\upsilon_1$, then since $\Phi$ is isotopic to the identity the matching pair $(\Phi, \Phi)$ is isotopic to the identity, and so $(\Psi, \Phi)$ is isotopic to the identity. Consequently, $(\upsilon_1, \Phi^* \upsilon_2) = (\Psi^* \upsilon_1, \Phi^* \upsilon_2)$ lies in the orbit. 

We have now shown that every diffeomorphism representing a class of $G$ maps into the orbit under the map of Definition \ref{defin:theamap}. It remains to show that if $\Phi$ represents a class of $\tilde A(\tilde g)$ and $(\upsilon_1, \Phi^* \upsilon_2)$ is in the orbit, then $\Phi$ represents a class of $G$. We suppose that $(\upsilon_1, \Phi^* \upsilon_2) = (\Psi_1^* \upsilon_1, \Psi_2^* \upsilon_2)$ for some matching pair $(\Psi_1, \Psi_2)$ isotopic to the identity. In particular, the $\Diff(N)$ part of their common limit is isotopic to the identity. This isotopy is a curve as at the start of the current proof, so defines a diffeomorphism $\tilde \Psi$ giving a class of $\tilde A(\tilde g)$. We will show that $\tilde \Psi$ is isotopic with fixed $\Diff(N)$ limit to both $\Psi_1$ and $\Psi_2$. That is, $\tilde \Psi$ is isotopic with fixed $\Diff(N)$ limit to the automorphism $\Psi_1$ of $\upsilon_1$ and also $\Phi\tilde \Psi^{-1}$ is isotopic with fixed $\Diff(N)$ limit to the automorphism $\Phi \Psi_2^{-1}$ of $\upsilon_2$. That is, the classes defined by $\tilde \Psi$ and $\Phi \tilde\Psi^{-1}$ are in $G$; it follows that the class defined by $\Phi$ lies in $G$. 

We note that the extensions of $\tilde \Psi$ to $M_1$ and $M_2$ are also isotopic to the identity and that moreover these isotopies can be chosen to match the original isotopies of $\Psi_1$ and $\Psi_2$ (which, since they match each other, have the same isotopy at the limit). Inverting one of these isotopies, we find an isotopy with fixed $\Diff(N)$ limit between $\id$ and $\tilde \Psi^{-1} \Psi_i$ for either $i$; composing with $\tilde \Psi$ gives the result.

For the tangent space, we use an infinitesimal version of the previous argument: it would be possible, but more complicated, to extract tangent spaces from our description of $G$. Suppose a Killing field $X$ maps, under the derivative of the map of Definition \ref{defin:theamap}, into the tangent space of the orbit extending $X$ to $M_2$. That is, we have $(\upsilon_1, \L_X \upsilon_2) = (\L_{Y_1} \upsilon_1, \L_{Y_2} \upsilon_2)$ for some matching pair of vector fields $Y_1$ and $Y_2$. It follows that $X-Y_2$ is a Killing field for $g_2$ and that $Y_1$ is a Killing field for $g_1$; hence, since $Y_1$ and $Y_2$ match, $X$ is the sum of Killing fields extending to Killing fields for $g_1$ and $g_2$. The converse follows by reversing the argument. 

To prove the claim for $A(\phi_1, \phi_2)$ we just observe that locally around the class of the identity these manifolds are given by their tangent spaces, and all Killing fields on $S^1$-invariant Ricci-flat manifolds are $S^1$-invariant.
\end{proof}
\begin{rmk}
The dimension of $A(g_1, g_2)$ may be established using the Mayer-Vietoris theorem, see for instance \cite[Proof of Proposition 4.2]{nordstromgluing}, which shows that it is zero under the condition $b^1(M^T) = 0$ (of course, in our case we are working with $b^1(M^T \times S^1) >0$). 
\end{rmk}
Before working further with these ideas, we note that we will need to extract elements of $A(g_1, g_2)$ from pairs of matching structures. Consequently, we need to take slightly more care with the slice arguments proving Theorem \ref{g2slicetheorem}, to make sure we can determine diffeomorphisms from structures. Specifically, we require the following result, claimed without proof by Nordstr\"om \cite[second sentence of p.141]{nordstromthesis}. 
\begin{prop}
\label{g2sliceneatprop}
Suppose one of the following holds. 
\begin{enumerate}[i)]
\item Let $N$ be a compact five-dimensional manifold. Let $U$ be a sufficiently small neighbourhood in a subspace around the translation-invariant $S^1$-invariant $G_2$ structure $\phi_0$ on $N \times S^1 \times \R$ such that $U$ is transverse to the orbit of the identity component $\Diff_0^{S^1}(N \times S^1)$ defined in Definition \ref{defin:s1invdiffeo}. 

\item Let $M$ be a six-dimensional manifold with an end. Let $U$ be a sufficiently small neighbourhood in a subspace around the asymptotically cylindrical $S^1$-invariant $G_2$ structure $\phi_0$ on $M \times S^1$, consisting of structures whose limits are torsion-free $S^1$-invariant $G_2$ structures with the same automorphism groups as the limit of $\phi_0$, and such that $U$ is transverse to the orbit of the identity component of diffeomorphisms in $\Diff_0^{S^1}(M \times S^1)$ that have limits automorphisms of the limit structure $\tilde \phi_0$. 
\end{enumerate}
Then, in both cases, on the image $\left\{\frac{\Diff^{S^1}_0}{\Aut(\phi_0) \cap \Diff_0^{S^1}}\right\}^* U$ of the pullback map, the map from a smooth structure to the class of diffeomorphisms required is continuous and smooth. 
\end{prop}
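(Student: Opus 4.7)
My plan is to establish the proposition as a direct consequence of the implicit function theorem applied to the action map, following the general scheme used to prove the slice theorem (Theorem \ref{g2slicetheorem}(c)) but paying more attention to the smoothness of the slice projection.

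Concretely, consider the map
\begin{equation*}
F: \frac{\Diff_0^{S^1}}{\Aut(\phi_0) \cap \Diff_0^{S^1}} \times U \to \{\text{structures}\}, \qquad ([\Phi], \phi) \mapsto \Phi^* \phi,
\end{equation*}
suitably formulated between Banach manifolds as in the proofs of Theorem \ref{g2slicetheorem}(a),(b) (in the asymptotically cylindrical case (ii), working as in Nordstr\"om \cite{nordstromacyldeformations} with diffeomorphisms whose limits are automorphisms of $\tilde \phi_0$, so that all tangent vector fields are asymptotically translation invariant with Killing limit). The quotient by $\Aut(\phi_0) \cap \Diff_0^{S^1}$ is a manifold because this automorphism group is a finite-dimensional Lie group (compact in the compact case; finite-dimensional in general because the infinitesimal automorphisms are the Killing fields with appropriate limits, which form a finite-dimensional space). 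At the class of the identity the tangent space to this quotient is the space of vector fields modulo Killing fields of $\phi_0$, and at $\phi_0$ the tangent space of $U$ is a chosen complement to the Lie derivatives.

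The derivative of $F$ at $([\id], \phi_0)$ is $([X], \dot\phi) \mapsto \L_X \phi_0 + \dot\phi$. Injectivity is then just the definition of modding by automorphisms: $\L_X \phi_0 = 0$ means $X$ is Killing (using Lemma \ref{sunauto} and its $G_2$ analogue to identify Killing fields and infinitesimal automorphisms of the structure within $\Diff_0^{S^1}$), so $[X]$ vanishes. Surjectivity is precisely the transversality hypothesis on $U$. Hence $DF$ is a linear isomorphism, and the implicit function theorem makes $F$ a local diffeomorphism of Banach manifolds. The inverse $F^{-1}$ therefore exists and is smooth on a neighbourhood, and projecting to the first factor yields a smooth map from the structure to its class of diffeomorphisms, as required. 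Continuity is then automatic, and the fact that the result extends from a Banach completion to smooth structures (and from a fixed rate $\delta$ to arbitrary asymptotic cylindricality) follows exactly as at the end of subsection \ref{ssec:s1invg2modspacesmooth}, since the construction is compatible with bootstrapping regularity and shrinking the rate.

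The only real subtlety, which is the point I expect to be most delicate, is ensuring in case (ii) that the action map $F$ really is well-defined and smooth as a map of Banach manifolds: the class $[\Phi]$ must include enough diffeomorphisms that $\Phi^* \phi$ stays in the asymptotically cylindrical Banach space of structures with suitable limits (requiring in particular that the limit of $\Phi$ is an automorphism of the limit structure $\tilde \phi_0$, which is why we need the hypothesis that structures in $U$ have limits with the same automorphism group as $\tilde \phi_0$). Once that set-up is in place, the computation of $DF$ and the application of the implicit function theorem proceed exactly as above, giving the conclusion.
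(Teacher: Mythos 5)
Your approach is the paper's: apply the inverse function theorem to the pullback map $(\Phi,\phi)\mapsto\Phi^*\phi$, using transversality of $U$ to the orbit plus the removal of automorphisms to see the derivative $([X],\dot\phi)\mapsto\L_X\phi_0+\dot\phi$ is an isomorphism, then project the local inverse to the diffeomorphism factor. (The paper additionally reduces the $S^1$-invariant quotient to the plain $\Diff_0/\Aut(\phi_0)$ case via Propositions \ref{diff0s1submfd} and \ref{s1invdiffeoweak}, and makes the Banach set-up explicit by taking $C^{2,\alpha}$ diffeomorphisms acting on $C^{1,\alpha}$ forms so that Baier's theorem gives smoothness of the pullback; your ``suitably formulated between Banach manifolds'' is hiding exactly this one-derivative loss, but that is the same device, not a different route.)

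There is, however, one genuine gap: your inverse function theorem argument is applied only at $([\id],\phi_0)$, so it yields smoothness of the map ``structure $\mapsto$ diffeomorphism class'' only on the pullbacks of $U$ by diffeomorphism classes near the identity. The statement asserts this on the whole image $\left\{\frac{\Diff^{S^1}_0}{\Aut(\phi_0)\cap\Diff_0^{S^1}}\right\}^*U$, which includes pullbacks by diffeomorphisms far from the identity, and this is the form in which the proposition is used later (Propositions \ref{g2ghatcharts} and \ref{g2ghatmanifold}). The missing step is a globalisation in the diffeomorphism factor: given an arbitrary $\Phi$, the image of $D\Phi\times U$ (with $D$ the small neighbourhood of the identity produced by the inverse function theorem) is just $\Phi^*$ of the image of $D\times U$, so the class required for $\phi$ in that region is the class required for $\Phi^{-1,*}\phi$ composed with $\Phi$; since pullback by and composition with the fixed smooth $\Phi$ are smooth, smoothness propagates to the whole image. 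Without this (or, equivalently, without verifying that the derivative remains an isomorphism at every point $([\Phi],\phi)$ by equivariance), your proof establishes a strictly weaker, purely local statement.
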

\begin{proof}
It suffices to prove that the analogous map to $\frac{\Diff_0}{\Aut(\phi_0)}$ is smooth and continuous, as $\frac{\Diff^{S^1}_0}{\Aut(\phi_0) \cap \Diff_0^{S^1}}$ is a submanifold of it, by Proposition \ref{diff0s1submfd} and using Proposition \ref{s1invdiffeoweak} to see that the quotient remains locally the same. 

We prove (i); (ii) is entirely analogous. Both are straightforward applications of the inverse function theorem. Note that the forms in $U$ are not constrained to be torsion-free $G_2$ except in their limits, in case (ii). 

Consider the pullback map from $C^{2, \alpha}$ diffeomorphisms and $C^{1, \alpha}$ $3$-forms in $U$ to $C^{1, \alpha}$ $3$-forms. By combining Baier's result on the smoothness of pullbacks in the diffeomorphism \cite[Theorem 2.2.15]{baier} with linearity in the form, the pullback map is smooth. The derivative is an isomorphism, since $U$ is transverse to the derivative orbits and we have removed the automorphisms, and so by the inverse function theorem there is a small neighbourhood of $\phi_0$ in $U$, and a small neighbourhood of the identity, which we call $D$, on which the inverse is continuous and smooth. When we restrict to smooth diffeomorphisms, the pullback map must remain continuous and smooth (as a smooth map to a submanifold). 

We now just have to globalise in diffeomorphisms. Given some diffeomorphism $\Phi$, consider the subset $D \Phi$ and the slice neighbourhood $U$. The image of $D \Phi \times U$ under the pullback map is just the pullback by $\Phi$ of the image of $D \times U$. Consequently, the map from a point of the image $\phi$ to the diffeomorphism class is given by composing the inverse of $\Phi$ with the diffeomorphism class required for $\Phi^{-1, *} \phi$, which depends smoothly on $\phi$ by the previous paragraph. Since composition and pullback by fixed smooth maps are smooth, it follows that the composition depends smoothly on $\phi$. 
\end{proof}
\begin{rmk}
The slice $U$ exists by the proof of Theorem \ref{g2slicetheorem}. The required transition to the $S^1$-invariant setting is carried out on beginning on page \pageref{s1invarg2slice}. The slice required for case (i) occurs in the paragraph immediately before \eqref{eq:cycrosssectionF} and the slice required for case (ii) is the penultimate paragraph before Theorem \ref{mg2s1mfd}. 
\end{rmk}

We may now use $A(\phi_1, \phi_2)$ to find charts for $\hat\G_{G_2}^{S^1}$. 
\begin{prop}[cf. {\cite[p.141]{nordstromthesis}}]
\label{g2ghatcharts}
Suppose that $[\phi_1, \phi_2] \in \hat \G_{G_2}^{S^1}$. We have a chart $U = U_1 \times U_2$ around $([\phi_1], [\phi_2]) \in \B_{G_2}^{S^1} \subset \M_{G_2}^{S^1} \times \M_{G_2}^{S^1}$ consisting of matching pairs of structures, such that all these pairs and their limits have the same identity components of their isometry groups, and that for each $\phi'_i \in U_i$, for $\Phi^* \phi'_i$ sufficiently close to $\phi_i$ there is a continuous map $\Phi^* \phi'_i \mapsto [\Phi] \in \frac{\Diff_0^{S^1}}{\Aut \cap \Diff_0^{S^1}}$. 

Then an open subset of $A(\phi_1, \phi_2) \times U$ is homeomorphic to an open neighbourhood of $[\phi_1, \phi_2]$ in $\hat \G_{G_2}^{S^1}$, by the map from $A(\phi_1, \phi_2) \times U$ to $\hat \G_{G_2}^{S^1}$
\begin{equation}
\label{eq:ghatchartsmap}
([\Phi], (\phi'_1, \phi'_2)) \mapsto [\phi'_1, \Phi^* \phi'_2]
\end{equation}
where we take the extension of $\Phi$ to a diffeomorphism of $M_2$. 
\end{prop}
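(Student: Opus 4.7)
The plan is to construct both directions of the homeomorphism explicitly, using the slice neighbourhoods $U_1$ and $U_2$ given by Proposition \ref{g2sliceneatprop} together with the identification of $A(\phi_1,\phi_2)$ provided by Proposition \ref{spaceanice}. First I would verify that the map \eqref{eq:ghatchartsmap} is well-defined. Two different extensions of a given representative $\Phi$ of $[\Phi] \in A(\phi_1,\phi_2)$ to a diffeomorphism of $M_2$ differ by a compactly supported diffeomorphism of $M_2$; paired with the identity on $M_1$ this is a matching pair isotopic to the identity, so the class in $\hat\G_{G_2}^{S^1}$ is unchanged. Two different representatives of the class $[\Phi]$ differ by an element of $G$; by Proposition \ref{spaceanice}, an element of $G$ applied to the pair $(\phi'_1,\Phi^*\phi'_2)$ lies in the orbit of $(\phi'_1,\Phi^*\phi'_2)$ by matching pairs of diffeomorphisms isotopic to the identity (using that on $U$ the identity components of the isometry groups are the isometry groups of $\phi_1$ and $\phi_2$, and that isometries of Ricci-flat $G_2$ structures isotopic to the identity are automorphisms by Lemma \ref{sunauto} and the remark after Definition \ref{defin:precedesg2auto}). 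Continuity follows from continuity of pullback.

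The main step is the construction of a local inverse. Given a matching representative $(\phi'_1,\phi'_2)$ sufficiently close to $(\phi_1,\phi_2)$, I would proceed as follows. Using the $U_1$ slice and Proposition \ref{g2sliceneatprop}, obtain $\Psi_1 \in \Diff_0^{S^1}(M_1)$, depending smoothly on $\phi'_1$, with $\Psi_1^*\phi'_1 \in U_1$. From the limit $(\tilde\Psi_1,L_1)$ construct, via a bump-function lift of the defining isotopy, an asymptotically cylindrical $\Psi_2 \in \Diff_0^{S^1}(M_2)$ with limit $(F\tilde\Psi_1 F^{-1},L_1)$, so that $(\Psi_1,\Psi_2)$ is a matching pair isotopic to the identity. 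Then $\Psi_2^*\phi'_2$ is close to $\phi_2$; another application of the $U_2$ slice produces $\Theta_2 \in \Diff_0^{S^1}(M_2)$ with $\Theta_2^*\Psi_2^*\phi'_2 \in U_2$. The asymptotic behaviour of $\Theta_2$ defines (after cutting off to the neck $N \times [0,3]$) an element of $\tilde A(\tilde g)$, whose class I denote $[\Theta_2] \in A(\phi_1,\phi_2)$. The inverse then sends $[\phi'_1,\phi'_2]$ to $\bigl([\Theta_2],(\Psi_1^*\phi'_1,\,\Theta_2^*\Psi_2^*\phi'_2)\bigr)$; applying \eqref{eq:ghatchartsmap} recovers $[\Psi_1^*\phi'_1,\Psi_2^*\phi'_2]=[\phi'_1,\phi'_2]$, as required.

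The main obstacle is showing that the class $[\Theta_2]\in A(\phi_1,\phi_2)$ so produced is genuinely independent of the choices made, and depends continuously on the input. The slice results of Proposition \ref{g2sliceneatprop} give $\Psi_1$ uniquely up to composition with automorphisms of $\phi''_1 := \Psi_1^*\phi'_1$ lying in $\Diff_0^{S^1}(M_1)$; any two valid choices of $\Psi_2$ matching the same $\Psi_1$ differ by an automorphism of $\phi''_2$, and likewise $\Theta_2$ is uniquely determined modulo automorphisms of the resulting structure in $U_2$. Each of these ambiguities changes the diffeomorphism of $M_2$ at the limit by either an isometry of $\tilde g$ that extends to an isometry of $g_2$ (from adjusting $\Theta_2$), or one that extends to an isometry of $g_1$ (transmitted through the matching condition from adjusting $\Psi_1$); by Proposition \ref{spaceanice} these are exactly the generators of $G$, so $[\Theta_2]$ is unambiguously defined in the quotient $A(\phi_1,\phi_2)$. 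Continuity in $(\phi'_1,\phi'_2)$ follows from the continuity statement of Proposition \ref{g2sliceneatprop} applied to each slice in turn. The two maps are mutually inverse by construction, establishing the claimed homeomorphism onto an open neighbourhood of $[\phi_1,\phi_2]$ in $\hat\G_{G_2}^{S^1}$.
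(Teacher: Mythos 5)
Your overall strategy is the paper's: slice neighbourhoods plus Proposition \ref{g2sliceneatprop} for the structure-to-diffeomorphism maps, Proposition \ref{spaceanice} for the description of $A(\phi_1,\phi_2)$ and its stabiliser property, and an inverse built by transporting $\Diff(N)$-limits across the matching. The well-definedness argument for \eqref{eq:ghatchartsmap} is essentially the paper's. But there is a genuine gap: you never prove that \eqref{eq:ghatchartsmap} is injective. The paper establishes this as a separate step (equality in $\hat\G_{G_2}^{S^1}$ forces equality in $\B_{G_2}^{S^1}$, the slice property of $U$ then forces the two pairs of representatives to coincide, and the ``precisely if'' direction of Proposition \ref{spaceanice} forces equality of the $A(\phi_1,\phi_2)$-classes). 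Your ``independence of choices'' paragraph addresses only the internal ambiguities of the inverse construction, not injectivity of the forward map, and not independence of the inverse of the chosen matching representative $(\phi'_1,\phi'_2)$ of the $\hat\G_{G_2}^{S^1}$-class; in the paper the latter is absorbed precisely because injectivity is proved first. A continuous map with a continuous section is not yet a homeomorphism onto an open set, so this step cannot be waved through as ``mutually inverse by construction''.

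Moreover, the one composition you do check is wrong as written: applying \eqref{eq:ghatchartsmap} to $([\Theta_2],(\Psi_1^*\phi'_1,\Theta_2^*\Psi_2^*\phi'_2))$ pulls the second slice structure back by (the extension of) $\Theta_2$ \emph{again}, giving $[\Psi_1^*\phi'_1,\,\Theta_2^{*}\Theta_2^{*}\Psi_2^*\phi'_2]$ rather than $[\Psi_1^*\phi'_1,\Psi_2^*\phi'_2]$; the $A$-component of your inverse should be the class determined by $\Theta_2^{-1}$. Even after this correction one must verify that replacing $\Theta_2^{-1}$ by the extension of its neck class, and discarding $\Psi_1,\Psi_2$, only changes the pair by a matching pair isotopic to the identity -- this is exactly the bookkeeping the paper performs with the pairs $(\Phi_1^{-1},\Psi_1^{-1})$ and $(\id,\Phi_2^{-1}\Psi_2)$, where the $A$-class is defined directly as $\Psi_2\Psi_1^{-1}$ from the limit isotopies of the two slice diffeomorphisms. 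Two further points left implicit in your construction and needing justification: that the limit of $\Theta_2$ is an isometry of $\tilde g$ (so that it defines an element of $\tilde A(\tilde g)$ at all), and that $(\Psi_1^*\phi'_1,\Theta_2^*\Psi_2^*\phi'_2)$ is a matching pair representing a point of the chart $U$; the paper avoids both issues by taking the matching slice representatives supplied by the chart of $\B_{G_2}^{S^1}$ from Proposition \ref{g2btheorem} and only then producing the diffeomorphisms.
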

\begin{proof}
The set $U$ exists by Proposition \ref{g2btheorem} and the properties required are just properties on $M_1$ and $M_2$ so hold by the slice theorem Theorem \ref{g2slicetheorem}. The existence of the required continuous map is given by Proposition \ref{g2sliceneatprop}. 

We first show that \eqref{eq:ghatchartsmap} gives a well-defined element of $\hat \G_{G_2}^{S^1}$.  Since $(\phi'_1, \phi'_2)$ are the representatives of the point of $U$ in the chart, they match, and have the same identity components of their automorphism groups as $(\phi_1, \phi_2)$, as do their limits. It follows immediately from Proposition \ref{spaceanice} that \eqref{eq:ghatchartsmap} is a well-defined map, since we have quotiented by the stabiliser.  

Now we show injectivity. If we have $[\phi'_1, \Phi^* \phi'_2] = [\phi''_1, \Psi^* \phi''_2]$ in $\hat\G$, in particular these define the same class in $\B$. Thus so do $(\phi'_1, \phi'_2)$ and $(\phi''_1, \phi''_2)$, and by hypothesis both of these pairs lie in $U$. Since $U$ is a slice neighbourhood, it follows that $\phi'_1=\phi''_1$ and $\phi'_2 = \phi''_2$.  It remains to show that if $[\phi'_1, \Phi^* \phi'_2] = [\phi'_1, \Psi^* \phi'_2]$, then $[\Phi] = [\Psi]$ in $A(\phi_1, \phi_2)$. Again, as in Proposition \ref{spaceanice} we have shown that we have quotiented by the stabiliser, we indeed have $[\Phi] = [\Psi]$. 

It is clear that \eqref{eq:ghatchartsmap} is continuous, so it only remains to show that it maps to an open subset and its inverse there is continuous. We shall construct the open set and the inverse on it simultaneously, taking a sequence of smaller open sets as required. First of all, the projection $\hat \G_{G_2}^{S^1} \to \B_{G_2}^{S^1}$ is continuous, and so the preimage of $U$ is open. This preimage is our first open set $V_1$. We also have a natural map from an open subset of $\hat \G_{G_2}^{S^1}$ contained in $V_1$ to $A(\phi_1, \phi_2)$, as follows. Suppose given $[\phi''_1, \phi''_2] \in V_1$, which projects to $([\phi''_1], [\phi''_2]) \in U$. By definition, there then exist slice structures $\phi'_1$ and $\phi'_2$ and asymptotically cylindrical diffeomorphisms $\Phi_1$ and $\Phi_2$ such that $\phi''_i = \Phi_i^* \phi_i$. By construction, $\phi'_1$ and $\phi'_2$ match, but $\Phi_1$ and $\Phi_2$ need not; note that $\Phi_1$ and $\Phi_2$ are only defined up to isometries, but changing them by an isometry will have no effect on the final class of $A(\phi_1, \phi_2)$. Since $\Phi_1$ is asymptotically cylindrically asymptotic to the identity, its limit is isotopic to the identity, and hence the $\Diff(N)$ part is. The isotopy from the identity to the $\Diff(N)$ part of its limit defines, as in the proof of Proposition \ref{spaceanice}, a diffeomorphism $\Psi_1$ representing a class of $\tilde A(\tilde \phi)$, such that $(\Phi_1, \Psi_1)$ is isotopic to the identity as a matching pair. On the other hand, $\Phi_2$ is also asymptotically cylindrically asymptotic to the identity, so we have a diffeomorphism $\Psi_2$ such that $(\Psi_2, \Phi_2)$ is isotopic to the identity as a matching pair. Let $\Phi' = \Psi_2 \Psi_1^{-1}$; the diffeomorphism $\Phi'$ defines a class of $A(\phi_1, \phi_2)$. 

On a suitably small open set $V_2$, $\Phi_1$ and $\Phi_2$ depend continuously on $\phi''_1$ and $\phi''_2$, by the hypothesis. Consequently, since the isotopy can clearly be chosen continuously in the diffeomorphism, so do $\Psi_1$ and $\Psi_2$. Since inversion is continuous, the diffeomorphism $\Phi' = \Psi_2 \Psi_1^{-1}$ also depends continuously on $\phi''_1$ and $\phi''_2$, and so in an even smaller open subset $V_3$ $\Phi'$ defines a class of $A(\phi_1, \phi_2)$ depending continuously on $\phi''_1$ and $\phi''_2$. 

We have now constructed an open subset $V_3$ of $\hat \G_{G_2}^{S^1}$ and a map to $U \times A(\phi_1, \phi_2)$ which we hope to be the inverse. It is clearly continuous, by construction. We have to check that it is an inverse, that is that $[\phi'_1,  (\Psi_2 \Psi_1^{-1})^* \phi'_2] = [\phi''_1, \phi''_2]$. We know that the pairs $(\Phi^{-1}_1, \Psi^{-1}_1)$ and $(\id, \Phi^{-1}_2 \Psi_2)$ are both isotopic to the identity as matching pairs. Consequently, we have
\begin{equation}
[\phi''_1, \phi''_2] = [\Phi_1^* \phi'_1, \Phi_2^* \phi'_2] = [\Phi_1^* \phi'_1, \Psi_2^* \phi'_2] = [\phi'_1, (\Psi_2\Psi_1^{-1})^* \phi'_2] \qedhere
\end{equation}
\end{proof}
We now show that the charts obtained in Proposition \ref{g2ghatcharts} form an atlas, and so that $\hat\G_{G_2}^{S^1}$ is a manifold. 
\begin{prop}
\label{g2ghatmanifold}
Suppose given two open subsets of $\hat\G_{G_2}^{S^1}$ as in Proposition \ref{g2ghatcharts}, so homeomorphic by the map in the proposition to the product of an open subset of $U \times A(\phi_1, \phi_2)$ and $U' \times A(\chi_1, \chi_2)$ respectively. Suppose that these subsets intersect; on the intersection, the transition map
\begin{equation}([\phi'_1], [\phi'_2], [\Phi]) \mapsto [\phi'_1, \Phi^* \phi'_2] = [\chi'_1, \Phi'^* \chi'_2] \mapsto ([\chi'_1], [\chi'_2], [\Phi'])\end{equation}
is smooth, where $\chi'_1$ and $\chi'_2$ are the representatives of the classes $[\phi'_1]$ and $[\phi'_2]$ in the other chart, and $[\Phi']$ is the relevant class of $A(\chi_1, \chi_2)$. 
\end{prop}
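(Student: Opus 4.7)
My approach is to split the transition map using the projection to $\B_{G_2}^{S^1}$. In both charts of Proposition \ref{g2ghatcharts} this projection sends $([\phi'_1],[\phi'_2],[\Phi])$ to $([\phi'_1],[\phi'_2])$, so the transition decomposes as a base component on $U\cap U'\subset \B_{G_2}^{S^1}$ together with a fibre component $A(\phi_1,\phi_2)\to A(\chi_1,\chi_2)$ depending on the base point. The base component is automatically smooth, being a transition map of the smooth manifold $\B_{G_2}^{S^1}$ (Proposition \ref{g2btheorem}). It remains to show smoothness of the fibre component jointly in all variables.

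To analyse the fibre component I first pick smooth local sections into the two slices, which is possible by Theorem \ref{g2slicetheorem}(c) together with Theorem \ref{mg2s1mfd}. This gives smoothly varying slice representatives $\phi'_i$ in the first chart and $\chi'_i$ in the second chart for each $[\phi'_i]$. Proposition \ref{g2sliceneatprop} then supplies smooth assignments $\phi'_i \mapsto [P_i]\in \Diff_0^{S^1}/(\Aut(\chi'_i)\cap \Diff_0^{S^1})$ with $\phi'_i = P_i^*\chi'_i$, and applying it to the smoothly parametrised family $(\phi'_2,\Phi)\mapsto \Phi^*\phi'_2$ gives a smooth $(\phi'_2,\Phi)\mapsto [\tilde P_2]$ with $\Phi^*\phi'_2 = \tilde P_2^*\chi'_2$.

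I then retrace the inverse construction of Proposition \ref{g2ghatcharts}. Since $P_1$ and $\tilde P_2$ are asymptotically cylindrically isotopic to the identity, an isotopy of the $\Diff(N)$-part of each limit to the identity yields matching corrections $Q_1,Q_2\in \tilde A(\tilde\chi)$ such that $(P_1,Q_1)$ and $(Q_2,\tilde P_2)$ are matching pairs isotopic to the identity. Chasing pullbacks, $[\Phi'] = [Q_2Q_1^{-1}]\in A(\chi_1,\chi_2)$ is the value of the transition on the fibre. As multiplication and inversion in the finite-dimensional Lie group $\tilde A(\tilde \chi)$ (Proposition \ref{spaceanice}) are smooth, smoothness of the fibre component reduces to smoothness of the correction map $P\mapsto Q$.

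The main obstacle is precisely this last smoothness. The correction $Q$ depends only on the limit $\tilde P\in \Diff_0(N)$, which is a smooth function of $P$ (using the smoothness of pullback/composition already invoked in Proposition \ref{g2sliceneatprop}, via \cite{baier}). It remains to show that the assignment of an element of $\tilde A(\tilde\chi)$ to a diffeomorphism of $N$ close to the identity is smooth. By Proposition \ref{spaceanice}, the identity component of $\tilde A(\tilde\chi)$ is the universal cover of the identity component of $\Isom(N,\tilde\chi)$, with Lie algebra the Killing fields on $N$; near the identity one may therefore choose the straight-line isotopy obtained by Riemannian exponentiation of the corresponding Killing field, which depends smoothly on $\tilde P$ by the inverse function theorem applied to the exponential map of this finite-dimensional Lie group. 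This gives the desired smoothness of the correction map and hence of the whole transition.
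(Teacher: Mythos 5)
Your proposal is correct and follows essentially the same route as the paper: split off the $\B_{G_2}^{S^1}$ component (smooth since the base map is the identity on a smooth manifold), recover the diffeomorphism classes smoothly via Proposition \ref{g2sliceneatprop} and the smoothness of pullback, and retrace the inverse construction of Proposition \ref{g2ghatcharts} using the finite-dimensional Lie-group structure of $\tilde A$ from Proposition \ref{spaceanice}. You are somewhat more explicit than the paper about smoothness of the correction map $P \mapsto Q$; just note that an element of $\tilde A(\tilde\chi)$ is determined by the homotopy class of the isotopy rather than by the limit $\tilde P$ alone, but since this ambiguity is locally constant it affects neither smoothness nor, by the injectivity established in Proposition \ref{g2ghatcharts}, the resulting class in $A(\chi_1,\chi_2)$.
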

\begin{proof}
The map $([\phi'_1, \phi'_2]) \mapsto ([\chi'_1], [\chi'_2])$ is the identity in $\B_{G_2}^{S^1}$, and so is smooth. Consequently also the maps to the slice representatives $\chi'_1$ and $\chi'_2$ are smooth. For the map to $\Phi'$, we note that the structures $\phi'_1$ and $\Phi^* \phi'_2$ depend smoothly on $[\phi'_1]$, $[\phi'_2]$ and $[\Phi]$. It is obvious that $[\phi'_1]$ and $[\phi'_2]$ depend smoothly on these classes, by linearity; for $[\Phi]$, we first note that since the components $\tilde A(\tilde \phi)$ are identified with the finite-dimensional space of Killing fields, we may choose a representative $\Phi$ for $[\Phi]$ smoothly, and then the pullback is smooth by \cite[Theorem 2.2.15]{baier}. 

Consequently it suffices to show the map in Proposition \ref{g2ghatcharts} determining $[\Phi']$ from the structures $\phi'_1$, $\chi'_1$, $\Phi^* \phi'_2$ and $\chi'_2$ is smooth. Exactly the same argument works, using the smoothness result of Proposition \ref{g2sliceneatprop}. 
\end{proof}

We now proceed to the Calabi-Yau case. The spaces are set up as in the $S^1$-invariant $G_2$ case, but to show they are manifolds requires some further work. We begin with the definitions. 
We first make, exactly as in Definition \ref{defin:g2gtilde},
\begin{defin}
\label{defin:su3gtilde}
Let
\begin{equation}
\tilde \G_{SU(3)} = \frac{\text{matching pairs of Calabi-Yau structures and parameters $T$}}{\text{pairs of diffeomorphisms isotopic to the identity as matching pairs}}
\end{equation}
where structures match if they match in the sense of Definition \ref{comdefin:matchingforms}, and isotopy as matching pairs and the action are as in Definition \ref{comdefin:matchingdiffeo}.
\end{defin}
We want to show that $\tilde \G_{SU(3)}$ is smooth and that we can include it into $\tilde\G_{G_2}^{S^1}$ as a smooth submanifold. It follows from our earlier analysis (in section \ref{sec:g2su3}) that to define such a map we need a matching pair of twistings $(z_1, z_2)$, with the usual boundary condition $\tilde z_i(\ddt) = 0$. Such a pair of twistings immediately gives a map from matching pairs of Calabi-Yau structures to matching pairs of $S^1$-invariant $G_2$ structures, using Theorem \ref{g2su3alltold} (Theorem C). This map may not induce a well-defined map $\tilde \G_{SU(3)}\to \tilde \G^{S^1}_{G_2}$; if the triples $(\Omega_1, \omega_1, \Omega_2, \omega_2, T)$ and $(\Omega'_1, \omega'_1, \Omega'_2, \omega'_2, T')$ are identified by the isotopic-to-the-identity matching pair $(\Phi_1, \Phi_2)$ then the extension $(\hat\Phi_1, \hat \Phi_2)$ given as in Lemma \ref{easyextensionlemma} need not identify $(\Re \Omega_1 + z_1 \wedge \omega_1, \Re \Omega_2 + z_2 \wedge \omega_2, T)$ with $(\Re \Omega'_1 + z_1 \wedge \omega'_1, \Re \Omega'_2 + z_2 \wedge \omega_2, T')$. Hence, we take a quotient of twistings as in Proposition \ref{setproduct}. We thus make, using notation inspired by Definition \ref{defin:g2bspace}, 
\begin{defin}
\label{bzdef}
Let $M_1$ and $M_2$ be as in Definition \ref{comdefin:matchingmanifolds}, and let $\B_Z$ be the space of matching pairs of twisting classes; that is, $([z_1], [z_2])$ in $Z(M_1) \times Z(M_2)$ such that there are representatives $z_1$ and $z_2$ matching in the sense of Definition \ref{comdefin:matchingforms}. Here $Z(M_i)$ is the open subset of $H^1(M_i \times S^1)$ of Lemma \ref{ccfcohom}. 
\end{defin}
\begin{rmk}
We know that a twisting is of the form $L d\theta + v$ for $v$ a $1$-form on $M$. Thus if two twistings $L_i d\theta + v_i$ match, we have $L_1=L_2$ and $v_1$ matches with $v_2$ (we used the first of these in Proposition \ref{gluingcohombehaves}).  Thus $\B_Z$ is the product of $\R_{>0}$ with the set of matching pairs $([v_1], [v_2])$. Now the forms $v_i$ have no $dt$ component in the limit and so $[\tilde v_1 - \tilde v_2]$ is a well-defined element of $H^1(N \times S^1)$, and it is easy to see using Mayer-Vietoris that a pair $([v_1], [v_2])$ equivalently matches if and only if $[\tilde v_1-\tilde v_2]$ is zero. Thus the set of matching pairs is a vector space, and $\B_Z$ is the product of $\R_{>0}$ with a vector space. It follows that $\B_Z$ is a manifold, and in particular path-connected. Analogously to our use of path-connectedness of the space of twistings in Proposition \ref{weakwelldef}, we will use path-connectedness of $\B_Z$ in Proposition \ref{btheorem} below to show that which element of $\B_Z$ we use is not very important. 
\end{rmk}
These are indeed the classes we need to use to define our inclusion maps. 
\begin{defin}
\label{defin:iotaontildeg}
Suppose that $([z_1], [z_2]) \in \B_Z$. Define the map
\begin{equation}
\iota_{[z_1], [z_2]}: \tilde \G_{SU(3)} \to \tilde \G_{G_2}^{S^1}
\end{equation}
by taking a pair of matching representatives $(z_1, z_2)$ and then mapping
\begin{equation}
(\Omega_1, \omega_1, \Omega_2, \omega_2, T) \mapsto [\Re \Omega_1 + z_1 \wedge \omega_1, \Re \Omega_2 + z_2 \wedge \omega_2, T]
\end{equation}
for any representative quintuple $(\Omega_1, \omega_1, \Omega_2, \omega_2, T)$.
\end{defin}
\begin{prop}
\label{prop:iotaontildeg}
The map $\iota_{[z_1], [z_2]}: \tilde \G_{SU(3)} \to \tilde \G_{G_2}^{S^1}$ of Definition \ref{defin:iotaontildeg} is well-defined and injective. 
\end{prop}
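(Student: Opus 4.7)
The plan is to verify well-definition and injectivity separately, in both cases translating back and forth between $SU(3)$ and $S^1$-invariant $G_2$ data by the unique decomposition of Proposition \ref{g2su3prop1}, while tracking the matching condition carefully at the cylindrical limit.

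For well-definition, I check independence of the two choices in the definition. First, fix matching representatives $(z_1, z_2)$ of $([z_1], [z_2])$. If a matching pair $(\Phi_1, \Phi_2)$ of asymptotically cylindrical diffeomorphisms isotopic to the identity, with limits $(\tilde \Phi_i, L_i)$, carries $(\Omega_1, \omega_1, \Omega_2, \omega_2, T)$ to $(\Omega'_1, \omega'_1, \Omega'_2, \omega'_2, T')$, I extend each $\Phi_i$ to an $S^1$-invariant diffeomorphism $\hat\Phi_i$ of $M_i \times S^1$ by Lemma \ref{easyextensionlemma}; the pair $(\hat\Phi_1, \hat\Phi_2)$ is a matching pair isotopic to the identity. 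The only obstruction to it identifying the two $G_2$ triples is that $\hat\Phi_i^* z_i - z_i$ is an exact form $dh_i$, not zero. Lemma \ref{changingzlemma} produces fibrewise $S^1$-flows $\Xi_i$ that absorb this difference, and the key check is that $(\Xi_1, \Xi_2)$ is itself a matching pair. This reduces to $\tilde h_1 = F^*\tilde h_2$; differentiating, using $\tilde v_1 = F^* \tilde v_2$ (matching) and $F^{-1}\tilde\Phi_2 F = \tilde\Phi_1$ (matching of $(\Phi_1, \Phi_2)$), gives $d\tilde h_1 = F^* d\tilde h_2$, so $\tilde h_1 - F^*\tilde h_2$ is a constant by connectedness of $N$ (Remark \ref{sunacylconndxs}), which can be absorbed by replacing $h_2$ by $h_2 + c$. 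The parameter shift $T' = T - (L_1+L_2)/2$ transfers directly, since $\hat\Phi_i$ and $\Xi_i$ have the same $t$-translation limits as $\Phi_i$ (the fibrewise flows have trivial $t$-limit). Second, if I replace $(z_1, z_2)$ by another matching pair $(z_1 + df_1, z_2 + df_2)$, the same application of Lemma \ref{changingzlemma} plus the same constant-adjustment trick produces a matching pair of $S^1$-invariant diffeomorphisms isotopic to the identity transforming one $G_2$ triple to the other.

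For injectivity, suppose the two classes have the same image. Choosing matching representatives $(z_1, z_2)$, there is a matching pair $(\Psi_1, \Psi_2)$ of $S^1$-invariant diffeomorphisms isotopic to the identity with $\Psi_i^*(\Re\Omega_i + z_i\wedge\omega_i) = \Re\Omega'_i + z_i\wedge\omega'_i$ and the appropriate shift in $T$. By Lemma \ref{s1invdiffstr}, write $\Psi_i(p,\theta) = (f_i(p), \theta + g_i(p))$, so that
\begin{equation}
\Psi_i^*(\Re\Omega_i + z_i\wedge\omega_i) = f_i^*\Re\Omega_i + (L\,d\theta + L\,dg_i + f_i^* v_i)\wedge f_i^*\omega_i.
\end{equation}
By the uniqueness part of Proposition \ref{g2su3prop1}, the decomposition of an $S^1$-invariant $G_2$ structure into a twisting and an $SU(3)$ structure is unique once the orientation is fixed. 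Comparing with $\Re\Omega'_i + (Ld\theta + v_i)\wedge\omega'_i$ forces $f_i^*\Omega_i = \Omega'_i$ and $f_i^*\omega_i = \omega'_i$. The pair $(f_1, f_2)$ is then a matching pair of asymptotically cylindrical diffeomorphisms of $M_1, M_2$ isotopic to the identity, since these properties are inherited from $(\Psi_1, \Psi_2)$: the isotopy descends coordinate-wise, and the matching of $(\tilde\Psi_1, \tilde\Psi_2)$ forces the matching of the $\Diff(N)$-components $(\tilde f_1, \tilde f_2)$. The translation parameters of $\Psi_i$ and $f_i$ coincide (the $S^1$-flow contributes nothing), so the parameter shift is correct; hence the two $SU(3)$ quintuples define the same class in $\tilde\G_{SU(3)}$.

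The main obstacle is the careful bookkeeping of matching up to additive constants in primitives of matching exact forms: at the infinitesimal level everything is automatic, but at the level of actual diffeomorphisms the constant-adjustment using connectedness of $N$ has to be inserted each time a fibrewise rotation is constructed from Lemma \ref{changingzlemma}.
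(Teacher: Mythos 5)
Your proposal is correct and follows essentially the same route as the paper: extend diffeomorphisms by Lemma \ref{easyextensionlemma}, absorb the exact discrepancy in the twisting via the fibrewise flows of Lemma \ref{changingzlemma} after arranging the primitives to match (the paper gets matching limits from the construction in Lemma \ref{ccfcohom}, you get them up to a constant from connectedness of $N$ and then adjust — the same point), and prove injectivity by splitting $S^1$-invariant diffeomorphisms as in Lemma \ref{s1invdiffstr} and invoking uniqueness of the decomposition of Proposition \ref{g2su3prop1}. The only organisational difference is that the paper reduces the two well-definition checks to one by relabelling $(\Phi_1^*z_1',\Phi_2^*z_2')$ as another matching representative pair, whereas you verify the two independences separately; this is immaterial.
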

\begin{proof}
First suppose that $(\Omega_1, \omega_1, \Omega_2, \omega_2, T)$ and $(\Omega'_1, \omega'_1, \Omega'_2, \omega'_2, T')$ are two quintuples representing the same class of $\M_{SU(3)}$. Suppose that we use the matching pairs $(z_1, z_2)$ and $(z'_1, z'_2)$, both of which represent $([z_1], [z_2])$, to define the corresponding $S^1$-invariant $G_2$ structures. There is a matching pair of diffeomorphisms $(\Phi_1, \Phi_2)$ isotopic to the identity such that $\Omega_1 = \Phi_1^* \Omega'_1$ and so on as in Definition \ref{comdefin:matchingdiffeo}. It is clear from the definitions that extending these diffeomorphisms to $M_i \times S^1$ as in Lemma \ref{easyextensionlemma} gives a matching pair, still isotopic to the identity in the sense of Definition \ref{comdefin:matchingdiffeo}, that acts on $(\Re \Omega'_1 + z'_1 \wedge \omega'_1, \Re \Omega'_2 + z'_2 \wedge \omega'_2, T')$ to give $(\Re \Omega_1 + \Phi^*_1(z'_1) \wedge \omega_1, \Re \Omega_2 + \Phi^*_2(z'_2) \wedge \omega_2, T)$. To prove the result, it thus suffices to show that $(\Re \Omega_1 + \Phi^*_1(z'_1) \wedge \omega_1, \Re \Omega_2 + \Phi^*_2(z'_2) \wedge \omega_2, T)$ and $(\Re \Omega_1 + z_1 \wedge \omega_1, \Re \Omega_2 + z_2 \wedge \omega_2, T)$ represent the same class of $\G_{G_2}^{S^1}$. Since $(\Phi_1^* z'_1, \Phi^*_2 z'_2)$ is another matching pair of representatives for $([z_1], [z_2])$, by relabelling it suffices to prove the special case in which $(\Omega_1, \omega_1, \Omega_2, \omega_2, T) = (\Omega'_1, \omega'_1, \Omega'_2, \omega'_2, T')$.

Since $z_i - z'_i$ are exact and asymptotically translation invariant, as in the proof of Lemma \ref{ccfcohom} they are $df_i$ for some asymptotically translation invariant $f_i$. Hence, there are asymptotically cylindrical diffeomorphisms $\Phi_i \in \Diff_0^{S^1}$ identifying $\Re \Omega_i + z_i \wedge \omega_i$ and $\Re \Omega_i + z'_i \wedge \omega_i$ on $M_i \times S^1$ by Lemma \ref{changingzlemma}. We have to check that $(\Phi_1, \Phi_2)$ can be chosen to be isotopic to the identity as a matching pair and that the common limit is of the form $(\tilde \Phi_i, 0)$, i.e.\ has no translation component, so that $T$ is unaffected. By the proof of Lemma \ref{changingzlemma}, $\Phi_i$ is the time-$1$ flow of $f_i \ddth$, so its limit certainly has no translation component (which would correspond to a flow by $\ddt$). It then only remains to show that $f_1$ and $f_2$ can be chosen to match, as then the flow defines a matching isotopy. However, the proof of Lemma \ref{ccfcohom} also yields that the limits of the $f_i$ only depend on the limits of the differences $z'_i - z_i$; hence, that $f_1$ and $f_2$ match follows, if we make appropriate choices, from the fact that these differences match.

For injectivity, we apply the proof of Lemma \ref{s1invdiffstr}. If 
\begin{equation}
[\Re\Omega_1 + z_1 \wedge \omega_1, \Re \Omega_2 + z_2 \wedge \omega_2, T] = [\Re \Omega'_1 + z_1 \wedge \omega'_1, \Re \Omega'_2 + z_2 \wedge \omega'_2, T']
\end{equation}
then there is a pair of $S^1$-invariant diffeomorphisms isotopic to the identity as a matching pair pulling back $\Re \Omega_i + z_i \wedge \omega_i$ to $\Re \Omega'_i + z_i \wedge \omega'_i$. Taking the $M_i$ parts of these diffeomorphisms as in Lemma \ref{s1invdiffstr} gives diffeomorphisms of $M_i$ pulling back $\Omega_i$ to $\Omega'_i$ and $\omega_i$ to $\omega'_i$. Evidently these diffeomorphisms are also isotopic to the identity as a matching pair, and as the $M$ part must include the translation part of the limit, the action on $T$ also gives $T'$. Hence
\begin{equation}
[\Omega_1, \omega_1, \Omega_2, \omega_2, T] = [\Omega'_1, \omega'_1, \Omega'_2, \omega'_2, T']
\end{equation}
\ie $\iota_{[z_1], [z_2]}$ is injective. 
\end{proof}
To show that $\tilde \G_{SU(3)}$ is smooth, and the maps $\iota_{[z_1], [z_2]}$ are smooth inclusions, we introduce smaller moduli spaces exactly analogous to those in the $S^1$-invariant $G_2$ case. We begin with the space corresponding to $\B_{G_2}^{S^1}$ defined in Definition \ref{defin:g2bspace}.
\begin{defin}
\label{defin:su3bspace}
Let ${\mathcal{B}}_{SU(3)}$ be the space of matching pairs in the $SU(3)$ moduli spaces on $M_1$ and $M_2$, that is:
\begin{equation}
([\Omega_1, \omega_1], [\Omega_2, \omega_2]) \in \M_{SU(3)}(M_1) \times \M_{SU(3)}(M_2)
\end{equation}
such that there exist representatives $(\Omega_1, \omega_1)$ and $(\Omega_2, \omega_2)$ matching in the sense of Definition \ref{comdefin:matchingforms}. 
\end{defin}
We also need the space corresponding to $\hat \G_{G_2}^{S^1}$ of Definition \ref{defin:g2ghat}. 
\begin{defin}
Let
\begin{equation}
\hat \G_{SU(3)} = \frac{\text{matching pairs of Calabi-Yau structures}}{\text{pairs of diffeomorphisms isotopic to the identity as matching pairs}}
\end{equation}
\end{defin}
To show that $\tilde \G_{SU(3)}$ is a manifold, we argue, just as in the $S^1$-invariant $G_2$ case, that $\B_{SU(3)}$ and $\hat \G_{SU(3)}$ are manifolds. We also have to show that the inclusion maps corresponding to $\iota_{[z_1], [z_2]}$ are all smooth. 

We begin with $\B_{SU(3)}$. Using Theorem A (Theorem \ref{maintheorema}), and essentially arguing as in the proof of that theorem that $\B_{SU(3)}$ is the fibre of a smooth submersion to $\B_{G_2}^{S^1} \to \B_Z$, we prove that $\B_{SU(3)}$ is a manifold. Proposition \ref{btheorem} is the Calabi-Yau analogue of Proposition \ref{g2btheorem}. 
\begin{prop}
\label{btheorem}
$\B_{SU(3)}$ is a smooth manifold. Moreover, there exist charts consisting of matching pairs of structures around every point, with their groups of automorphisms isotopic to the identity independent of the point in the slice. 

We have a diffeomorphism $\B_{SU(3)} \times \B_Z \to \B_{G_2}^{S^1}$. In particular, the inclusion map $\B_{SU(3)} \to \B_{G_2}^{S^1}$ given by any pair of matching cohomology classes $([z_1], [z_2])$ is a well-defined smooth immersion. 
\end{prop}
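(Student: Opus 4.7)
The plan is to use Theorem A (Theorem \ref{maintheorema}) together with Proposition \ref{g2btheorem} to transfer smoothness from the $S^1$-invariant $G_2$ setting to the Calabi-Yau setting. The key observation is that the canonical decomposition $\phi \mapsto (z,\Omega,\omega)$ given by Proposition \ref{g2su3prop1} is natural under the diffeomorphism $F$ extending the matching identification of Definition \ref{comdefin:matchingmanifolds}: $F$ preserves $\ddth$ and the sub-bundle $T^*M$, so $F^*$ commutes with the splitting into $z$-part and $M$-part as well as with the metric construction of Proposition \ref{g2su3prop1}. Consequently a matching pair of $S^1$-invariant torsion-free $G_2$ structures $\phi_i = \Re \Omega_i + z_i \wedge \omega_i$ corresponds precisely to a matching pair of Calabi-Yau structures $(\Omega_i,\omega_i)$ together with a matching pair of twistings $z_i$. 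Using Lemmas \ref{easyextensionlemma} and \ref{changingzlemma} (as in Proposition \ref{setproduct}) to pass to moduli classes, this gives a well-defined bijection $\B_{SU(3)} \times \B_Z \to \B_{G_2}^{S^1}$.

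I would then show this bijection is a diffeomorphism, which simultaneously endows $\B_{SU(3)}$ with a smooth manifold structure. By Theorem A, $\M_{G_2}^{S^1}(M_i) \cong \M_{SU(3)}(M_i) \times Z(M_i)$ as smooth products for $i=1,2$, so the bijection above is the restriction to matching subspaces of the product diffeomorphism
\begin{equation}
\bigl(\M_{SU(3)}(M_1) \times \M_{SU(3)}(M_2)\bigr) \times \bigl(Z(M_1) \times Z(M_2)\bigr) \cong \M_{G_2}^{S^1}(M_1) \times \M_{G_2}^{S^1}(M_2),
\end{equation}
and by the first paragraph the preimage of $\B_{G_2}^{S^1}$ under this product diffeomorphism is exactly $\B_{SU(3)} \times \B_Z$. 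Since $\B_{G_2}^{S^1}$ is a smooth submanifold (Proposition \ref{g2btheorem}) and $\B_Z$ is a smooth submanifold of $Z(M_1) \times Z(M_2)$ by the remark after Definition \ref{bzdef}, it follows that $\B_{SU(3)}$ inherits a smooth manifold structure making the displayed bijection a diffeomorphism.

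For the remaining claims: the slice charts of Proposition \ref{su3slicecoords} consist of Calabi-Yau structures diffeomorphic to an open subset of $\M_{SU(3)}(M_i)$, with the automorphism group isotopic to the identity constant across the slice. On a sufficiently small product of such slices for $M_1$ and $M_2$, the matching condition cuts out an open subset of matching representatives, providing the required chart of $\B_{SU(3)}$ consisting of matching pairs; constancy of the automorphism group passes to pairs immediately. The inclusion $\B_{SU(3)} \to \B_{G_2}^{S^1}$ associated to a matching pair $([z_1],[z_2])$ is then $\B_{SU(3)} \hookrightarrow \B_{SU(3)} \times \B_Z$ at $([z_1],[z_2])$ composed with the product diffeomorphism, so it is a smooth immersion.

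The main obstacle is the naturality claim in the first paragraph: one must check carefully that $F^*$ genuinely carries each of the three components of $\phi_2$ separately to those of $\phi_1$, rather than merely preserving some combination. This is not automatic because the decomposition depends on the induced metric, but it follows because $F^*$ commutes with the metric construction, preserves $\ddth$ (and hence the line $z$ spans in each cotangent space on the end), and preserves the splitting $T^*(M_i \times S^1) = T^*M_i \oplus \R\,d\theta$; thus each canonical operation in Proposition \ref{g2su3prop1} is equivariant.
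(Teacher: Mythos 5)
Your first step (matching $S^1$-invariant $G_2$ pairs correspond exactly to matching Calabi--Yau pairs plus matching twistings) is essentially the paper's argument: the paper gets it by applying the uniqueness in Proposition \ref{g2su3prop1} to the limits, which is the same naturality you describe, so that part is fine. The gap is in your second paragraph, at the sentence ``it follows that $\B_{SU(3)}$ inherits a smooth manifold structure making the displayed bijection a diffeomorphism.'' Knowing that the preimage of the submanifold $\B_{G_2}^{S^1}$ under the product diffeomorphism of Theorem \ref{maintheorema} is a smooth submanifold of $\M_{SU(3)}(M_1)\times\M_{SU(3)}(M_2)\times Z(M_1)\times Z(M_2)$ which \emph{as a set} equals $\B_{SU(3)}\times\B_Z$ does not by itself produce a smooth structure on the factor $\B_{SU(3)}$, let alone show that the product decomposition is smooth: a submanifold that is set-theoretically a product of two subsets need not visibly be a smooth product of submanifolds. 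This is precisely the non-trivial content, and the paper supplies it: it shows the projection $\B_{G_2}^{S^1}\to\B_Z$ is a surjective submersion (using explicit curves of the form $\Re\Omega+(z+sy)\wedge\omega$ as in Lemma \ref{smoothcurves}), obtains manifold structures on the fibres from the implicit function theorem, shows these structures are independent of the point of $\B_Z$ by factoring the inclusion $\B_{SU(3)}\into\M_{SU(3)}\times\M_{SU(3)}$ through $\B_{G_2}^{S^1}$, and then verifies that the bijective homeomorphism $\B_{SU(3)}\times\B_Z\to\B_{G_2}^{S^1}$ is a diffeomorphism by checking smoothness of both directions at the level of structures (Propositions \ref{g2su3prop1} and \ref{su3slicecoords}). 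To repair your argument you would need at least the submersion/transversality statement and the fibrewise comparisons; simply citing the set-theoretic product is not enough.

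A secondary point: your description of the charts is not quite right. The matching condition does not cut out an \emph{open} subset of a product of slices for $M_1$ and $M_2$; matching pairs form a proper, non-open subset of $\M_{SU(3)}(M_1)\times\M_{SU(3)}(M_2)$ in general. What is needed (and what the paper gets from Proposition \ref{su3slicecoords} together with the corresponding $G_2$ statement in Proposition \ref{g2btheorem}) is that every matching pair of classes near the base point admits matching slice representatives, so that the chart of $\B_{SU(3)}$ itself consists of matching pairs with constant automorphism group; this existence of matching representatives inside the slices is asserted rather than proved in your sketch.
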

\begin{proof}
We have shown in Theorem \ref{su3acyldeformation} that 
\begin{equation}
\label{eq:bspaceproduct}
\begin{split}
&\M_{G_2}^{S^1}(M_1 \times S^1) \times \M_{G_2}^{S^1}(M_2 \times S^1)
\\=\, & \M_{SU(3)}(M_1) \times Z(M_1) \times \M_{SU(3)}(M_2) \times Z(M_2)
\end{split}
\end{equation}
That is, given a pair of $S^1$-invariant $G_2$ moduli classes $[\phi_1]$ and $[\phi_2]$ we can express them in terms of Calabi-Yau structures by pairs $([\Omega_1, \omega_1], [z_1])$ and $([\Omega_2, \omega_2], [z_2])$. If moreover $([\phi_1], [\phi_2]) \in \B_{G_2}^{S^1}$, then there exist representatives $\phi_1$ and $\phi_2$ that match. By applying uniqueness in Proposition \ref{g2su3prop1} to the limits, it follows immediately that the corresponding representatives $z_i$, $\omega_i$, and $\Omega_i$ all match. 

Conversely, given a matching pair of Calabi-Yau classes and a matching pair of twisting classes, taking matching representatives for these pairs and then combining them as in Proposition \ref{g2su3prop1} gives a matching pair of $S^1$-invariant $G_2$ structures and hence of classes.

It follows therefore that the submanifold $\B_{G_2}^{S^1}$ can be expressed in terms of the product structure of \eqref{eq:bspaceproduct} as
\begin{equation}
\label{eq:bspaceclearproduct}
\B_{G_2}^{S^1} = \B_{SU(3)} \times \B_Z
\end{equation}
We proceed exactly as in the case of proving that $\M_{SU(3)}$ is a manifold by showing that $\B_{SU(3)}$ is the fibre of a surjective submersion. By the remark after Definition \ref{bzdef}, $\B_Z$ is the product of $\R_{>0}$ with a vector space, and so a manifold. An obvious smooth path of structures and hence of classes (since a path of structures defines a path of cohomology classes) yields that the map $\B_{G_2}^{S^1} \to \B_Z$ is a submersion; it follows that we have a collection of manifold structures on $\B_{SU(3)}$ by the implicit function theorem. 

The natural inclusion map from $\B_{SU(3)}$ to $\M_{SU(3)} \times \M_{SU(3)}$ is a smooth immersion from the smooth structure given on $\B_{SU(3)}$ by the implicit function theorem, because it is the composition
\begin{equation}
\B_{SU(3)} \into \B_{G_2}^{S^1} \into \M_{G_2}^{S^1} \times \M_{G_2}^{S^1} \twoheadrightarrow \M_{SU(3)} \times \M_{SU(3)}
\end{equation}
It follows that the smooth structure on $\B_{SU(3)}$ is independent of the point of $\B_Z$. 

We already know that $\B_{G_2}^{S^1}$ is a product as a topological space, as a subspace of the topological product \eqref{eq:bspaceproduct}. It remains to check that the bijective homeomorphism $\B_{SU(3)} \times \B_Z \to \B_{G_2}^{S^1}$ is a diffeomorphism. We may choose coordinates as in the statement by applying Proposition \ref{su3slicecoords}. Now the maps from an $S^1$-invariant $G_2$ structure to its $Z$ and Calabi-Yau parts are smooth, and combining these we see that the map $\B_{G_2}^{S^1} \to \B_{SU(3)} \times \B_Z$ is smooth; similarly given a Calabi-Yau structure and a twisting the map to an $S^1$-invariant $G_2$ structure is smooth, so the map $\B_{SU(3)} \times \B_Z \to \B_{G_2}^{S^1}$ is smooth, and hence is indeed a diffeomorphism. 
\end{proof}
We now turn to the smoothness of $\hat \G_{SU(3)}$. The coordinate charts are set up as in Proposition \ref{g2ghatcharts}. 
\begin{prop}
\label{su3ghatcharts}
Suppose that $[\Omega_1, \omega_1, \Omega_2, \omega_2] \in \hat \G_{SU(3)}$. We have a chart $U$ for $\B_{SU(3)}$ around $([\Omega_1, \omega_1], [\Omega_2, \omega_2])$ consisting of matching pairs of structures, such that all these pairs and their limits have the same groups of isometries isotopic to the identity, and that for each $(\Omega, \omega)$ on $M_1$ or $M_2$, for $(\Phi^*\Omega_i, \Phi^* \omega_i) $ sufficiently close to $(\Omega_i, \omega_i)$ there is a continuous map $(\Phi^* \Omega_i,\Phi^* \omega_i) \ \mapsto \Phi$ (which is not necessarily unique). 

Then an open subset of $A(\Omega_1, \omega_1, \Omega_2, \omega_2) \times U$ is homeomorphic to an open neighbourhood of $[\Omega_1, \omega_1, \Omega_2, \omega_2]$ in $\hat \G_{SU(3)}$, by the map from $A(\Omega_1, \omega_1, \Omega_2, \omega_2) \times U$ to $\hat \G_{G_2}^{S^1}$
\begin{equation}
([\Phi], (\Omega'_1, \omega'_1, \Omega'_2, \omega'_2)) \mapsto [\Omega'_1, \omega'_1, \Phi^* \Omega'_2, \Phi^* \Omega'_2]
\end{equation}
where $A(\Omega_1, \omega_1, \Omega_2, \omega_2)$ is as in Definition \ref{defin:spacea}, and its action is as described in Definition \ref{defin:theamap}. 
\end{prop}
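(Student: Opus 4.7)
The plan is to follow the proof of Proposition \ref{g2ghatcharts} step by step, replacing $G_2$-specific ingredients with their Calabi-Yau analogues. The chart $U$ on $\B_{SU(3)}$ with the stated constancy of groups of isometries (equivalently, by Lemma \ref{sunauto}, of structure automorphisms) is obtained by combining Proposition \ref{btheorem} with the local slice description in Proposition \ref{su3slicecoords}; the same property for the limits follows from the corresponding cross-section slice result invoked inside the proof of Proposition \ref{btheorem}. For the continuous extraction $(\Phi^*\Omega_i, \Phi^*\omega_i) \mapsto \Phi$, I would pass to the associated $S^1$-invariant $G_2$ structures $\phi_i = \Re \Omega_i + d\theta \wedge \omega_i$, apply Proposition \ref{g2sliceneatprop} to obtain a continuous assignment of a class in $\Diff_0^{S^1}/(\Aut(\phi_i) \cap \Diff_0^{S^1})$, and then extract the $M$-part using Lemma \ref{s1invdiffstr}.

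With these ingredients in place, I would define the map exactly as in \eqref{eq:ghatchartsmap}, namely
\begin{equation*}
([\Phi], (\Omega'_1, \omega'_1, \Omega'_2, \omega'_2)) \mapsto [\Omega'_1, \omega'_1, \Phi^*\Omega'_2, \Phi^*\omega'_2].
\end{equation*}
Well-definedness is exactly the content of Proposition \ref{spaceanice}: the classes of $A(\Omega_1, \omega_1, \Omega_2, \omega_2)$ are classes of $\tilde A(\tilde g)$ modulo precisely those that produce the same orbit under matching pairs of diffeomorphisms isotopic to the identity, which is what is required for the formula to descend to $\hat\G_{SU(3)}$ independently of the representative of $[\Phi]$. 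For injectivity the argument of Proposition \ref{g2ghatcharts} transfers verbatim: equal images force the underlying classes in $\B_{SU(3)}$ to agree, hence by the slice property of $U$ the slice representatives coincide, and then the two $A$-classes must agree again by Proposition \ref{spaceanice}.

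For the homeomorphism property, continuity of the forward map is immediate from smoothness of pullback within the finite-dimensional group $A(\Omega_1, \omega_1, \Omega_2, \omega_2)$. For continuity of the inverse I would mimic the construction in Proposition \ref{g2ghatcharts}: given $[\Omega''_1, \omega''_1, \Omega''_2, \omega''_2]$ near the basepoint, use the chart $U$ to extract slice representatives $(\Omega'_i, \omega'_i)$, then apply the extraction from the first paragraph to obtain diffeomorphisms $\Phi_i$ with $\Phi_i^*(\Omega'_i, \omega'_i) = (\Omega''_i, \omega''_i)$. Each $\Phi_i$ is asymptotically cylindrically isotopic to the identity, and the isotopy of the $\Diff(N)$-part of its limit defines, via the translation-invariant extension used in the proof of Proposition \ref{spaceanice}, a diffeomorphism $\Psi_i$ representing a class of $\tilde A(\tilde g)$. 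Then $\Psi_2 \Psi_1^{-1}$ descends to a class in $A(\Omega_1, \omega_1, \Omega_2, \omega_2)$ depending continuously on the input, and a computation identical to the last line of the proof of Proposition \ref{g2ghatcharts} shows that this is a two-sided inverse.

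The main obstacle is the continuous extraction of the diffeomorphism class from a Calabi-Yau structure; everything else is a formal transcription. The route via Proposition \ref{g2sliceneatprop} applied to $\phi_i = \Re \Omega_i + d\theta \wedge \omega_i$ is clean, but one must verify that the $M$-part map from Lemma \ref{s1invdiffstr} is continuous at the level of the quotient by automorphisms, and that the quotient by $\Aut(\phi_i) \cap \Diff_0^{S^1}$ matches the quotient by $\Aut(\Omega_i, \omega_i) \cap \Diff_0$ in the identity component under this map. This compatibility is exactly where Lemma \ref{sunauto}, Proposition \ref{s1invdiffeoweak}, and the rigidity statement of Lemma \ref{s1invdiffstr} combine to close the argument.
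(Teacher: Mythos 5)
Your proposal is correct and takes essentially the same route as the paper: reduce everything to establishing the first paragraph (chart from Proposition \ref{btheorem}, continuous extraction of the diffeomorphism by passing to the $S^1$-invariant $G_2$ structures $\Re\Omega_i + d\theta\wedge\omega_i$, applying Proposition \ref{g2sliceneatprop}/Proposition \ref{g2ghatcharts}, and restricting to $M$ via Lemma \ref{s1invdiffstr}), after which the rest of the argument of Proposition \ref{g2ghatcharts} transfers verbatim. The compatibility check you flag at the end is precisely the step the paper closes by composing with a known identifying diffeomorphism (extended via Lemma \ref{easyextensionlemma}), observing the composition is an isometry, and invoking Lemma \ref{sunauto} to conclude the restricted map is a Calabi-Yau automorphism, so your cited ingredients do suffice.
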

\begin{proof}
Once the first paragraph of the proposition is established, the rest follows by exactly the same methods as in Proposition \ref{g2ghatcharts}. 

To establish the first paragraph, we note that we have charts with the required property on isometries by Proposition \ref{btheorem}, and a continuous map giving diffeomorphisms between Calabi-Yau structures is the following composition of which every step is continuous. Given a pair of structures $(\Omega, \omega)$ and $(\Omega', \omega')$ close by and representing the same moduli class, we have that the continuous images $\Re \Omega + d\theta \wedge \omega$ and $\Re \Omega' + d\theta \wedge \omega'$ are also close by and represent the same moduli class. In turn, therefore, we have a continuously dependent $S^1$-invariant $\Phi$ pulling back the first to the second, by the result of Proposition \ref{g2ghatcharts}. We know that $(\Omega, \omega)$ and $(\Omega', \omega')$ represent the same class of $\M_{SU(3)}$, so there is a diffeomorphism $\Phi'$ pulling back one to the other. Composing the extension of $\Phi'$ using Lemma \ref{easyextensionlemma} with $\Phi^{-1}$ clearly gives an automorphism of the $G_2$ structure, and hence an isometry of the product metric. Therefore restricting the composition to $M$ as in Lemma \ref{s1invdiffstr} is also an isometry, and it follows by Lemma \ref{sunauto} that it is an automorphism of the Calabi-Yau structure. In particular, we see that $\Phi^* \Omega = \Omega'$ and $\Phi^* \omega = \omega'$. The map of diffeomorphisms given by restricting $\Phi$ to $M$ is continuous, as we see in Lemma \ref{s1invdiffstr} that restricting an $S^1$-invariant diffeomorphism to $M$ is essentially composition with an inclusion and a projection: that is, the final step is continuous, as required.
\end{proof}

We now show that $\hat \G_{SU(3)}$ is smooth and the inclusion maps $\iota_{[z_1], [z_2]}: \hat \G_{SU(3)} \to \hat \G_{G_2}^{S^1}$ induced as in Definition \ref{defin:iotaontildeg} are smooth. 
\begin{prop}
\label{su3ghatmanifold}
Let $(\Omega_1, \omega_1, \Omega_2, \omega_2)$ define a class of $\hat \G_{SU(3)}$ and let $(z_1, z_2)$ define a class of $\B_Z$. Let $(\phi_1 = \Re \Omega_1 + z_1 \wedge \omega_1, \phi_2 = \Re \Omega_2 + z_2 \wedge \omega_2)$ define $\iota_{[z_1], [z_2]}([\Omega_1, \omega_1, \Omega_2, \omega_2]) \in \hat \G_{G_2}^{S^1}$. The manifolds $A(\Omega_1, \omega_1, \Omega_2, \omega_2)$ and $A(\phi_1, \phi_2)$ of Definition \ref{defin:spacea} can be naturally identified in a neighbourhood of the class of the identity. The inclusion map $\iota_{[z_1], [z_2]}$ can thus be examined locally in terms of the local homeomorphisms of Propositions \ref{su3ghatcharts} and \ref{g2ghatcharts} as
\begin{equation}
A(\Omega_1, \omega_1, \Omega_2, \omega_2) \times \B_{SU(3)} \to \hat \G_{SU(3)} \to \hat \G^{S^1}_{G_2} \to A(\phi_1, \phi_2) \times \B_{G_2}^{S^1}
\end{equation}
It is smooth. Moreover, it is an immersion, so the coordinate charts defined on $\hat\G_{SU(3)}$ in Proposition \ref{su3ghatcharts} form an atlas. 
\end{prop}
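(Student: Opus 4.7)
The plan is to reduce the proposition to two already-established ingredients: the product decomposition $\B_{G_2}^{S^1} = \B_{SU(3)} \times \B_Z$ of Proposition \ref{btheorem}, and a local identification of $A(\Omega_1, \omega_1, \Omega_2, \omega_2)$ with $A(\phi_1, \phi_2)$ as finite-dimensional Lie groups near the identity class.

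First I would construct a natural Lie group homomorphism $A(\Omega_1, \omega_1, \Omega_2, \omega_2) \to A(\phi_1, \phi_2)$ by extending a representative diffeomorphism $\Phi$ of $N \times [0,3]$ of the form in Definition \ref{defin:spacea} to an $S^1$-invariant diffeomorphism $\hat\Phi$ of $N \times S^1 \times [0,3]$. Naively one extends trivially in the $\theta$ factor; since an isometry of the limit Calabi--Yau metric on $N$ need only preserve the cohomology class of the limit $\tilde v$, one composes if necessary with an $S^1$-invariant $\theta$-shift in the spirit of Lemma \ref{changingzlemma} to ensure $\hat\Phi$ is an isometry of the full $G_2$ cross-section metric $\tilde g_N + \tilde z \otimes \tilde z$. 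The assignment descends to a homomorphism of the quotient $A$-groups, because a Killing extension of a class to $M_i$ gives rise by the same construction to a Killing extension to $M_i \times S^1$.

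To show this homomorphism is a local diffeomorphism near the identity, by Proposition \ref{spaceanice} it is enough to check it induces an isomorphism on tangent spaces. The substantive content is the following decomposition of $S^1$-invariant Killing fields on $N \times S^1$ for the limit $G_2$ metric: the Bochner argument from Proposition \ref{g2su3prop2} forces every Killing field to be parallel, Lemma \ref{rests1invardiffeo} then forces it to be $S^1$-invariant, and using that $\ddth$ is parallel of constant length one decomposes any such field uniquely as $X_N + c\ddth$, where $X_N$ descends to a Killing field on $(N, \tilde g_N)$ (after the canonical identification compensating for $\tilde v$) and $c \in \R$. Since $\ddth$ always extends as a Killing field on $M_i \times S^1$, a field $X_N + c\ddth$ extends iff $X_N$ extends on $M_i$; taking quotients, the $\R\ddth$ summand is killed and one recovers exactly the tangent space of $A(\Omega_1, \omega_1, \Omega_2, \omega_2)$.

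With this identification in hand, in the local charts of Propositions \ref{su3ghatcharts} and \ref{g2ghatcharts} and using the product decomposition of Proposition \ref{btheorem}, the composition in the statement reads
\[
([\Phi], (\Omega'_1, \omega'_1, \Omega'_2, \omega'_2)) \;\longmapsto\; ([\hat\Phi], ((\Omega'_1, \omega'_1, \Omega'_2, \omega'_2), [z_1, z_2])).
\]
Each component is smooth---the $A$-component by the local diffeomorphism of Lie groups just established, and the $\B$-component as the smooth inclusion of a fibre in the product $\B_{SU(3)} \times \B_Z$---and each has injective derivative, so $\iota_{[z_1], [z_2]}$ is a smooth immersion. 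The atlas property then follows formally: for two overlapping charts $\psi_1, \psi_2$ on $\hat\G_{SU(3)}$ with corresponding $G_2$ charts $\tilde\psi_1, \tilde\psi_2$, the transition $\tilde\psi_2\tilde\psi_1^{-1}$ is smooth by Proposition \ref{g2ghatmanifold}, and since each $\tilde\psi_i \circ \iota_{[z_1],[z_2]} \circ \psi_i^{-1}$ is a smooth immersion admitting smooth local left inverses, smoothness of $\psi_2\psi_1^{-1}$ follows by composition. The main obstacle is the Killing-field decomposition on $N \times S^1$; once that is settled, the remaining smoothness and immersion claims are essentially formal consequences of the product structure of Proposition \ref{btheorem} and the previously established smooth atlas on $\hat\G_{G_2}^{S^1}$.
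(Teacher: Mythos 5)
Your identification of $A(\Omega_1, \omega_1, \Omega_2, \omega_2)$ with $A(\phi_1, \phi_2)$ near the identity is essentially the paper's argument: Killing fields of the cross-section metric split as $X_N + c\ddth$ (with the compensation for $\tilde v$ you mention), the $\R\ddth$ summand dies in the quotient because $\ddth$ extends to both sides, and the quotient recovers the Calabi--Yau $A$-group. That part, and your closing remarks on the immersion property and the atlas, are sound and consistent with the paper.

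The gap is the chart computation, which you treat as formal but which is the main content of the paper's proof. You assert that in the coordinates of Propositions \ref{su3ghatcharts} and \ref{g2ghatcharts} the composition is $([\Phi], (\Omega'_1, \omega'_1, \Omega'_2, \omega'_2)) \mapsto ([\hat\Phi], ((\Omega'_1, \omega'_1, \Omega'_2, \omega'_2), [z_1], [z_2]))$. But the actual image of the chart point under $\iota_{[z_1],[z_2]}$ is $[\Re\Omega'_1 + z_1 \wedge \omega'_1,\ \Re(\Phi^*\Omega'_2) + z_2 \wedge \Phi^*\omega'_2]$, and writing the second entry as $\Phi^*(\Re\Omega'_2 + \Psi^* z_2 \wedge \omega'_2)$ with $\Psi = \Phi^{-1}$, its $\B_{G_2}^{S^1}$-component is $[\Re\Omega'_2 + \Psi^* z_2 \wedge \omega'_2]$, which is not visibly $[\Re\Omega'_2 + z_2 \wedge \omega'_2]$: the twisting representative is pulled back by a diffeomorphism whose limit is a nontrivial isometry of $N$, so $\Psi^* v_2 \neq v_2$ in general. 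To get your formula (independence of the $\B$-component from $[\Phi]$, whence smoothness via Proposition \ref{btheorem}) one must prove the moduli-class equality $[\Re\Omega'_2 + \Psi^* z_2 \wedge \omega'_2] = [\Re\Omega'_2 + z_2 \wedge \omega'_2]$ in $\M_{G_2}^{S^1}(M_2 \times S^1)$. The paper does exactly this, by comparing cohomology classes (including the limit components as in Theorem \ref{g2slicetheorem}), using that $\Psi$ is isotopic to the identity and $\omega'_2$ is closed, and then invoking the local injectivity of the cohomology map; alternatively one could observe $\Psi^* z_2 - z_2 = dh$ with $h$ asymptotically translation invariant and apply Lemma \ref{changingzlemma}. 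Your $\theta$-shift correction in the construction of $\hat\Phi$ does not substitute for this: the corrected $\hat\Phi$ is no longer the extension of a cross-section diffeomorphism of the type parametrising the chart factor $A(\phi_1, \phi_2)$, so the chart decomposition of the image would still have to be re-derived. Once this step is supplied, the rest of your argument goes through and agrees with the paper.
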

\begin{proof}
Locally around the identity, both $A(\phi_1, \phi_2)$ and $A(\Omega_1, \omega_1, \Omega_2, \omega_2)$ are manifolds. Consequently, we may work with the tangent spaces at the identity. These are quotients as identified in Proposition \ref{spaceanice}: the quotient of Killing fields on the cross-section by those that extend to Killing fields on the asymptotically cylindrical pieces.

It is clear that a Killing field on $N$ extends to a Killing field on $N \times S^1$. Conversely, given a Killing field on $N \times S^1$, it is (since parallel and so $S^1$-invariant) $X + c \ddth$, with $X$ a Killing field on $N$ and $c$ a constant, which defines a map from Killing fields on $N \times S^1$ to Killing fields on $N$. Hence we have maps between $\tilde A(\tilde \phi)$ and $\tilde A(\tilde \Omega, \tilde \omega)$. We have to show first that the maps induced on the quotients $A(\phi_1, \phi_2)$ and $A(\Omega_1, \omega_1, \Omega_2, \omega_2)$ by these are well-defined. If a Killing field on $N$ extends to a Killing field on $M_1$, say, then clearly the corresponding Killing field on $N \times S^1$ extends as a Killing field to $M_1 \times S^1$. Conversely, if a Killing field $X + c\ddth$ on $N \times S^1$ extends to a Killing field on $M_1 \times S^1$, then since $c\ddth$ is itself a Killing field on $M_1$, $X$ must also so extend. Thus these maps are well-defined. That the maps are inverse to each other also follows easily from the fact that $c \ddth$ extends to an $S^1$-invariant Killing field of $M_1$. Thus, in a sufficiently small subset of the identity, $A(\Omega_1, \omega_1, \Omega_2, \omega_2)$ and $A(\phi_1, \phi_2)$ are naturally identified. 

For the second claim, for notational simplicity setting $\Psi = \Phi^{-1}$, in these coordinates $\iota_{[z_1], [z_2]}$ becomes
\begin{align}
([\Omega_1, \omega_1], [\Omega_2, \omega_2], [\Phi]) &\mapsto [(\Omega_1, \omega_1), ( \Phi^* \Omega_2, \Phi^* \omega_2)]\\ &\mapsto [\Re \Omega_1 + z_1 \wedge \omega_1,  \Phi^* (\Re \Omega_2 + {{\Psi^*}}z_2 \wedge \omega_2)] \\&\mapsto ([\Re \Omega_1 + z_1 \wedge \omega_1], [\Re \Omega_2 + {\Psi^*}z_2 \wedge \omega_2], [\Phi])
\end{align}
The map from $A(\Omega_1, \omega_1, \Omega_2, \omega_2)$ to $A(\phi_1, \phi_2)$ is clearly the identity under the identification of the previous paragraph and so smooth, so it is sufficient to check that the map to the $\B_{G_2}^{S^1}$ component is smooth. We show that the $\B_{G_2}^{S^1}$ component is independent of $\Phi$. Then the $\B_{G_2}^{S^1}$ component is just $([\Re \Omega_1 + z_1 \wedge \omega_1], [\Re \Omega_2 + z_2 \wedge \omega_2])$, which depends smoothly on the class $([\Omega_1, \omega_1], [\Omega_2, \omega_2])$ of $\B_{SU(3)}$ by Proposition \ref{btheorem}. 

So, it is enough to show that as moduli classes we have $[\Re \Omega_2 + z_2 \wedge \omega_2] = [\Re \Omega_2 + {\Psi^*}z_2 \wedge \omega_2]$. On an appropriate neighbourhood of the class of the identity in $A(\Omega_1, \omega_1, \Omega_2, \omega_2)$, possibly reducing the size of the charts, it is sufficient to show the equalities of cohomology classes
\begin{equation}
[\Re \Omega_2 + z_2 \wedge \omega_2] = [\Re \Omega_2 + {\tilde \Psi^*} z_2 \wedge \omega_2] \qquad [\Re \tilde \Omega_{2, 2} + \tilde z_2 \wedge \tilde \omega_{2, 2}] = [\Re \tilde \Omega_{2, 2} + {\tilde \Psi^*} \tilde z_2 \wedge \tilde \omega_{2, 2}]
\end{equation}
where the additional subscript $2$ in the second equation denotes the relevant components of $\tilde \Omega_2 = \tilde \Omega_{2, 1} + dt \wedge \tilde \Omega_{2, 2}$ and $\tilde \omega_2 = \tilde \omega_{2, 1} + dt \wedge \tilde \omega_{2, 2}$ as in Theorem \ref{g2slicetheorem}. Using that theorem, we know that structures that are sufficiently close and have these cohomology classes the same define the same moduli classes. But $\Phi$ is isotopic to the identity, and so so is $\Psi$, and $\omega_2$ and $\omega_{2, 2}$ are closed (since $\omega_2$ is parallel): it follows that the cohomology classes are the same.

$\iota_{[z_1], [z_2]}$ is now obviously an immersion, because the identity is and the inclusion of $\B_{SU(3)}$ into $\B_{G_2}^{S^1}$ is (by Proposition \ref{btheorem} again). Since the manifold structure on $\hat\G_{G_2}^{S^1}$ is fixed, and $\iota_{[z_1], [z_2]}$ is independent of which chart we take on $\hat\G_{SU(3)}$, we find that each chart is a submanifold of $\hat \G_{G_2}^{S^1}$. By uniqueness of the smooth structure on a submanifold, it follows that the transition functions for the charts of Proposition \ref{su3ghatcharts} are smooth. 
\end{proof}

Finally, $\tilde \G_{SU(3)}$ is a principal $\R$-bundle over $\hat\G_{SU(3)}$ exactly as $\tilde \G_{G_2}^{S^1}$ is a principal $\R$-bundle over $\hat\G_{G_2}^{S^1}$; consequently, it is smooth. The   inclusions $\iota_{[z_1], [z_2]}$ of Definition \ref{defin:iotaontildeg} are bundle maps over the corresponding inclusions $\hat \G_{SU(3)} \to \hat \G_{G_2}^{S^1}$: thus, for every pair $([z_1], [z_2]) \in \B_Z$, $\iota_{[z_1], [z_2]}: \tilde \G_{SU(3)} \to \tilde \G_{G_2}^{S^1}$ is a smooth map. 

Consequently we have the final result of this subsection
\begin{thm}
\label{gluingdataspacesproduct}
The spaces $\tilde \G_{SU(3)}$ and $\tilde \G_{G_2}^{S^1}$ defined in Definitions \ref{defin:g2gtilde} and \ref{defin:su3gtilde} are manifolds. With $\B_Z$ as defined in Definition \ref{bzdef}, we have a diffeomorphism
\begin{equation}
\label{eq:gluingdataspacesproduct}
\tilde\G_{SU(3)} \times \B_Z \to \tilde\G_{G_2}^{S^1}
\end{equation}
induced from that of Theorem \ref{maintheorema}. 
\end{thm}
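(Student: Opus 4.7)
My strategy is to establish the diffeomorphism in three stages, reducing to results already in hand: the product decomposition of $\B$ spaces (Proposition \ref{btheorem}), the local chart descriptions of the $\hat \G$ spaces (Propositions \ref{g2ghatcharts} and \ref{su3ghatcharts}), and the principal $\R$-bundle structure coming from the gluing parameter $T$.

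First I would verify that the map
\[
\tilde \G_{SU(3)} \times \B_Z \to \tilde \G_{G_2}^{S^1}, \qquad
\big([\Omega_1,\omega_1,\Omega_2,\omega_2,T],([z_1],[z_2])\big) \mapsto \iota_{[z_1],[z_2]}[\Omega_1,\omega_1,\Omega_2,\omega_2,T]
\]
is a well-defined bijection at the set level. Well-definition of each $\iota_{[z_1],[z_2]}$ is Proposition \ref{prop:iotaontildeg}. For injectivity of the combined map, two points with the same image agree as classes in $\hat\G_{G_2}^{S^1}$, hence have the same $\B_Z$ projection via $\hat\G_{G_2}^{S^1} \to \B_{G_2}^{S^1} = \B_{SU(3)} \times \B_Z$ (Proposition \ref{btheorem}); so the twisting classes coincide, and injectivity of the single map $\iota_{[z_1],[z_2]}$ concludes. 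For surjectivity, given $[\phi_1,\phi_2,T]$, apply Theorem \ref{theoremc} to representatives to write $\phi_i = \Re\Omega_i + z_i \wedge \omega_i$. The uniqueness clause of Proposition \ref{g2su3ptwise}, applied to the limits, shows that matching of the $\phi_i$ forces matching of the $\Omega_i$, $\omega_i$ and $z_i$.

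Next I would upgrade to a diffeomorphism at the level of $\hat \G$. The local-chart description of $\hat\G_{G_2}^{S^1}$ from Proposition \ref{g2ghatcharts} is $A(\phi_1,\phi_2) \times U_{\B_{G_2}^{S^1}}$, and that of $\hat\G_{SU(3)}$ from Proposition \ref{su3ghatcharts} is $A(\Omega_1,\omega_1,\Omega_2,\omega_2) \times U_{\B_{SU(3)}}$. Proposition \ref{su3ghatmanifold} already identifies the two $A$-factors locally around the identity, and Proposition \ref{btheorem} supplies a compatible product decomposition $U_{\B_{G_2}^{S^1}} \cong U_{\B_{SU(3)}} \times U_{\B_Z}$. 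Putting these together and checking compatibility across overlapping charts (using smoothness of $\phi \mapsto (\Omega,\omega,z)$ as a map of Fréchet spaces, Proposition \ref{g2su3prop1}) yields the global diffeomorphism $\hat\G_{SU(3)} \times \B_Z \to \hat\G_{G_2}^{S^1}$.

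Finally I would lift to the total spaces using the principal $\R$-bundle structure, already observed in the excerpt: both $\tilde\G_{G_2}^{S^1} \to \hat\G_{G_2}^{S^1}$ and $\tilde\G_{SU(3)} \to \hat\G_{SU(3)}$ are principal $\R$-bundles with $T$ as the fibre parameter, and each $\iota_{[z_1],[z_2]}$ is a bundle map. The fibre action on $T$ by $-\tfrac12(L_1+L_2)$ is identical on both sides, since by Lemma \ref{s1invdiffstr} the limit $L$ of an $S^1$-invariant asymptotically cylindrical diffeomorphism equals the limit $L$ of its $M$-component. Consequently the map in the theorem is an $\R$-bundle map over the diffeomorphism of $\hat \G$ spaces (viewing $\B_Z$ as a base factor that carries no fibre), and hence a diffeomorphism of the total spaces. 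The main obstacle I foresee is the global compatibility check in the second step: different choices of slice representatives in Propositions \ref{g2ghatcharts} and \ref{su3ghatcharts} produce \emph{a priori} different identifications of the $A$-factors, and one must verify these patch smoothly across chart overlaps. This, however, is precisely what Proposition \ref{su3ghatmanifold} establishes for the $\iota_{[z_1],[z_2]}$ inclusions, and the additional freedom in $([z_1],[z_2])$ is controlled by the smooth projection $\hat\G_{G_2}^{S^1} \to \B_Z$ that factors through the smooth map $\phi \mapsto [z]$ of Proposition \ref{g2su3prop1}.
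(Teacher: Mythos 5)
Your proposal is correct and follows essentially the same route as the paper: the map is defined via the inclusions $\iota_{[z_1],[z_2]}$, smoothness and the local-diffeomorphism property are read off from the chart descriptions where the map reduces to the $\B_{SU(3)} \times \B_Z \to \B_{G_2}^{S^1}$ diffeomorphism of Proposition \ref{btheorem} together with the identity on the $A$-factor and the $T$/$\R$-bundle direction, and bijectivity comes from the pointwise $SU(3)$–$G_2$ decomposition of matching representatives plus injectivity of each $\iota_{[z_1],[z_2]}$. Your three-stage organisation (set-level bijection, $\hat\G$-level diffeomorphism, $\R$-bundle lift) is just a slightly more explicit packaging of the same argument.
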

\begin{proof}
Using the maps $\iota_{[z_1], [z_2]}$ of Definition \ref{defin:iotaontildeg}, we have a map $\tilde \G_{SU(3)} \times \B_Z \to \tilde \G_{G_2}^{S^1}$
\begin{equation}
\label{eq:gluingdataspacesprodmap}
([\Omega_1, \omega_1, \Omega_2, \omega_2, T], ([z_1], [z_2])) \mapsto \iota_{[z_1], [z_2]}([\Omega_1, \omega_1, \Omega_2, \omega_2, T])
\end{equation}
\eqref{eq:gluingdataspacesprodmap} is smooth because in local coordinates, by Proposition \ref{su3ghatmanifold}, it reduces to the corresponding map $\B_{SU(3)} \times \B_Z \to \B_{G_2}^{S^1}$, the identity on the $T$ component, and the identity $A(\Omega_1, \omega_1, \Omega_2, \omega_2) \to A(\phi_1, \phi_2)$. Using that the map on $\B$ spaces is a diffeomorphism, it follows that \eqref{eq:gluingdataspacesprodmap} is a smooth local diffeomorphism. 

It is clearly a surjection, as any representative $(\phi_1, \phi_2, T)$ of a class of $\tilde \G_{G_2}^{S^1}$ can be written as $(\Re \Omega_1 + z_1 \wedge \omega_1, \Re \Omega_2 + z_2 \wedge \omega_2, T)$ for some matching pair of Calabi-Yau structures and matching pair of twistings as in Proposition \ref{btheorem}. It is an injection because if 
\begin{equation}
[\Re \Omega_1 + z_1 \wedge \omega_1, \Re \Omega_2 + z_2 \wedge \omega_2, T] = [\Re \Omega'_1 + z'_1 \wedge \omega'_1, \Re \Omega'_2 + z'_2 \wedge \omega'_2, T']
\end{equation}
then there are asymptotically cylindrical diffeomorphisms relating these $S^1$-invariant $G_2$ structures, and in particular we see that $[z_i] = [z'_i]$ as in Lemma \ref{s1invdiffstr}. Injectivity then follows by injectivity in Proposition \ref{prop:iotaontildeg}. 

Thus \eqref{eq:gluingdataspacesproduct} is a global diffeomorphism, as claimed. 
\end{proof}
\subsection{Restricting to data that can be glued}
\label{ssec:removingtildes}
In this subsection, we define the subspaces of the quotient $\tilde \G_{G_2}^{S^1}$ and $\tilde \G_{SU(3)}$ that actually glue and the corresponding gluing maps (Definitions \ref{defin:g2modspacegluingmap} and \ref{defin:su3modspacegluingmap}). We have to define the gluing on Calabi-Yau structures by our inclusions $\tilde \G_{SU(3)} \to \tilde \G_{G_2}^{S^1}$; as there is more than one such inclusion, there is more than one possible such gluing map. Consequently, we must prove that the Calabi-Yau gluing map is independent of the inclusion we consider: this is the final result of this subsection, Proposition \ref{prop:strongwelldef}, and essentially follows by combining Proposition \ref{weakwelldef} with the fact that the gluing map is well-defined on the $G_2$ moduli space (Theorem \ref{g2gluingwelldef}). 

We begin by defining $\G_{G_2}^{S^1} \subset \tilde \G_{G_2}^{S^1}$: our definition, Definition \ref{defin:g2gluingmodspace}, is adapted from Nordstr\"om  \cite[Definition 2.4]{nordstromgluing}. From Theorem \ref{g2structuregluinggeneral}, we know that any pair of $S^1$-invariant $G_2$ structures glues for gluing parameter $T>T_0$ for some $T_0$, and $T_0$ is upper semi-continuous in the pair of structures. \cite[Proposition 4.4]{nordstromgluing} says that the derivative of the gluing map between moduli spaces of $G_2$ structures (and hence of $S^1$-invariant $G_2$ structures) is an isomorphism for $T>T'_0$ for some, possibly larger, $T'_0$. The proof of \cite[Theorem 3.1]{nordstromgluing} enables us to infer that $T'_0$ is also upper semi-continuous in the structures: in order to prove Theorem B, that the gluing map between moduli spaces of Calabi-Yau structures is a local diffeomorphism, we would like to have $T>T'_0$ as well. Therefore we make
\begin{defin}[cf. {\cite[Definition 2.4]{nordstromgluing}}]
\label{defin:g2gluingmodspace}
Let $\G_{G_2}^{S^1} \subset \tilde \G_{G_2}^{S^1}$  be the subset of $G_2$ gluing data classes that have a representative $(\phi_1, \phi_2, T)$ with $T$ large enough that $\phi_1$ and $\phi_2$ can be glued with parameter $T$ in the sense of Theorem \ref{g2structuregluinggeneral} and the derivative of the gluing map is an isomorphism at the triple $(\phi_1, \phi_2, T)$. 
\end{defin}
We see that $\G_{G_2}^{S^1}$ is an open subset of $\tilde \G_{G_2}^{S^1}$. We may then define a gluing map from $\G_{G_2}^{S^1}$ in the obvious way. Note that as $T$ is now varying, we cannot sensibly use $M^T$ for the glued manifold as in Theorem \ref{g2structuregluinggeneral}. We shall call it $M$. 
\begin{defin}
\label{defin:g2modspacegluingmap}
The gluing map $\G_{G_2}^{S^1}$ to $\M_{G_2}^{S^1}(M \times S^1)$ is defined as follows. Given a class in $\G_{G_2}^{S^1}$, by definition it admits a representative $(\phi_1, \phi_2, T)$ that glues in the sense of Theorem \ref{g2structuregluinggeneral} (and by the proof of Theorem \ref{su3structuregluing} the resulting structure is $S^1$-invariant). Take the class of the result in $\M_{G_2}^{S^1}(M \times S^1)$.
\end{defin}
There is likely to be more than one such representative, but Nordstr\"om has shown 
\begin{thm}[cf. {\cite[{Proposition 4.1}]{nordstromgluing}}]
\label{g2gluingwelldef}
The map $\G^{S^1}_{G_2} \to \M^{S^1}_{G_2}$ given by Definition \ref{defin:g2modspacegluingmap} is well-defined.
\end{thm}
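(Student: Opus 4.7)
The plan is to reduce well-definition to two claims: (a) for a fixed representative $(\phi_1, \phi_2, T)$ of a class in $\G_{G_2}^{S^1}$, the glued structure $\phi^T + d\eta$ defines a moduli class independent of the auxiliary choices in Definition \ref{comdefin:matchingforms} and Theorem \ref{g2structuregluinggeneral}; and (b) two representatives related by the action \eqref{eq:matchingaction} of a matching pair isotopic to the identity yield moduli-equivalent glued structures.

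Claim (a) is almost immediate. The perturbation $d\eta$ is unique of its size given $\phi^T$ and $\hat\phi^T$ by Theorem \ref{g2structuregluinggeneral}; and the freedom in the cutoff $\psi_T$ and the primitives $\beta_i$ in Definition \ref{comdefin:matchingforms} alters $\phi^T$ and $\hat\phi^T$ only by compactly supported exact forms in the neck. The resulting perturbed torsion-free $G_2$ structures are therefore cohomologous and, by Theorem \ref{g2slicetheorem}, moduli-equivalent for $T$ large.

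For claim (b), let $(\Phi_1, \Phi_2)$ be a matching pair of $S^1$-invariant diffeomorphisms isotopic to the identity, with common cross-section limit $\tilde\Phi$ and translation limits $L_1, L_2$, and set $T' = T - \tfrac12(L_1 + L_2)$. The central step is to construct an $S^1$-invariant diffeomorphism $\Psi \in \Diff_0^{S^1}(M^{T'} \times S^1)$ pulling back the glued structure associated to the second representative to that of the first. Using the isotopy of matching pairs (Definition \ref{comdefin:matchingdiffeo}), replace $(\Phi_1, \Phi_2)$ by an equivalent pair that, past some large $t_0$ on each end, equals its asymptotic form $(n, t) \mapsto (\tilde\Phi(n), t+L_i)$. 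The matching condition on $\tilde\Phi$ together with the shift $T \mapsto T'$ is chosen precisely so that the modified $\Phi_1$ and $\Phi_2$ become compatible at the identified overlap near $t = T'$ of $M^{T'}$, and therefore glue to a genuine $S^1$-invariant diffeomorphism $\Psi$ of $M^{T'} \times S^1$; the isotopy of matching pairs to the identity glues to a path in $\Diff_0^{S^1}$ from the identity to $\Psi$.

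Under $\Psi$, the approximate gluing $\gamma_{T'}(\Phi_1^*\phi_1, \Phi_2^*\phi_2)$ pulls back to $\gamma_T(\phi_1, \phi_2)$ up to a compactly supported exact correction arising from the choice of primitives, and similarly for the four-form $\hat\phi^{T'}$. The torsion-removal equation \eqref{eq:g2gluingequation} is then $\Psi$-equivariant up to an exact error absorbable as in claim (a), so the uniqueness of $d\eta$ in Theorem \ref{g2structuregluinggeneral} identifies the two resulting glued structures up to the action of $\Psi$, giving equal moduli classes. The main obstacle is the construction of $\Psi$: arranging the identifications at the overlap so that the translation parameters $L_i$ and the shift in gluing length combine consistently requires a precise bookkeeping of the diffeomorphism $F$ and the cutoff region, and verifying that the translations and cross-section action interact correctly with the $S^1$-invariance and with the isotopy of matching pairs to the identity.
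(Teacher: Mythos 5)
Your proposal takes a genuinely different route from the paper. The paper's proof of this theorem is a one-step transfer: it quotes Nordstr\"om's Proposition 4.1, which is precisely the well-definedness of the analogous map $\G_{G_2} \to \M_{G_2}$, and observes that since $\G_{G_2}^{S^1}$ and $\M_{G_2}^{S^1}$ are locally diffeomorphic to the non-invariant spaces (Theorem \ref{mg2s1mfd} and the gluing-data analysis) and the gluing map is literally the same map -- gluing $S^1$-invariant structures produces an $S^1$-invariant structure by the proof of Theorem \ref{su3structuregluing} -- the $S^1$-invariant statement is inherited. What you propose is instead a direct re-proof: patching the matching pair $(\Phi_1,\Phi_2)$ into a diffeomorphism $\Psi$ of the glued manifold and comparing the two perturbed structures is essentially a reconstruction of Nordstr\"om's own argument. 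That buys self-containedness, at the cost of redoing the delicate bookkeeping with the translation limits $L_i$, the shift $T \mapsto T'$, the $S^1$-invariance of the interpolation, and the fact that $\Phi_i$ only decays to, and does not equal, its cylindrical model.

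If you pursue your route, two steps need repair. First, the final identification cannot rest on the uniqueness clause of Theorem \ref{g2structuregluinggeneral}: that uniqueness concerns solutions of \eqref{eq:g2gluingequation} for the \emph{given} $\phi^T$ and $\hat\phi^T$, whereas $\Psi^*$ of one glued structure is only approximately of the form $\phi^{T}+d\eta$ -- the interpolation of the $\Phi_i$ and the change of cutoff produce errors which are small but are not solutions of the same equation with the same data. The correct mechanism is the one you already use in claim (a): the two torsion-free structures are cohomologous and close, hence lie in a common slice chart and are identified by the local injectivity of the period map in Theorem \ref{g2slicetheorem} (with the identifying diffeomorphism seen to lie in $\Diff_0^{S^1}$ as elsewhere in the paper). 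Second, in claim (a) itself "cohomologous" alone does not give moduli equivalence; you must add the closeness estimate, which holds provided the primitives are taken exponentially decaying as produced by Lemma \ref{endexactness}, so that changing cutoff or primitive alters $\phi^T$ and $\hat\phi^T$ only by terms exponentially small in $T$. With these repairs your argument goes through, but it amounts to re-proving the cited $G_2$ result rather than transferring it as the paper does.
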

Of course, Nordstr\"om proved that the corresponding map $\G_{G_2} \to \M_{G_2}$, with $\G_{G_2}$ defined analogously to $\G_{G_2}^{S^1}$, was well-defined: since both $\G_{G_2}$ and $\M_{G_2}$ are locally diffeomorphic to the corresponding $S^1$-invariant spaces and the map is defined identically, the $S^1$-invariant result immediately follows.

The most natural definition of $\G_{SU(3)}$ would be to take those classes of Calabi-Yau gluing data that have representatives $(\Omega_1, \omega_1, \Omega_2, \omega_2, T)$ that glue using the inclusions $\iota_{[z_1], [z_2]}$ of Definition \ref{defin:iotaontildeg}. However, it is possible that the required $T$ may depend on $z_i$. We will therefore work initially with subsets depending on the class $([z_1], [z_2]) \in \B_Z$, but we will then take the union to define our space $\G_{SU(3)}$, and check that the gluing map is still well-defined. First, we make
\begin{defin}
\label{defin:su3gluingmodspaces}
Let
\begin{align}
\notag\G_{SU(3), ([z_1], [z_2])} &= \iota_{[z_1], [z_2]}^{-1}(\G_{G_2}^{S^1}) \\
&=\{[\Omega_1, \omega_1, \Omega_2, \omega_2, T]: [\Re \Omega_1 + z_1 \wedge \omega_1, \Re \Omega_2 + z_2 \wedge \omega_2, T] \in \G_{G_2}^{S^1}\}
\end{align}
\end{defin}
$\G_{SU(3), ([z_1], [z_2])}$ is the inverse image of an open subset under a continuous map so open. Note that, for any choice of $([z_1], [z_2])$, every class of $\hat G_{SU(3)}$ is included in $\G_{SU(3), ([z_1], [z_2])}$ for sufficiently large $T$,  because every pair of matching $S^1$-invariant $G_2$ structures glues and the derivative is an isomorphism for $T$ sufficiently large. 

We now define a family of gluing maps:
\begin{defin}
\label{defin:su3modspacegluingmaps}
Define the gluing map on the space of gluing data $\G_{SU(3), ([z_1], [z_2])}$ given by Definition \ref{defin:su3gluingmodspaces} by the composition
\begin{equation}
\G_{SU(3), ([z_1], [z_2])} \to \G_{G_2}^{S^1} \to \M_{G_2}^{S^1} \to \M_{SU(3)}
\end{equation}
where the first map is the inclusion $\iota_{[z_1], [z_2]}$, the second map is the gluing map of Definition \ref{defin:g2modspacegluingmap}, and the third map is the appropriate projection of Theorem \ref{smoothproduct}. 
\end{defin}
Rather than a family of spaces of gluing data and corresponding gluing maps, we would like a single space with a single gluing map. We make
\begin{defin}
Let 
\begin{equation}
\G_{SU(3)} = \bigcup_{([z_1], [z_2]) \in \B_Z} \G_{SU(3), ([z_1], [z_2])}
\end{equation}
\end{defin}
$\G_{SU(3)}$ is also an open subset of $\tilde \G_{SU(3)}$, and so a manifold. We can define a gluing map on it in the natural way
\begin{defin}
\label{defin:su3modspacegluingmap}
Define the gluing map $\G_{SU(3)} \to \M_{SU(3)}$ by taking the map of Definition \ref{defin:su3modspacegluingmaps} on each of the open subsets $\G_{SU(3), ([z_1], [z_2])}$. 
\end{defin}
The gluing map of Definition \ref{defin:su3modspacegluingmap} is not a priori well-defined. If we are given a class $[\Omega_1, \omega_1, \Omega_2, \omega_2, T]$ of $\G_{SU(3)}$, there may be a pair of pairs $([z_1], [z_2])$ and $([z'_1], [z'_2])$ such that $\iota_{[z_1], [z_2]}([\Omega_1, \omega_1, \Omega_2, \omega_2, T])$ and $\iota_{[z'_1, z'_2]}([\Omega_1, \omega_1, \Omega_2, \omega_2, T])$ both lie in $\G_{G_2}^{S^1}$. We have to show that under the two maps of Definition \ref{defin:su3modspacegluingmaps} $[\Omega_1, \omega_1, \Omega_2, \omega_2, T]$ has the same image.

We have already proved Proposition \ref{weakwelldef}, which says that the $\M_{SU(3)}$ components of the gluing of the two pairs $(\Omega_1 + z_1 \wedge \omega_1, \Omega_2 + z_2 \wedge \omega_2, T+S)$ and $(\Omega_1 + z'_1 \wedge \omega_1, \Omega_2 + z'_2 \wedge \omega_2, T+S)$ are equal for $S$ large enough. We now have two distinct classes in $\G_{G_2}^{S^1}$ corresponding to our two inclusions. For each of these classes, there exist representatives that glue, but we know nothing about how the representatives for the different classes are related. However, for $S$ large enough, we know that the explicit representatives $(\Omega_1 + z_1 \wedge \omega_1, \Omega_2 + z_2 \wedge \omega_2, T+S)$ and $(\Omega_1 + z'_1 \wedge \omega_1, \Omega_2 + z'_2 \wedge \omega_2, T+S)$ glue, and we may apply Proposition \ref{weakwelldef} to deduce that these have the same $\M_{SU(3)}$ component. We will show that the equality of the $\M_{SU(3)}$ component is independent of increasing the gluing parameter, so that although $(\Omega_1 + z_1 \wedge \omega_1, \Omega_2 + z_2 \wedge \omega_2, T)$ and $(\Omega_1 + z'_1 \wedge \omega_1, \Omega_2 + z'_2 \wedge \omega_2, T)$ may not glue, the result of gluing the classes $[\Omega_1 + z_1 \wedge \omega_1, \Omega_2 + z_2 \wedge \omega_2, T]$ and $[\Omega_1 + z'_1 \wedge \omega_1, \Omega_2 + z'_2 \wedge \omega_2, T]$ must also have the same $\M_{SU(3)}$ component. 
\begin{lem}
\label{reducingslemma}
Suppose that $(\Omega_i, \omega_i, T)$ and $(\Omega'_i, \omega'_i, T')$ are representatives of the same class in $\tilde \G_{SU(3)}$ and for matching pairs of twistings $z_i$ and $z'_i$, possibly defining different twisting classes, the resulting representatives $(\Re \Omega_i + z_i \wedge \omega_i, T)$ and $(\Re \Omega'_i + z'_i \wedge \omega'_i, T')$ for the corresponding classes of $\tilde \G_{G_2}^{S^1}$ glue and they continue to glue if the parameters $T$ and $T'$ are increased. Suppose that there exists $S>0$ such that the results of gluing $(\Re \Omega_i + z_i \wedge \omega_i, T+S)$ and $(\Re \Omega'_i + z'_i \wedge \omega'_i, T'+S)$ have the same $\M_{SU(3)}$ component. Then so too do the results of gluing $(\Re \Omega_i + z_i \wedge \omega_i, T)$ and $(\Re \Omega'_i + z'_i \wedge \omega'_i, T')$. 
\end{lem}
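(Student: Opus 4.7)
The plan is to establish, as a mild strengthening of Proposition \ref{lengthstructurebehaves}, the following: for fixed matching Calabi--Yau data and a fixed matching pair of twistings, the $\M_{SU(3)}$ component of the glued structure is constant as the gluing parameter varies over any interval on which gluing is possible. Granted this, the lemma is immediate, by chaining
\begin{equation*}
\M_{SU(3)}\text{-part at } T \;=\; \M_{SU(3)}\text{-part at } T+S \;=\; \M_{SU(3)}\text{-part at } T'+S \;=\; \M_{SU(3)}\text{-part at } T',
\end{equation*}
where the outer equalities come from the strengthened statement applied to the data $(\Omega_i, \omega_i, z_i)$ and $(\Omega'_i, \omega'_i, z'_i)$ respectively, and the middle equality is the hypothesis of the lemma.

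For the strengthened statement, I will essentially rerun the proof of Proposition \ref{weakwelldef}, replacing the path through twistings with the path $s \mapsto T+s$ in the gluing parameter. By assumption, the triples $(\Re \Omega_i + z_i \wedge \omega_i, T+s)$ glue for all $s \in [0,S]$; Theorem \ref{g2structuregluinggeneral} then yields a continuous curve of torsion-free $S^1$-invariant $G_2$ structures, which Proposition \ref{g2su3prop1} splits as $\Re \Omega(s) + z(s) \wedge \omega(s)$ with $(\Omega(s), \omega(s))$ and $z(s)$ continuous. Proposition \ref{gluingcohombehaves} then gives, for some continuous positive $c(s)$,
\begin{equation*}
[\Re \Omega(s)] = [\gamma_{T+s}(\Re \Omega_1, \Re \Omega_2)], \qquad [\omega(s)] = \tfrac{1}{c(s)}[\gamma_{T+s}(\omega_1, \omega_2)].
\end{equation*}
Once the right-hand sides are identified with $s$-independent cohomology classes (under the canonical diffeomorphism between the glued manifolds for different $T$), Lemma \ref{ccontrol} forces $c$ to be locally constant and hence constant on $[0,S]$, so $[\Re \Omega(s)]$ and $[\omega(s)]$ are both constant; Proposition \ref{su3cohomrep} then yields constancy of the $\M_{SU(3)}$ class, completing the strengthened statement.

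The main obstacle is the cohomology identification in the previous paragraph. For the analogous statement about $\gamma_{T+s}(z_1, z_2)$ in Proposition \ref{lengthstructurebehaves}, the boundary condition $\tilde z_i(\ddt) = 0$ permitted a direct appeal to \cite[Proposition 3.2]{nordstromgluing}; here the limits $\Re \tilde\Omega_i$ and $\tilde\omega_i$ generally have nonzero $dt$ components, so that reference does not apply verbatim. I plan to do the identification by hand. Write $\Re \Omega_i - \Re \tilde\Omega_i = d\beta_i$ and $\omega_i - \tilde\omega_i = d\gamma_i$ on the end, using Lemma \ref{endexactness}. Under the canonical neck-shortening diffeomorphism $M^{T+s} \to M^T$ (identity on the compact part, translation by $-s$ on the end), the pullback of the $\gamma_{T+s}$-cutoff of $\Re \Omega_i$ differs from the $\gamma_T$-cutoff by the differential of a form of the shape $(\psi_{T+s}^{\mathrm{pb}} - \psi_T)\beta_i$, which is exact and supported away from the identification region on $M^T$. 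The same computation for $\omega_i$ closes the argument, even though the limits $\Re \tilde\Omega_i$ and $\tilde\omega_i$ themselves have $dt$ components, because those components sit inside $\tilde\alpha_i$ and cancel when one passes to the potential $\beta_i$.
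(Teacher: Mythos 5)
Your reduction rests on a ``strengthened Proposition \ref{lengthstructurebehaves}'': that for fixed matching Calabi--Yau data and fixed matching twistings, the $\M_{SU(3)}$ component of the glued class is constant in the gluing parameter. That statement is false in general, and the by-hand cohomology identification you offer for it does not work. The class $[\gamma_{T+s}(\Re\Omega_1,\Re\Omega_2)]$ (and likewise for $\omega$) genuinely depends on $s$: the glued form equals the translation-invariant limit on the neck, and the limits $\Re\tilde\Omega_i = \Re\tilde\Omega_{i,1}+dt\wedge\Re\tilde\Omega_{i,2}$, $\tilde\omega_i = \tilde\omega_{i,1}+dt\wedge\tilde\omega_{i,2}$ have nonzero $dt$-components (they must, since $(\Re\tilde\Omega_{i,2},\tilde\omega_{i,2})$ are part of a Calabi--Yau structure on the cross-section). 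Your ``identity on the compact part, translation on the end'' map is not a diffeomorphism without a transition region where $t$ is reparametrised, and in that region the pullback of the $dt\wedge(\cdot)$ part of the limit produces a primitive which does not decay on the far side of the transition; it cannot be absorbed into an exact term supported away from the identification region. The upshot, which is exactly the content of \cite[Proposition 3.2]{nordstromgluing} invoked in Proposition \ref{lengthstructurebehaves}, is that the glued class changes linearly in the neck length by the image of the limit's $dt$-component under the Mayer--Vietoris connecting map; this vanishes for twistings (no $dt$ term) but not for $\Re\Omega$ and $\omega$. Indeed, if your strengthened statement held, the $T\R$ direction of $T\G_{SU(3)}$ would map into $\R[z]\subset T_Z$ under the $G_2$ gluing derivative, which is already hit by a $T_Z$ vector, contradicting injectivity of that derivative (Theorem \ref{g2localdiffeoofmodspace}) and hence the conclusion of Theorem \ref{localdiffeoofmodspace}; this is why Propositions \ref{su3structurebehaves} and \ref{lengthstructurebehaves} only assert proportionality of the $Z$ parts and say nothing about the $\M_{SU(3)}$ parts.

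The correct argument does not show either curve is constant; it compares the two curves to each other. Since $(\Omega_i,\omega_i,T)$ and $(\Omega'_i,\omega'_i,T')$ represent the same class of $\tilde\G_{SU(3)}$, the structures have the same cohomology and the same limit cohomology, so although $[\gamma_{T+s}(\Re\Omega_1,\Re\Omega_2)]$ and $[\gamma_{T'+s}(\Re\Omega'_1,\Re\Omega'_2)]$ each vary with $s$, they vary in the same way (the variation being determined by Mayer--Vietoris and the limit classes), and agreement at $s=S$ forces agreement for all $s$; the same holds for the $\omega$ classes after using restriction to $M_1$ and nondegeneracy of the K\"ahler class to split off the scaling factors, giving $c(S)=c'(S)$. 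One then uses Lemma \ref{ccontrol} together with continuity and connectedness of $[0,S]$ to propagate $c(s)=c'(s)$, hence equality of the two curves in $\M_{SU(3)}$ (via Proposition \ref{su3cohomrep}), down to $s=0$. Your proposal is missing this comparison step entirely, and no local modification of your neck-shortening computation can supply it.
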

\begin{proof}
We have two curves in $\M_{G_2}^{S^1}$, defined on $[0, S]$ by gluing the curves $s \mapsto (\Re \Omega_1 + z_1 \wedge \omega_1, \Re \Omega_2 + z_2 \wedge \omega_2, T+s)$ and $s \mapsto s(\Re \Omega'_1 + z'_1 \wedge \omega'_1, \Re \Omega'_2 + z'_2 \wedge \omega'_2, T'+s)$. We consider the projection of these to $\M_{SU(3)}$, and call them $([\Omega(s), \omega(s)])$ and $([\Omega'(s), \omega'(s)])$. By Proposition \ref{su3cohomrep}, $\M_{SU(3)}$ is locally represented by the cohomology of $\Re \Omega$ and $\omega$, and so these curves are determined by their values at a point and the corresponding curves of cohomology classes. Now these cohomology classes are
\begin{equation}
[\gamma_{T+s}(\Re \Omega_1, \Re \Omega_2)] \text{ and } c(s)[\gamma_{T+s}(\omega_1, \omega_2)]
\end{equation}
and the same with primes, where $\gamma_T$ is the gluing map of Definition \ref{comdefin:matchingforms}, and the functions $c$ and $c'$ are as in Proposition \ref{gluingcohombehaves}. 

By assumption, $[\Omega(S), \omega(S)] = [\Omega'(S), \omega'(S)]$. In particular, we have the same for the cohomology classes:
\begin{align}
\label{eq:increasingTfirstOm}
[\gamma_{T+S}(\Re \Omega_1, \Re \Omega_2)] &= [\gamma_{T'+S}(\Re \Omega'_1, \Re \Omega'_2)] \\
\label{eq:increasingTfirstom}
c(s)[\gamma_{T+S}(\omega_1, \omega_2)] &= c'(s)[\gamma_{T'+s}(\omega'_1, \omega'_2)]
\end{align}
As $\Omega_i, \omega_i$ and $\Omega'_i, \omega'_i$ define the same Calabi-Yau class, they have agreeing cohomology and agreeing limit cohomology, it follows that $[\gamma_{T+s}(\Re \Omega_1, \Re \Omega_2)] = [\gamma_{T'+s}(\Re \Omega'_1, \Re \Omega'_2)]$ for all $s$ (because they agree at $s=S$ and the change as we reduce $s$ are given by the Mayer-Vietoris sequence and the limit cohomology; see, for example, \cite[Proposition 3.2]{nordstromgluing}). 

The K\"ahler parts are complicated slightly by the functions $c$ and $c'$. Under the restriction map to the cohomology of $M_1$, $[\gamma_{T+S}(\omega_1, \omega_2)]$ and $[\gamma_{T'+S}(\omega'_1, \omega'_2)]$ give $[\omega_1]$ and $[\omega'_1]$, respectively, which are equal by assumption and nonzero by non-degeneracy of the K\"ahler form. Thus from \eqref{eq:increasingTfirstom} we see that 
\begin{equation}
[\gamma_{T+S}(\omega_1, \omega_2)] = [\gamma_{T'+S}(\omega'_1, \omega'_2)] \qquad c(S) = c'(S)
\end{equation}
The same argument as in the previous paragraph then shows the equality of the cohomology classes over the whole curve. Now Lemma \ref{ccontrol} shows that there exists $\epsilon$ such that $c(s) = c'(s)$ for  $s>S-\epsilon$. Since the cohomology represents $\M_{SU(3)}$ locally homeomorphically we get that the curves in $\M_{SU(3)}$ agree for $s>S-\epsilon$; by continuity they also agree at $s=S-\epsilon$. Generalising the above argument, and using the connectedness of $[0, S]$, it follows that the curves in $\M_{SU(3)}$ agree at $s=0$. 
\end{proof}
As envisaged, Lemma \ref{reducingslemma} enables us to prove a far stronger well-definition result than Proposition \ref{weakwelldef}. 
\begin{prop}
\label{prop:strongwelldef}
The gluing map $\G_{SU(3)} \to \M_{SU(3)}$ of Definition \ref{defin:su3modspacegluingmap} is well-defined.
\end{prop}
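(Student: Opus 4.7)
Proof plan: The plan is to reduce Proposition \ref{prop:strongwelldef} to Lemma \ref{reducingslemma}, using Proposition \ref{weakwelldef} to verify its $\M_{SU(3)}$-agreement hypothesis at an enlarged neck-length parameter, and invoking Theorem \ref{g2gluingwelldef} to manage the distinction between glueability of abstract classes in $\G_{G_2}^{S^1}$ and of canonical triples. Fix a class $\xi = [\Omega_1, \omega_1, \Omega_2, \omega_2, T] \in \G_{SU(3)}$ together with two twisting-class pairs $([z_1], [z_2]), ([z'_1], [z'_2]) \in \B_Z$ each placing $\xi$ in the corresponding $\G_{SU(3), (\cdot)}$, and matching representatives $z_i, z'_i$. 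We must show that the two images in $\M_{SU(3)}$ produced by the pipeline of Definition \ref{defin:su3modspacegluingmaps} coincide.

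First I would use path-connectedness of $\B_Z$ (noted after Definition \ref{bzdef}) to pick a smooth path in matching twistings connecting $(z_1, z_2)$ to $(z'_1, z'_2)$. Applying Proposition \ref{weakwelldef} to this path produces a threshold $T^\sharp \geq T$ such that at neck length $T^\sharp$ the canonical triples $(\Re \Omega_i + z_i \wedge \omega_i, T^\sharp)$ and $(\Re \Omega_i + z'_i \wedge \omega_i, T^\sharp)$ both glue and their gluings have agreeing $\M_{SU(3)}$-components; enlarging $T^\sharp$ using upper semi-continuity from Theorem \ref{g2structuregluinggeneral} we may further ensure glueability persists for every parameter beyond $T^\sharp$. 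Setting $S = T^\sharp - T$, this supplies the large-parameter agreement hypothesis of Lemma \ref{reducingslemma}.

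The remaining hypothesis of Lemma \ref{reducingslemma} demands that the canonical triples at neck length $T$ themselves glue and continue to glue as the parameter is increased. Since $\xi \in \G_{SU(3), ([z_i])}$ means by definition that $[\Re \Omega_i + z_i \wedge \omega_i, T] \in \G_{G_2}^{S^1}$, some representative of this $\tilde \G_{G_2}^{S^1}$-class is glueable; by Theorem \ref{g2gluingwelldef} the $\M_{G_2}^{S^1}$-image of the $G_2$ gluing map depends only on the class, so one may replace the canonical triple by a glueable representative within the same class without altering the final $\M_{SU(3)}$-projection computed via Definition \ref{defin:su3modspacegluingmaps}. Running the argument with the resulting glueable representatives fulfils the first hypothesis of Lemma \ref{reducingslemma}, whose conclusion then propagates the $\M_{SU(3)}$-agreement at $T^\sharp$ established above back to the original neck length $T$, which is precisely the claim of well-definition.

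The main obstacle is the careful bookkeeping around the switch from canonical triples to glueable representatives of the same $\tilde \G_{G_2}^{S^1}$-class: the $\R$-action of Definition \ref{comdefin:matchingdiffeo} simultaneously modifies the structures and shifts the $T$-parameter, so one must check that the resulting manipulations respect both the matching condition and the continuity of the gluing output under the $\M_{G_2}^{S^1} \to \M_{SU(3)}$ projection of Theorem \ref{smoothproduct}. None of this requires new machinery beyond what has already been assembled in this section, but it needs attentive handling of limits and cutoffs to ensure Lemma \ref{reducingslemma} is applied to genuinely equivalent gluing data.
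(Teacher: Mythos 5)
Your proposal is correct and follows essentially the same route as the paper: establish agreement of the $\M_{SU(3)}$ components at a sufficiently large neck parameter via Proposition \ref{weakwelldef}, use Theorem \ref{g2gluingwelldef} to pass freely between glueable representatives of the same class in $\tilde\G_{G_2}^{S^1}$, and then descend to the original parameter with Lemma \ref{reducingslemma}. The only difference is bookkeeping: the paper applies Proposition \ref{weakwelldef} to the glueable representatives after pulling the second pair of twistings back by the matching diffeomorphisms relating them (using the affine action on $T$), whereas you apply it to the canonical triples and invoke Theorem \ref{g2gluingwelldef} once more to transfer the agreement, which is equally valid.
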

\begin{proof}
Suppose that $[\Omega_1, \omega_1, \Omega_2, \omega_2, T]$ is a class of $\G_{SU(3)}$ and there exist two twisting class pairs $[z_1, z_2]$ and $[z'_1, z'_2]$ such that the classes $[\Re \Omega_i + z_i \wedge \omega_i, T]$ and $[\Re \Omega_i + z'_i \wedge \omega_i, T]$ both lie in $\G_{G_2}^{S^1}$. That is, there are representatives
\begin{equation}
(\Re \Omega_1 + z_1 \wedge \omega_1, \Re \Omega_2 + z_2 \wedge \omega_2, T) \text{ and } (\Re \Omega'_1 + z'_1 \wedge \omega'_1, \Re \Omega'_2 + z'_2 \wedge \omega'_2, T')
\end{equation}
both of which glue and continue to glue if $T$ and $T'$ are increased.

Now take $(\Phi_1, \Phi_2)$ to be a matching pair of diffeomorphisms of $M_1$ and $M_2$ isotopic to the identity pulling back $(\Omega'_i, \omega'_i, T')$ to $(\Omega_i, \omega_i, T)$, which represents the same $\tilde G_{SU(3)}$ class, by construction. Then there exists $S$ sufficiently large so that
\begin{equation}\label{eq:strongwelldefstar}
(\Re \Omega_1 + \Phi^*_1 z'_1 \wedge \omega_1, \Re \Omega_2 + \Phi^*_2 z'_2 \wedge \omega_2, T+S)
\end{equation}
also glues. Since the action of $(\Phi_1, \Phi_2)$ is affine on the gluing parameter, gluing \eqref{eq:strongwelldefstar} defines the same class of $\M_{G_2}^{S^1}$ as gluing
\begin{equation}\label{eq:strongwelldefdagger}
(\Re \Omega'_1 + z'_1 \wedge \omega'_1, \Re \Omega'_2 + z'_2 \wedge \omega'_2, T'+S)
\end{equation}
By Theorem \ref{g2gluingwelldef}, the results of gluing these two are thus the same. 

Also, however, by Proposition \ref{weakwelldef}, and possibly increasing $S$ some more, the result of gluing \eqref{eq:strongwelldefstar} has the same $\M_{SU(3)}$ component as the result of gluing 
\begin{equation}\label{eq:strongwelldefddagger}
(\Re \Omega_1 + z_1 \wedge \omega_1, \Re \Omega_2 + z_2 \wedge \omega_2, T+S)
\end{equation}
Thus we have that the $\M_{SU(3)}$ component giving by gluing \eqref{eq:strongwelldefddagger} is the same as for gluing \eqref{eq:strongwelldefdagger}. By Lemma \ref{reducingslemma}, we can then reduce to $S$ to zero, which proves the proposition. 
\end{proof}
\begin{rmk}
The proof of Proposition \ref{prop:strongwelldef} is closely allied to Nordstr\"om's proof of the $G_2$ theorem, Theorem \ref{g2gluingwelldef}, which again works by taking a curve for far larger gluing parameter and arguing on cohomology, and then increasing the gluing parameter in a controlled fashion. It would not be that hard to combine the two proofs (essentially proving Proposition \ref{weakwelldef} in a representation-independent way).
\end{rmk}
\subsection{The main theorem}
\label{ssec:finalssec}\label{ssec:localdiffeoofmodspace}
We now turn to Theorem B, which is that the gluing map is a local diffeomorphism of moduli spaces, and is Theorem \ref{localdiffeoofmodspace} below. The idea is that by the previous work we locally have
\begin{equation}
\label{eq:gsplitting}
\begin{aligned}
\G_{SU(3)} =& A(\Omega_1, \omega_1, \Omega_2, \omega_2) \times \R_{>0} \times \B_{SU(3)} \\\subset& A(\Re \Omega_1 + z_1 \wedge \omega_1, \Re \Omega_2 + z_2 \wedge \omega_2) \times \R_{>0} \times \B_{G_2}^{S^1} = \G_{G_2}^{S^1}
\end{aligned}
\end{equation}
we have by Theorem A (Theorem \ref{maintheorema})
that
\begin{equation}
\label{eq:msplitting}
\M_{G_2}^{S^1} = \M_{SU(3)} \times Z
\end{equation}
and by Nordstr\"om's work \cite{nordstromgluing} we understand the gluing map defined (by Definition \ref{defin:g2modspacegluingmap} below) on $\G_{G_2}^{S^1}$ and its derivative, which is an isomorphism. We thus just have to consider what happens in terms of the splittings in \eqref{eq:gsplitting} and \eqref{eq:msplitting}. We consider the $\B_Z$ part, $\B_{SU(3)}$ part, the $A$ part, and the $\R_{>0}$ part in turn, showing that variations in $\B_Z$ lead to variations in the $Z$ component of $\M_{SU(3)} \times Z$ but variations in the other three parts lead to variations in $\M_{SU(3)}$ and perhaps rescaling of the $Z$ component. It then follows easily that the composition defined by Definition \ref{defin:su3modspacegluingmap} also has derivative an isomorphism, and Theorem B then follows from the inverse function theorem. 

Formally, we should first note Nordstr\"om's result for the $G_2$ case, which says (\cite[Theorem 2.3]{nordstromgluing}; see also our Definition \ref{defin:g2gluingmodspace}) that the map
\begin{equation}
\G_{G_2} \to \M_{G_2}
\end{equation}
defined analogously to Definition \ref{defin:g2modspacegluingmap} is a local diffeomorphism. As we know that $\G_{G_2}$ and $\M_{G_2}$ are locally diffeomorphic to $\G_{G_2}^{S^1}$ and $\M_{G_2}^{S^1}$ respectively, and the gluing map commutes with these local diffeomorphisms as it maps $S^1$-invariant structures to $S^1$-invariant structures (proof of Theorem \ref{su3structuregluing}), Nordstr\"om's result implies
\begin{thm}
\label{g2localdiffeoofmodspace}
The gluing map 
\begin{equation}
\G_{G_2}^{S^1} \to \M_{G_2}^{S^1}
\end{equation}
defined in Definition \ref{defin:g2modspacegluingmap} has derivative an isomorphism, and is thus a local diffeomorphism. 
\end{thm}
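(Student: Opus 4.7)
The plan is to deduce this directly from Nordstr\"om's theorem \cite[Theorem 2.3]{nordstromgluing} for the non-equivariant case, by showing both sides of the gluing map in the $S^1$-invariant setting are locally diffeomorphic to the corresponding non-equivariant spaces and that the gluing maps commute with these local diffeomorphisms.

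First, on the moduli-space side, Theorem \ref{mg2s1mfd} already provides the local diffeomorphism $\M_{G_2}^{S^1} \to \M_{G_2}$. On the gluing-data side I would show that the natural map $\G_{G_2}^{S^1} \to \G_{G_2}$ (defined by forgetting the $S^1$-invariance restriction on the diffeomorphisms in the quotient) is a local diffeomorphism. Using Proposition \ref{g2ghatcharts} and its non-equivariant analogue in \cite{nordstromthesis}, this reduces to checking compatibility on each factor of the local product decomposition $A(\phi_1,\phi_2) \times \B_{G_2}^{S^1} \times \R$: the $\B$ factors compare via Theorem \ref{mg2s1mfd} applied to each factor of the matching pair; the gluing-parameter $\R$ is identical; and the $A$ factors agree locally around the identity by the argument in Proposition \ref{su3ghatmanifold}, using that any Killing field on a Ricci-flat $M \times S^1$ with $S^1$-invariant metric is $S^1$-invariant, hence splits as $X + c\ddth$ with $X$ a Killing field on $N$.

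Second, I would verify that the gluing map commutes with these local diffeomorphisms, i.e. that the square with top row $\G_{G_2}^{S^1} \to \M_{G_2}^{S^1}$ and bottom row $\G_{G_2} \to \M_{G_2}$ commutes. This is precisely the content of the proof of Theorem \ref{su3structuregluing}: if $(\phi_1,\phi_2,T)$ represents an $S^1$-invariant gluing-data class, the approximate glued form $\phi^T$ is $S^1$-invariant by $S^1$-equivariance of the cutoff, and the perturbation $d\eta$ solving \eqref{eq:g2gluingequation} is $S^1$-invariant by the uniqueness statement in Theorem \ref{g2structuregluinggeneral} together with $S^1$-equivariance of the equation (whose coefficients are built from the $S^1$-invariant data $\phi^T$ and $\hat\phi^T$). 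Hence the two compositions around the square produce the same class in $\M_{G_2}^{S^1}$.

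The conclusion is then formal: in a commutative square of continuous maps between manifolds in which the two vertical maps and the bottom horizontal map are local diffeomorphisms, the top map is a local diffeomorphism as well. The only subtlety is that the domain $\G_{G_2}^{S^1}$ of Definition \ref{defin:g2gluingmodspace} is cut out by the simultaneous conditions that the structures glue and that the derivative of the gluing map is an isomorphism; I would check that these conditions are preserved by the local diffeomorphism $\G_{G_2}^{S^1} \to \G_{G_2}$, but this is automatic because the commutativity just established identifies the two derivatives via the isomorphisms of tangent spaces coming from the vertical local diffeomorphisms. The main technical point — and the only place requiring genuine work beyond citing earlier results — is the construction of the local diffeomorphism $\G_{G_2}^{S^1} \to \G_{G_2}$, which is why the preceding subsections developed the $A$, $\hat\G$ and $\tilde\G$ spaces carefully in parallel with the $SU(3)$ case.
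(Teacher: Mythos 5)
Your proposal is correct and follows essentially the same route as the paper: the paper likewise deduces the theorem from Nordstr\"om's non-equivariant result \cite[Theorem 2.3]{nordstromgluing}, using that $\G_{G_2}$ and $\M_{G_2}$ are locally diffeomorphic to $\G_{G_2}^{S^1}$ and $\M_{G_2}^{S^1}$ and that the gluing map commutes with these identifications because gluing $S^1$-invariant structures yields an $S^1$-invariant structure (proof of Theorem \ref{su3structuregluing}). The only difference is that you spell out the local diffeomorphism $\G_{G_2}^{S^1}\to\G_{G_2}$ factor by factor via the $A\times\B\times\R$ decomposition, a step the paper asserts without detail, so your extra argument is consistent with (and a harmless elaboration of) the paper's proof.
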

We show
\begin{thm}
\label{maintheoremb}
\label{localdiffeoofmodspace}
The map
\begin{equation}
\G_{SU(3)} \to \G^{S^1}_{G_2} \to \M^{S^1}_{G_2} \to \M_{SU(3)}
\end{equation}
defined by Definition \ref{defin:su3modspacegluingmap} is also a local diffeomorphism. 
\end{thm}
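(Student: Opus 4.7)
The plan is to exploit the product decompositions established in Theorems \ref{maintheorema} and \ref{gluingdataspacesproduct} to write the Calabi-Yau gluing map as a composition that factors through the $G_2$ gluing map, and then to use the well-definition result Proposition \ref{prop:strongwelldef} to extract the desired local diffeomorphism property from Theorem \ref{g2localdiffeoofmodspace} via a block-matrix argument on the derivative.

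More precisely, I would fix $x_0 = [\Omega_1, \omega_1, \Omega_2, \omega_2, T] \in \G_{SU(3)}$ and a matching pair of twisting classes $y_0 = ([z_1], [z_2]) \in \B_Z$ for which $\iota_{y_0}(x_0) \in \G_{G_2}^{S^1}$. By Theorem \ref{gluingdataspacesproduct} together with the openness of $\G_{G_2}^{S^1} \subset \tilde\G_{G_2}^{S^1}$, a neighbourhood of $(x_0, y_0)$ in $\G_{SU(3)} \times \B_Z$ lies inside $\G_{G_2}^{S^1}$, and Theorem \ref{maintheorema} identifies $\M_{G_2}^{S^1} = \M_{SU(3)} \times Z$. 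In these coordinates the $G_2$ gluing map becomes a smooth map
\begin{equation*}
G: \G_{SU(3)} \times \B_Z \to \M_{SU(3)} \times Z
\end{equation*}
whose derivative at $(x_0, y_0)$ is an isomorphism by Theorem \ref{g2localdiffeoofmodspace}. Write it as a block matrix
\begin{equation*}
DG_{(x_0, y_0)} = \begin{pmatrix} A & B \\ C & D \end{pmatrix}: T_{x_0}\G_{SU(3)} \oplus T_{y_0}\B_Z \to T\M_{SU(3)} \oplus TZ.
\end{equation*}

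The key step is to show $C = 0$. For any tangent vector $\dot y \in T_{y_0}\B_Z$, I would realise $\dot y$ as the initial velocity of a short path $y_s$ in $\B_Z$ with $y_0 = y_0$, small enough that $(x_0, y_s) \in \G_{G_2}^{S^1}$ for all $s$. Proposition \ref{prop:strongwelldef} then says that the $\M_{SU(3)}$ component of $G(x_0, y_s)$ is independent of $s$, so differentiating yields $C(\dot y) = 0$. Since $DG_{(x_0, y_0)}$ is an isomorphism with vanishing off-diagonal block $C$, the diagonal blocks $A$ and $D$ must both be isomorphisms. But $A$ is precisely the derivative of the Calabi-Yau gluing map at $x_0$, because that map factors as
\begin{equation*}
\G_{SU(3)} \xrightarrow{\ \iota_{y_0}\ } \G_{G_2}^{S^1} \xrightarrow{\ \text{glue}\ } \M_{G_2}^{S^1} \xrightarrow{\ \pi_{\M_{SU(3)}}\ } \M_{SU(3)},
\end{equation*}
whose derivative at $x_0$ is $D\pi_{\M_{SU(3)}} \circ DG \circ D\iota_{y_0} = A$. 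The inverse function theorem then gives the local diffeomorphism.

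The main obstacle will be ensuring that the product decompositions of $\G_{G_2}^{S^1}$ and $\M_{G_2}^{S^1}$ are genuinely compatible with the $G_2$ gluing map in the smooth category, so that the block decomposition of $DG$ is meaningful; this requires assembling the smoothness statements in Theorem \ref{gluingdataspacesproduct} and Theorem \ref{maintheorema} with the smoothness of the $G_2$ gluing map of Definition \ref{defin:g2modspacegluingmap}. Beyond this bookkeeping, the only subtlety is upgrading Proposition \ref{prop:strongwelldef} (an equality-of-classes statement) to the infinitesimal statement $C = 0$; this is handled because every tangent vector in $T_{y_0}\B_Z$ is realised by an admissible path, and $\pi_{\M_{SU(3)}} \circ G$ is smooth. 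The factor $c$ of Proposition \ref{gluingcohombehaves} is what makes $D$ nontrivial (it couples $\B_Z$ variations to the scaling component of $Z$), but this does not affect the vanishing of $C$ and hence not our argument.
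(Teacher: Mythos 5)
Your overall strategy coincides with the paper's: pass to the product coordinates $\G_{G_2}^{S^1}\cong\G_{SU(3)}\times\B_Z$ (Theorem \ref{gluingdataspacesproduct}) and $\M_{G_2}^{S^1}\cong\M_{SU(3)}\times Z$ (Theorem \ref{maintheorema}), use well-definition of the Calabi--Yau gluing to kill the block sending $T\B_Z$ to $T\M_{SU(3)}$ (your ``$C=0$''; in the matrix convention you display this is the block $B$, and it is the derivative form of Proposition \ref{gluingztoz} -- your differentiation of Proposition \ref{prop:strongwelldef} along a path $y_s$ is legitimate), and then conclude from Theorem \ref{g2localdiffeoofmodspace}. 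The surjectivity half of your conclusion is sound and is exactly the paper's argument: with that block zero, surjectivity of $DG$ forces $A$ to be surjective onto $T\M_{SU(3)}$.

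The gap is the injectivity half. The assertion ``$DG$ is an isomorphism with one vanishing off-diagonal block, hence both diagonal blocks are isomorphisms'' is only valid when the diagonal blocks are square: for a map $V_1\oplus V_2\to W_1\oplus W_2$ with the $V_2\to W_1$ block zero, invertibility gives $A:V_1\to W_1$ surjective but not injective unless $\dim V_1=\dim W_1$ (the identity of $\R^3$, split as $\R^2\oplus\R\to\R\oplus\R^2$, has the relevant block zero and neither diagonal block invertible). So your argument needs $\dim\B_Z=\dim Z(M)$, equivalently $\dim\G_{SU(3)}=\dim\M_{SU(3)}$; this is true, but it requires a Mayer--Vietoris computation (using connectedness of the cross-section $N$) showing the space of matching pairs of twisting classes has dimension $1+b^1(M)$, which is neither carried out in your proposal nor available as a citable statement in the paper. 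The paper avoids any dimension count: Propositions \ref{gluingsu3tosu3} and \ref{gluingautostosu3}, together with the analogous statement for varying the neck length (all resting on Propositions \ref{su3structurebehaves}, \ref{lengthstructurebehaves}, \ref{gluingcohombehaves} and Lemma \ref{ccontrol}), show that every $\G_{SU(3)}$-direction is mapped by the $G_2$ gluing derivative into $T_{SU(3)}\oplus\R[z]$, and the ray $\R[z]$ is itself in the image of $T\B_Z$ (rescaling the twistings); hence a kernel vector of $A$ would produce a nonzero kernel vector of the full $G_2$ derivative, contradicting Theorem \ref{g2localdiffeoofmodspace}. To complete your proof you must either supply the Mayer--Vietoris dimension count or these finer mapping properties; as written, injectivity of the derivative is not established.
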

As the result is local, we work on an open subset $\G_{SU(3), ([z_1], [z_2])}$ as in Definition \ref{defin:su3gluingmodspaces}. We apply the inverse mapping theorem, as Nordstr\"om did to prove the corresponding result for $G_2$. He showed (in the proof of \cite[Proposition 4.4]{nordstromgluing}) that the map between harmonic forms given by the derivative is essentially the gluing map given by Definition \ref{comdefin:matchingforms}, and that whilst this gluing map isn't an isomorphism, it is injective, and a complement of its image can be obtained by varying $T$. We have to show the derivative remains an isomorphism when we pre- and post-compose the inclusion by $\iota_{[z_1], [z_2]}$ and the projection $\M_{G_2}^{S^1} \to \M_{SU(3)}$ of Theorem \ref{maintheorema}. 

We need to consider the tangent spaces of $\G_{SU(3), ([z_1], [z_2])}$, $\G^{S^1}_{G_2}$, $\M^{S^1}_{G_2}$ and $\M_{SU(3)}$ and how they are related to each other. By the work in subsection \ref{ssec:gluingmodspacesetup}, we essentially have the following
\begin{prop}
\label{gluingtangentsplitting}
The tangent space to $\G_{SU(3), ([z_1], [z_2])}$ is the direct sum $T_{SU(3)} \oplus TA \oplus T\R$, where $T_{SU(3)}$ is the inclusion of the tangent space to $\B_{SU(3)}$ and $A = A(\Omega_1, \omega_1, \Omega_2, \omega_2)$ is defined in Definition \ref{defin:spacea}. The tangent space to $\G_{G_2}^{S^1}$ is the direct sum $T_{SU(3)} \oplus T_Z \oplus TA \oplus T\R$, where $T_{SU(3)}$ is as before, $T_Z$ is the inclusion of the tangent space to $\B_Z$, and $A = A(\phi_1, \phi_2)$, and the inclusion of $T\G_{SU(3)}$ is given by the inclusion of the appropriate components (recalling that the two groups $A$ have the same tangent spaces by Proposition \ref{su3ghatmanifold}).

The tangent space to $\M_{G_2}^{S^1}$ is the direct sum of the tangent space to $\M_{SU(3)}$ and the tangent space to $Z$ at the corresponding points, and we write these as $T_{SU(3)}$ and $T_Z$. 
\end{prop}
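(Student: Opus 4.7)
The plan is to assemble the three tangent-space decompositions from the smooth structures already built in subsection \ref{ssec:gluingmodspacesetup} together with Theorem \ref{maintheorema}: this proposition is essentially a bookkeeping summary, packaging those results in a form suitable for the inverse-function-theorem argument needed for Theorem \ref{localdiffeoofmodspace}. The third assertion, $T\M_{G_2}^{S^1} = T\M_{SU(3)} \oplus T_Z$, is immediate from Theorem \ref{maintheorema}, which gives $\M_{G_2}^{S^1} = \M_{SU(3)} \times Z$ as a smooth product, so taking tangent spaces fibrewise yields the direct sum.

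For the first two decompositions I would exploit the principal $\R$-bundle structure of $\tilde\G_{G_2}^{S^1}$ and $\tilde\G_{SU(3)}$ over $\hat\G_{G_2}^{S^1}$ and $\hat\G_{SU(3)}$ respectively (the $\R$ factor being the gluing parameter $T$, as noted immediately after Definition \ref{defin:g2ghat}), combined with the local product charts from Propositions \ref{g2ghatcharts} and \ref{su3ghatcharts}. Since $\G_{G_2}^{S^1}$ and $\G_{SU(3),([z_1],[z_2])}$ are open subsets of their respective ambient spaces, their tangent spaces equal those of the ambient spaces, and unpacking the bundle and product structures gives
\begin{equation*}
T\G_{SU(3),([z_1],[z_2])} = T\B_{SU(3)} \oplus TA(\Omega_1,\omega_1,\Omega_2,\omega_2) \oplus T\R
\end{equation*}
and
\begin{equation*}
T\G_{G_2}^{S^1} = T\B_{G_2}^{S^1} \oplus TA(\phi_1,\phi_2) \oplus T\R.
\end{equation*}
Applying the product decomposition $\B_{G_2}^{S^1} = \B_{SU(3)} \times \B_Z$ from Proposition \ref{btheorem} to the $G_2$ expression produces the claimed $T_{SU(3)} \oplus T_Z \oplus TA \oplus T\R$.

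The last claim — that the inclusion $T\G_{SU(3),([z_1],[z_2])} \hookrightarrow T\G_{G_2}^{S^1}$ induced by $\iota_{[z_1],[z_2]}$ is the natural inclusion of the three shared components — is then a matter of differentiating the local expression for $\iota_{[z_1],[z_2]}$ worked out in the proof of Proposition \ref{su3ghatmanifold}. There it was shown that in the local product coordinates, $\iota_{[z_1],[z_2]}$ acts as the identity on the $A$ factor (under the natural identification of $A(\Omega_1,\omega_1,\Omega_2,\omega_2)$ with $A(\phi_1,\phi_2)$ at the class of the identity), as the identity on the gluing parameter, and as the inclusion $\B_{SU(3)} \hookrightarrow \B_{SU(3)} \times \B_Z = \B_{G_2}^{S^1}$ at the value $([z_1],[z_2])$ on the base factor. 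Taking derivatives yields exactly the asserted component-wise inclusion. No step here should present a genuine obstacle, since all analytic content — in particular, the identification of the two groups $A$ and the smoothness and product structure of the $\B$ spaces — was already dealt with in subsection \ref{ssec:gluingmodspacesetup}.
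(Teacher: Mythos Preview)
Your proposal is correct and follows essentially the same approach as the paper: both assemble the decomposition from the principal $\R$-bundle structure, the local product charts of Propositions \ref{g2ghatcharts} and \ref{su3ghatcharts}, the splitting $\B_{G_2}^{S^1} = \B_{SU(3)} \times \B_Z$ from Proposition \ref{btheorem}, and Theorem \ref{maintheorema} for the compact side. The only cosmetic difference is that the paper obtains the $\G_{G_2}^{S^1}$ decomposition by first invoking Theorem \ref{gluingdataspacesproduct} (writing $\tilde\G_{G_2}^{S^1} = \tilde\G_{SU(3)} \times \B_Z$) rather than applying Proposition \ref{g2ghatcharts} directly and then splitting $\B_{G_2}^{S^1}$; the ingredients and conclusions are identical.
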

\begin{proof}
Locally, $\G_{SU(3), [z_1], [z_2]}$ is diffeomorphic to $\tilde \G_{SU(3)}$ as it is an open subset. But $\tilde \G_{SU(3)}$ is locally diffeomorphic to $\hat \G_{SU(3)} \times \R$, as a principal bundle, which in turn is locally diffeomorphic to $\B_{SU(3)} \times A(\Omega_1, \omega_1, \Omega_2, \omega_2) \times \R$ by Proposition \ref{su3ghatcharts}. For $\G_{G_2}^{S^1}$, we again work locally and note that Theorem \ref{gluingdataspacesproduct} says that $\tilde \G_{G_2}^{S^1}$ is the product of $\tilde \G_{SU(3)}$ with $\B_Z$. The inclusion is the identity for $TA \oplus T\R$ because Theorem \ref{gluingdataspacesproduct} preserves these components. For the inclusion $\B_{SU(3)} \to \B_{G_2}^{S^1}$, the fact the inclusion is the identity on the $\B_{SU(3)}$ component follows from the definition of $\B_{SU(3)}$ as a submanifold in Proposition \ref{btheorem}, and the corresponding product structure. 

The compact case is even easier.
\end{proof}

We begin by considering Proposition \ref{weakwelldef}, which almost immediately implies
\begin{prop}
\label{gluingztoz}
Suppose that $[\phi_1, \phi_2, T] = [\Re \Omega_1 + z_1 \wedge \omega_1, \Re \Omega_2 + z_2 \wedge \omega_2, T] \in \G_{G_2}^{S^1}$. Suppose that $([z'_1], [z'_2]) \in T\B_Z$, so that $[z'_1 \wedge \omega_1, z'_2 \wedge \omega_2, 0] \in T_Z \subset T\G_{G_2}^{S^1}$. Applying the derivative of the gluing map of Definition \ref{defin:g2modspacegluingmap} takes $[z'_1 \wedge \omega_1, z'_2 \wedge \omega_2, 0]$ into the $T_Z$ component of $\M_{G_2}^{S^1}$.  
\end{prop}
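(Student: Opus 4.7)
The plan is to deduce this infinitesimal statement directly from the well-definition result Proposition \ref{prop:strongwelldef} (or equivalently Proposition \ref{weakwelldef}), by realising the given tangent vector as the derivative of an explicit curve in $\G_{G_2}^{S^1}$ along which the $\M_{SU(3)}$ component of the glued class is constant.

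More concretely, I would choose matching representatives $z_1, z_2$ and $z'_1, z'_2$ for the relevant classes in $\B_Z$ and $T\B_Z$, and define the curve
\begin{equation}
\eta(s) = [\Re \Omega_1 + (z_1 + s z'_1) \wedge \omega_1,\; \Re \Omega_2 + (z_2 + s z'_2) \wedge \omega_2,\; T] \in \tilde\G_{G_2}^{S^1}.
\end{equation}
Since $\G_{G_2}^{S^1}$ is open in $\tilde\G_{G_2}^{S^1}$ and $\eta(0) \in \G_{G_2}^{S^1}$ by hypothesis, for $s$ in some small interval $\eta(s) \in \G_{G_2}^{S^1}$ and $\eta$ is smooth by Theorem \ref{gluingdataspacesproduct}. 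Under the identifications of Proposition \ref{gluingtangentsplitting} the tangent vector $\dot\eta(0)$ lies in the $T_Z$ component of $T\G_{G_2}^{S^1}$, and corresponds exactly to $[z'_1 \wedge \omega_1, z'_2 \wedge \omega_2, 0]$.

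Now let $\gamma(s) \in \M_{G_2}^{S^1}$ denote the image of $\eta(s)$ under the gluing map of Definition \ref{defin:g2modspacegluingmap}; this is smooth by Theorem \ref{g2localdiffeoofmodspace}. Each $\eta(s)$ arises via $\iota_{[z_1+sz'_1],[z_2+sz'_2]}$ from the same class $[\Omega_1, \omega_1, \Omega_2, \omega_2, T] \in \G_{SU(3)}$, so by the well-definition of the induced Calabi-Yau gluing map established in Proposition \ref{prop:strongwelldef}, the projection $\pi_{\M_{SU(3)}}\gamma(s)$ is independent of $s$. Therefore the curve $\gamma$ is entirely contained in a single fibre of the projection $\M_{G_2}^{S^1} \to \M_{SU(3)}$ from Theorem \ref{maintheorema}, so its tangent $\dot\gamma(0)$ lies in the kernel of $D\pi_{\M_{SU(3)}}$, which under the product decomposition $T\M_{G_2}^{S^1} = T_{SU(3)} \oplus T_Z$ is exactly the $T_Z$ component.

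There is no real obstacle: the only mild subtlety is that Proposition \ref{weakwelldef} as originally stated requires $T$ sufficiently large depending on a connecting curve, so one should invoke the stronger Proposition \ref{prop:strongwelldef} (or, equivalently, appeal to Lemma \ref{reducingslemma} to reduce from large $T$ back to the given $T$) to guarantee constancy of the $\M_{SU(3)}$ component for the specific gluing parameter $T$ we started with. Once that is in place the argument is a direct differentiation of a constant-valued projection.
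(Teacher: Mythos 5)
Your proof is correct, and the core idea coincides with the paper's: realise the tangent vector as the derivative of a curve in $\G_{G_2}^{S^1}$ obtained by varying only the twisting, and deduce that the glued curve in $\M_{G_2}^{S^1}$ has constant $\M_{SU(3)}$ component, so its tangent lies in $T_Z$. The difference is in how the constancy is justified. The paper works with the explicit representatives $(\Re\Omega_1 + z_1(s)\wedge\omega_1, \Re\Omega_2 + z_2(s)\wedge\omega_2, T)$, uses upper semi-continuity of the minimal gluing parameter $T_0$ to see that these glue at the fixed $T$ for $s$ small, and then applies Proposition \ref{weakwelldef}; the ``$T$ sufficiently large'' caveat that worries you is discharged precisely because, along the short curve, the given $T$ already exceeds the minimal parameter at every point, which is all that the proof of Proposition \ref{weakwelldef} (via Lemma \ref{ccontrol}) needs. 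You instead stay at the level of moduli classes: openness of $\G_{G_2}^{S^1}$ in $\tilde\G_{G_2}^{S^1}$ gives $\eta(s)\in\G_{G_2}^{S^1}$ for small $s$, and Proposition \ref{prop:strongwelldef} — well-definition of the $SU(3)$ gluing map across the charts $\G_{SU(3),([z_1+sz'_1],[z_2+sz'_2])}$ — gives constancy of $\pi_{\M_{SU(3)}}\gamma(s)$ at the given $T$ with no parameter adjustment; smoothness of $\gamma$ comes from Theorem \ref{g2localdiffeoofmodspace}. This is legitimate (no circularity, since Proposition \ref{prop:strongwelldef} is established before this point) and has the small advantage of not needing the explicit representatives to glue, but it is heavier machinery: Proposition \ref{prop:strongwelldef} is itself deduced from Proposition \ref{weakwelldef}, Lemma \ref{reducingslemma} and Theorem \ref{g2gluingwelldef}, so the paper's direct appeal to Proposition \ref{weakwelldef} is the more economical route to the same conclusion.
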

\begin{proof}
Suppose that we have chosen our representatives such that the triple $(\Re \Omega_1 + z_1 \wedge \omega_1, \Re \Omega_2 + z_2 \wedge \omega_2, T)$ glues. Choose a curve of matching pairs of twistings $z_i(s)$ with $z_i(0) = z_i$ and $z'_i = z'_i$, for some representative. Upper semi-continuity of the minimal parameter $T_0$ implies that $(\Re \Omega_1 + z_1(s) \wedge \omega_1, \Re \Omega_2 + z_2(s) \wedge \omega_2, T)$ will also glue for $s$ small. Note that these triples define a curve through $[\Re \Omega_1 + z_1 \wedge \omega_1, \Re \Omega_2 + z_2 \wedge \omega_2, T]$ in the same way as Lemma \ref{smoothcurves}, with its tangent exactly $[z'_1 \wedge \omega_1, z'_2 \wedge \omega_2, 0]$. Proposition \ref{weakwelldef} says that the curve in $\M_{G_2}^{S^1}$ constructed by gluing $(\Re \Omega_1 + z_1(s) \wedge \omega_1, \Re \Omega_2 + z_2(s) \wedge \omega_2, T)$ has fixed $\M_{SU(3)}$ component, and so its tangent lies in $T_Z$. 
\end{proof}
Using the analogous proposition for Calabi-Yau structures (Proposition \ref{su3structurebehaves}), we obtain
\begin{prop}
\label{gluingsu3tosu3}
Suppose that $[\Omega_i, \omega_i, T] \in \G_{SU(3), ([z_1], [z_2])}$, and that $[\Omega'_i, \omega'_i, 0]$ is a tangent vector in $T\G_{SU(3)}$. Pick representatives of $\Omega_i$ and $\omega_i$, and let $z_1, z_2$ be a pair of matching twistings such that the triple $(\Re \Omega_1 + z_1 \wedge \omega_1, \Re \Omega_2 + z_2 \wedge \omega_2, T)$ is gluable. Then the image of the tangent $[\Re \Omega'_i + z_i \wedge \omega'_i, 0] \in T_{SU(3)} \subset T\G_{G_2}^{S^1}$ under the derivative of the gluing map lies in $T_{SU(3)} \oplus \R [z] \subset T\M_{G_2}^{S^1}$, where $[z]$ is the twisting class given by applying the gluing of Definition \ref{comdefin:matchingforms} to $z_1$ and $z_2$. 
\end{prop}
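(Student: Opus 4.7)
The plan is to mimic the proof of Proposition~\ref{gluingztoz}, but with Proposition~\ref{su3structurebehaves} playing the role Proposition~\ref{weakwelldef} played there. First I would pick matching representatives $z_1, z_2$ so that the triple $(\Re \Omega_1 + z_1 \wedge \omega_1, \Re \Omega_2 + z_2 \wedge \omega_2, T)$ is gluable (which is possible since $[\Omega_i, \omega_i, T] \in \G_{SU(3), ([z_1], [z_2])}$), and choose a smooth curve $s \mapsto (\Omega_i(s), \omega_i(s))$ of matching pairs of Calabi-Yau structures through $(\Omega_i, \omega_i)$ with initial velocity $(\Omega'_i, \omega'_i)$. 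Such a curve exists because, by Proposition~\ref{gluingtangentsplitting}, the tangent vector $[\Omega'_i, \omega'_i, 0]$ lies in the summand $T_{SU(3)} = T\B_{SU(3)}$ of $T\G_{SU(3)}$, which is realised by exactly such matching deformations.

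Applying $\iota_{[z_1], [z_2]}$ with the gluing parameter $T$ held fixed produces a smooth curve in $\tilde\G_{G_2}^{S^1}$ whose initial velocity is $[\Re \Omega'_i + z_i \wedge \omega'_i, 0]$, and by upper semi-continuity of the minimal gluing parameter in Theorem~\ref{g2structuregluinggeneral} this curve remains in $\G_{G_2}^{S^1}$ for $s$ small. Composing with the gluing map of Definition~\ref{defin:g2modspacegluingmap} then gives a smooth curve in $\M_{G_2}^{S^1} = \M_{SU(3)} \times Z$ (using Theorem~\ref{maintheorema}) whose velocity at $s=0$ is precisely the image of $[\Re \Omega'_i + z_i \wedge \omega'_i, 0]$ under the derivative of the gluing map.

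Proposition~\ref{su3structurebehaves} now tells us that, for sufficiently large neck-length, the $Z$-component of the glued class is proportional to $[\gamma_T(z_1, z_2)] = [z]$; in other words, its value at every $s$ lies on the line $\R [z] \subset Z$. For the specific $T$ given, I would bridge the gap between ``sufficiently large'' and our value exactly by the strategy of Lemma~\ref{reducingslemma}: pass to neck-length $T+S$ for $S$ large, apply Proposition~\ref{su3structurebehaves} to the enlarged-neck curve, then decrease $S$ back to $0$, using that $[\gamma_{T+S}(z_1, z_2)]$ is independent of $S$ (by the argument in the proof of Proposition~\ref{lengthstructurebehaves}) and Lemma~\ref{ccontrol} to ensure that the scalar multiple $c(s)$ cannot leave the line as $S$ shrinks. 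The deformed curve therefore lies in the submanifold $\M_{SU(3)} \times \R [z]$ throughout, so its velocity at $s=0$ lies in $T_{SU(3)} \oplus \R [z]$, as required. The main obstacle is really only this bookkeeping on the neck-length; the geometric content is already encapsulated in Proposition~\ref{su3structurebehaves} and the cohomological constraints of Subsection~\ref{ssec:gluingtomod}.
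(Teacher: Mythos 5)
Your proposal is correct and follows essentially the same route as the paper: choose a curve of matching Calabi--Yau pairs realising the tangent vector, note by upper semi-continuity of the minimal gluing parameter that the curve glues at the fixed $T$ for small $s$, use the cohomological proportionality of the $Z$-part to $[\gamma_T(z_1,z_2)]$, and differentiate. The only difference is your bridging detour through neck-length $T+S$ via Lemma \ref{reducingslemma} and Lemma \ref{ccontrol}, which is harmless but unnecessary: the underlying statement, Proposition \ref{gluingcohombehaves}, holds at any parameter at which the structures glue, so the proportionality of the $Z$-component to $[z]$ can be read off directly at the given $T$, which is how the paper's proof (citing Proposition \ref{su3structurebehaves}) proceeds.
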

\begin{proof}
Pick some forms $\Omega'_i$ and $\omega'_i$ representing $[\Omega'_i, \omega'_i, 0]$. Take a curve of matching Calabi-Yau structures $(\Omega_i(s), \omega_i(s))$ such that $\Omega_i(0) = \Omega_i$, $\omega_i(0) = \omega_i$, $\Omega'_i = \Omega'_i$, $\omega'_i$. By upper semi-continuity of the minimal parameter $T_0$, $(\Re \Omega_1(s) + z_1 \wedge \omega_1(s), \Re \Omega_2(s) + z_2 \wedge \omega_2(s), T)$ is gluable for $s$ sufficiently small. 

We know by Proposition \ref{su3structurebehaves} that the image of $(\Re \Omega_1(s) + z_1 \wedge \omega_1(s), \Re \Omega_2(s) + z_2 \wedge \omega_2(s), T)$ in $\M_{G_2}^{S^1}$ is of the form $[\Re \Omega(s) + c(s) z \wedge \omega(s)]$ for a curve of Calabi-Yau structures $(\Omega(s), \omega(s))$; the result clearly follows by differentiating in $s$. 
\end{proof}
In the same way, we have a similar result for the automorphism component $TA$. 
\begin{prop}
\label{gluingautostosu3}
\label{autosbehave}
As in Proposition \ref{gluingsu3tosu3}, let $[\Omega_i, \omega_i, T] \in \G_{SU(3), ([z_1], [z_2])}$. Choose representatives $\Omega_i$ and $\omega_i$ and let $z_1, z_2$ be a pair of matching twistings such that the triple $(\phi_1=\Re \Omega_1 + z_1 \wedge \omega_1, \phi_2=\Re \Omega_2 + z_2 \wedge \omega_2, T)$ is gluable. Consider $X \in TA(\Omega_1, \omega_1, \Omega_2, \omega_2) = TA(\phi_1, \phi_2) \subset T\G_{G_2}^{S^1}$. Its image under the derivative of the gluing map lies in $T_{SU(3)} \oplus \R [z]$.
\end{prop}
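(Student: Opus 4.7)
The strategy follows the pattern of Propositions \ref{gluingztoz} and \ref{gluingsu3tosu3}. I would first extend $X$ to a smooth curve $\Phi_s$ in $A(\Omega_1, \omega_1, \Omega_2, \omega_2)$ with $\Phi_0 = \id$ and $\dot\Phi_0 = X$; by Proposition \ref{su3ghatmanifold}, $X$ is simultaneously a tangent vector to $A(\phi_1, \phi_2)$ at the identity under the natural identification. Choosing fixed matching representatives $z_1, z_2$ of the class $([z_1], [z_2])$, set $\hat\phi_2(s) := \Re(\Phi_s^*\Omega_2) + z_2 \wedge (\Phi_s^*\omega_2)$, which by Proposition \ref{g2su3prop2} is a torsion-free $S^1$-invariant $G_2$ structure on $M_2 \times S^1$. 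The crucial observation is that the limit $\tilde\Phi_s$ is an isometry of the limit Calabi-Yau metric that is isotopic to the identity; by Lemma \ref{sunauto} applied to the compact cross-section $N_2 \times S^1$, $\tilde\Phi_s$ preserves $(\tilde\Omega_2, \tilde\omega_2)$ exactly, so $\hat\phi_2(s)$ continues to match $\phi_1$ in the sense of Definition \ref{comdefin:matchingforms}.

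The assignment $s \mapsto [\phi_1, \hat\phi_2(s), T]$ is then a smooth curve in $\tilde\G_{G_2}^{S^1}$. It is the image under $\iota_{[z_1], [z_2]}$ of the curve $s \mapsto [\Omega_1, \omega_1, \Phi_s^*\Omega_2, \Phi_s^*\omega_2, T]$ in $\tilde\G_{SU(3)}$, and by Proposition \ref{su3ghatmanifold} its tangent at $s=0$ is $X$ under the inclusion $TA \subset T\tilde\G_{G_2}^{S^1}$. For $s$ sufficiently small, upper semi-continuity of the minimal gluing parameter (Theorem \ref{g2structuregluinggeneral}) places the curve in $\G_{G_2}^{S^1}$, and gluing produces a smooth curve $\beta(s)$ in $\M_{G_2}^{S^1}$ whose tangent at $s=0$ is the image of $X$ under the derivative of the gluing map.

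Finally, applying Proposition \ref{gluingcohombehaves} to the matching pair $(\phi_1, \hat\phi_2(s))$, decomposed as $(\Omega_1, \omega_1)$ with twisting $z_1$ and $(\Phi_s^*\Omega_2, \Phi_s^*\omega_2)$ with twisting $z_2$, and writing $\beta(s) = [\Re\Omega(s) + z(s)\wedge\omega(s)]$, we obtain $[z(s)] = c(s)\,[\gamma_T(z_1, z_2)]$ for a positive continuous function $c(s)$. Since the twistings used to form $\hat\phi_2(s)$ are the \emph{fixed} representatives $z_1, z_2$ (not $\Phi_s^*z_2$), the cohomology class $[\gamma_T(z_1, z_2)]$ is manifestly independent of $s$, so $[z(s)] = (c(s)/c(0))\,[z(0)]$ lies along the line $\R[z]$. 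Differentiating at $s=0$, the $Z$-component of the tangent of $\beta$ is a scalar multiple of $[z]$, and the full tangent therefore lies in $T_{SU(3)} \oplus \R[z]$.

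I expect the main subtlety to be the matching verification in the first paragraph: the natural construction would use $\Phi_s^*\phi_2$ (and correspondingly $\Phi_s^*z_2$) as the pulled-back $G_2$ structure on $M_2$, but by replacing the twisting component with the fixed $z_2$ and invoking Lemma \ref{sunauto} to see that the limit Calabi-Yau structure is rigid under identity-component isometries, we produce a manifestly matching pair for which the cohomology computation reduces trivially to Proposition \ref{gluingcohombehaves}. Once this is set up correctly, everything else is routine.
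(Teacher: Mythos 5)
Your proposal is correct and follows essentially the same route as the paper: generate a curve of diffeomorphisms $\Phi_s$ of $M_2$ from $X$, pull back $(\Omega_2,\omega_2)$ while keeping the twistings $z_1, z_2$ fixed, glue, and control the $Z$-component cohomologically so that it stays on the ray $\R[z]$. The only difference is presentational — the paper cites Proposition \ref{su3structurebehaves} (itself an immediate consequence of Proposition \ref{gluingcohombehaves}) where you unwind that computation directly, and you spell out the matching of limits via Lemma \ref{sunauto} and the tangent identification via Proposition \ref{su3ghatmanifold}, which the paper leaves implicit in its chart construction for $\hat\G$.
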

\begin{proof}
Let $\Phi_s$ be the curve of diffeomorphisms of $M_2$ generated by curve of Killing fields $sX$ as in Proposition \ref{spaceanice}. By upper semi-continuity of the minimal $T_0$, $(\Omega_1, \omega_1, \Phi_s^* \Omega_2,  \Phi_s^* \omega_2, T)$ is gluable for $s$ sufficiently small. Exactly as in Proposition \ref{gluingsu3tosu3}, by Proposition \ref{su3structurebehaves}, the image of $(\Omega_1, \omega_1, \Phi_s^* \Omega_2,  \Phi_s^* \omega_2, T)$ in $\M_{G_2}^{S^1}$ is of the form $[\Re \Omega(s) + c(s) z \wedge \omega(s)]$; the result follows.
\end{proof}
Finally, we have to consider the effect of varying the neck length $T$. Exactly the same argument as in Propositions \ref{gluingztoz}--\ref{gluingautostosu3} shows that it follows from Proposition \ref{lengthstructurebehaves} that $T\R$ maps into $T_{SU(3)} \oplus \R [z]$. 

In sum, therefore, we have shown that $T_Z$ maps into $T_Z$ but that $T \G_{SU(3), [z_1], [z_2]}$ maps into $T\M_{SU(3)} \oplus \R [z]$. 
The proof of Theorem \ref{localdiffeoofmodspace} is now straightforward linear algebra, as follows.
\begin{proof}[Proof of Theorem \ref{localdiffeoofmodspace}]
We consider the derivative of the gluing map
\begin{equation}
\G_{SU(3)} \to \M_{SU(3)}
\end{equation}
around any given point. Because $\G_{SU(3)}$ is given by a union of open sets, we may suppose that this point lies in $\G_{SU(3), ([z_1], [z_2])}$, and then by Definition \ref{defin:su3modspacegluingmaps} the gluing map is locally given by the composition
\begin{equation}
\G_{SU(3), ([z_1], [z_2])} \to \G_{G_2}^{S^1} \to \M_{G_2}^{S^1} \to \M_{SU(3)}
\end{equation}
Its derivative is therefore given in terms of the decomposition in Proposition \ref{gluingtangentsplitting} by
\begin{equation}
T_{SU(3)} \oplus TA \oplus T\R \hookrightarrow T_{SU(3)} \oplus T_{Z} \oplus TA \oplus T\R \overset{\gamma}\to T_{SU(3)} \oplus T_Z \twoheadrightarrow T_{SU(3)}
\end{equation}
where the middle map $\gamma$ is the derivative of the gluing map on $S^1$ invariant moduli spaces and is therefore an isomorphism by Theorem \ref{g2localdiffeoofmodspace}.

Now we have that the components in the tangent space of $\G_{G_2}^{S^1}$ corresponding to the tangent space of $\G_{SU(3)}$ are mapped into $T_{SU(3)} \oplus \R [z]$ (by Propositions \ref{gluingsu3tosu3}, \ref{gluingautostosu3}, and the following comment) and the additional component $T_Z$ is mapped into $T_Z$ (by Proposition \ref{gluingztoz}).  Moreover, we know that $\R [z]$  is mapped to by $T_Z$, by taking the obvious curve of twistings $(1+s)(z_1, z_2)$: that is, by taking $[z_1 \wedge \omega_1, z_2 \wedge \omega_2, 0] \in T_Z$. It follows that the composition is injective, because if a vector in $T_{SU(3)}$ maps to zero under the whole map, then under the $G_2$ gluing it must map to $(0, c[z])$. But $(0, c[z])$ is also the image of something in $T_Z$, which proves that the $G_2$ gluing map is not injective, a contradiction. The composition is also surjective, simply because anything in $T_{SU(3)}$ is mapped to by some tangent vector in $T\G_{G_2}^{S^1}$, and if we ignore the $T_Z$ component we still get the same $T_{SU(3)}$ component under the composition. Hence the composition of derivatives is an isomorphism and the gluing map of Calabi-Yau structures is a local diffeomorphism. 
\end{proof}
\begin{rmk}
We could also have used a simplified version of \cite[Proposition 3.1]{nordstromgluing} to show that the patching in Definition \ref{comdefin:matchingforms} defines an isomorphism between the two $T_Z$ components, rather than analysing the remaining components separately.
\end{rmk}
Finally, we make a remark on the possibility of complex gluing parameter $T$, which is natural when we are gluing complex manifolds. 
\begin{rmk}
By the Haskins--Hein--Nordstr\"om structure theory (\cite{haskinsheinnordstrom}), any asymptotically cylindrical Calabi-Yau has cross-section a finite quotient of $S^1 \times X$ for some $X$, and so has a rotation map on its asymptotically cylindrical end; we could say that taking gluing parameter $T + iS$ corresponds to a rotation of one asymptotically cylindrical end. Evidently, if we permitted complex gluing parameter, the gluing map would still cover an open subset. Local injectivity depends on how variation of $S$ interacts with the proof of Theorem \ref{localdiffeoofmodspace}, in particular how the automorphism of rotation on the end behaves as a class in $A(\Omega_1, \omega_1, \Omega_2, \omega_2)$. If it defines the trivial class, then the moduli class we obtain on gluing is independent of $S$. If it defines a nontrivial class, then we lose local injectivity, as changing $S$ is equivalent to a change in $T\hat\G_{SU(3)}$. 
\end{rmk}
\bibliographystyle{abbrv}
\bibliography{bibfile}
\end{document}